\documentclass[12pt,final]{amsart}
\usepackage[utf8]{inputenc}
\usepackage[T1]{fontenc}
\usepackage[letterpaper,centering]{geometry}
\usepackage{lmodern}
\usepackage{mathrsfs}
\usepackage{amsmath,amssymb,amsfonts}
\usepackage[unicode,pdfborder={0 0 0},final]{hyperref}
\usepackage{enumerate}
\usepackage{multicol}

\usepackage[all]{xy}
{\setbox0\hbox{$ $}}\fontdimen16\textfont2=\fontdimen17\textfont2
\entrymodifiers={+!!<0pt,\the\fontdimen22\textfont2>}
\SelectTips{cm}{12}

\usepackage{xr}
\externaldocument[BW2-]{partie_2}

\theoremstyle{plain}
\newtheorem{thm}{Theorem}[section]

\newtheorem{sublem}[thm]{Sublemma}
\newtheorem{lem}[thm]{Lemma}

\newtheorem{question}[thm]{Question}
\newtheorem{cor}[thm]{Corollary}
\newtheorem{prop}[thm]{Proposition}
\theoremstyle{definition}
\newtheorem{defn}[thm]{Definition}
\newtheorem{rmk}[thm]{Remark}
\newtheorem{rmks}[thm]{Remarks}
\newtheorem{example}[thm]{Example}

\numberwithin{equation}{section}
\theoremstyle{remark}
\newtheorem{fact}[thm]{Fact}

\def\owrepositiontag{\mathpalette\owrepositiontaginternal}
\def\owrepositiontaginternal#1#2{\owrepositiontagaux{#1}#2}
\def\owrepositiontagaux#1#2#3{#2\hbox to 1pt{\hss$\mathsurround=0pt#1{#3}$\hss}}

\def\clap#1{\hbox to 0pt{\hss#1\hss}}

\def\mathrlap{\mathpalette\mathrlapinternal}
\def\mathclap{\mathpalette\mathclapinternal}

\def\mathrlapinternal#1#2{%
\rlap{$\mathsurround=0pt#1{#2}$}}
\def\mathclapinternal#1#2{%
\clap{$\mathsurround=0pt#1{#2}$}}

\newcommand{\congru}[3]{#1 \equiv #2 \!\!\mod #3}
\newcommand{\pascongru}[3]{#1 \not\equiv #2 \!\!\mod #3}
\newcommand{\ev}{{\mathrm{ev}}}
\newcommand{\Sq}{{\mathrm{Sq}}}
\newcommand{\alg}{{\mathrm{alg}}}
\newcommand{\cl}{{\mathrm{cl}}}
\newcommand{\orient}{{\mathrm{or}}}
\newcommand{\Gm}{\mathbf{G}_\mathrm{m}}
\newcommand{\isoto}{\myxrightarrow{\,\sim\,}}
\makeatletter
\def\myrightarrow{{\setbox\z@\hbox{$\rightarrow$}\dimen0\ht\z@\multiply\dimen0 6\divide\dimen0 10\ht\z@\dimen0\box\z@}}
\def\myrightarrowfill@{\arrowfill@\relbar\relbar\myrightarrow}
\newcommand{\myxrightarrow}[2][]{\ext@arrow 0359\myrightarrowfill@{#1}{#2}}
\makeatother
\newcommand{\petitoverline}[1]{\mkern1mu\overline{\mkern-1mu#1\mkern-2mu}\mkern2mu}
\newcommand{\Ztilde}{\widetilde\Z}
\newcommand{\gammatilde}{\widetilde\gamma}

\newcommand{\sH}{{\mathscr H}}
\newcommand{\sF}{{\mathscr F}}
\newcommand{\sG}{{\mathscr G}}
\newcommand{\sO}{{\mathscr O}}
\newcommand{\sA}{{\mathscr A}}
\newcommand{\sB}{{\mathscr B}}
\newcommand{\sY}{{\mathscr Y}}
\newcommand{\A}{{\mathbf A}}
\renewcommand{\C}{{\mathbf C}}
\newcommand{\F}{{\mathbf F}}
\renewcommand{\Im}{{\mathrm{Im}}}
\newcommand{\LL}{{\mathbf L}}
\newcommand{\Mpsi}{{\mathscr M}}
\renewcommand{\P}{{\mathbf P}}
\newcommand{\Q}{{\mathbf Q}}
\newcommand{\R}{{\mathbf R}}
\newcommand{\Z}{{\mathbf Z}}
\newcommand{\Zl}{{\Z_{\ell}}}
\newcommand{\Ql}{{\Q_{\ell}}}
\newcommand{\CH}{\mathrm{CH}}
\newcommand{\Hdg}{\mathrm{Hdg}}
\newcommand{\Gal}{\mathrm{Gal}}
\newcommand{\nr}{\mathrm{nr}}
\newcommand{\BM}{\mathrm{BM}}
\newcommand{\NS}{\mathrm{NS}}
\newcommand{\Pic}{\mathrm{Pic}}
\newcommand{\Br}{\mathrm{Br}}
\newcommand{\ci}{\mathscr{C}^{\infty}}
\renewcommand{\phi}{\varphi}
\renewcommand{\emptyset}{\varnothing}
\newcommand{\Ker}{{\mathrm{Ker}}}
\newcommand{\Coker}{{\mathrm{Coker}}}
\newcommand{\Hom}{{\mathrm{Hom}}}

\newcommand{\Homrond}{\mathscr{H}\mkern-4muom}
\newcommand{\et}{\text{ét}}
\newcommand{\tors}{{\mathrm{tors}}}
\newcommand{\elw}{\mathrm{ind}}
\newcommand{\RGamma}{{\mathrm{R\Gamma}}}
\newcommand{\RR}{{\mathrm{R}}}
\newcommand{\labelbetasubtilde}{{\mathrlap{\beta_{\Z}}\phantom{\beta}_{\mbox{\large\smash{\lower6.5pt\hbox{$\mathsurround=0pt\scriptstyle{\tilde{\kern2pt\hbox to 2.5pt{\kern-20pt\phantom{X}\hss}}}$}}}}}}

\hyphenpenalty=500
\pretolerance=515

\advance\textheight 2mm
\advance\topmargin -1mm

\date{January 2nd, 2018; revised on April 30th, 2019}
\title[On the integral Hodge conjecture for real varieties, I]{On the integral Hodge conjecture for\\real varieties, I}

\author{Olivier Benoist}
\address{Institut de Recherche Math\'ematique Avanc\'ee,
UMR 7501, Universit\'e de Strasbourg et CNRS,
7 rue Ren\'e Descartes,
67000 Strasbourg, FRANCE}
\email{olivier.benoist@unistra.fr}

\author{Olivier Wittenberg}
\address{D\'epartement de math\'ematiques et applications, \'Ecole normale sup\'erieure, 45~rue d'Ulm, 75230 Paris Cedex 05, France}
\email{wittenberg@dma.ens.fr}

\begin{document}

\begin{abstract}
We formulate the ``real integral Hodge conjecture'',
a version of the integral Hodge conjecture for real varieties,
and raise the question of its validity
for cycles of dimension~$1$
on uniruled and Calabi--Yau
threefolds and on rationally connected varieties.
We relate it to
the problem of determining the image of the Borel--Haefliger cycle class map
for $1$\nobreakdash-cycles,
with the problem of deciding whether
a real variety with no real point contains a curve of even geometric genus
and with the problem of computing the torsion of
the Chow group of $1$\nobreakdash-cycles of real threefolds.
New results about these problems are obtained along the way.
\end{abstract}

\maketitle

\vspace*{-6.5mm}%
\section*{Introduction}
{\renewcommand*{\thethm}{\Alph{thm}}

One of the central problems in the study of algebraic cycles of
codimension~$k$ on a smooth proper complex algebraic variety~$X$ consists
in determining the subgroup
$H^{2k}_\alg(X(\C),\Z) \subseteq H^{2k}(X(\C),\Z)$
formed by their cycle classes.
Hodge theory provides a chain of inclusions
\begin{align}
\label{eq:intro chain of inclusions}
H^{2k}_\alg(X(\C),\Z) \subseteq \Hdg^{2k}(X(\C),\Z) \subseteq H^{2k}(X(\C),\Z)\rlap{,}
\end{align}
where $\Hdg^{2k}(X(\C),\Z)$ denotes the set of those classes whose images
in~$H^{2k}(X(\C),\C)$ have type~$(k,k)$ with respect to the Hodge
decomposition.
By the Hodge conjecture, the first inclusion should become an equality
after tensoring with~$\Q$.
It is customary to refer to the property that the first inclusion is itself
an equality as the \emph{integral Hodge conjecture}.
Despite its name, this property can fail.
Its study for specific~$X$ and~$k$ has nevertheless played a significant
role in recent years (see \cite{voisinthreefolds}, \cite{ctvoisin},
\cite[Chapter~6]{voisinweyl} and the references therein, and \S\ref{par:complexIHC} for a more detailed discussion).

Let now~$X$ denote a smooth proper real algebraic variety, by which we mean a smooth proper
scheme over~$\R$.  With any algebraic cycle of codimension~$k$ on~$X$,
Borel and Haefliger~\cite{borelhaefliger} have associated
a cycle class in $H^k(X(\R),\Z/2\Z)$.
The study of the subgroup $H^k_\alg(X(\R),\Z/2\Z) \subseteq H^k(X(\R),\Z/2\Z)$
formed by these classes is a classical topic in real algebraic geometry
(see \cite[Chapter~III]{silhol},
\cite[\textsection11.3]{bcr},
\cite{bochnakkucharzonhomologyclasses},
\cite[Chapitres~3--4]{mangoltelivre} and the references therein),
related to the problem of~$\ci$ approximation of submanifolds
of~$X(\R)$
by algebraic subvarieties.

Despite the formal similarity between these two settings, one
critical difference stands out:
being expressed with torsion coefficients,
the definition of the subgroup $H^k_\alg(X(\R),\Z/2\Z)$ misses any information that might come from the
Hodge theory of the underlying complex variety.
The latter, however, does have an influence on this
subgroup (see, \emph{e.g.}, \cite[Chapter~IV, Corollary~4.4]{vanhamelthese} or \cite{MangolteK3}).

The main aim of the present work is to put forward and examine the \emph{real
integral Hodge conjecture}, a statement for real algebraic varieties which
is analogous to the (complex) integral Hodge conjecture recalled above and
whose study refines, at the same time, that of $H^k_\alg(X(\R),\Z/2\Z)$.
In part~I, we formulate it (\textsection\ref{sec:realIHC}) and, focusing on the case
of $1$\nobreakdash-cycles, study its consequences
(\textsection\ref{sec:onecycles}, \textsection\ref{sec:blochogus})
while part~II (that is, \cite{bwpartie2}) establishes particular cases of it, again for $1$\nobreakdash-cycles
(see \cite[\textsection\ref*{BW2-sec:conicbundles}, \textsection\ref*{BW2-sec:Fano}, 
\textsection\ref*{BW2-sec:dPfibrations},
\textsection\textsection\ref*{BW2-subsec:conic bundles non arch}--\ref*{BW2-subsec:cubichypersurfaces}]{bwpartie2}).

More specifically, we define, in~\textsection\ref{sec:realIHC}, a subgroup
\begin{align}
\label{eq:intro hdgG0}
\Hdg^{2k}_G(X(\C),\Z(k))_0 \subseteq H^{2k}_G(X(\C),\Z(k))\rlap{,}
\end{align}
where $G=\Gal(\C/\R)$ acts on the space~$X(\C)$ and on
the group $\Z(k)=(\sqrt{-1}\mkern2mu)^k\Z$
and where $H^{2k}_G(X(\C),\Z(k))$ denotes $G$\nobreakdash-equivariant
cohomology in the sense of Borel, by combining 
the Hodge condition in $H^{2k}(X(\C),\C)$ with a topological condition in $H^{2k}_G(X(\R),\Z(k))$
discovered by Kahn and Krasnov \cite{kahnchern}, \cite{krasnovequivariant}.
When~$k\leq 1$, this topological condition is trivial;
when $\dim(X)\leq 3$ and $k=2$, it is simply the requirement that the
pull-back to any real point of~$X$ of the equivariant cohomology class
under consideration should vanish.

We prove that the subgroup~\eqref{eq:intro hdgG0}
is compatible with cup
products, pull-backs and proper push-forwards
(see~\textsection\ref{subsubsec:compatibility with pushforwards} and
Theorem~\ref{th:stability of topological constraints}).
It contains the subgroup of $H^{2k}_G(X(\C),\Z(k))$ formed by
the equivariant cycle classes of algebraic cycles of codimension~$k$.
The \emph{real integral Hodge conjecture}
refers to the property that every element of~\eqref{eq:intro hdgG0}
is the equivariant cycle class of some algebraic cycle of codimension~$k$.
Just as in the complex situation, the real integral Hodge conjecture sometimes fails,
though it always holds when $k=1$ (an observation due to Krasnov) or $k=\dim(X)$ (see~\textsection\ref{subsubsec:zerocycles}), it is a birational invariant when $k=2$ or $k=\dim(X)-1$
(see~\textsection\ref{subsubsec:birational invariance}), and it holds for projective spaces (see \S\ref{subsubsecps}).

As was noted by Krasnov~\cite{krasnovequivariant} and by van~Hamel~\cite{vanhamelthese},
there exists a canonical map
\begin{align}
\label{eq:intro psiwithpoint}
H^{2k}_G(X(\C),\Z(k)) \to H^k(X(\R),\Z/2\Z)
\end{align}
which sends the equivariant cycle class of any codimension~$k$ algebraic cycle
to its Borel--Haefliger cycle class.
Considering the image~$H$ of $\Hdg^{2k}_G(X(\C),\Z(k))_0$ by this map now leads to
a chain of inclusions
analogous to~\eqref{eq:intro chain of inclusions}:
\begin{align}
H^k_\alg(X(\R),\Z/2\Z) \subseteq H \subseteq H^k(X(\R),\Z/2\Z)\rlap{.}
\end{align}
Obviously, if $H \neq H^k(X(\R),\Z/2\Z)$,
then $H^k_\alg(X(\R),\Z/2\Z)\neq H^k(X(\R),\Z/2\Z)$.
This implication already explains all of the known examples of real varieties~$X$
 such that $H^k_\alg(X(\R),\Z/2\Z)\neq H^k(X(\R),\Z/2\Z)$
for some~$k$
(see Remark~\ref{rk:covers hknotalg}~(ii)).
In~\textsection\ref{subsec:cycle theoretic obs},
we provide an example that cannot be explained by this mechanism.
It is based on a degeneration argument to positive characteristic,
as in~\cite{totarocontreexemples},
to contradict the real integral Hodge conjecture.

Unlike the equality $H^k_\alg(X(\R),\Z/2\Z)=H^k(X(\R),\Z/2\Z)$,
the real integral Hodge conjecture turns out to be an interesting property
when $X(\R)=\emptyset$ as well.  When $X(\R)=\emptyset$ and~$X$ has pure dimension~$d$,
we construct a canonical map
\begin{align}
\label{eq:intro psiwithoutpoint}
H^{2d-2}_G(X(\C),\Z(d-1)) \to \Z/2\Z
\end{align}
which
sends the equivariant cycle class of a reduced curve $Z \subseteq X$ with normalisation~$Z'$ to $\chi(Z',\sO_{Z'})$ modulo~$2$ (see Theorem~\ref{th:phi}).  In particular, the equivariant cycle class
of a geometrically irreducible curve on~$X$ determines its geometric genus modulo~$2$, whereas
the other cycle classes we have mentioned do not.
Considering the image of $\Hdg^{2d-2}_G(X(\C),\Z(d-1))_0$ by this map now provides a possible
obstruction to the existence of a geometrically irreducible curve of even geometric genus
on~$X$,
and, in particular, to the existence of a geometrically rational curve.
We give examples of this in~\textsection\textsection\ref{subsec:topological
examples}--\ref{subsec:hodge-theoretic examples}.
We note that any smooth proper and geometrically irreducible
variety of dimension~$\geq 2$ contains a geometrically irreducible
curve of odd geometric genus (see Proposition~\ref{prop:parity of genus in irrelevant situations});
the existence of a geometrically irreducible curve of even geometric
genus is a property which does not seem to have been considered systematically before
(though see \cite[Example~41, Question~42]{kollarelw}).

Let~$\psi$ denote the restriction
of the map~\eqref{eq:intro psiwithpoint} for $k=d-1$, if $X(\R)\neq\emptyset$,
or of the map~\eqref{eq:intro psiwithoutpoint}, if $X(\R)=\emptyset$,
to the subgroup
\begin{align*}
H^{2d-2}_G(X(\C),\Z(d-1))_0 \subseteq H^{2d-2}_G(X(\C),\Z(d-1))
\end{align*}
cut out by the topological condition which enters the definition of~\eqref{eq:intro hdgG0}
(disregarding the Hodge condition).
To draw consequences of the real integral Hodge conjecture for $1$\nobreakdash-cycles,
one quickly faces the problem of determining the image of~$\psi$.
Using a new result of a purely topological nature established in~\textsection\ref{sec:cohomology}
(see Theorem~\ref{th:selfduality}),
we solve it completely in~\textsection\ref{sec:onecycles} (see Theorem~\ref{th:image psi}),
thus providing an answer to a question of van~Hamel
(see \cite[p.~93]{vanhamelthese}, where
the map~$\rho_1$ is our~\eqref{eq:intro psiwithpoint} for $k=d-1$).

When the $2$\nobreakdash-torsion subgroup of $\Pic(X_\C)$ is trivial
(\emph{e.g.}, when $X(\C)$ is simply connected), the map~$\psi$ is surjective and we show that its
kernel consists of norms from~$\C$ to~$\R$ of classes in $H^{2d-2}(X(\C),\Z)$.
A direct relation follows,
in this case, between four motifs:
the real integral Hodge conjecture, the complex
integral Hodge conjecture, the surjectivity of the Borel--Haefliger cycle class map and the existence of geometrically irreducible curves of even geometric genus (see Theorem~\ref{thm:relation ihc phi}).

Two theorems that we obtain as consequences of these results---in conjunction, in the case of Theorem~\ref{th:B},
with Voisin's theorem according to which complex uniruled or Calabi--Yau
threefolds satisfy the integral Hodge
conjecture (see~\cite{voisinthreefolds})---are the following.
If~$M$ is an abelian group, we denote by $M[2^\infty]$ its $2$\nobreakdash-primary torsion subgroup.

\begin{thm}[see Theorem~\ref{th:nohodgetheoreticob}]
\label{th:A}
Let~$X$ be a smooth, proper and geometrically irreducible real variety,
of dimension~$d\geq 1$.
Assume that~$X$ satisfies the real integral Hodge conjecture for $1$\nobreakdash-cycles
and that $H^2(X,\sO_X)=0$.
\begin{enumerate}[(i)]
\item
The subgroup $H^{d-1}_\alg(X(\R),\Z/2\Z) \subseteq H^{d-1}(X(\R),\Z/2\Z)$
is the exact orthogonal complement,
under the Poincar\'e duality pairing, of the image of $\Pic(X)[2^\infty]$
by the Borel--Haefliger cycle class map $\Pic(X) \to H^1(X(\R),\Z/2\Z)$.
\item
There exists a geometrically irreducible curve of even geometric
genus in~$X$ if and only if the natural map $\Pic(X)[2^{\infty}]\to
\Pic(X_\C)^G[2^{\infty}]$ is onto.
\end{enumerate}
\end{thm}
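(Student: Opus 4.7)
The strategy is to combine the real integral Hodge conjecture for $1$\nobreakdash-cycles—which turns questions about algebraic $1$\nobreakdash-cycles into questions about the topological group $H^{2d-2}_G(X(\C),\Z(d-1))_0$—with the explicit description of the image of~$\psi$ provided by Theorem~\ref{th:image psi}, using the vanishing $H^2(X,\sO_X)=0$ to trivialise the Hodge condition in the relevant degree.

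The first step is to check that $\Hdg^{2d-2}_G(X(\C),\Z(d-1))_0 = H^{2d-2}_G(X(\C),\Z(d-1))_0$. Indeed, $H^2(X,\sO_X)=0$ yields $H^{0,2}(X_\C)=H^{2,0}(X_\C)=0$ by Hodge symmetry, and then Serre duality on $X_\C$ gives $H^{d-2,d}(X_\C)=H^{d,d-2}(X_\C)=0$. The only possibly non-zero Hodge component of $H^{2d-2}(X(\C),\C)$ is therefore of type $(d-1,d-1)$, so every integral equivariant class in that degree is automatically a Hodge class. Combined with the real integral Hodge conjecture, this shows that the equivariant cycle class map from $1$\nobreakdash-cycles surjects onto $H^{2d-2}_G(X(\C),\Z(d-1))_0$.

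For part~(i), suppose first $X(\R)\neq\emptyset$. Then $\psi$ sends an algebraic equivariant class to its Borel--Haefliger class, so the image of $\psi$ on algebraic classes is $H^{d-1}_\alg(X(\R),\Z/2\Z)$; by the previous step this equals the image of $\psi$ on the entire source, which by Theorem~\ref{th:image psi} is the orthogonal complement of the image of $\Pic(X)[2^\infty]$ in $H^1(X(\R),\Z/2\Z)$ under Poincar\'e duality. When $X(\R)=\emptyset$ both sides of (i) are zero and the statement is trivial.

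For part~(ii), the ``only if'' direction follows from $\psi([C])=\chi(C',\sO_{C'})\bmod 2=1$ for a geometrically irreducible curve $C$ of even geometric genus, which makes $\psi$ non-trivial on algebraic classes and forces, by Theorem~\ref{th:image psi}, the surjectivity of $\Pic(X)[2^\infty]\to\Pic(X_\C)^G[2^\infty]$. For the converse, the real integral Hodge conjecture realises a class detected by $\psi$ as a reduced $1$\nobreakdash-cycle $Z=\sum Z_i$; since a geometrically reducible component has a normalisation whose base change to~$\C$ splits into an even number of components of equal genus, it contributes $0$ modulo~$2$ to $\chi(Z',\sO_{Z'})$, so some geometrically irreducible component $Z_i$ must have odd $\chi(Z'_i,\sO_{Z'_i})$, i.e.\ even geometric genus. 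The main anticipated obstacle lies in part~(ii) when $X(\R)\neq\emptyset$, where $\psi$ lands in $H^{d-1}(X(\R),\Z/2\Z)$ rather than in $\Z/2\Z$: a parity invariant must then be extracted (likely via a pushforward to a point after cupping with an appropriate class, or by pulling back to the generic fibre of a suitable pencil), and one must verify that the resulting scalar, read off Theorem~\ref{th:image psi}, matches the surjectivity condition on $\Pic(X)[2^\infty]\to\Pic(X_\C)^G[2^\infty]$ exactly as in the anisotropic case.
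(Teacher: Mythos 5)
Your treatment of part~(i), and of part~(ii) in the case $X(\R)=\emptyset$, is correct and follows the paper's own route: the hypothesis $H^2(X,\sO_X)=0$ kills $H^{d,d-2}$ and $H^{d-2,d}$ by Hodge symmetry and Serre duality, so the Hodge condition in degree $2d-2$ is vacuous and the real integral Hodge conjecture makes $\cl:\CH_1(X)\to H^{2d-2}_G(X(\C),\Z(d-1))_0$ surjective; Theorem~\ref{th:image psi} then identifies the image of $\phi=\psi\circ\cl$ with the exact orthogonal complement of the image of~$\psi'$, and Theorem~\ref{th:phi} translates this into~(i) (when $X(\R)\neq\emptyset$) and into~(ii) (when $X(\R)=\emptyset$, where $\psi'=0$ is equivalent to the surjectivity of $\Pic(X)[2^\infty]\to\Pic(X_\C)^G[2^\infty]$).

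The gap is in part~(ii) when $X(\R)\neq\emptyset$, precisely where you announce the "main anticipated obstacle". There is no parity invariant to be extracted from~$\psi$ in that case: $\psi\circ\cl$ is the Borel--Haefliger class map and carries no information about the genus of the curves representing a class (Proposition~\ref{prop:parity of genus in irrelevant situations} produces geometrically irreducible curves of \emph{both} parities on any such $X$ of dimension $\geq 2$), so the strategy of cupping with a class and pushing forward to a point, or restricting to a pencil, cannot succeed. The correct resolution is much simpler and uses neither the real integral Hodge conjecture nor Theorem~\ref{th:image psi}: when $X(\R)\neq\emptyset$ \emph{both} sides of the equivalence in~(ii) hold unconditionally. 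The map $\Pic(X)[2^\infty]\to\Pic(X_\C)^G[2^\infty]$ is onto because $\Pic(X)=\Pic(X_\C)^G$ as soon as $X$ has a real point, and a geometrically irreducible curve of even geometric genus exists by Proposition~\ref{prop:parity of genus in irrelevant situations} (blow up a real point and apply adjunction to a general member of $|4nH'+E|$). Relatedly, your "only if" argument for~(ii) is phrased as if $\Mpsi=\Z/2\Z$ throughout; it is only valid for $X(\R)=\emptyset$, which is harmless once the case $X(\R)\neq\emptyset$ is disposed of as above. (One caveat inherited from the statement itself: the existence of the even-genus curve via Proposition~\ref{prop:parity of genus in irrelevant situations} requires $\dim X\geq 2$, so the case $d=1$ with $X(\R)\neq\emptyset$ deserves separate mention.)
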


Theorem~\ref{th:A} applies, in particular, to surfaces of geometric genus zero.
Even in this case, its conclusions are new, except for~(i) when~$X_\C$ is
a surface of geometric genus zero such that $\Pic(X_\C)[2]=0$ (Silhol, van~Hamel;
see \cite[Th\'eor\`eme~3.7.18]{mangoltelivre}),
an Enriques surface (Mangolte and van Hamel~\cite{mangoltevanhamel}) or
a birationally ruled surface (Kucharz \cite{kucharzalgeq}).
Among the new corollaries of Theorem~\ref{th:A},
we find that
any real Enriques surface contains a geometrically irreducible curve of
even geometric genus
and that any real surface of geometric genus zero such that $H^1(X(\R),\Z/2\Z)\neq 0$
satisfies $H^1_\alg(X(\R),\Z/2\Z)\neq 0$
(see~\textsection\ref{subsec:varieties with h20=0}).

\begin{thm}[see Corollary~\ref{cor:IHC pour solides RC ou CY}]
\label{th:B}
Let $X$ be a smooth and proper real threefold.
Assume that~$X_\C$ is rationally connected or is simply connected Calabi--Yau.
Then the real integral Hodge conjecture for~$X$
is equivalent to the equality
$H^2_\alg(X(\R),\Z/2\Z)=H^2(X(\R),\Z/2\Z)$, if $X(\R)\neq\emptyset$,
or to the existence of a geometrically irreducible curve of even geometric genus on~$X$,
if $X(\R)=\emptyset$.
\end{thm}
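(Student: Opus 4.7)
The plan is to combine Theorem~\ref{thm:relation ihc phi} with Voisin's theorem \cite{voisinthreefolds} on the complex integral Hodge conjecture for threefolds. Theorem~\ref{thm:relation ihc phi}, as advertised in~\textsection\ref{sec:onecycles}, equates---under the hypotheses $H^2(X,\sO_X)=0$ and $\Pic(X_\C)[2]=0$---the real integral Hodge conjecture for $1$\nobreakdash-cycles on~$X$ with the conjunction of the complex integral Hodge conjecture for $1$\nobreakdash-cycles on~$X_\C$ and of one further condition: the surjectivity of the Borel--Haefliger cycle class map in degree $d-1$ (which, for threefolds, reads $H^2_\alg(X(\R),\Z/2\Z)=H^2(X(\R),\Z/2\Z)$) when $X(\R)\neq\emptyset$, or the existence of a geometrically irreducible curve of even geometric genus on~$X$ when $X(\R)=\emptyset$. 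The corollary will follow once both hypotheses of Theorem~\ref{thm:relation ihc phi} are verified and the complex integral Hodge conjecture is known.

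First I would check the two vanishing hypotheses in each of the two geometric settings. If $X_\C$ is rationally connected, then $H^i(X_\C,\sO_{X_\C})=0$ for all $i>0$, and $X_\C$ is simply connected by a theorem of Campana and of Koll\'ar--Miyaoka--Mori, so $\Pic(X_\C)$ is torsion-free. If instead $X_\C$ is simply connected Calabi--Yau of dimension~$3$, then simple connectedness again implies that $\Pic(X_\C)$ is torsion-free, while Serre duality applied to $\omega_{X_\C}\cong\sO_{X_\C}$ combined with $H^1(X_\C,\sO_{X_\C})=0$ yields $H^2(X_\C,\sO_{X_\C})=0$. In particular $\Pic(X_\C)[2]=0$ in both cases, so Theorem~\ref{thm:relation ihc phi} applies.

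Next I would invoke Voisin's theorem, according to which every complex uniruled threefold and every complex Calabi--Yau threefold satisfies the integral Hodge conjecture for $1$\nobreakdash-cycles. Since a rationally connected threefold is in particular uniruled, this applies in both of our settings and establishes the complex integral Hodge conjecture for~$X_\C$. Feeding this into Theorem~\ref{thm:relation ihc phi} removes the complex integral Hodge conjecture from the conjunction, and what remains is precisely the equivalence stated in the corollary.

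There is no genuine obstacle at this stage: the real work is packaged into Theorem~\ref{thm:relation ihc phi}, whose proof constitutes the main analytic and geometric content, and into Voisin's theorem. The only thing to do by hand is to check the two cohomological hypotheses in the rationally connected and simply connected Calabi--Yau cases, which is routine, as sketched above.
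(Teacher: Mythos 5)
Your proposal is correct and follows essentially the same route as the paper: the paper also deduces this statement from Theorem~\ref{thm:relation ihc phi} together with Voisin's theorem on the complex integral Hodge conjecture for uniruled and Calabi--Yau threefolds, the only additional content being the verification of the hypotheses $H^2(X,\sO_X)=0$ and $\Pic(X_\C)[2]=0$, which you carry out exactly as intended (via vanishing of the higher cohomology of $\sO$ and simple connectedness in the rationally connected case, and via the definition of Calabi--Yau plus simple connectedness in the other). The paper's own proof is in fact terser than yours, leaving these verifications implicit.
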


As Theorem~\ref{th:B} clearly illustrates,
the existence of a geometrically irreducible curve of even geometric genus
must be considered as the analogue, in the absence of real points,
of the equality $H^{d-1}_\alg(X(\R),\Z/2\Z)=H^{d-1}(X(\R),\Z/2\Z)$.

Over~$\C$, Voisin has proved the integral Hodge conjecture for $1$\nobreakdash-cycles
on uniruled or Calabi--Yau threefolds and, conditionally on the Tate conjecture for surfaces
over finite fields, on rationally connected varieties of any dimension (see \cite{voisinthreefolds},
\cite{voisinremarks}).  The analogy between the real and complex integral Hodge conjectures,
on the one hand,
and the good properties of the real integral Hodge conjecture,
on the other hand,
prompt the following question, which serves as a guiding problem for \cite{bwpartie2}:
if~$X_\C$ is a uniruled threefold, a Calabi--Yau threefold, or a rationally connected variety,
does~$X$ satisfy the real integral Hodge conjecture for $1$\nobreakdash-cycles?

By Theorem~\ref{th:A}, a positive answer would imply that
rationally connected varieties (of positive dimension) over~$\R$
satisfy $H^{d-1}_\alg(X(\R),\Z/2\Z)=H^{d-1}(X(\R),\Z/2\Z)$
and contain geometrically irreducible curves of even geometric genus.
A conjecture of Kollár predicts that such varieties should even contain
geometrically rational curves (see \cite[Remarks~20]{araujokollar}, \cite[Question~42]{kollarelw}).

In \cite{bwpartie2}, we provide evidence towards a positive answer to the above question
by establishing the real integral Hodge conjecture for $1$\nobreakdash-cycles on~$X$
under any of the following assumptions:
\begin{enumerate}
\item $X$ is a conic bundle over a variety which itself satisfies the real integral Hodge conjecture for $1$\nobreakdash-cycles (\emph{e.g.}, $X$ can be any conic bundle threefold);
\item $X$ is a Fano threefold with no real point;
\item $X$ is a threefold fibred over a curve into del~Pezzo surfaces of degree~$\delta$,
when $\delta \notin \{1,2,4\}$, as well as in some cases for which $\delta\in\{1,2,4\}$
(see \cite{bwpartie2} for precise statements).
\end{enumerate}
In view of Theorem~\ref{th:A}, these results have concrete consequences.
Among them:
\begin{enumerate}[(i)]
\item the existence of a geometrically irreducible curve of even geometric genus
in any smooth real quartic threefold;
\item the equality $H^2_\alg(X(\R),\Z/2\Z)=H^2(X(\R),\Z/2\Z)$
when~$X$ is the total space of a fibration into cubic surfaces over a real
curve~$B$ such that $B(\R)$ is connected.
\end{enumerate}

The determination of the subgroup $H^k_\alg(X(\R),\Z/2\Z)$ also has
consequences on the problem of~$\ci$ approximation of submanifolds of~$X(\R)$
by algebraic subvarieties.  As an example, combining~(ii) with
a result of Akbulut and King~\cite{AkbulutKing}
(see also~\cite{bochnakkucharz} and \cite[\textsection\ref*{BW2-subsec:algebraic approximation}]{bwpartie2})
shows that for any threefold~$X$ as in~(ii), any $\ci$ loop in~$X(\R)$
can be approximated arbitrarily well by the
real locus of an algebraic curve.

Finally, we examine, in~\textsection\ref{sec:blochogus}, the implications of the real integral
Hodge conjecture for the study of torsion $1$\nobreakdash-cycles on real varieties.
We obtain, in particular, the following theorem.
Its proof relies, on the one hand, on Bloch--Ogus theory, which we develop
further in~\textsection\ref{sec:blochogus} for real varieties, and, on the
other hand, on the topological result already mentioned above
(Theorem~\ref{th:selfduality}).

\begin{thm}[see Corollary~\ref{cor:ch1torsduality}]
\label{th:C}
Let~$X$ be a smooth, proper and geometrically irreducible real variety,
of dimension $d$.
Assume that $\CH_0(X_\C)$ is supported on a surface
(such is the case, for instance, if~$X_\C$ is a uniruled threefold)
and that~$H^2(X,\sO_X)=0$.
If~$X$ satisfies the real integral Hodge conjecture
for $1$\nobreakdash-cycles,
then
the image of $\CH_1(X)[2^\infty]$
by the Borel--Haefliger cycle class map
\begin{align*}
\CH_1(X) \to H^{d-1}(X(\R),\Z/2\Z)
\end{align*}
is the exact orthogonal complement,
under the Poincar\'e duality pairing, of
the subgroup $H^1_\alg(X(\R),\Z/2\Z) \subseteq H^1(X(\R),\Z/2\Z)$.
\end{thm}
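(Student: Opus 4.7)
Write $T \subseteq H^{d-1}(X(\R),\Z/2\Z)$ for the Borel--Haefliger image of $\CH_1(X)[2^\infty]$ and $A := H^1_\alg(X(\R),\Z/2\Z)$; the assertion is $T = A^\perp$ under the Poincar\'e pairing on the compact topological manifold $X(\R)$ with $\Z/2\Z$ coefficients.

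The inclusion $T \subseteq A^\perp$ is elementary and uses neither the real integral Hodge conjecture nor the cohomological hypotheses. Given $\alpha \in \CH_1(X)[2^\infty]$ and any $D \in \Pic(X)$, the intersection $D \cdot \alpha \in \CH_0(X)$ is a torsion $0$-cycle, hence of degree zero. Since Borel--Haefliger is a ring homomorphism, $[D]_\R \cup [\alpha]_\R = [D\cdot\alpha]_\R$ in $H^d(X(\R),\Z/2\Z)$, and the sum over connected components of $X(\R)$ (which computes the Poincar\'e pairing) identifies this class with $\deg(D\cdot\alpha) \bmod 2 = 0$. Thus $\langle [\alpha]_\R,[D]_\R\rangle = 0$ for every $D$, giving the inclusion.

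The reverse inclusion $A^\perp \subseteq T$ is the heart of the proof. I would first invoke the real integral Hodge conjecture for $1$-cycles to identify the image of the equivariant cycle class map $\CH_1(X) \to H^{2d-2}_G(X(\C),\Z(d-1))$ with $\Hdg^{2d-2}_G(X(\C),\Z(d-1))_0$. The Bloch--Srinivas decomposition of the diagonal, applied to the hypothesis that $\CH_0(X_\C)$ is supported on a surface, forces the $2$-primary torsion in $\CH_1(X_\C)$, and thence in $\CH_1(X)$, to be captured by cohomological invariants accessible through the Bloch--Ogus spectral sequence; the equivariant real analogue of this machinery developed in \S\ref{sec:blochogus} then translates this into a description of $T$ as the Borel--Haefliger image of a distinguished purely topological subgroup of $H^{2d-2}_G(X(\C),\Z(d-1))_0$. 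The vanishing $H^2(X,\sO_X) = 0$ is precisely what lets one separate the Hodge condition from the topological condition in this range, so that no transcendental residue survives.

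The final step applies Theorem~\ref{th:selfduality}: its topological self-duality pairs the distinguished topological subgroup of $H^{2d-2}_G(X(\C),\Z(d-1))$ against the Borel--Haefliger image of $\Pic(X)$ in $H^1(X(\R),\Z/2\Z)$, which is exactly $A$, and so turns the topological description of $T$ into the equality $T = A^\perp$. The main obstacle I anticipate is the middle step: carefully tracking $2^\infty$-torsion simultaneously through $\CH_1(X)$, through equivariant cohomology, and through the Bloch--Ogus filtration, so that the surface-support assumption and the vanishing of $H^2(X,\sO_X)$ together isolate the image of the torsion $1$-cycles as the predicted topological subgroup, with no stray Hodge-theoretic or non-torsion contribution.
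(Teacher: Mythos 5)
Your overall architecture matches the paper's: the easy inclusion by intersection with divisors, then the real integral Hodge conjecture plus Bloch--Ogus to control torsion $1$\nobreakdash-cycles, then Theorem~\ref{th:selfduality} to convert this into the orthogonality statement. Your argument for $T\subseteq A^\perp$ is correct and is in fact more elementary than the paper's (which obtains both inclusions at once from a Pontrjagin duality); note only that to conclude you implicitly use that $\deg(\cl_R(z))\equiv\deg(z)\bmod 2$ for a zero-cycle $z$, which holds because closed points with residue field $\C$ contribute evenly.

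The reverse inclusion, however, is not proved: your middle step is a description of where the argument should live rather than the argument itself, and you say so (``the main obstacle I anticipate''). Concretely, what is needed is the equality $\cl_R(\CH_1(X)[2^\infty])=\psi\bigl(\bigl(H^{2d-2}_G(X(\C),\Z(d-1))_0\bigr)_\tors\bigr)$, i.e.\ that torsion $1$\nobreakdash-cycles account for \emph{all} of the torsion of the topological target. The paper gets this from the surjectivity of Bloch's Abel--Jacobi map onto $H^{2d-3}_G(X(\C),\Q/\Z(d-1))_0$ (Theorem~\ref{th:ch1torsion}~(i)), whose proof combines three ingredients you do not supply: (a) van Hamel's identification $\Im(\lambda)=N^{d-2}H^{2d-3}_G(X(\C),\Q/\Z(d-1))$; (b) the identification $N^{d-2}H^{2d-2}_G(X(\C),\Z(d-1))=H^{2d-2}_G(X(\C),\Z(d-1))_0$ of Proposition~\ref{prop:various bo}~(v), which rests on Proposition~\ref{prop:truncated projection integral coeff}; and (c) the lemma that the boundary map $H^{2d-3}_G(\Q/\Z(d-1))\to H^{2d-2}_G(\Z(d-1))$ pulls the coniveau-$(d-2)$ piece back to the coniveau-$(d-2)$ piece --- this last point is exactly where the hypotheses enter, via Proposition~\ref{prop:ihc defect and bo}: the real integral Hodge conjecture forces $H^{d-3}(X,\sH^d_X(\Q/\Z(d-1))_0)=0$ and the surface-support assumption kills $H^{d-3}(X,\sH^d_X(\Q(d-1)))$. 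Without (c) one only gets an inclusion of $\cl_R(\CH_1(X)[2^\infty])$ into the dual of $A$, which is the direction you already have. Finally, in your last step the self-duality pairs the relevant subgroup against the image of $H^2_G(X(\C),\Z(1))$ in $H^1(X(\R),\Z/2\Z)$; identifying that image with $H^1_\alg(X(\R),\Z/2\Z)$ requires the real Lefschetz $(1,1)$ theorem (Proposition~\ref{prop:real(1,1)}) together with Theorem~\ref{th:conditions de krasnov}, a point you should make explicit.
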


Theorem~\ref{th:A}~(i) and Theorem~\ref{th:C} coincide in the case of surfaces.

For real threefolds~$X$ which satisfy the real integral Hodge conjecture,
Bloch--Ogus theory also enables us to control
the torsion subgroup
of the kernel of the equivariant cycle class map $\CH_1(X) \to H^{2d-2}_G(X(\C),\Z(d-1))$.
As an example, we prove in~\textsection\ref{sec:blochogus}
that for any smooth real quartic threefold~$X$ with no real point,
the abelian group $\CH_1(X)_\tors$
is isomorphic to $\Z/2\Z \oplus (\Q/\Z)^{30}$
(see Proposition~\ref{prop:example quartic threefold torsion};
one can even determine the full structure of~$\CH_1(X)$,
see Remark~\ref{rmks:chow group of quartic threefold}~(i)).
This relies on the real integral Hodge conjecture for such~$X$,
which we establish in~\cite{bwpartie2}.

In this article as well as in~\cite{bwpartie2},
we work over an arbitrary real closed field,
except when we use the specific archimedean properties of~$\R$,
via the Stone--Weierstrass theorem as in \cite[\textsection\ref*{BW2-sec:conicbundles}]{bwpartie2}
or via Hodge theory as in \cite[\textsection\ref*{BW2-sec:Fano}]{bwpartie2}.
The statements of Theorems~\ref{th:A}, \ref{th:B}, \ref{th:C}, established in the present article, remain true in this generality,
\emph{mutatis mutandis}, while some of the results proved in \cite{bwpartie2} do not.
As is well known, and as we recall in \cite[\textsection\ref*{BW2-subsec:curves of bounded degree}]{bwpartie2},
the truth,
over an arbitrary real closed field, of an assertion such as the equality
$H^{d-1}_\alg(X(\R),\Z/2\Z)=H^{d-1}(X(\R),\Z/2\Z)$ or the existence of a
geometrically irreducible curve of even geometric genus in~$X$ is
equivalent to the truth of the same assertion over the reals together with
a bound, in any bounded family of real varieties, on the degree of the
curves whose existence it predicts.  Thus, for instance, it follows from
Theorem~\ref{th:nohodgetheoreticob} (which is Theorem~\ref{th:A} over a real closed field)
that in any bounded family of geometrically rational surfaces~$X$, every~$\ci$ loop
in~$X(\R)$ is homologically equivalent to the real locus of an algebraic curve of bounded degree.
For rationally connected threefolds, however, the same is not true, as
the real integral Hodge conjecture can fail for them over non-archimedean real closed fields
(see \cite[\textsection\ref*{BW2-nonarchimedeansection}]{bwpartie2}).
This is in marked contrast with the situation over algebraically closed fields of characteristic~$0$:
by the Lefschetz principle, Voisin's theorem on the integral
Hodge conjecture for rationally connected threefolds readily extends to such fields.

The text is organised as follows.
We devote~\textsection\ref{sec:cohomology} to the cohomological
tools that are used throughout the
article (both reminders and new results); we refer the reader to
the introduction of~\textsection\ref{sec:cohomology} for more details.
In~\textsection\ref{sec:realIHC},
we formulate the real integral Hodge conjecture, prove a few basic results about 
it (\emph{e.g.}, its birational invariance for cycles of dimension~$1$ or of codimension~$2$)
and raise the question of its validity for rationally connected varieties, uniruled threefolds
and Calabi--Yau threefolds over~$\R$.
We proceed,
in~\textsection\ref{sec:onecycles},
to relate
the real integral Hodge conjecture for $1$\nobreakdash-cycles
to the study of the group $H^{d-1}_\alg(X(\R),\Z/2\Z)$ and of geometrically irreducible
curves of even geometric genus.  This leads us, in particular, to Theorems~\ref{th:A} and~\ref{th:B}.
A number of examples of smooth, proper and geometrically irreducible varieties~$X$ over~$\R$ such that $H^{d-1}_\alg(X(\R),\Z/2\Z)\neq H^{d-1}(X(\R),\Z/2\Z)$ or such that~$X$ does not contain any geometrically irreducible curve of even geometric genus are presented in~\textsection\ref{section:examples}.
Finally, we develop Bloch--Ogus theory and apply the real integral Hodge conjecture
to the study of torsion $1$\nobreakdash-cycles in~\textsection\ref{sec:blochogus},
where we prove, in particular, Theorem~\ref{th:C}.

\bigskip
\emph{Acknowledgements.}
Krasnov and van Hamel were the first to approach algebraic cycles on real
varieties in a systematic way through the study of the cycle class map into the equivariant
integral singular cohomology of the complex locus.
We would like to emphasise the importance of their work to the development of the subject
considered in this article.
In addition, we thank the referee for their careful work and for many suggestions
which helped improve the exposition.
}

\section{Cohomology of real algebraic varieties}
\label{sec:cohomology}

We introduce, in this section, the cohomological tools on which
this article and its sequel~\cite{bwpartie2} heavily depend.
Some of these tools are standard (at least for varieties over the field~$\R$ of real numbers), while some are new.
Let us describe the organisation of~\textsection\ref{sec:cohomology}.

For lack of an adequate reference to the existing literature,
we first recall, in~\textsection\ref{subsec:sheaf cohomology},
the formalism of sheaf cohomology and equivariant
sheaf cohomology for algebraic varieties over a real closed field,
together with some of the standard properties that we shall use throughout:
the two spectral sequences of equivariant cohomology;
Poincar\'e duality \`a la Verdier; purity, equivariant
purity; covariant functoriality.
Over the field of real numbers,
these topics are discussed in \cite[Chapter~II and Chapter~III]{vanhamelthese}.
Over a real closed field,
one has to replace singular cohomology with semi-algebraic cohomology,
a theory first developed by Delfs and Knebusch (see \cite{delfsknebuschonthehomology}, \cite{delfshomology}).

Letting $G=\Z/2\Z$, the $G$\nobreakdash-equivariant cohomology groups of
a space endowed with the trivial action of~$G$, with coefficients in~$\Z/2\Z$, in~$\Z$,
or in the twisted integers~$\Z(1)$, canonically decompose as direct sums of non-equivariant cohomology groups.
These decompositions appear in~\cite{kahnchern}, \cite{krasnovequivariant},
\cite[Chapter~III, \textsection\textsection6--7]{vanhamelthese}.
They play an essential role in the formulation of the real integral Hodge conjecture
(see~\textsection\ref{subsubsec:topological constraints} and~\textsection\ref{par:realIHC}).
We discuss them in~\textsection\ref{subsec:canonical decompositions},
in the setting of semi-algebraic cohomology and with locally constant sheaves as coefficients.

In~\textsection\ref{subsec:on the normal bundle}, we show
that in the case of the complex locus of a smooth real algebraic variety with
support in the real locus,
the long exact sequence of equivariant cohomology with support
decomposes into canonically split short exact sequences,
when one works with $\Z/2\Z$ coefficients (Proposition~\ref{prop:short exact sequences})
or with appropriately twisted integer coefficients (Proposition~\ref{prop:short exact sequences integral}).
Proposition~\ref{prop:short exact sequences} is an improvement on
 \cite[\textsection2.2]{vanhamelabeljacobi};
Proposition~\ref{prop:short exact sequences integral} and its
companion Proposition~\ref{prop:truncated projection integral coeff},
however,
seem to be entirely new.
The results of~\textsection\ref{subsec:on the normal bundle}
are used in~\textsection\ref{subsec:two dualities}, in~\textsection\ref{sec:blochogus}
and in \cite[proof of Theorem~\ref*{BW2-thm:fibres en coniques}, Step~\ref*{BW2-step:notrace}]{bwpartie2}.

The goal of~\textsection\ref{subsec:two dualities} is to formulate and prove
Theorem~\ref{th:selfduality}, a duality result which combines,
for any smooth and proper real variety, Poincar\'e duality for the real
locus with Lefschetz duality for the complement of the real locus in the
complex locus.
Theorem~\ref{th:selfduality} is new, and is key to the proofs
of Theorem~\ref{th:image psi}
and Theorem~\ref{th:ch1torsion}.

In~\textsection\ref{subsec:real lefschetz},
we establish
a Lefschetz hyperplane theorem for equivariant cohomology with twisted integral
coefficients
(Proposition~\ref{prop:weak lefschetz surjectivity}).
This will be used in \cite[proof of Theorem~\ref*{BW2-thm:solides fano}, \textsection\ref*{BW2-par:systeme anticanonique}]{bwpartie2}.

Finally, we devote~\textsection\ref{subsec:topologicalconstraints} to
the equivariant cycle class map associated with a smooth real algebraic variety,
and to the topological constraint
discovered by
Kahn~\cite{kahnchern} and Krasnov~\cite{krasnovequivariant}
that all algebraic cycle classes must satisfy (see Theorem~\ref{th:conditions de krasnov}).
A new result here is Theorem~\ref{th:stability of topological constraints},
which asserts the compatibility of this topological constraint with proper push-forwards.
Its proof rests on the contents of~\textsection\ref{subsec:on the normal bundle}
and on a relative version of Wu's theorem due to Atiyah and Hirzebruch.
Theorem~\ref{th:stability of topological constraints} is used
in~\textsection\ref{subsubsec:birational invariance} and in
\cite[proofs of Theorem~\ref*{BW2-thm:fibres en coniques} and Theorem~\ref*{BW2-thm:solides fibres en del Pezzo}~(iv)]{bwpartie2}.

In the whole article,
in an effort to keep the notation as tidy as possible
(especially in the equivariant setting),
we stick to cohomology
and do not introduce Borel--Moore homology.
For smooth equidimensional varieties, this makes no difference:
if~$\sF$ is a locally constant sheaf of abelian groups on an equidimensional
semi-algebraic space~$V$
subject to the assumptions of~\textsection\ref{subsubsec:verdier} below,
one could regard $H_i^{\BM}(V,\sF)$ as shorthand for $H^{\dim(V)-i}(V,\sF \otimes_\Z \orient_V)$,
where~$\orient_V$ denotes the orientation sheaf of~$V$
(see~\cite[Chapter~III, \textsection9, Theorem~9.3]{delfshomology}).
Some of the results below
(\emph{e.g.}, Theorem~\ref{th:phi})
would extend to singular varieties if one replaced
cohomology with Borel--Moore homology.

\subsection{Sheaf cohomology over real closed fields}
\label{subsec:sheaf cohomology}

We fix a real closed field~$R$
and set $C=R(\sqrt{-1}\mkern2mu)$,
$G=\Gal(C/R)$
and $\Z(j)=(\sqrt{-1}\mkern2mu)^j\Z \subset C$
for $j\in \Z$.
Thus~$C$ is algebraically closed,
the group~$G$ has order~$2$ and~$\Z(j)$ is canonically isomorphic, as a $G$\nobreakdash-module,
to~$\Z$ or to~$\Z(1)$, depending on the parity of~$j$.
If~$M$ is a $G$\nobreakdash-module,
we let $M(j)=M \otimes_\Z \Z(j)$ and $M[G]=M\otimes_\Z \Z[G]$; these $G$\nobreakdash-modules fit into canonical short exact sequences
\begin{align}
\label{eq:real-complex sequence 01}
0 \to M \to M[G] \to M(1) \to 0
\end{align}
and
\begin{align}
\label{eq:real-complex sequence 10}
0 \to M(1) \to M[G] \to M \to 0\rlap{\text{,}}
\end{align}
which we shall refer to as the \emph{real-complex exact sequences}.
We denote the field of real (resp.~complex) numbers by~$\R$
(resp.~$\C$).
We use the term \emph{variety} (over~$R$) as a synonym for \emph{separated scheme of finite type}
(over~$R$).

For the whole of~\textsection\ref{sec:cohomology}, we fix a variety~$X$ over~$R$.

\subsubsection{Semi-algebraic spaces and their cohomology}
\label{subsubsec:sacohomology}

The set~$X(R)$ of rational points of~$X$ is
a locally complete (hence affine) semi-algebraic space
in the sense of Delfs and Knebusch (see
\cite{delfsknebuschbasictheory1},
\cite{delfsknebuschbasictheory2},
\cite{delfsknebuschsurvey}, \cite{robson}, \cite[Chapter~I, Example~7.1]{delfsknebuschbook},
\cite{delfshomology}).
We shall always consider it as such;
thus, a \emph{sheaf of abelian groups on~$X(R)$} will refer to a sheaf on the semi-algebraic site of~$X(R)$
 (see \cite[\textsection1, \textsection5]{delfsknebuschsurvey}).
We denote by $H^*(X(R),\sF)$ the cohomology groups of
such a sheaf.
If~$\sF$ is the constant sheaf associated with~$M$, we simply write $H^*(X(R),M)$.
The semi-algebraic cohomology groups $H^*(X(R),M)$ are finitely generated if~$M$ is a finitely generated abelian
group;
when~$R=\R$,
they coincide with
the singular cohomology of the naive topological space~$X(\R)$,
with coefficients in~$M$
(see~\cite[Proposition~6.2]{delfsknebuschsurvey}).

We also view~$X(C)$ as a locally complete
semi-algebraic space, namely as the space of $R$\nobreakdash-points
of the Weil restriction from~$C$ to~$R$ of $X_C = X\otimes_R C$.
(To be precise, the Weil restriction makes sense when~$X$ is quasi-projective.
In general, one proceeds by choosing an affine open cover and then gluing;
see \cite[Lemma~5.6.1]{scheiderer}.)
This semi-algebraic space carries a natural action of~$G$.
If~$\sF$ is a $G$\nobreakdash-equivariant sheaf of abelian groups on~$X(C)$, we denote by $H^*_G(X(C),\sF)$ its
equivariant cohomology groups
(see~\cite[(6.1.3)]{scheiderer}).
We simply write $H^*_G(X(C),M)$
if~$\sF$ is the constant sheaf associated with a $G$\nobreakdash-module~$M$.
When~$R=\R$,
this coincides with equivariant Betti cohomology of~$X(\C)$ with coefficients in~$M$.
Finally, we recall that when~$M$ is torsion,
there are canonical
isomorphisms
$H^*(X(C),M)=H^*_\et(X_C,M)$
and
$H^*_G(X(C),M)=H^*_\et(X,M)$
(see~\cite[Corollary~15.3.1]{scheiderer}).
It follows, as these groups are finite when~$M$ is finite
and as the $G$\nobreakdash-module $\Q/\Z(1)$ is canonically isomorphic
to the group of roots
of unity of~$C$,
that there are canonical isomorphisms
$H^*_G(X(C),\Z(j)) \otimes_\Z \Zl = H^*_\et(X,\Zl(j))$
for all primes~$\ell$ and all~$j$.

More generally,
for any $G$\nobreakdash-invariant locally closed semi-algebraic subset
$V \subseteq X(C)$,
any $G$\nobreakdash-equivariant sheaf of abelian groups~$\sF$ on~$V$,
any integer~$i$
and any
$G$\nobreakdash-invariant closed semi-algebraic subset
$Z \subseteq V$,
we denote by
$H^i_{G,Z}(V,\sF)$ the value on~$\sF$ of the $i$th right derived functor
of the functor of $G$\nobreakdash-invariant global sections supported on~$Z$.
Let $\pi:V \to V/G$ denote the quotient map
(see \cite[Corollary~1.6]{brumfielquotient}).
We denote by $\sH^i(G,\sF)$
the sheaf, on~$V/G$, defined as the sheafification
of the presheaf $U \mapsto H^i(G,\sF(\pi^{-1}(U)))$.
We recall that
the two spectral sequences of equivariant cohomology with support in~$Z$ take the shape
\begin{align}
\label{eq:first spectral sequence}
E_2^{p,q}=H^p_{Z/G}(V/G,\sH^q(G,\sF)) \Rightarrow H^{p+q}_{G,Z}(V,\sF)
\end{align}
and
\begin{align}
\label{eq:hochschild-serre}
E_2^{p,q}=H^p(G,H^q_Z(V,\sF)) \Rightarrow H^{p+q}_{G,Z}(V,\sF)
\end{align}
(see \cite[Th\'eor\`eme~5.2.1]{tohoku});
the latter is
the \emph{Hochschild--Serre spectral sequence}.

Viewing $\sF[G]=\sF\otimes_\Z \Z[G]$
as a $G$\nobreakdash-equivariant sheaf with the diagonal action of~$G$,
let us now consider the spectral sequence~\eqref{eq:hochschild-serre}
associated with~$\sF[G]$
(rather than with~$\sF$).
As $H^q_Z(V,\sF[G])=H^q_Z(V,\sF)[G]$, we have $E_2^{p,q}=0$ for $p>0$ and $E_2^{0,q}=H^q_Z(V,\sF)$
(see \cite[Chapter~III, Corollary~5.7, Proposition~5.9, Corollary~6.6]{brown}),
hence
\begin{align}
\label{eq:cohoeqnoneq}
H^i_{G,Z}(V,\sF[G])=H^i_Z(V,\sF)
\end{align}
for all~$i$.
Setting $\sF(1)=\sF \otimes_\Z \Z(1)$,
the sequences~\eqref{eq:real-complex sequence 01} and~\eqref{eq:real-complex sequence 10} with $M=\Z$, tensored by~$\sF$, therefore
induce long exact sequences
\begin{align}
\label{eq:real-complex long 01}
\cdots \to H^i_{G,Z}(V,\sF) \to H^i_Z(V,\sF) \to H^i_{G,Z}(V,\sF(1)) \to H^{i+1}_{G,Z}(V,\sF) \to \cdots
\end{align}
and
\begin{align}
\label{eq:real-complex long 10}
\cdots \to H^i_{G,Z}(V,\sF(1)) \to H^i_Z(V,\sF) \to H^i_{G,Z}(V,\sF) \to H^{i+1}_{G,Z}(V,\sF(1)) \to \cdots
\end{align}
for any $G$\nobreakdash-equivariant sheaf of abelian groups~$\sF$ on~$V$.
We will refer to the map $H^i_Z(V,\sF)\to H^i_{G,Z}(V,\sF)$
appearing in~\eqref{eq:real-complex long 10} as the \emph{norm map}.

\subsubsection{Notation: the class $\omega$ and its variants}
\label{subsubsec:omega}

For any integer $i \geq 1$,
we shall denote by $\omega^i_V \in H^i_G(V,\Z(i))$ the image
of $1\in \Z/2\Z=H^i(G,\Z(i))=H^i_G(\mathrm{pt},\Z(i))$
by pull-back with respect to the map from~$V$ to the point,
and by $\omega^i_{V,\Z/2\Z} \in H^i_G(V,\Z/2\Z)$ the image of~$\omega^i_V$
by the map $H^i_G(V,\Z(i))\to H^i_G(V,\Z/2\Z)$ induced by the surjection $\Z(i)\to \Z/2\Z$.
The subscript~$V$ will be omitted when no confusion can arise.
We shall write $\omega$ (resp., $\omega_{\Z/2\Z}$) for $\omega^1$ (resp., $\omega^1_{\Z/2\Z}$).
The notation is justified by the remark that $\omega^i$ (resp.,
$\omega^i_{\Z/2\Z}$) coincides with the $i$\nobreakdash-fold cup product
of~$\omega$ (resp., $\omega_{\Z/2\Z}$) with itself.
Finally, we note that the map $H^i_{G,Z}(V,\sF) \to H^{i+1}_{G,Z}(V,\sF(1))$
which appears in~\eqref{eq:real-complex long 10}
can be interpreted as the cup product with~$\omega$
(see \cite[\textsection A3]{kahndeuxtheoremes}).

\subsubsection{Cohomological dimension}
\label{subsubsec:cohomological dimension}

For a $G$\nobreakdash-module~$M$,
we shall consider the relative equivariant cohomology groups $H^*_G(X(C),X(R),M)$,
defined as $H^*_G(X(C),j_!M)$, where $j:X(C) \setminus X(R) \hookrightarrow X(C)$ denotes the inclusion and~$j_!M$ is the extension by zero.
These groups fit into the localisation long exact sequence
\begin{align*}
\xymatrix@C=1.3em{
\cdots \ar[r] & H^i_G(X(C),X(R),M) \ar[r] & H^i_G(X(C),M) \ar[r] & H^i_G(X(R),M) \ar[r] & \cdots
}
\end{align*}
(induced by the exact sequence of $G$\nobreakdash-equivariant sheaves
$0 \to j_!M \to M \to \iota_*M \to 0$, where $\iota:X(R) \hookrightarrow X(C)$ denotes the inclusion).
As the action of~$G$ on~$X(C)$ is discontinuous (see \cite[\textsection5.3]{tohoku}),
it follows from the spectral sequence~\eqref{eq:first spectral sequence}
that
\begin{align}
\label{eq:cohomological dimension iso relative}
H^i_G(X(C),X(R),M)=H^i(X(C)/G,X(R),M)
\end{align}
and
\begin{align}
\label{eq:cohomological dimension iso open}
H^i_G(X(C)\setminus X(R),M)=H^i((X(C)\setminus X(R))/G,M)\rlap{\text{,}}
\end{align}
where~$M$ now also denotes the locally constant sheaf $\sH^0(G,M)$ on
$(X(C)\setminus X(R))/G$.
By \cite[Chapter~II, Lemma~9.1]{delfshomology},
these two groups vanish for $i>2\dim(X)$.
In particular, the restriction map $H^i_G(X(C),M) \to H^i_G(X(R),M)$ is an isomorphism
if $i>2\dim(X)$.

\subsubsection{Semi-algebraic Verdier duality and purity}
\label{subsubsec:verdier}

Let $f:V\to W$ denote a continuous semi-algebraic map between locally complete
semi-algebraic spaces over~$R$.
We assume that~$V$ and~$W$ are homology manifolds
(see~\cite[Chapter~III, \textsection3, Definition~1]{delfshomology})
and denote by $\orient_V$, $\orient_W$ their orientation sheaves;
these are locally constant sheaves with stalks isomorphic to~$\Z$.
We let $\orient_{V/W}=\Homrond(f^*\orient_W,\orient_V)$.
The main examples of such manifolds, in this article, will be the spaces $X(R)$, $X(C)$ and $(X(C)\setminus X(R))/G$
for a smooth variety~$X$ over~$R$
(\emph{loc.\ cit.}, Example~3.3).
For any noetherian ring~$\Lambda$ and
any $T \in \{V,W\}$,
we let $D^+(T,\Lambda)$ denote the derived category of bounded below complexes of sheaves of $\Lambda$\nobreakdash-modules on the semi-algebraic site of~$T$.
According to~\cite[Theorem~4.1]{edmundoprelli},
the derived direct image functor with proper support
$\RR f_!:D^+(V,\Lambda) \to D^+(W,\Lambda)$,
defined in \cite[Chapter~II, \textsection8]{delfshomology},
admits a right adjoint
$\RR f^!:D^+(W,\Lambda) \to D^+(V,\Lambda)$.

The following statement combines Poincar\'e duality and a version of the
Thom isomorphism in the context of semi-algebraic spaces over real closed fields.

\begin{prop}
\label{prop:verdier}
For any noetherian ring~$\Lambda$
and any bounded complex~$\sF$ of sheaves of $\Lambda$\nobreakdash-modules on~$W$
whose cohomology sheaves are locally constant, there is a canonical isomorphism
\begin{align}
\label{eq:verdier}
\RR f^!\sF = \big(\orient_{V/W}\otimes_\Z f^* \sF\big)[\dim(V)-\dim(W)]
\end{align}
in $D^+(V,\Lambda)$.
\end{prop}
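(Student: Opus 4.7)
The plan is to reduce the statement to the case of the constant sheaf $\sF=\Lambda_W$ by means of a projection formula for $\RR f^!$, and then to compute $\RR f^!\Lambda_W$ by comparing the absolute Poincar\'e--Verdier duality for~$V$ with that for~$W$.

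First, I would establish the following \emph{projection formula}: for any $\sG\in D^+(W,\Lambda)$ and any bounded complex $\sF$ of sheaves of $\Lambda$\nobreakdash-modules on~$W$ with locally constant cohomology, there is a canonical isomorphism
\begin{align*}
\RR f^!(\sG \otimes_\Lambda^{\LL} \sF) \isoto \RR f^!(\sG) \otimes_\Lambda^{\LL} f^*\sF
\end{align*}
in $D^+(V,\Lambda)$.  This is a formal consequence of the adjunction $(\RR f_!,\RR f^!)$ furnished by~\cite{edmundoprelli} together with the classical projection formula $\RR f_!(\sH \otimes_\Lambda^{\LL} f^*\sF) \cong \RR f_!(\sH) \otimes_\Lambda^{\LL} \sF$ for $\RR f_!$ in the semi-algebraic setting.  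The locally constant cohomology hypothesis on~$\sF$ is used to reduce locally on~$W$, via an induction on the amplitude, to the case where $\sF$ is a single locally constant sheaf, and ultimately to the invertible case, in which the formula follows directly from the $(\RR f_!,\RR f^!)$ adjunction.

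Granting this, taking $\sG=\Lambda_W$ yields $\RR f^!\sF = f^*\sF \otimes_\Lambda^{\LL} \RR f^!\Lambda_W$, so it is enough to exhibit a canonical isomorphism $\RR f^!\Lambda_W \cong \orient_{V/W}\otimes_\Z \Lambda[\dim(V)-\dim(W)]$.  To do so, let $p_V:V\to\mathrm{pt}$ and $p_W:W\to\mathrm{pt}$ denote the structure morphisms, so that $p_V=p_W\circ f$ and $\RR p_V^!=\RR f^!\circ\RR p_W^!$.  Semi-algebraic Poincar\'e--Verdier duality for homology manifolds, which is available in this setting thanks to \cite[Chapter~III, Theorem~9.3]{delfshomology}, gives $\RR p_V^!\Lambda=\orient_V\otimes_\Z\Lambda[\dim(V)]$ and $\RR p_W^!\Lambda=\orient_W\otimes_\Z\Lambda[\dim(W)]$, whence
\begin{align*}
\orient_V\otimes_\Z\Lambda[\dim(V)]=\RR f^!\bigl(\orient_W\otimes_\Z\Lambda[\dim(W)]\bigr).
\end{align*}
Since $\orient_W$ is a locally constant invertible sheaf of $\Z$\nobreakdash-modules, the projection formula, applied with $\sF=\orient_W\otimes_\Z\Lambda$, identifies the right-hand side with $\RR f^!\Lambda\otimes_\Z f^*\orient_W[\dim(W)]$.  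Tensoring with the dual $f^*\orient_W^{-1}=\Homrond_\Z(f^*\orient_W,\Z)$ then yields $\RR f^!\Lambda=\orient_V\otimes_\Z f^*\orient_W^{-1}\otimes_\Z\Lambda[\dim(V)-\dim(W)]$, which is precisely $\orient_{V/W}\otimes_\Z\Lambda[\dim(V)-\dim(W)]$ by the definition $\orient_{V/W}=\Homrond_\Z(f^*\orient_W,\orient_V)$.  The derived tensor product coincides with the ordinary one throughout because $\orient_V$ and $\orient_W$ are $\Z$\nobreakdash-flat.

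The main obstacle will be the clean formulation and verification of the projection formula above within the semi-algebraic derived category of~\cite{edmundoprelli}: although it is standard in the topological and \'etale settings, one must check that the internal tensor/$\Homrond$ structure and the $(\RR f_!,\RR f^!)$ adjunction of Edmundo--Prelli are compatible enough to carry out the adjunction calculus and the reduction to the invertible case.  A purely bookkeeping secondary task is to confirm that the canonical isomorphism output by this chain of identifications is the natural one arising from the definition of $\orient_{V/W}$ adopted in the statement, which reduces to a stalkwise verification.
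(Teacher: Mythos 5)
Your overall strategy is the one the paper follows: produce the canonical ``projection formula'' morphism for $\RR f^!$ and reduce its invertibility to the computation of $\RR f^!$ on (twists of) constant sheaves, which is then supplied by the absolute semi-algebraic Poincar\'e--Verdier duality of Edmundo--Prelli. The paper phrases this via the canonical morphism $\RR f^!\orient_W \otimes^\LL_\Z f^*\Homrond(\orient_W,\sF) \to \RR f^!\sF$ of Kashiwara--Schapira, but that is only a cosmetic difference from your formulation.

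There is, however, a genuine gap at the crux of the argument. The projection formula for $\RR f^!$ is \emph{not} a formal consequence of the $(\RR f_!,\RR f^!)$ adjunction together with the projection formula for $\RR f_!$: that adjunction calculus works only when $\sF$ is dualizable (locally constant with perfect stalks), and your proposed d\'evissage ``ultimately to the invertible case'' breaks down precisely when it is needed. After localising on~$W$ and reducing to a single constant sheaf, you are left with an arbitrary $\Lambda$\nobreakdash-module~$M$ as stalk --- for instance $M=\Z/N\Z$, or a module that is not finitely generated --- and such a sheaf is neither invertible nor obtainable from invertible sheaves by any induction on amplitude or local argument. This is exactly where the paper has to do real work: it forgets the $\Lambda$\nobreakdash-structure, uses that both sides of the comparison morphism commute with filtered direct limits to reduce to finitely generated~$M$, hence to $M=\Z$ or $M=\Z/N\Z$, and then invokes the duality theorem of Edmundo--Prelli a second time \emph{with $\Z/N\Z$ coefficients}. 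Note in particular that commutation of $\RR f^!$ with filtered colimits is itself not formal (it is a right adjoint), so even the colimit step requires justification. To repair your proof you should replace the ``invertible case'' claim by this reduction, or restrict the formal argument to perfect stalks and treat general~$M$ by a limit argument. A secondary point: the absolute duality you need is the statement $\RR p^!\Lambda=\orient\otimes_\Z\Lambda[\dim]$ in the derived category of Edmundo--Prelli, for both $\Z$ and torsion coefficients; Delfs's Poincar\'e duality for semi-algebraic Borel--Moore homology is not stated in that six-functor language, so the correct reference is \cite[Theorem~4.10]{edmundoprelli}.
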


\begin{proof}
Applying \cite[Theorem~4.10]{edmundoprelli} to~$V$ and to~$W$
yields a canonical isomorphism
\begin{align}
\RR f^! \orient_W = \orient_V[\dim(V)-\dim(W)]
\end{align}
in $D^+(V,\Z)$.
On the other hand,
there is
a canonical morphism
\begin{align}
\RR f^!\orient_W \otimes^\LL_\Z f^*\Homrond(\orient_W,\sF) \to \RR f^!\sF
\end{align}
(see \cite[Proposition~3.1.11]{kashiwaraschapira}).
It suffices to prove that the latter is an isomorphism.
By the triangulated five lemma, we may assume that~$\sF$ is concentrated in degree~$0$.
After shrinking~$W$, we may assume that $\sF=M \otimes_\Z \orient_W$
for some $\Lambda$\nobreakdash-module~$M$.
The $\Lambda$\nobreakdash-module structure is now irrelevant;
we may therefore assume that $\Lambda=\Z$.
The question being compatible with filtered direct limits, we may also
assume that~$M$ is a finitely generated abelian group, and then that $M=\Z$
or that $M=\Z/N\Z$ for some $N\geq 1$.
In this case, the assertion follows from \cite[Theorem~4.10]{edmundoprelli} applied four
times (to~$V$ and to~$W$, with coefficients~$\Z$ and~$\Z/N\Z$).
\end{proof}

When~$f$ is a closed embedding of pure codimension~$c$,
the isomorphism~\eqref{eq:verdier} with $\sF=\Lambda=\Z/2\Z$
induces a canonical isomorphism
\begin{align}
\label{eq:purity mod 2}
H^{i-c}(V,\Z/2\Z)=H^i_V(W,\Z/2\Z)
\end{align}
for any $i\in\Z$
(see \cite[Proposition~3.1.12]{kashiwaraschapira}), and hence,
by forgetting the support, a Gysin map
$H^{i-c}(V,\Z/2\Z) \to H^i(W,\Z/2\Z)$.
By definition, the
\emph{fundamental class}
$s_{V/W} \in H^c_V(W,\Z/2\Z)$
of~$V$ in~$W$ is the image
of the constant section $1 \in H^0(V,\Z/2\Z)$
by the isomorphism~\eqref{eq:purity mod 2}
for $i=c$.

More generally, if~$f$ is a proper semi-algebraic map
(see \cite[Chapter~II, Remark~7.6]{delfshomology})
and if we let $c=\dim(W)-\dim(V)$,
the isomorphism~\eqref{eq:verdier}
yields, by adjunction,
a canonical morphism
$\RR f_*(\orient_{V/W}\otimes_\Z f^* \sF) \to \sF[c]$,
since $\RR f_*=\RR f_!$.
This morphism induces, in turn, a push-forward homomorphism
\begin{align}
\label{eq:pushforward}
f_*:H^{i-c}(V,\orient_{V/W}\otimes_\Z f^* \sF) \to H^i(W,\sF)
\end{align}
for any $i\in\Z$.  Pull-back and proper push-forward are related by the formula
\begin{align}
\label{eq:projection formula}
f_*(\alpha \smile f^*\beta)=f_*\alpha\smile\beta\rlap{,}
\end{align}
valid
in $H^{i+j+c}(W,\sF \otimes^\LL_\Z \sG)$
for all $\alpha \in H^i(V,\orient_{V/W}\otimes_\Z f^* \sF)$, $\beta \in H^j(W,\sG)$, all~$i$, $j$,
and all bounded complexes of sheaves of abelian groups~$\sF$ and~$\sG$ on~$W$ with locally constant
cohomology sheaves (and therefore also in
$H^{i+j+c}(W,\sF \otimes_\Z \sG)$ if~$\sF$ and~$\sG$ are themselves just locally constant sheaves).
To prove~\eqref{eq:projection formula}, we note
that the map $\beta \mapsto f_*(\alpha \smile f^*\beta)$
is induced by a morphism $\sG \to \sF \otimes^\LL_\Z \sG[i+c]$ in $D^+(W,\Z)$
that depends functorially on~$\sG$.
In view of the general fact stated below
(which could also be taken as a definition for the cup product), this map can be interpreted, for all~$j$ and all~$\sG$, as the cup product with the class in $H^{i+c}(W,\sF)$ obtained by taking $j=0$, $\sG=\Z$, $\beta=1$, \emph{i.e.}, with the class~$f_*\alpha$.

\begin{fact}
\label{fact:cupproduct}
Let~$\sA_1$, $\sA_2$ be bounded above complexes of sheaves of abelian groups on~$W$
and let $x_1 \in H^0(W,\sA_1)$,
$x_2 \in H^0(W,\sA_2)$.
We set $\sA_3 = \sA_1 \otimes^\LL_\Z \sA_2$
and $x_3=x_1\smile x_2 \in H^0(W,\sA_3)$.
Letting $\phi_i:\Z \to \sA_i$ denote the morphism in $D^-(W,\Z)$
corresponding to~$x_i$,
we have $\phi_3 = (\mathrm{Id}_{\sA_1} \otimes \phi_2) \circ \phi_1$.
\end{fact}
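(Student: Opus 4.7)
The plan is to read off the statement directly from the standard description of the cup product in the derived category. Recall that there is a canonical bijection $H^0(W,\sA) = \Hom_{D^-(W,\Z)}(\Z,\sA)$, which sends a global section of $\sA$ to the morphism of complexes it represents; this is how $\phi_1$, $\phi_2$ and $\phi_3$ are defined. Under this identification, the cup product pairing $H^0(W,\sA_1) \otimes H^0(W,\sA_2) \to H^0(W,\sA_1 \otimes^\LL_\Z \sA_2)$ admits the following description: it sends $(\phi_1,\phi_2)$ to the composition
\begin{equation*}
\Z \xrightarrow{\,u\,} \Z \otimes^\LL_\Z \Z \xrightarrow{\phi_1 \otimes^\LL \phi_2} \sA_1 \otimes^\LL_\Z \sA_2 = \sA_3,
\end{equation*}
where $u$ is the unit isomorphism of the symmetric monoidal structure on $D^-(W,\Z)$. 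I would take this as the starting point: under the identification above, $\phi_3$ equals this composition. (Equivalently, one could replace $\sA_1$, $\sA_2$ by flat resolutions and check that the derived-tensor formula agrees with the classical Alexander--Whitney or Čech description of cup product; this is routine and indeed the fact invites us to take the derived-category formula as a definition.)

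With this in hand, the conclusion is pure diagram chasing in $D^-(W,\Z)$. By bifunctoriality of $\otimes^\LL_\Z$ we can factor
\begin{equation*}
\phi_1 \otimes^\LL \phi_2 \,=\, (\mathrm{Id}_{\sA_1} \otimes^\LL \phi_2) \circ (\phi_1 \otimes^\LL \mathrm{Id}_\Z).
\end{equation*}
By naturality of the unit $u_\sA : \sA \to \sA \otimes^\LL_\Z \Z$ of the monoidal structure, the composition $(\phi_1 \otimes^\LL \mathrm{Id}_\Z) \circ u$ agrees with $u_{\sA_1} \circ \phi_1$; after identifying $\sA_1 \otimes^\LL_\Z \Z$ with $\sA_1$ via $u_{\sA_1}^{-1}$, this composition becomes simply $\phi_1$. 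Similarly, after identifying $\sA_1 \otimes^\LL_\Z \Z = \sA_1$, the morphism $\mathrm{Id}_{\sA_1} \otimes^\LL \phi_2 : \sA_1 \to \sA_1 \otimes^\LL_\Z \sA_2 = \sA_3$ is what the statement denotes by $\mathrm{Id}_{\sA_1} \otimes \phi_2$. Putting the two factorisations together yields $\phi_3 = (\mathrm{Id}_{\sA_1} \otimes \phi_2) \circ \phi_1$.

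The only step with any content is the initial identification of the cup product with the derived-tensor construction; once this is granted, the rest is formal manipulation with the symmetric monoidal structure of $D^-(W,\Z)$, and there is no real obstacle. In practice, since the statement is used below essentially as a shorthand definition, the proof reduces to unpacking conventions.
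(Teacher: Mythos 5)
Your argument is correct, and it matches the paper's treatment: the paper states this Fact without proof, remarking only that it ``could also be taken as a definition for the cup product'', which is precisely the standard monoidal-category description you start from. Once the cup product is identified with $(\phi_1\otimes^{\LL}\phi_2)\circ u$, your factorisation via bifunctoriality of $\otimes^{\LL}_\Z$ and naturality of the unit isomorphism is exactly the formal verification the authors leave implicit.
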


As explained in
\cite[\textsection3.2.6]{delignedualite},
when~$V$ has pure dimension~$d$ and~$W$ is a point,
the isomorphism~\eqref{eq:verdier}
with $\sF=\Lambda=\Z/N\Z$
for a divisible enough $N \geq 1$
induces, for any
locally constant sheaf~$\sG$ of abelian groups of finite exponent on~$V$, a perfect Poincar\'e duality pairing
\begin{align}
\label{eq:semi-algebraic poincare duality}
H^i_c(V,\sG) \times H^{d-i}(V,\sG\mkern3mu\check{ }\otimes_\Z\orient_V) \to \Q/\Z
\end{align}
for every $i\in \Z$, where
 $H^i_c(V,\sG)$ denotes semi-algebraic cohomology with complete
supports (see \cite[Chapter~II, \textsection1]{delfshomology})
and $\sG\mkern3mu\check{ }=\Homrond(\sG,\Q/\Z)$.

\subsubsection{Equivariant purity}
\label{subsubsec:equivariant purity}

We keep the notation of~\textsection\ref{subsubsec:verdier}
and assume, in addition, that the group $G=\Gal(C/R)$ acts on~$V$ and~$W$ and that~$f$ is $G$\nobreakdash-equivariant; thus, the sheaves~$\orient_V$, $\orient_W$, and $\orient_{V/W}$ are $G$\nobreakdash-equivariant.
Using a finite-dimensional approximation of the Borel construction, we now transfer the results of~\textsection\ref{subsubsec:verdier}
to the setting
of equivariant cohomology.
Let~$G$ act on the
 $n$\nobreakdash-dimensional semi-algebraic unit sphere $S^n \subset R^{n+1}$
by the antipodal involution. For $T \in \{V,W\}$, the diagonal action of~$G$ on $T \times S^n$ is free
and discontinuous, hence the quotient semi-algebraic space $(T\times S^n)/G$ is again a homology manifold.
Moreover, if $p:T \times S^n \to T$
denotes the first projection,
then
for any $G$\nobreakdash-equivariant sheaf~$\sF$ of abelian groups on~$T$, the $G$\nobreakdash-equivariant
sheaf $p^*\sF$ uniquely descends to a sheaf~$\sF'$
on $(T\times S^n)/G$ and for any $G$\nobreakdash-invariant closed semi-algebraic subset $Z \subseteq T$,
there are canonical isomorphisms
\begin{align}
\label{eq:finite dimensional borel}
H^i_{G,Z}(T,\sF)=H^i_{G,Z \times S^n}(T \times S^n,p^*\sF)=H^i_{(Z \times S^n)/G}((T \times S^n)/G,\sF')
\end{align}
for every $i<n$, as follows from~\eqref{eq:first spectral sequence}
and from the Leray spectral sequence for~$p$.
If~$\sF$ is a locally constant $G$\nobreakdash-equivariant sheaf of abelian groups (by which we mean that it is
a $G$\nobreakdash-equivariant sheaf of abelian groups which, as a sheaf of abelian groups,
is locally constant), then~$\sF'$ is locally constant.  We note, however, that~$\sF$
being a constant sheaf does not imply that~$\sF'$ is constant.

As a consequence, when~$f$ is a closed embedding of pure codimension~$c$,
applying Proposition~\ref{prop:verdier} to the inclusion of $(V \times S^n)/G$ in $(W \times S^n)/G$
for a large enough~$n$ yields
a canonical isomorphism
\begin{align}
\label{eq:equivariant purity}
H^{i-c}_G(V,\orient_{V/W} \otimes_\Z f^*\sF)=H^i_{G,V}(W,\sF)
\end{align}
for any $i \in \Z$
and any locally constant $G$\nobreakdash-equivariant sheaf~$\sF$ of abelian groups on~$W$.
Similarly, when~$f$ is a proper semi-algebraic map,
considering~\eqref{eq:pushforward}
for the
map $(V \times S^n)/G \to (W \times S^n)/G$ induced by~$f$
for a large enough~$n$ leads to
a push-forward homomorphism
\begin{align}
\label{eq:pushforward equivariant}
f_*:H^{i-c}_G(V,\orient_{V/W}\otimes_\Z f^* \sF) \to H^i_G(W,\sF)
\end{align}
for any $i \in \Z$ and any locally constant
$G$\nobreakdash-equivariant sheaf~$\sF$ of abelian groups on~$W$,
where $c=\dim(W)-\dim(V)$.

In particular,
if~$X$ is a smooth variety over~$R$, purely of dimension~$d$,
we obtain a canonical isomorphism
\begin{align}
\label{eq:equivariant purity example}
H^{i-d}_G(X(R),\Z/2\Z)=H^i_{G,X(R)}(X(C),\Z/2\Z)
\end{align}
for any $i \in \Z$
by taking $\sF=\Lambda=\Z/2\Z$ and $V=X(R)$, $W=X(C)$.
The
\emph{equivariant fundamental class}
$s_{G,X(R)/X(C)} \in H^d_{G,X(R)}(X(C),\Z/2\Z)$
of~$X(R)$ in~$X(C)$ is the image
of the constant section $1 \in H^0_G(X(R),\Z/2\Z)$
by the isomorphism~\eqref{eq:equivariant purity example} for $i=d$.

We note that
for any smooth variety~$X$ of pure dimension~$d$ over~$R$,
 there is a canonical
 isomorphism
of $G$\nobreakdash-equivariant sheaves
  $\orient_{X(C)}=\Z(d)$
(see \cite[Chapter~IV, \textsection1, Example~1.7]{delfshomology}).
As a consequence,
for any smooth variety~$X$ over~$R$,
any smooth subvariety $Y \subseteq X$ of pure codimension~$k$
and any $G$\nobreakdash-module~$M$,
we find that $\orient_{Y(C)/X(C)}=\Z(-k)$ and
we obtain
a canonical isomorphism
\begin{align}
\label{eq:equivariant purity subvariety}
H^{i-2k}_G(Y(C),M(-k))=H^i_{G,Y(C)}(X(C),M)
\end{align}
for any $i \in \Z$,
by taking $\Lambda=\Z$, $\sF=M$, $V=Y(C)$, $W=X(C)$
in~\eqref{eq:equivariant purity};
and hence, by forgetting the support, also a Gysin map
$H^{i-2k}_G(Y(C),M(-k)) \to H^i_G(X(C),M)$.

More generally, for any proper morphism $f:Y\to X$
of smooth equidimensional varieties over~$R$
and for any $G$\nobreakdash-module~$M$,
if we let $k=\dim(X)-\dim(Y)$,
we obtain, in view of~\eqref{eq:pushforward equivariant},
a push-forward homomorphism
\begin{align}
\label{eq:pushforward equivariant complex}
f_*:H^{i-2k}_G(Y(C),M(-k)) \to H^i_G(X(C),M)
\end{align}
for any $i\in \Z$, since $\orient_{Y(C)/X(C)}=\Z(-k)$.
The projection formula
\begin{align}
\label{eq:projection formula equivariant}
f_*(\alpha \smile f^*\beta)=f_*\alpha\smile\beta
\end{align}
now holds in $H^{i+j+2k}_G(X(C),(M \otimes_\Z M')(k))$
for all $G$\nobreakdash-modules~$M$ and~$M'$,
all $\alpha \in H^i_G(Y(C),M)$, $\beta \in H^j_G(X(C),M')$,
and all~$i$, $j$ (see~\eqref{eq:projection formula}).

\subsection{Canonical decompositions}
\label{subsec:canonical decompositions}

Let~$\sF$ be a locally constant sheaf of abelian groups
on~$X(R)$, with stalks
isomorphic to~$\Z$,
viewed as a $G$\nobreakdash-equivariant sheaf with the trivial action.
For any $G$\nobreakdash-module~$M$ and any $i \geq 0$,
as~$G$ acts trivially on~$X(R)$, there is a canonical isomorphism
\begin{align}
\label{eq:general canonical decomposition}
H^i_G(X(R),\sF \otimes_\Z M)=H^i(X(R), \sF \otimes^\LL_\Z \RGamma(G,M))\rlap{\text{.}}
\end{align}
Indeed, for any $G$\nobreakdash-equivariant sheaf of abelian groups~$\sG$
on~$X(R)$, there is a canonical isomorphism $H^i_G(X(R),\sG)=H^i(X(R),\RR\sH^0(G,\sG))$,
where~$\RR\sH^0(G,-)$ denotes the total right derived functor of the
functor~$\sH^0(G,-)$ introduced just before~\eqref{eq:first spectral sequence};
as
$\RR\sH^0(G,\sF \otimes_\Z M)=\sF \otimes^\LL_\Z \RGamma(G,M)$,
this yields~\eqref{eq:general canonical decomposition}.

\subsubsection{With $\Z/2\Z$ coefficients}
\label{subsubsec:canonical decomposition mod 2}

Let us apply~\eqref{eq:general canonical decomposition}
to $M=\Z/2\Z$.
As $\Z/2\Z$ is a field, there exists, in
the derived
category of $\Z/2\Z$\nobreakdash-modules,
a canonical isomorphism
\begin{align}
\label{eq:derived decomposition modulo 2}
\RGamma(G,\Z/2\Z)=\bigoplus_{q \geq 0} H^q(G,\Z/2\Z)[-q]\rlap{\text{.}}
\end{align}
(There is, in fact, a unique
isomorphism inducing the identity on the cohomology groups.)
As $H^q(G,\Z/2\Z)=\Z/2\Z$ for all $q\geq 0$, a canonical decomposition
\begin{align}
\label{eq:canonical decomposition mod 2}
H^i_G(X(R),\Z/2\Z) = \bigoplus_{0\leq p\leq i} H^p(X(R), \Z/2\Z)
\end{align}
results, for any~$i$.
We note that as the category of $\Z/2\Z$\nobreakdash-modules is semisimple,
the derived cup product map
$\RGamma(G,\Z/2\Z) \otimes^\LL_{\Z/2\Z} \RGamma(G,\Z/2\Z) \to \RGamma(G,\Z/2\Z)$
has to coincide, via~\eqref{eq:derived decomposition modulo 2},
with the direct sum, over all~$r$, of the cup product maps
\begin{align}
\bigoplus_{p+q=r} H^p(G,\Z/2\Z) \otimes_{\Z/2\Z} H^q(G,\Z/2\Z) \to H^r(G,\Z/2\Z)
\end{align}
shifted by~$-r$.
The cup product of $x\in H^k_G(X(R),\Z/2\Z)$, $y \in H^\ell_G(X(R),\Z/2\Z)$
in $H^{k+\ell}_G(X(R),\Z/2\Z)$ can therefore be written,
in terms of~\eqref{eq:canonical decomposition mod 2},
as
\begin{align}
\label{eq:cup product}
x\smile y = \bigg(\sum_{p+q=r}x_p \smile y_q\bigg)_{0 \leq r \leq k+\ell}
\end{align}
if $x=(x_p)_{0\leq p\leq k}$ and $y=(y_q)_{0\leq q\leq \ell}$.

As an alternative way to obtain~\eqref{eq:canonical decomposition mod 2},
one can identify $H^i_G(X(R),\Z/2\Z)$ with $H^i(X(R)\times (S^n/G),\Z/2\Z)$
for some $n>i$, via the Borel construction
(see~\textsection\ref{subsubsec:equivariant purity}),
and then apply the K\"unneth formula.
This leads to the same decomposition,
as we have $\tau_{\leq i}\RGamma(G,\Z/2\Z)=\tau_{\leq i}\RR\pi_*\Z/2\Z$
in $D^+(X(R),\Z/2\Z)$
if $\pi:X(R) \times (S^n/G) \to X(R)$ denotes the first projection.

\subsubsection{With integral coefficients}
\label{subsubsec:decomposition with integral coeff}

Let us now apply~\eqref{eq:general canonical decomposition} to $M=\Z(j)$ for $j \in \Z$.
In the derived category of abelian groups,
there exists a canonical
isomorphism
\begin{align}
\label{eq:decomposition of RGamma integral coeff}
\RGamma(G,\Z(j))=\bigoplus_{q \geq 0} H^q(G,\Z(j))[-q]\rlap{\text{,}}
\end{align}
as can be seen
from the standard explicit complex representing the total cohomology of the cyclic group~$G$.
(There is, in fact, a unique
isomorphism inducing the identity on the cohomology groups.)
Letting $\sF(j) = \sF \otimes_\Z \Z(j)$, a canonical decomposition
\begin{align}
\label{eq:canonical decomposition}
H^i_G(X(R),\sF(j)) = \bigoplus_{p+q=i} H^p(X(R), \sF \otimes_\Z H^q(G,\Z(j)))
\end{align}
results, for any~$i$.  We note that $H^q(G,\Z(j))=0$ if $\pascongru{q}{j}{2}$,
$H^q(G,\Z(j))=\Z/2\Z$
if $\congru{q}{j}{2}$ and $q>0$,
and
$H^q(G,\Z(j))=\Z$
if $q=0$ and~$j$ is even.
By mapping $H^i(X(R),\sF)$
to $H^i(X(R),\sF/2\sF)=H^i(X(R),\Z/2\Z)$ in the obvious way, we thus obtain a natural map
\begin{align}
\label{eq:natural projection}
H^i_G(X(R),\sF(j)) \mkern3mu\to \mkern-20mu\bigoplus_{\substack{0\leq p\leq i \\ p\mkern1mu\equiv\mkern1mu i-j \text{ mod } 2}}\mkern-20muH^p(X(R),\Z/2\Z)\rlap{\text{,}}
\end{align}
which is an isomorphism if $i>\dim(X)$ (see \cite[\textsection5]{delfsknebuschsurvey}) or if~$j$ is odd.

If $Y \subset X$ is a closed subvariety of~$X$, the exact same reasoning yields a canonical decomposition
\begin{align}
\label{eq:canonical decomposition with support}
H^i_{G,Y(R)}(X(R),\sF(j)) = \bigoplus_{p+q=i} H^p_{Y(R)}(X(R), \sF\otimes_\Z H^q(G,\Z(j)))\rlap{\text{.}}
\end{align}

We warn the reader that the formula (\ref{eq:cup product})
does \emph{not} describe
cup products in equivariant cohomology with integral coefficients
in terms of the decompositions~\eqref{eq:canonical decomposition}.

\subsubsection{Change of coefficients}
\label{subsubsec:reduction modulo 2}

Krasnov~\cite[Theorem~1.2, Theorem~1.3]{krasnovequivariant}
has shown that the above canonical decompositions of cohomology with coefficients in~$\sF(j)$ and in~$\Z/2\Z$ are compatible in the following sense:
for any~$i$ and~$j$,
the reduction map
\begin{align*}
H^i_G(X(R),\sF(j)) \to H^i_G(X(R),\sF(j) \otimes_\Z \Z/2\Z)=H^i_G(X(R),\Z/2\Z)\rlap{\text{,}}
\end{align*}
the natural map~\eqref{eq:natural projection}
and the decomposition~\eqref{eq:canonical decomposition mod 2} fit into a commutative square
\begin{align}
\label{eq:commutative square reduction modulo 2}
\begin{aligned}
\xymatrix{
H^i_G(X(R),\sF(j)) \ar[r] \ar[d] & \displaystyle
\mkern-20mu{\smash{{\bigoplus_{\substack{0\leq p\leq i \\ p\mkern1mu\equiv\mkern1mu i-j \text{ mod } 2}}}}}\phantom{\bigoplus}\mkern-40mu H^p(X(R),\Z/2\Z) \ar[d]^{1+\beta_\sF} \\
H^i_G(X(R),\Z/2\Z) \ar@{=}[r] & \displaystyle \bigoplus_{0\leq p\leq i} H^p(X(R), \Z/2\Z)\rlap{\text{,}}
}
\end{aligned}
\end{align}
where the right-hand vertical arrow is the sum of the twisted Bockstein homomorphisms $\beta_\sF:H^p(X(R),\Z/2\Z) \to H^{p+1}(X(R),\Z/2\Z)$, defined as the boundary maps of the short exact sequence
$0 \to \Z/2\Z \to \sF \otimes_\Z \Z/4\Z \to \Z/2\Z \to 0$, for $0\leq p\leq i-1$, and of the identity maps  $H^p(X(R),\Z/2\Z) \to H^{p}(X(R),\Z/2\Z)$, for $0\leq p\leq i$.

\subsubsection{Effect on the real-complex short exact sequence}
\label{subsubsec:effect}

Let us consider the effect of the decomposition~\eqref{eq:canonical decomposition mod 2} on the real-complex exact sequence~\eqref{eq:real-complex sequence 01}
for $M=\Z/2\Z$.
Applying the functor $\RGamma(G,-)$ to this short exact sequence yields,
via the canonical isomorphisms~\eqref{eq:derived decomposition modulo 2}
and $\RGamma(G,\Z/2\Z[G])=\Z/2\Z$,
a distinguished triangle
\begin{align}
\label{eq:dt effect of decomposition}
\bigoplus_{q\geq 0} \Z/2\Z[-q] \to \Z/2\Z \to \bigoplus_{q\geq 0} \Z/2\Z[-q] \to
 \bigoplus_{q\geq -1} \Z/2\Z[-q]\rlap{\text{.}}
\end{align}
The first map of~\eqref{eq:dt effect of decomposition} is the natural projection, the second map vanishes,
and the third is the natural inclusion. (Indeed, these induce the correct maps after
passing to cohomology, and the category of $\Z/2\Z$\nobreakdash-modules is semisimple.)
Thus,
the decomposition~\eqref{eq:canonical decomposition mod 2} identifies the canonical map
$H^n_G(X(R),\Z/2\Z) \to H^n(X(R),\Z/2\Z)$ with the projection map
$H^0(X(R),\Z/2\Z) \oplus\dots\oplus H^n(X(R),\Z/2\Z) \to H^n(X(R),\Z/2\Z)$
and it identifies the map
$H^n_G(X(R),\Z/2\Z) \to H^{n+1}_G(X(R),\Z/2\Z)$
induced by~\eqref{eq:real-complex sequence 01} with the inclusion
$\bigoplus_{0 \leq p \leq n} H^p(X(R),\Z/2\Z)
\subseteq
\bigoplus_{0 \leq p \leq n+1} H^p(X(R),\Z/2\Z)$.

\subsubsection{Decomposition of $\omega$}
\label{subsubsec:decomposition omega}

The class $\omega^i_{\Z/2\Z} \in H^i_G(X(R),\Z/2\Z)$
(defined in~\textsection\ref{subsubsec:omega})
is mapped,
by the canonical decomposition~\eqref{eq:canonical decomposition mod 2},
to the unit element of the graded ring
$\bigoplus_{0\leq p \leq i}H^p(X(R),\Z/2\Z)$, since this is so when~$X$ is a point
and since~\eqref{eq:canonical decomposition mod 2}
is compatible with pull-backs.

\subsubsection{Covariant functoriality}
\label{subsubsec:covariant functoriality decomposition mod 2}

Let~$X$ and~$Y$ be smooth equidimensional varieties over~$R$.  Let $c=\dim(X)-\dim(Y)$.
Let $f:Y\to X$ be a proper morphism.
The push-forward map $f_*:H^{i-c}_G(Y(R),\Z/2\Z) \to H^i_G(X(R),\Z/2\Z)$
constructed in~\eqref{eq:pushforward equivariant} can be regarded,
via the decompositions~\eqref{eq:canonical decomposition mod 2},
as a map
\begin{align}
\label{eq:pushforward in decomposition}
f_*:\bigoplus_{0\leq p\leq i-c} H^p(Y(R), \Z/2\Z)\to
\bigoplus_{0\leq p\leq i} H^p(X(R), \Z/2\Z)\rlap{.}
\end{align}
It follows at once from the interpretation of~\eqref{eq:canonical decomposition mod 2}
in terms of the Borel construction and of the K\"unneth formula
(see~\textsection\ref{subsubsec:canonical decomposition mod 2}),
and from the projection formula (see~\eqref{eq:projection formula}),
that~\eqref{eq:pushforward in decomposition}
coincides with the direct sum, over $p \in \{0,\dots,i-c\}$,
of the push-forward map $f_*:H^p(X(R),\Z/2\Z) \to H^{p+c}(X(R),\Z/2\Z)$
constructed in~\eqref{eq:pushforward}.

\subsection{On the normal bundle of \texorpdfstring{$X(R)$}{X(R)} in \texorpdfstring{$X(C)$}{X(C)}}
\label{subsec:on the normal bundle}

We fix a smooth variety~$X$ over~$R$.
Making use of the invertibility of
the equivariant Euler class of the normal bundle of~$X(R)$
in~$X(C)$, viewed as an element of the non-equivariant cohomology ring of~$X(R)$
via the
canonical decompositions of~\textsection\ref{subsec:canonical decompositions},
we investigate, in this section,
the long exact sequence of equivariant cohomology
of~$X(C)$ with support in~$X(R)$.

\subsubsection{With $\Z/2\Z$ coefficients}
\label{subsubsec:normal bundle with mod 2 coefficients}

With $\Z/2\Z$ coefficients, we show that
this long exact sequence
decomposes into canonically split short exact sequences.
\begin{prop}
\label{prop:short exact sequences}
Let~$X$ be a smooth variety over~$R$.
For every $i \geq 0$, the sequence
\begin{align*}
0 \to H^i_{G,X(R)}(X(C),\Z/2\Z) \to H^i_G(X(C),\Z/2\Z) \to H^i_G(X(C) \setminus X(R),\Z/2\Z) \to 0
\end{align*}
is exact and canonically split.
\end{prop}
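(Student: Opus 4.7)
The plan is to produce a canonical left inverse of the map $H^i_{G,X(R)}(X(C),\Z/2\Z) \to H^i_G(X(C),\Z/2\Z)$. Such an inverse will force the connecting homomorphism in the long exact sequence of equivariant cohomology with support to vanish, and will simultaneously provide the required canonical splitting. It will be built from the pullback $\iota^*$ along $\iota:X(R)\hookrightarrow X(C)$ together with the canonical decomposition~\eqref{eq:canonical decomposition mod 2}.

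Using equivariant purity~\eqref{eq:equivariant purity example}, I first identify $H^i_{G,X(R)}(X(C),\Z/2\Z)$ with $H^{i-d}_G(X(R),\Z/2\Z)$, where $d=\dim X$; let $\iota_!$ denote the corresponding Gysin pushforward into $H^i_G(X(C),\Z/2\Z)$. By the self-intersection formula (a consequence of the projection formula~\eqref{eq:projection formula equivariant}), the composition $\iota^*\iota_!$ coincides with cup product by the equivariant Euler class $e:=\iota^*\iota_!(1)\in H^d_G(X(R),\Z/2\Z)$ of the normal bundle of $X(R)$ in $X(C)$. The central claim is that, under the canonical decomposition $H^d_G(X(R),\Z/2\Z)=\bigoplus_{p=0}^d H^p(X(R),\Z/2\Z)$, the $H^0$\nobreakdash-component of $e$ equals the constant function~$1$. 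I would check this by restriction to any point $x\in X(R)$: the normal space at~$x$ is $\sqrt{-1}\cdot T_xX(R)\subset T_xX(C)$, which is a direct sum of $d$ copies of the sign representation of $G$. Its equivariant Euler class is therefore $\omega^d\in H^d_G(\{x\},\Z/2\Z)$, and~\textsection\ref{subsubsec:decomposition omega} implies that $\omega^d$ projects to $1\in H^0(\{x\},\Z/2\Z)$.

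Granted the claim, the cup product formula~\eqref{eq:cup product} shows that $\iota^*\iota_!$, regarded as a map $\bigoplus_{q=0}^{i-d}H^q(X(R),\Z/2\Z)\to\bigoplus_{r=0}^i H^r(X(R),\Z/2\Z)$, sends $(a_q)_q$ to an element whose component in degree $r$ equals $a_r+(\text{terms in } a_{q'} \text{ with } q'<r)$ for $r\in\{0,\dots,i-d\}$. Projecting the target onto its first $i-d+1$ summands yields a lower-triangular map with identity on the diagonal; its canonical inverse is therefore a canonical retraction of $\iota^*\iota_!$ onto $\bigoplus_{q=0}^{i-d}H^q(X(R),\Z/2\Z)$. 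Pre-composing with the projection and with $\iota^*$ gives the desired canonical left inverse of $\iota_!$, and the proposition follows.

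The main obstacle I anticipate is the verification of the central claim on $e$. It requires identifying the $G$\nobreakdash-equivariant structure on the normal bundle of $X(R)$ in $X(C)$ (given by multiplication by $\sqrt{-1}$ on tangent spaces, with $G$ acting by~$-1$) and computing the equivariant Euler class of the sign representation as $\omega$ within the semi-algebraic framework set up in~\textsection\ref{sec:cohomology}. Both facts are standard in topology but deserve careful translation to our setting.
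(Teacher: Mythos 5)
Your construction of the retraction is exactly the one the paper uses: restrict to $X(R)$, decompose via~\eqref{eq:canonical decomposition mod 2}, invert the lower-triangular ``multiplication by $\gamma$'' map, project onto the first $i-d+1$ summands, and go back through equivariant purity (your $e=\iota^*\iota_!(1)$ is the class the paper calls $\gamma$ in Definition~\ref{def:gamma}). The genuine difference is how you establish the key point that the $H^0$\nobreakdash-component of this class is~$1$. The paper's Lemma~\ref{lem:gamma invertible} does this by a soft stabilisation argument: for $i\gg 0$ the groups $H^i_G(X(C)\setminus X(R),\Z/2\Z)$ and $H^i_G(X(C),X(R),\Z/2\Z)$ vanish (\textsection\ref{subsubsec:cohomological dimension}), so $g_i\circ f_i$ is an isomorphism in that range, and since it is multiplication by $\gamma$ on the stable ring $H^0\oplus\dots\oplus H^d$, the class $\gamma$ must be a unit. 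You instead compute locally, identifying $e$ with the equivariant Euler class of the normal bundle and restricting to a point, where the normal representation is $d$ copies of the sign representation with Euler class $\omega^d$. Your route is more geometric and explains \emph{why} $\gamma$ is invertible (it is consistent with Krasnov's identification of $\gamma$ with the total Stiefel--Whitney class, Remark~\ref{rks:short exact sequences mod 2}~(i)), but it buys this at the cost of two facts you correctly flag as needing translation to the semi-algebraic equivariant setting; the paper's argument avoids both entirely and is the cheaper proof.

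Two small cautions. First, the projection formula~\eqref{eq:projection formula equivariant} alone only yields $\iota^*\iota_!(\iota^*\beta)=e\smile\iota^*\beta$, i.e.\ the self-intersection formula on the image of $\iota^*$; to get that $\iota^*\iota_!$ is cup product with $e$ on \emph{all} classes, and with the \emph{same} class $e$ for every degree $i$ (which your triangularity argument needs), the right tool is the observation that the composite is induced by a single morphism in the derived category, so that Fact~\ref{fact:cupproduct} applies --- this is precisely how the paper argues. Second, the identification of $\iota^*\iota_!(1)$ with the equivariant Euler class of the normal bundle is not proved anywhere in \textsection\ref{sec:cohomology}, so if you keep your route you would have to supply it; with the stabilisation argument in hand, it is not needed for the proposition.
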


The exactness of this sequence and the role of the equivariant Euler class appear
in \cite[Proposition~5.3.7]{alldaypuppe} when $i\gg 0$ (in the setting of Poincar\'e duality spaces)
and in \cite[Lemma~2.6]{vanhamelabeljacobi} for all $i\geq 0$.
 The description of a canonical splitting is new and will be of importance in the sequel.
 The remainder of~\textsection\ref{subsubsec:normal bundle with mod 2 coefficients}
is devoted to constructing a canonical retraction
of the left arrow of the above sequence,
for any~$i$, thus proving
Proposition~\ref{prop:short exact sequences}.
From now on, we assume, as we may, that~$X$ is irreducible, and we let $d=\dim(X)$.

\begin{defn}
\label{def:gamma}
For the sake of simplicity, let us write $H^p=H^p(X(R),\Z/2\Z)$.
We denote by $\gamma \in H^0 \oplus \dots \oplus H^d$
the image of
the
equivariant fundamental class of~$X(R)$ in~$X(C)$,
introduced in~\textsection\ref{subsubsec:equivariant purity},
by the composition
\begin{align*}
H^d_{G,X(R)}(X(C),\Z/2\Z) \to
H^d_G(X(C),\Z/2\Z) \to
H^d_G(X(R),\Z/2\Z)=
H^0 \oplus \cdots \oplus H^d
\end{align*}
of the forgetful map, the restriction map, and
the canonical decomposition~\eqref{eq:canonical decomposition mod 2}.
\end{defn}

\begin{lem}
\label{lem:gamma invertible}
The class~$\gamma$ is invertible in the graded ring $H^0 \oplus\dots\oplus H^d$.
In other words, its degree~$0$
component
is the constant section $1 \in H^0$.
\end{lem}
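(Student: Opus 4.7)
The plan is to identify $\gamma$ with the mod-$2$ equivariant Euler class of the normal bundle $N=N_{X(R)/X(C)}$ of~$X(R)$ in~$X(C)$, to express it using that~$N$ is a twist of~$T_{X(R)}$ by the sign representation, and then to read off its degree-$0$ component in the canonical decomposition. As a first step, I would observe that the composition
\begin{align*}
H^0_G(X(R),\Z/2\Z)\isoto H^d_{G,X(R)}(X(C),\Z/2\Z) \to H^d_G(X(C),\Z/2\Z) \to H^d_G(X(R),\Z/2\Z)
\end{align*}
of equivariant purity~\eqref{eq:equivariant purity example}, the forgetful map, and restriction is the self-intersection map $\iota^*\iota_*$ for the closed immersion $\iota\colon X(R)\hookrightarrow X(C)$, which by the standard self-intersection formula coincides with cup product by $e_G(N)\in H^d_G(X(R),\Z/2\Z)$. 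Consequently, $\gamma$ is the image of~$e_G(N)$ under the decomposition~\eqref{eq:canonical decomposition mod 2}.

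Next, I would identify~$N$ as a $G$-equivariant bundle. The $G$-equivariant splitting $T_{X(C)}|_{X(R)}=T_{X(R)}\oplus N$ into $\pm 1$-eigenspaces of complex conjugation, combined with multiplication by~$\sqrt{-1}$, exhibits an isomorphism $N\cong T_{X(R)}\otimes_\Z\Z(1)$ of $G$-equivariant real rank-$d$ vector bundles on~$X(R)$ (with $G$ acting trivially on~$T_{X(R)}$ and by~$-1$ on~$\Z(1)$). Since $w_1^G(\Z(1))=\omega_{\Z/2\Z}$, the classical formula $w_d(F\otimes L)=\sum_{i=0}^{d} w_i(F)\cdot w_1(L)^{d-i}$ for the top Stiefel--Whitney class of the tensor of a rank-$d$ bundle~$F$ with a line bundle~$L$, proved via the splitting principle and valid equivariantly, yields
\begin{align*}
\gamma\;=\;e_G(N)\;=\;\sum_{i=0}^{d} w_i(T_{X(R)})\cdot \omega_{\Z/2\Z}^{d-i}\quad\text{in } H^d_G(X(R),\Z/2\Z).
\end{align*}

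Finally, I would trace this sum through~\eqref{eq:canonical decomposition mod 2}. Interpreting the decomposition via the Borel construction and the K\"unneth formula, as in~\S\ref{subsubsec:canonical decomposition mod 2}, shows that the pullback of $H^p(X(R),\Z/2\Z)$ lands in the $H^p$-summand (the ``top slot'') of $H^p_G(X(R),\Z/2\Z)=H^0\oplus\cdots\oplus H^p$, while cup product with~$\omega_{\Z/2\Z}$ preserves this slot index and only raises the total degree. Hence each term $w_i(T_{X(R)})\omega_{\Z/2\Z}^{d-i}$ lies in the $H^i$-summand of $H^d_G(X(R),\Z/2\Z)=H^0\oplus\cdots\oplus H^d$, and the degree-$0$ component of~$\gamma$ is the $i=0$ contribution~$\omega_{\Z/2\Z}^d$, which equals $1\in H^0$ by~\S\ref{subsubsec:decomposition omega}. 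The main technical obstacle is the equivariant version of the Stiefel--Whitney formula used above, which rests on the equivariant splitting principle; an alternative avoiding it is to reduce the statement pointwise (pullback to any $x\in X(R)$ annihilates every $H^p$-summand with $p>0$) and then to the model case $X=\A^d_R$ at the origin by étale localization, where $N$ is $G$-equivariantly trivial and the equality $e_G(N)=\omega_{\Z/2\Z}^d$ in $H^d_G(R^d,\Z/2\Z)=\Z/2\Z$ can be computed directly.
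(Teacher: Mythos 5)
Your proof is correct, but it reaches the conclusion by a genuinely different and strictly stronger route than the paper. The paper never computes $\gamma$: it only observes (via Fact~\ref{fact:cupproduct} and the fact that the relevant maps come from degree-independent morphisms in the derived category over the Borel construction) that the composition of purity, the forgetful map~$f_i$ and the restriction~$g_i$ is cup product with the single class~$\gamma$ for \emph{every}~$i$, and then gets invertibility for free from cohomological dimension: by \textsection\ref{subsubsec:cohomological dimension} the groups $H^i_G(X(C)\setminus X(R),\Z/2\Z)$ and $H^i_G(X(C),X(R),\Z/2\Z)$ vanish for $i\gg 0$, so $f_i$ and $g_i$ are isomorphisms there, and multiplication by~$\gamma$ is an automorphism of $H^0\oplus\dots\oplus H^d$. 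You instead identify $\gamma$ with the mod-$2$ equivariant Euler class of $N_{X(R)/X(C)}$ by the self-intersection formula, use $N\cong T_{X(R)}\otimes_\Z\Z(1)$, and expand $w_d^G(N)=\sum_i w_i(T_{X(R)})\,\omega_{\Z/2\Z}^{d-i}$; reading this off in the decomposition~\eqref{eq:canonical decomposition mod 2} gives not just invertibility but the full identity $\gamma=w(T_{X(R)})$, i.e.\ Krasnov's formula of Remark~\ref{rks:short exact sequences mod 2}~(i), now over an arbitrary real closed field. What this buys is an explicit formula; what it costs is the need to justify the self-intersection formula and the Stiefel--Whitney tensor-product formula in the equivariant semi-algebraic setting (both reduce, via the Borel construction, to the ordinary splitting principle on $(X(R)\times S^n)/G$, so they are available, but they are more machinery than the lemma requires). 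Your pointwise fallback is the cheapest correct version of your argument: the degree-$0$ component of~$\gamma$ lies in $H^0(X(R),\Z/2\Z)$ and is detected by restriction to each $x\in X(R)$, where $N_x\cong\Z(1)^{\oplus d}$ gives $w_d^G(N_x)=\omega_{\Z/2\Z}^d$, whose image in $H^d_G(x,\Z/2\Z)=\Z/2\Z$ is~$1$.
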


\begin{proof}
Let us denote the equivariant fundamental class
by $s \in H^d_{G,X(R)}(X(C),\Z/2\Z)$.
For any~$i$, the composition
of
the equivariant purity
isomorphism
\begin{align}
\label{eq:the equivariant purity isomorphism}
H^{i-d}_G(X(R),\Z/2\Z)\isoto H^i_{G,X(R)}(X(C),\Z/2\Z)
\end{align}
(see~\eqref{eq:equivariant purity example})
with
the forgetful map
\begin{align}
\label{eq:forgetful map}
f_i:H^i_{G,X(R)}(X(C),\Z/2\Z)\to H^i_G(X(C),\Z/2\Z)
\end{align}
and the restriction map
\begin{align}
\label{eq:map gi}
g_i:H^i_G(X(C),\Z/2\Z) \to H^i_G(X(R),\Z/2\Z)
\end{align}
sends $x \in H^{i-d}_G(X(R),\Z/2\Z)$
to $x \smile g_d(f_d(s)) \in H^i_G(X(R),\Z/2\Z)$.
(Indeed, the two maps~\eqref{eq:the equivariant purity isomorphism}
and $g_i\circ f_i$ are induced by maps in the derived category
of sheaves on $(X(R) \times S^n)/G$ that do not depend on $i$.
Hence so is their composition, which can therefore be interpreted,
for all~$i$, as the
cup product with a fixed class in $H^d_G(X(R),\Z/2\Z)$
(see Fact~\ref{fact:cupproduct}). To compute this class,
one simply takes $i=d$.)
By~\textsection\ref{subsubsec:canonical decomposition mod 2},
this map can therefore be identified,
via the canonical decompositions~\eqref{eq:canonical decomposition mod 2},
with the map
\begin{align}
\label{eq:cup product with gamma}
H^0\oplus\dots\oplus H^{i-d} \to H^0 \oplus\dots\oplus H^i
\end{align}
given by multiplication by~$\gamma$
(see~\eqref{eq:cup product}).
On the other hand,
as the groups $H^i_G(X(C) \setminus X(R),\Z/2\Z)$ and $H^i_G(X(C),X(R),\Z/2\Z)$
vanish for $i\gg 0$
(see~\textsection\ref{subsubsec:cohomological dimension}),
the maps~$f_i$ and~$g_i$ are isomorphisms for $i\gg 0$.
Hence~$\gamma$ is invertible (note that $H^0\oplus\dots\oplus H^{i-d}=H^0\oplus\dots\oplus H^d$
for $i\gg 0$,
by \cite[Chapter~II, Lemma~9.1]{delfshomology}).
\end{proof}

Thanks to Lemma~\ref{lem:gamma invertible}, we may now consider,
for any~$i$,
the composition
\begin{align}
\label{eq:canonical retraction mod 2}
H^i_G(X(C),\Z/2\Z) \to H^i_{G,X(R)}(X(C),\Z/2\Z)
\end{align}
of the following maps: first the restriction map~$g_i$,
then the canonical decomposition
$H^i_G(X(R),\Z/2\Z) = H^0 \oplus\dots\oplus H^i$,
then the map $H^0\oplus\dots\oplus H^i \to H^0\oplus\dots\oplus H^{i-d}$
given by multiplication by~$\gamma^{-1}$ followed by projection onto the first $i-d+1$
summands,
then the canonical decomposition
$H^0 \oplus\dots\oplus H^{i-d}=H^{i-d}_G(X(R),\Z/2\Z)$
and finally the isomorphism~\eqref{eq:the equivariant purity isomorphism}.
According to the proof of Lemma~\ref{lem:gamma invertible}, the map~\eqref{eq:canonical retraction mod 2}
is indeed a retraction of~$f_i$.
Thus, Proposition~\ref{prop:short exact sequences} is established.

\begin{rmks}
\label{rks:short exact sequences mod 2}
(i) If $R=\R$, Krasnov has checked that~$\gamma$ coincides
with the total Stiefel--Whitney class of the tangent bundle of~$X(R)$ (see \cite[Theorem~2.1]{krasnovequivariant}).

(ii) Let $\pi_0(X(R))$ denote the set of semi-algebraic connected components of~$X(R)$.
For $V \in \pi_0(X(R))$,
let~$s_V$ denote the image, in $H^d_\et(X,\Z/2\Z)=H^d_G(X(C),\Z/2\Z)$, of the equivariant
fundamental class of~$V$ in~$X(C)$.
For $x \in X(R)$, the image of~$s_V$ by the evaluation map
$\ev_x:H^d_\et(X,\Z/2\Z) \to H^d_\et(x,\Z/2\Z)=\Z/2\Z$
coincides with
the evaluation at~$x$ of the image,
by the map~\eqref{eq:cup product with gamma} for $i=d$,
of the non-zero element of~$H^0$ supported on~$V$.
In view of Lemma~\ref{lem:gamma invertible}, it follows
that $\ev_x(s_V)=0$ if and only if $x\notin V$:
the~$s_V$'s form a canonical family of elements of $H^d_\et(X,\Z/2\Z)$ which separate the
semi-algebraic connected components of~$X(R)$.
This gives a further (and entirely canonical) answer to a question of Colliot-Th\'el\`ene and Parimala~\cite[Remark~2.4.4]{ctparimala}.
At least for $R=\R$, answers to this question had already been given
by Krasnov and by van~Hamel, see \cite[\textsection2.3]{vanhamelabeljacobi}.

(iii)
We have constructed not only a retraction of~$f_i$ but also one of $g_i \circ f_i$
since the map~\eqref{eq:canonical retraction mod 2} factors,
by definition,
through~$g_i$.
The kernel of this retraction
of $g_i \circ f_i$,
and therefore also the cokernel of $g_i \circ f_i$,
is canonically isomorphic to $H^{i-d+1}\oplus\dots\oplus H^i$.
\end{rmks}

\subsubsection{With integral coefficients}
\label{subsubsec:with integral coeff}

The assertion obtained by replacing~$\Z/2\Z$ with $\Z(j)$
in the statement of Proposition~\ref{prop:short exact sequences}
fails: already when $X=\A^1_R$ and $i=j=1$, the group
$H^i_{G,X(R)}(X(C),\Z(j))$ is infinite while $H^i_G(X(C),\Z(j))$ has order~$2$.
It does hold,
however,
when~$j$ has the correct parity, as we now show.

\begin{prop}
\label{prop:short exact sequences integral}
Let~$X$ be a smooth, irreducible variety over~$R$, of dimension~$d$.
Let $\congru{j}{d-1}{2}$.
For every $i \geq 0$, the sequence
\begin{align*}
0 \to H^i_{G,X(R)}(X(C),\Z(j)) \to H^i_G(X(C),\Z(j)) \to H^i_G(X(C) \setminus X(R),\Z(j)) \to 0
\end{align*}
is exact and canonically split.
\end{prop}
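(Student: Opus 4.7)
The plan is to adapt the argument of Proposition~\ref{prop:short exact sequences} to the integral setting, with the parity hypothesis $j \equiv d-1 \pmod 2$ playing the role that ensures the orientation twists remain tractable.

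First, I would analyze the support group $H^i_{G,X(R)}(X(C),\Z(j))$. The equivariant Gysin isomorphism~\eqref{eq:equivariant purity}, applied with $V=X(R)$ and $W=X(C)$, together with the identification $\orient_{X(R)/X(C)}=\orient_{X(R)}(-d)$ coming from $\orient_{X(C)}=\Z(d)$, yields
\[
H^i_{G,X(R)}(X(C),\Z(j)) \cong H^{i-d}_G(X(R),\orient_{X(R)}(j-d))\rlap{.}
\]
Since $j-d$ is odd, $H^q(G,\Z(j-d))$ vanishes for $q$ even and equals $\Z/2\Z$ for $q$ odd positive; and since every real manifold is $\Z/2\Z$-orientable, $\orient_{X(R)}\otimes_\Z\Z/2\Z=\Z/2\Z$. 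Combining these facts with the canonical decomposition~\eqref{eq:canonical decomposition} applied to $\sF=\orient_{X(R)}$ exhibits $H^i_{G,X(R)}(X(C),\Z(j))$ as a direct sum of mod~$2$ cohomology groups of $X(R)$, in particular as a $\Z/2\Z$-vector space. This is the feature that makes the parity hypothesis essential.

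Next, I would introduce an integral analog $\gamma_\Z$ of the class $\gamma$ of Definition~\ref{def:gamma}, formed from the equivariant fundamental class of $X(R)$ in $X(C)$ with $\Z(d)$ coefficients and its image under $g_d\circ f_d$ expanded via the canonical decomposition of $H^d_G(X(R),\Z(d))$. Mirroring Lemma~\ref{lem:gamma invertible}, the composition $g_i\circ f_i$ is represented, via Fact~\ref{fact:cupproduct} and the derived-categorical interpretation of the Gysin isomorphism, as cup product with $\gamma_\Z$; its invertibility is forced by the cohomological dimension bound of~\S\ref{subsubsec:cohomological dimension}, which gives $H^i_G(X(C)\setminus X(R),\Z(j))=H^i_G(X(C),X(R),\Z(j))=0$ for $i>2d$ and hence makes $f_i$ and $g_i$ isomorphisms in that range. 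With $\gamma_\Z$ invertible, the canonical retraction is built exactly as in~\eqref{eq:canonical retraction mod 2}: restrict to $X(R)$, apply the canonical decomposition, multiply by $\gamma_\Z^{-1}$, project onto the summands matching the image of the Gysin isomorphism, and invert that isomorphism.

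I expect the main obstacle to be the non-canonical nature of the integral equivariant fundamental class when $X(R)$ is not orientable. The parity hypothesis is precisely what resolves this: the odd twist $\Z(j-d)$, combined with the $\Z/2\Z$-orientability of real manifolds, makes the orientation sheaf disappear from all the groups relevant to the retraction, reducing the structure of the argument to a mod-$2$ computation that closely shadows Proposition~\ref{prop:short exact sequences}. Some additional care will be needed in defining $\gamma_\Z$ unambiguously despite the orientation issue, most likely by working directly with the $\Z(d)$-twisted fundamental class supplied by the canonical isomorphism $\orient_{X(C)}=\Z(d)$ rather than with an auxiliary choice of orientation on $X(R)$.
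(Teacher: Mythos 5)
Your first step is exactly right and matches the paper: equivariant purity together with $\orient_{X(C)}=\Z(d)$ identifies $H^i_{G,X(R)}(X(C),\Z(j))$ with $H^{i-d}_G(X(R),\Ztilde(j-d))$, and since $j-d$ is odd this group is a direct sum of groups $H^p(X(R),\Z/2\Z)$; this is indeed where the parity hypothesis enters. The gap is in your second step. You propose to realise $g_i\circ f_i$ as multiplication by an integral class $\gamma_\Z$ \emph{in terms of the canonical decompositions} and then to invert $\gamma_\Z$ in the graded ring. But the canonical decomposition~\eqref{eq:canonical decomposition} is not multiplicative with integral coefficients: the paper warns explicitly at the end of~\textsection\ref{subsubsec:decomposition with integral coeff} that the formula~\eqref{eq:cup product} fails in that setting, and the remark following Proposition~\ref{prop:truncated projection integral coeff} records that the relevant map is $x\mapsto x\gammatilde+\beta_{\Ztilde}(x)\beta_{\Ztilde}(\gammatilde)$ rather than graded multiplication by a fixed class. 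So ``its degree-$0$ component is $1$, hence it is invertible, hence multiply by $\gamma_\Z^{-1}$'' does not go through as stated; at best one obtains injectivity of $g_i\circ f_i$ by a triangularity argument (essentially the content of Proposition~\ref{prop:truncated projection integral coeff}), and a canonical retraction would still have to be exhibited.

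The paper's proof avoids the integral cup product altogether. Since the source $H^{i-d}_G(X(R),\Ztilde(j-d))$ is a sum of mod~$2$ cohomology groups, its reduction map to $H^{i-d}_G(X(R),\Z/2\Z)$ is, by the square~\eqref{eq:commutative square reduction modulo 2}, the unipotent map $1+\beta_{\Ztilde}$, which admits an obvious canonical retraction, namely the projection. The desired retraction of $f_i$ is then the composite of the coefficient map $H^i_G(X(C),\Z(j))\to H^i_G(X(C),\Z/2\Z)$, the mod~$2$ retraction~\eqref{eq:canonical retraction mod 2} already constructed for Proposition~\ref{prop:short exact sequences}, and this projection. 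Your closing remark that the argument should ``reduce to a mod-$2$ computation'' points in the right direction, but the reduction has to be effected through this composite, not through an inverse of $\gamma_\Z$.
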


As in~\textsection\ref{subsubsec:normal bundle with mod 2 coefficients},
we shall prove Proposition~\ref{prop:short exact sequences integral} by describing
a canonical retraction
of the left arrow of the above sequence.
Let us denote $\Ztilde=\orient_{X(R)}$ and $\Ztilde(j) = \Ztilde \otimes_\Z \Z(j)$.

\begin{proof}
In view of
the canonical isomorphism
of $G$\nobreakdash-equivariant sheaves
 $\orient_{X(C)}=\Z(d)$,
putting together the equivariant purity isomorphisms~\eqref{eq:equivariant purity}
and the square~\eqref{eq:commutative square reduction modulo 2} yields a commutative diagram
\begin{align*}
\xymatrix{
\displaystyle\bigoplus_{\substack{0\leq p\leq i-d \\ p\mkern1mu\equiv\mkern1mu i-j \text{ mod } 2}} \mkern-10muH^p
\ar[d] &
H^{i-d}_G(X(R),\Ztilde(j-d)) \ar[d] \ar[r]^\sim \ar[l]_(.55)\sim &
H^i_{G,X(R)}(X(C),\Z(j)) \ar[d] \\
H^0\oplus\dots\oplus H^{i-d} &
H^{i-d}_G(X(R),\Z/2\Z) \ar@{=}[l] \ar[r]^\sim &
H^i_{G,X(R)}(X(C),\Z/2\Z)
}
\end{align*}
in which the leftmost vertical map is $1+\beta_{\Ztilde}$ (see~\textsection\ref{subsubsec:reduction modulo 2})
and in which the top left horizontal arrow is an isomorphism because
$j-d$ is odd (see~\textsection\ref{subsubsec:decomposition with integral coeff}).
The map $1+\beta_{\Ztilde}$ admits an obvious canonical retraction, namely the projection
map.
Following the isomorphisms in the above diagram, this yields a canonical retraction
of the rightmost vertical map.
Composing it with the map~\eqref{eq:canonical retraction mod 2}
constructed in~\textsection\ref{subsubsec:normal bundle with mod 2 coefficients}
and with the natural map
$H^i_G(X(C),\Z(j)) \to H^i_G(X(C),\Z/2\Z)$,
we obtain the desired retraction.
\end{proof}

Refining these ideas leads to the following integral variant of
\cite[Theorem~2.8]{vanhamelabeljacobi}
and \cite[Corollary~3.2]{krasnovequivariant}.
We shall use it in~\textsection\ref{sec:blochogus} and in~\cite{bwpartie2}.

\begin{prop}
\label{prop:truncated projection integral coeff}
Let~$X$ be a smooth, irreducible variety over~$R$, of dimension~$d$.
For any $i\geq 0$ and any $j \in \Z$, the composition
\begin{align}
\label{eq:truncated projection integral coeff}
H^i_G(X(C),\Z(j))
 \mkern3mu\to \mkern-20mu\bigoplus_{\substack{0\leq p<i-d \\ p\mkern1mu\equiv\mkern1mu i-j \text{ mod } 2}}\mkern-20muH^p(X(R),\Z/2\Z)
\end{align}
of the restriction map
$H^i_G(X(C),\Z(j))\to H^i_G(X(R),\Z(j))$,
of the map~\eqref{eq:natural projection},
and of the natural projection
is surjective.
\end{prop}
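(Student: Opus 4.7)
Plan: I will prove Proposition~\ref{prop:truncated projection integral coeff} in two steps according to the parity of $j-d$, reducing the ``bad'' parity case to the ``good'' one via cup product with~$\omega$.

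\textbf{Step 1 (case $j \equiv d-1 \pmod 2$).} Since $j-d$ is odd, Proposition~\ref{prop:short exact sequences integral} furnishes a canonical direct summand inclusion $H^i_{G,X(R)}(X(C),\Z(j)) \hookrightarrow H^i_G(X(C),\Z(j))$. By equivariant purity~\eqref{eq:equivariant purity} and the canonical decomposition~\eqref{eq:canonical decomposition} (with $\sF=\Ztilde$), this summand is identified with $\bigoplus_{0 \leq a \leq i-d-1,\, a \equiv i-d-1 \bmod 2} H^a(X(R),\Z/2\Z)$, which under our parity assumption coincides with the target $\bigoplus_{0 \leq p < i-d,\, p \equiv i-j \bmod 2} H^p(X(R),\Z/2\Z)$ of the Proposition. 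The map of the Proposition, restricted to this summand, factors (by equivariant purity and functoriality) as cup product with the equivariant Euler class $e \in H^d_G(X(R),\Ztilde(d))$ of the normal bundle of $X(R)$ in $X(C)$, followed by~\eqref{eq:natural projection} and the truncation. An integer-coefficient analog of Lemma~\ref{lem:gamma invertible}---following from Lemma~\ref{lem:gamma invertible} itself via the commutative square~\eqref{eq:commutative square reduction modulo 2}, in view of the fact that the degree-$0$ summand is common to the integer and mod-$2$ decompositions and that $e$ reduces modulo~$2$ to $\gamma$---implies that $e$ has unit degree-$0$ component. Multiplication by~$e$ is therefore triangular with unit diagonal in the graded sense, and the composition restricts to an isomorphism on this summand, yielding surjectivity onto the target.

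\textbf{Step 2 (case $j \equiv d \pmod 2$).} Using the real-complex exact sequence~\eqref{eq:real-complex long 10} with $\sF=\Z(j-1)$, I consider the map $H^{i-1}_G(X(C),\Z(j-1)) \to H^i_G(X(C),\Z(j))$ given by cup product with~$\omega$ (see~\textsection\ref{subsubsec:omega}). As $j-1 \equiv d-1 \pmod 2$, Step~1 applies to $(i-1,j-1)$ and yields the surjectivity of the corresponding composition onto $\bigoplus_{0 \leq p \leq i-d-2,\, p \equiv i-j \bmod 2} H^p(X(R),\Z/2\Z)$. A direct computation using Fact~\ref{fact:cupproduct}, the mod-$2$ cup-product formula~\eqref{eq:cup product}, the commutative square~\eqref{eq:commutative square reduction modulo 2}, and the observation that $\omega_{\Z/2\Z}|_{X(R)}$ is the multiplicative unit of the graded ring $\bigoplus_p H^p(X(R),\Z/2\Z)$ (see~\textsection\ref{subsubsec:decomposition omega}) shows that cup product with~$\omega$ preserves the $p$\nobreakdash-th component of the decomposition~\eqref{eq:natural projection} for every $p \leq i-1$. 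Since the targets of the Proposition for $(i,j)$ and $(i-1,j-1)$ coincide when $j \equiv d \pmod 2$ (both equal $\bigoplus_{0 \leq p \leq i-d-2,\, p \equiv i-j \bmod 2} H^p(X(R),\Z/2\Z)$), the surjectivity transfers.

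\textbf{Main obstacle.} The delicate point is the integer, $\Ztilde$\nobreakdash-twisted analog of Lemma~\ref{lem:gamma invertible} needed in Step~1, which is not explicitly stated in the paper. It follows by combining Lemma~\ref{lem:gamma invertible} with the commutativity of~\eqref{eq:commutative square reduction modulo 2} and the naturality of the equivariant Euler class under reduction modulo~$2$, the key observation being that the degree-$0$ summand of the relevant integral decomposition coincides canonically with the degree-$0$ summand of the mod-$2$ decomposition. Step~2 is then a mechanical cup-product manipulation using the formulas of~\textsection\ref{subsec:canonical decompositions}.
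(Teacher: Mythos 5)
Your proof is correct, and its engine is the same as the paper's: equivariant purity, the canonical decompositions, and the invertibility of $\gamma$ (Lemma~\ref{lem:gamma invertible}), transported to integral coefficients via~\eqref{eq:commutative square reduction modulo 2}, are used to show that the induced endomorphism of $\bigoplus_{0\leq p<i-d}H^p(X(R),\Z/2\Z)$ is unitriangular, hence bijective. The organisation differs, though. The paper treats all $j$ uniformly: it writes the composition of the purity isomorphism, the forgetful map and the restriction to $X(R)$ as $H^{i-d}(X(R),e)$ for a single map $e$ in the derived category and then truncates to $\tau_{>0}(e)$; this truncation discards precisely the $q=0$ summand $H^{i-d}(X(R),\Ztilde)$ that appears when $j\equiv d\pmod 2$, so neither the parity case distinction nor the reduction via cup product with $\omega$ of your Step~2 is needed. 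Likewise, invoking Proposition~\ref{prop:short exact sequences integral} in Step~1 is superfluous: surjectivity only requires that the map restricted to the truncated summand be onto, not that this summand split off. One point in Step~1 deserves more care than you give it: the triangularity of ``multiplication by $e$'' is not a consequence of $e$ having unit degree-zero component, and --- as the paper warns at the end of~\textsection\ref{subsubsec:decomposition with integral coeff} and in the remark following the proposition --- integral cup products are \emph{not} computed componentwise in these decompositions. Triangularity holds because the map is induced by a morphism in the derived category between complexes split into shifted sheaves, so it can only increase the degree~$p$ (equivalently, one can reduce modulo~$2$ and use the unitriangularity of $1+\beta$). Granting this, your identification of the diagonal with multiplication by $\gamma_0=1$ via mod-$2$ reduction is correct, and Step~2's compatibility of $\smile\,\omega$ with the projections does follow from the mod-$2$ reduction route you indicate, since $1+\beta_{\Z}$ is injective on each parity class.
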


\begin{proof}
We shall in fact prove the surjectivity
(and even produce a canonical section)
of
the map obtained by composing~\eqref{eq:truncated projection integral coeff} with
the forgetful map
\begin{align}
\label{eq:forgetful map integral coeff}
H^i_{G,X(R)}(X(C),\Z(j)) \to H^i_G(X(C),\Z(j))
\end{align}
and with the equivariant purity isomorphism
\begin{align}
\label{eq:equiv iso integral coeff}
H^{i-d}_G(X(R),\Ztilde(j-d)) \isoto H^i_{G,X(R)}(X(C),\Z(j))
\end{align}
(see~\eqref{eq:equivariant purity}).
As in the proof of Lemma~\ref{lem:gamma invertible},
we remark
that the
composition
of the maps~\eqref{eq:equiv iso integral coeff}
and~\eqref{eq:forgetful map integral coeff}
with the restriction map
$H^i_G(X(C),\Z(j))\to H^i_G(X(R),\Z(j))$
exists at the derived level: this map can
be written as $H^{i-d}(X(R),e)$
for some map
\begin{align*}
e:\Ztilde \otimes_\Z^\LL \RGamma(G,\Z(j-d)) \to \RGamma(G,\Z(j))[d]
\end{align*}
in the derived category of sheaves of abelian groups on~$X(R)$.
In view of the decomposition~\eqref{eq:decomposition of RGamma integral coeff},
in view of~\textsection\ref{subsubsec:reduction modulo 2},
and in view of the proof of Lemma~\ref{lem:gamma invertible},
this map fits into the following commutative diagram,
in which $\tau_{>0}$ denotes the truncation functor:
\begin{align*}
\xymatrix@R=3ex{
\ar@{=}[d] H^{i-d}_G(X(R),\Ztilde(j-d)) \ar[r]^{H^{i-d}(X(R),e)} & H^i_G(X(R),\Z(j)) \ar@{=}[d] \\
\ar@{->>}[d] \displaystyle \vphantom{\bigoplus}\smash[b]{\bigoplus_{p+q=i-d}} H^p(X(R), \Ztilde \otimes_\Z H^q(G,\Z(j-d))) \ar[r] &
\displaystyle \vphantom{\bigoplus}\smash[b]{\bigoplus_{p+q=i}} H^p(X(R), H^q(G,\Z(j))) \ar@{->>}[d] \\
\ar@{^{(}->}[d]^(.62){1+\labelbetasubtilde}\displaystyle\vphantom{\bigoplus}\smash[b]{\bigoplus_{\substack{0\leq p<i-d \\ p\mkern1mu\equiv\mkern1mu i-j \text{ mod } 2}}}
\mkern-11mu H^p(X(R),\Z/2\Z) \ar[r]^{H^{i-d}(X(R),\tau_{>0}(e))} &
\mkern-11mu\ar@{^{(}->}[d]^(.62){1+\beta_{\Z}}\displaystyle
  \vphantom{\bigoplus}\smash[b]{\bigoplus_{\substack{0\leq p<i-d \\ p\mkern1mu\equiv\mkern1mu i-j \text{ mod } 2}}}
\mkern-11mu H^p(X(R),\Z/2\Z) \\
*!<-.85em,1ex>\entrybox{H^0 \oplus\cdots\oplus H^{i-d-1}} \ar@<-1ex>[r]^{x\mapsto x\gamma} &
*!<-.85em,1ex>\entrybox{H^0 \oplus\cdots\oplus H^{i-d-1}\rlap{.}}
}
\end{align*}
Here, for $\sF=\Ztilde$ or $\sF=\Z$,
the label $1+\beta_{\sF}$ denotes
the sum of the twisted Bockstein homomorphisms
$\beta_\sF:H^p\to H^{p+1}$
(see~\textsection\ref{subsubsec:reduction modulo 2})
for $p<i-d-1$ and of the identity maps  $H^p\to H^p$
for $p<i-d$.

By Lemma~\ref{lem:gamma invertible},
the bottom horizontal map is injective;
therefore, so is the map $H^{i-d}(X(R),\tau_{>0}(e))$.
As the latter is an endomorphism of a finite-dimensional vector space,
it must then be surjective.  The proposition follows.
\end{proof}

\begin{rmk}
One can prove,
although we shall not use this fact,
that the map $H^{i-d}(X(R),\tau_{>0}(e))$ in the above diagram
is given by $x \mapsto x\gammatilde + \beta_{\Ztilde}(x)\beta_{\Ztilde}(\gammatilde)$,
where $\gammatilde \in H^0 \oplus H^2 \oplus \cdots$
is such that $\gamma=\gammatilde + \beta_{\Ztilde}(\gammatilde)$.
In particular,
the derived cup product
with integral coefficients,
in terms of the decomposition~\eqref{eq:decomposition of RGamma integral coeff},
is not just the sum of the individual cup product maps
(in contrast with~$\Z/2\Z$ coefficients; see~\eqref{eq:cup product}).
\end{rmk}

\subsection{Two Poincar\'e dualities}
\label{subsec:two dualities}

If~$X$ is an irreducible, smooth and proper variety over~$R$, of dimension~$d$,
there cannot be, in general, a duality between the equivariant cohomology groups
$H^i_G(X(C),\Z/2\Z)$
and $H^{2d-i}_G(X(C),\Z/2\Z)$,
as the former group need not vanish when $i\gg 0$.
The following proposition
provides
a substitute.

\begin{prop}
\label{prop:poincare duality equivariant}
Let~$X$ be a smooth and proper variety over~$R$, of pure dimension~$d$.
There is a canonical ``trace'' homomorphism
\begin{align}
H^{2d}_G(X(C),X(R),\Q/\Z(d))\to\Q/\Z\rlap{;}
\end{align}
together with cup product, it induces,
for any finite $G$\nobreakdash-module~$M$,
a perfect pairing
of finite abelian groups
\begin{align}
\label{eq:poincare duality equivariant}
H^i_G(X(C),X(R),M) \times H^{2d-i}_G(X(C) \setminus X(R),\Hom(M,\Q/\Z(d))) \to \Q/\Z
\end{align}
for any $i\in \Z$.
\end{prop}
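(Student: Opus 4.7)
The plan is to reduce the assertion to ordinary semi-algebraic Poincaré duality on the quotient manifold $V=(X(C)\setminus X(R))/G$. Since $G$ acts freely on $X(C)\setminus X(R)$, and since the complex locus $X(C)$ of a smooth $R$-variety of pure dimension~$d$ is itself a semi-algebraic manifold of dimension~$2d$ (\cite[Chapter~IV, \textsection1, Example~1.7]{delfshomology}), the quotient~$V$ is again a semi-algebraic manifold of dimension~$2d$. Moreover, since~$X$ is proper, $X(C)/G$ is a compact (\emph{i.e.}, complete) semi-algebraic space, and~$V$ is its open subspace with closed complement~$X(R)$.

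Next I would use the isomorphisms~\eqref{eq:cohomological dimension iso relative} and~\eqref{eq:cohomological dimension iso open}, which rely on the discontinuity of the $G$-action on $X(C)$, to rewrite both sides of the pairing in terms of cohomology on~$V$. Specifically, letting $M_V$ and $\sG$ denote the locally constant sheaves on~$V$ obtained by descent from the $G$-equivariant constant sheaves~$M$ and $\Hom(M,\Q/\Z(d))$ on $X(C)\setminus X(R)$, one has
\begin{align*}
H^i_G(X(C),X(R),M)=H^i_c(V,M_V)\rlap{,}\qquad H^{2d-i}_G(X(C)\setminus X(R),\Hom(M,\Q/\Z(d)))=H^{2d-i}(V,\sG)\rlap{,}
\end{align*}
the first equality incorporating the compactness of $X(C)/G$.

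The crucial input is the orientation sheaf: as $\orient_{X(C)}=\Z(d)$ as a $G$-equivariant sheaf (see the discussion preceding~\eqref{eq:equivariant purity subvariety}), the sheaf $\orient_V$ is obtained by descent from the $G$-module~$\Z(d)$. Consequently $M_V\mkern3mu\check{ }\otimes_\Z\orient_V$ coincides with~$\sG$. The semi-algebraic Poincaré duality pairing~\eqref{eq:semi-algebraic poincare duality} applied to the manifold~$V$ of pure dimension~$2d$, with coefficients in the finite locally constant sheaf~$M_V$, then yields a perfect pairing between these two groups. The trace map is the special case $i=2d$ applied with $M=\Z/N\Z(d)$, passed to the direct limit over~$N$ (or directly to $M=\Q/\Z(d)$), namely the canonical map $H^{2d}_c(V,\orient_V\otimes_\Z\Q/\Z)\to\Q/\Z$ whose existence is built into~\eqref{eq:semi-algebraic poincare duality}.

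The main thing to verify carefully is that the pairing produced by semi-algebraic Poincaré duality on~$V$, once transported back via the isomorphisms \eqref{eq:cohomological dimension iso relative} and~\eqref{eq:cohomological dimension iso open}, is indeed induced by the equivariant cup product and the trace map as described in the statement. This is a formal check about how cup products and fundamental classes descend along the quotient map of a free $G$-action, and does not present serious difficulty.
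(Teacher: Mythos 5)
Your proposal is correct and follows essentially the same route as the paper: both reduce to the semi-algebraic Poincaré duality pairing~\eqref{eq:semi-algebraic poincare duality} on the quotient homology manifold $V=(X(C)\setminus X(R))/G$ of dimension~$2d$, using the isomorphisms~\eqref{eq:cohomological dimension iso relative} and~\eqref{eq:cohomological dimension iso open}, the identification $\orient_V=\sH^0(G,\Z(d))$, and the fact that properness of~$X$ identifies compactly supported cohomology of~$V$ with the cohomology of $X(C)/G$ relative to~$X(R)$. The paper's proof is exactly this reformulation, stated in one sentence with the same references.
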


\begin{proof}
This is a reformulation of the semi-algebraic Poincar\'e duality~\eqref{eq:semi-algebraic poincare duality} applied to $V=(X(C)\setminus X(R))/G$
and to $\sG=\sH^0(G,M)$,
in view of the isomorphisms~\eqref{eq:cohomological dimension iso relative}
and~\eqref{eq:cohomological dimension iso open},
of the isomorphism $\orient_V=\sH^0(G,\Z(d))$
and of the remark that by the properness of~$X$,
the cohomology of~$V$ with complete supports coincides with
the cohomology of~$X(C)/G$ relative to~$X(R)$
(see \cite[Chapter~II, Theorem~5.1]{delfshomology}, \cite[Theorem~2.3]{delfsknebuschsurvey}).
\end{proof}

\begin{rmks}
\label{rks:equivariant poincare}
(i)
When $X(R)=\emptyset$, Proposition~\ref{prop:poincare duality equivariant} produces
a canonical duality between
the \'etale cohomology groups
$H^i_\et(X,M)$ and $H^{2d-i}_\et(X,\Hom(M,\Q/\Z(d)))$.
Contrary to what happens for the \'etale cohomology of varieties over a $p$\nobreakdash-adic field
(for which see~\cite[Lemma~2.9]{saitolakelouise}),
this duality does not result from the formal combination of Poincar\'e
duality for the \'etale cohomology of~$X_C$ with a duality in the Galois cohomology of~$R$.
Such a combination only yields a duality for the modified (\`a la Tate) \'etale hypercohomology of~$X$
(\emph{loc.\ cit.}, Lemma~2.10).

(ii)
If~$M$ is a finitely generated $G$\nobreakdash-module,
applying Proposition~\ref{prop:poincare duality equivariant}
to $M/nM$ for all $n\geq 1$
yields, for any $i\in \Z$, a pairing that identifies
the profinite completion
of the finitely generated abelian group
$H^i_G(X(C),X(R),M)$
with the Pontrjagin dual of the torsion abelian group $H^{2d-i}_G(X(C) \setminus X(R),\Hom(M,\Q/\Z(d)))$.
\end{rmks}

Let us fix an irreducible, smooth and proper variety~$X$ over~$R$, of dimension~$d$.
We now have two Poincar\'e dualities at our disposal:
one for the equivariant cohomology of $X(C)\setminus X(R)$,
between
the cohomological degrees~$i$ and~$2d-i$ (see Proposition~\ref{prop:poincare duality equivariant}),
and one for the cohomology of~$X(R)$,
between
the cohomological degrees~$i$ and~$d-i$
(see~\eqref{eq:semi-algebraic poincare duality}).
In the remainder of~\textsection\ref{subsec:two dualities},
we show how to reconcile these two seemingly incompatible dualities in a single self-dual long exact sequence
of cohomology groups.

Using the notation of \S\ref{subsubsec:normal bundle with mod 2 coefficients}, we first consider the localisation long exact sequence
\begin{align*}
\xymatrix@C=1.3em{
\cdots \ar[r]^(.2){\delta_i} & H^i_G(X(C),X(R),\Z/2\Z) \ar[r]^(.563){\smash[t]{h_i}} & H^i_G(X(C),\Z/2\Z) \ar[r]^{g_i} & H^i_G(X(R),\Z/2\Z) \ar[r]^(.73){\delta_{i+1}} & \cdots\rlap{\text{.}}
}
\end{align*}
The maps
$f_i:H^i_{G,X(R)}(X(C),\Z/2\Z) \to H^i_G(X(C),\Z/2\Z)$
and $g_i\circ f_i$ are injective,
by Proposition~\ref{prop:short exact sequences}
and Remark~\ref{rks:short exact sequences mod 2}~(iii).
The above sequence therefore remains exact if we
replace $H^i_G(X(C),\Z/2\Z)$ and $H^i_G(X(R),\Z/2\Z)$
with $\Coker(f_i)$ and $\Coker(g_i\circ f_i)$, respectively.
In view of Proposition~\ref{prop:short exact sequences}
and Remark~\ref{rks:short exact sequences mod 2}~(iii),
we obtain, in this way, a long exact sequence
\begin{align}
\label{eq:selfdual sequence}
\begin{aligned}
\xymatrix@R=0.5ex@C=1em{
&*!<8.18em,0ex>\entrybox{
\cdots \xrightarrow{w_{i-1}}
H^{i-d}\oplus\dots\oplus H^{i-1}
\xrightarrow{u_i}
 H^i_G(X(C), X(R),\Z/2\Z)
}\ar`r/6pt[d]`[l]`[dl]`[d]_{v_i}[d] \\
&*!<.3em,0ex>\entrybox{
H^i_G(X(C)\setminus X(R),\Z/2\Z)
\xrightarrow{w_i}
H^{i-d+1}\oplus\dots\oplus H^i \xrightarrow{u_{i+1}} \cdots
}
}
\end{aligned}
\end{align}
in which the arrows may be described as follows.
Let us denote by~$e_i$ the restriction map $e_i:H^i_G(X(C),\Z/2\Z) \to H^i_G(X(C)\setminus X(R),\Z/2\Z)$.
We have $v_i=e_i \circ h_i$.
The map~$w_i$ is characterised by the property that~$w_i(e_i(x'))$,
for $x' \in H^i_G(X(C),\Z/2\Z)$, is obtained by
decomposing~$g_i(x')$
according to~\eqref{eq:canonical decomposition mod 2},
then multiplying by~$\gamma^{-1}$
and projecting onto $H^{i-d+1}\oplus\dots\oplus H^i$.
Finally, to compute $u_i(y)$ for $y \in H^{i-d}\oplus\dots\oplus H^{i-1}$,
we multiply~$y$ by~$\gamma$, project onto $H^{i-d} \oplus\dots\oplus H^{i-1}$,
view the result as an element of $H^{i-1}_G(X(R),\Z/2\Z)$ via~\eqref{eq:canonical decomposition mod 2}
and apply the connecting homomorphism~$\delta_i$ of the localisation exact sequence.

Poincar\'e duality for~$X(R)$, in the form of~\eqref{eq:semi-algebraic poincare duality}, yields
a trace map $H^d \to \Z/2\Z$ (which is an isomorphism if and only if~$X(R)$ is semi-algebraically connected).
Let us denote by $\deg:\bigoplus_{p \geq 0} H^p \to \Z/2\Z$ its composition with the projection map onto~$H^d$.
For every~$i$, let us consider the pairing
\begin{align}
\label{eq:modified pairing}
\left( H^{i-d+1} \oplus \dots \oplus H^i \right) \times \left( H^{d-i} \oplus \dots \oplus H^{2d-i-1} \right) \to \Z/2\Z
\end{align}
defined by $(x,y) \mapsto \deg(xy\gamma)$, where the products take place in the ring
$\bigoplus_{p \geq 0} H^p$.
This is a perfect pairing since the map $H^p \times H^{d-p} \to \Z/2\Z$ given by $(x,y) \mapsto \deg(xy)$ is a
perfect pairing
for every~$p$
(see~\eqref{eq:semi-algebraic poincare duality})
and~$\gamma$ is invertible.

\begin{thm}
\label{th:selfduality}
For any irreducible, smooth and proper variety~$X$ over~$R$,
the long exact sequence~\eqref{eq:selfdual sequence}
is self-dual with respect to the perfect pairings~\eqref{eq:poincare duality equivariant}
and~\eqref{eq:modified pairing}.
In other words, if~$d$ denotes the dimension of~$X$, then,
for every integer~$i$,
the duals of $u_i$, $v_i$, $w_i$
with respect to these pairings
are $w_{2d-i}$, $v_{2d-i}$, $u_{2d-i}$, respectively.
\end{thm}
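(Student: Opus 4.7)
The plan is to verify the three equalities $u_i^\vee = w_{2d-i}$, $v_i^\vee = v_{2d-i}$, and $w_i^\vee = u_{2d-i}$ separately, by unravelling the definitions of $u_i$, $v_i$, $w_i$ given just before the statement and exploiting the compatibility of cup products with the restriction, forgetful, and connecting maps used in those definitions. All three identities should ultimately follow from the naturality of Poincar\'e duality with respect to the localisation and support long exact sequences.

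The identity $v_i^\vee = v_{2d-i}$ is the easiest. Since $v_i = e_i \circ h_i$ is a composition of natural maps, and the pairing \eqref{eq:poincare duality equivariant} is realised by a cup product into $H^{2d}_G(X(C), X(R), \Z/2\Z)$ followed by the trace, the identity reduces, via the (mod~$2$) symmetry of the cup product, to the fact that both $\langle v_i(\alpha), \beta\rangle$ and $\langle \alpha, v_{2d-i}(\beta)\rangle$ can be refactored as the same cup product in $H^{2d}_G(X(C), X(R), \Z/2\Z)$ applied to $h_i(\alpha)$ and $h_{2d-i}(\beta)$, after refactoring one of the two factors through the restriction to $X(C)\setminus X(R)$.

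The heart of the argument is the identity $u_i^\vee = w_{2d-i}$. By definition, $u_i$ is the composition of multiplication by~$\gamma$ in the graded ring $\bigoplus_p H^p(X(R), \Z/2\Z)$ (applied via the canonical decomposition \eqref{eq:canonical decomposition mod 2}) with the connecting homomorphism $\delta_i : H^{i-1}_G(X(R), \Z/2\Z) \to H^i_G(X(C), X(R), \Z/2\Z)$ of the localisation sequence. Dually, $w_{2d-i}$ can be described in terms of the connecting homomorphism $\partial : H^{2d-i}_G(X(C) \setminus X(R), \Z/2\Z) \to H^{2d-i+1}_{G, X(R)}(X(C), \Z/2\Z)$ of the support long exact sequence, together with the equivariant purity isomorphism \eqref{eq:equivariant purity example}, the canonical retraction of Proposition~\ref{prop:short exact sequences}, and division by~$\gamma$. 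The key input is the standard compatibility---proved by a chain-level argument---that $\delta_i$ and $\partial$ are dual under Poincar\'e duality for~$X(R)$ and the pairing \eqref{eq:poincare duality equivariant}. Granted this, the identity reduces to an algebraic manipulation in $\bigoplus_p H^p(X(R), \Z/2\Z)$, where the factors of~$\gamma$ and~$\gamma^{-1}$ combine correctly because the pairing \eqref{eq:modified pairing} is defined as $\deg(xy\gamma)$.

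The remaining identity $w_i^\vee = u_{2d-i}$ then follows either by the same argument with the roles of the localisation and support sequences swapped, or by formal symmetry from the other two. The main obstacle is not any single step but the bookkeeping: verifying the mutual compatibility of the canonical decomposition \eqref{eq:canonical decomposition mod 2}, equivariant purity \eqref{eq:equivariant purity example}, the splitting of Proposition~\ref{prop:short exact sequences}, Poincar\'e duality on~$X(R)$, and Proposition~\ref{prop:poincare duality equivariant}. The unifying observation is that the equivariant fundamental class $s_{G, X(R)/X(C)} \in H^d_{G, X(R)}(X(C), \Z/2\Z)$ plays the role of a Thom class linking the two Poincar\'e dualities: cup product with it realises both the Gysin map for the inclusion $X(R) \hookrightarrow X(C)$ and, after applying the canonical decompositions, multiplication by~$\gamma$. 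With this observation in hand, the necessary compatibilities reduce to diagram chases.
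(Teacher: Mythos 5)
Your proposal is correct and follows essentially the same route as the paper: the $v$\nobreakdash-duality is the trivial cup-product refactoring, and the $u$/$w$\nobreakdash-duality rests on the splitting of Proposition~\ref{prop:short exact sequences}, the invertibility of~$\gamma$, and the compatibility of the connecting homomorphism with cup products and the trace. The ``standard compatibility'' you invoke for the boundary maps is precisely what the paper isolates as Lemma~\ref{lem:boundary map and degree} (the trace of $\delta_{2d}(z)$ equals $\deg(z\gamma^{-1})$) together with the module property $x\smile\delta(t)=\delta(g(x)\smile t)$; note that the pairing under which the two connecting homomorphisms are adjoint is the $\gamma$\nobreakdash-twisted pairing~\eqref{eq:modified pairing} rather than the naive Poincar\'e pairing on $H^*(X(R),\Z/2\Z)$, exactly as your Thom-class remark predicts.
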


\begin{proof}
For $x \in H^i_G(X(C),X(R),\Z/2\Z)$ and $y \in H^{2d-i}_G(X(C),X(R),\Z/2\Z)$,
we have $v_i(x) \smile y = x \smile y = x \smile v_{2d-i}(y)$
in $H^{2d}_G(X(C),X(R),\Z/2\Z)$,
hence~$v_{2d-i}$ is dual to~$v_i$.
Thus, to prove the theorem, we only have to show that
\begin{align}
\label{eq:adjunction}
x \smile u_{2d-i}(y) = \deg(w_i(x)y\gamma)
\end{align}
for any $i\geq 0$, any $x \in H^i_G(X(C)\setminus X(R),\Z/2\Z)$ and any $y \in H^{d-i}\oplus\dots\oplus H^{2d-i-1}$.
Let us fix~$i$, $x$ and~$y$.
Let $x' \in H^i_G(X(C),\Z/2\Z)$ be the image of~$x$ by the canonical section of~$e_i$
given by Proposition~\ref{prop:short exact sequences}.
Let $x'' \in H^0 \oplus\dots\oplus H^i$
denote the decomposition
of~$g_i(x')$
via~\eqref{eq:canonical decomposition mod 2}.
As~$x'$ is annihilated
by the retraction~\eqref{eq:canonical retraction mod 2},
the product $x'' \gamma^{-1}$
belongs to
$H^{i-d+1} \oplus \dots \oplus H^i \subseteq H^0\oplus\dots\oplus H^i$.
It follows that $w_i(x)=x'' \gamma^{-1}$
and hence that
\begin{align}
\label{eq:adjoint computation 1}
\deg(w_i(x)y\gamma)=\deg(x''y)\rlap{\text{.}}
\end{align}
On the other hand, if $t'' \in H^{d-i} \oplus\dots\oplus H^{2d-i-1}$ denotes the projection
of~$y\gamma$
and $t' \in H^{2d-i-1}_G(X(R),\Z/2\Z)$ the element
which corresponds to~$t''$ via~\eqref{eq:canonical decomposition mod 2},
we have
\begin{align}
\label{eq:adjoint computation 2}
x\smile u_{2d-i}(y) &= x' \smile u_{2d-i}(y) = x' \smile \delta_{2d-i}(t') \\\notag&= \delta_{2d}(g_i(x') \smile t')=\deg(x''t''\gamma^{-1})\rlap{\text{,}}
\end{align}
where the last equality follows from Lemma \ref{lem:boundary map and degree} below
(and the penultimate one from \cite[Chapter~II, Theorem~7.1~(b)]{bredon}).
Finally, as $x''\gamma^{-1}$
belongs to $H^{i-d+1} \oplus \dots \oplus H^i$,
we have $x''t''\gamma^{-1}=x''\gamma^{-1}t''=x''\gamma^{-1}y\gamma=x''y$;
in view of~\eqref{eq:adjoint computation 1} and~\eqref{eq:adjoint computation 2},
this completes the proof of~\eqref{eq:adjunction} and hence of Theorem~\ref{th:selfduality}.
\end{proof}

\begin{lem}
\label{lem:boundary map and degree}
Let $z \in H^0\oplus\dots\oplus H^{2d-1}$.
The image of~$z$ by the
composed map
\begin{align*}
H^0 \oplus\dots\oplus H^{2d-1}=H^{2d-1}_G(X(R),\Z/2\Z) \xrightarrow{\delta_{2d}} H^{2d}_G(X(C),X(R),\Z/2\Z)=\Z/2\Z\rlap{\text{,}}
\end{align*}
where the two canonical isomorphisms come
from~\eqref{eq:canonical decomposition mod 2}
and from Proposition~\ref{prop:poincare duality equivariant},
is equal to $\deg(z \gamma^{-1})$.
\end{lem}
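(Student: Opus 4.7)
The plan is to identify $\delta_{2d}$ and $z\mapsto\deg(z\gamma^{-1})$ as the same linear functional on $H^{2d-1}_G(X(R),\Z/2\Z)$, by reducing via exactness of the localisation sequence and then verifying the remaining normalisation through a Leibniz calculation.

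\textbf{Step 1 (reduction using exactness).} By the proof of Lemma~\ref{lem:gamma invertible}, under the canonical decomposition~\eqref{eq:canonical decomposition mod 2}, the composition $g_{2d-1}\circ f_{2d-1}\circ\phi_{2d-1}^{-1}:H^{d-1}_G(X(R),\Z/2\Z)\to H^{2d-1}_G(X(R),\Z/2\Z)$ is cup product by~$\gamma$. Exactness of the localisation sequence implies that~$\delta_{2d}$ annihilates the image $\gamma\cdot(H^0\oplus\dots\oplus H^{d-1})$. The functional $z\mapsto\deg(z\gamma^{-1})$ also annihilates this subspace: for $z=\gamma y$ with $y\in H^0\oplus\dots\oplus H^{d-1}$, we have $z\gamma^{-1}=y$, which has no component in~$H^d$. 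Since $H^p=0$ for $p>d$ and~$\gamma$ is invertible, every element of $H^0\oplus\dots\oplus H^{2d-1}=H^0\oplus\dots\oplus H^d$ is congruent modulo this subspace to some $\gamma z_d=z_d$ with $z_d\in H^d$ (the higher components of~$\gamma$ multiply~$z_d$ into $H^{>d}=0$). It therefore remains to prove that $\delta_{2d}(z_d)=\int_{X(R)}z_d$ for $z_d\in H^d$ viewed, via~\eqref{eq:canonical decomposition mod 2}, as the $p=d$ summand of $H^{2d-1}_G(X(R),\Z/2\Z)$.

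\textbf{Step 2 (Leibniz reduction).} Under~\eqref{eq:canonical decomposition mod 2}, the $p=d$ summand inclusion $H^d\hookrightarrow H^{2d-1}_G(X(R),\Z/2\Z)$ identifies $z_d$ with $z_d\smile\omega^{d-1}_{\Z/2\Z}$ (by~\textsection\ref{subsubsec:decomposition omega} and the cup product formula~\eqref{eq:cup product}). Since $\omega^{d-1}_{\Z/2\Z}\in H^{d-1}_G(X(R),\Z/2\Z)$ is the restriction of $\omega^{d-1}_{\Z/2\Z}\in H^{d-1}_G(X(C),\Z/2\Z)$, the Leibniz rule for the connecting map of the localisation sequence of the pair $(X(C),X(R))$ yields
\begin{align*}
\delta_{2d}(z_d\smile\omega^{d-1}_{\Z/2\Z})=\delta_{d+1}(z_d)\smile\omega^{d-1}_{\Z/2\Z}
\end{align*}
in $H^{2d}_G(X(C),X(R),\Z/2\Z)=\Z/2\Z$, where the identification of the target with~$\Z/2\Z$ comes from Proposition~\ref{prop:poincare duality equivariant} and the fact that $(X(C)\setminus X(R))/G$ is connected for~$X$ irreducible.

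\textbf{Step 3 (normalisation via Verdier duality).} To finish, I pair this class with the generator $1\in H^0_G(X(C)\setminus X(R),\Z/2\Z)$ under Proposition~\ref{prop:poincare duality equivariant}. Using the compatibility between the connecting map~$\delta$ and Verdier duality, together with the module-theoretic relation that cup product by the restriction of a class from~$H^*_G(X(C))$ transfers across the pairing, the value $\langle\delta_{d+1}(z_d)\smile\omega^{d-1}_{\Z/2\Z},1\rangle$ equals $\langle\delta_{d+1}(z_d),e_{d-1}(\omega^{d-1}_{\Z/2\Z})\rangle$, where~$e_{d-1}$ denotes restriction to $H^{d-1}_G(X(C)\setminus X(R),\Z/2\Z)$. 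The main obstacle will be recognising this last pairing as $\int_{X(R)}z_d$; I expect this to follow from a tubular neighbourhood analysis near~$X(R)$ in~$X(C)/G$, using that the equivariant Euler class of the normal bundle of~$X(R)$ in~$X(C)$ equals~$\gamma$ (Remark~\ref{rks:short exact sequences mod 2}~(i)), so that the link structure around~$X(R)$ produces, via integration of~$\omega^{d-1}_{\Z/2\Z}$ against~$\gamma^{-1}$, exactly the PD trace of~$z_d$ on~$X(R)$.
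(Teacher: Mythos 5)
Your Steps~1 and~2 are correct and constitute a genuine reduction: since $\delta_{2d}\circ g_{2d-1}=0$ and, by the proof of Lemma~\ref{lem:gamma invertible}, the image of $g_{2d-1}\circ f_{2d-1}$ is $\gamma\cdot(H^0\oplus\dots\oplus H^{d-1})$, both functionals kill that subspace, and every class is congruent modulo it to its ``top'' component $(\gamma^{-1}z)_d\in H^d$; the identification of the $p=d$ summand with $\tilde z_d\smile\omega^{d-1}_{\Z/2\Z}$ and the module formula $\delta_{2d}(\tilde z_d\smile g_{d-1}(\omega^{d-1}_{\Z/2\Z}))=\delta_{d+1}(\tilde z_d)\smile\omega^{d-1}_{\Z/2\Z}$ are also fine. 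But after these reductions the lemma is \emph{equivalent} to the normalisation $\delta_{2d}(\tilde z_d)=\deg(z_d)$ for $z_d\in H^d$ (note that if $X(R)$ is disconnected, $H^d$ has dimension $>1$, so you really must identify the functional on all of $H^d$, not merely show it is non-zero), and Step~3 does not prove it: ``I expect this to follow from a tubular neighbourhood analysis'' is precisely the computation that carries all the content. Reducing to the pairing $\langle\delta_{d+1}(\tilde z_d),\omega^{d-1}_{\Z/2\Z}|_{X(C)\setminus X(R)}\rangle$ is a restatement, not a solution, and the phrase ``compatibility between the connecting map and Verdier duality'' risks circularity, since the only such compatibility established in the paper is Theorem~\ref{th:selfduality}, whose proof depends on this very lemma.

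For comparison, the paper closes this gap without any tubular-neighbourhood or link analysis. It first records a commutative diagram comparing the equivariant and non-equivariant purity isomorphisms for $X(R)\subset X(C)$ under the maps induced by~\eqref{eq:real-complex sequence 01} (with coefficients $\Z/2\Z$ and $\Z/2\Z[G]$), the bottom triangle $H^d\to H^{2d}(X(C),\Z/2\Z)=\Z/2\Z$ being the degree map because $H^d$ is generated by classes supported on points. Combined with the fact that, for $X(R)\neq\emptyset$, the forgetful map $f_{2d}$ is an isomorphism whose inverse is the canonical retraction~\eqref{eq:canonical retraction mod 2}, this shows that the composite $H^{2d}_G(X(C),\Z/2\Z)\xrightarrow{g_{2d}}H^0\oplus\dots\oplus H^{2d}\xrightarrow{z\mapsto\deg(z\gamma^{-1})}\Z/2\Z$ equals the natural map $\varepsilon$ to $H^{2d}(X(C),\Z/2\Z)$. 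The lemma then follows from a chase using the exactness of the localisation sequence in degree~$2d$, the exactness of~\eqref{eq:real-complex long 01} in the column $H^{2d-1}_G(X(C),\Z/2\Z)\to H^{2d}_G(X(C),\Z/2\Z)\xrightarrow{\varepsilon}H^{2d}(X(C),\Z/2\Z)$, and the surjectivity of $g_{2d}$. If you want to keep your Steps~1--2, you must still supply an argument of comparable substance for the top-degree normalisation; as written, the proof is incomplete.
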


\begin{proof}
We may assume that $X(R)\neq\emptyset$.
Let us consider the commutative diagram
\begin{equation*}
\vspace*{-1.5ex}
\xymatrix@C=1.05em@R=3ex{
H^0\oplus\dots\oplus H^d \ar[d] \ar@{=}[r] & H^d_G(X(R),\Z/2\Z) \ar[d] \ar[r]^(.45)\sim & H^{2d}_{G,X(R)}(X(C),\Z/2\Z) \ar[d] \ar[r]^(.53){f_{2d}} & H^{2d}_G(X(C),\Z/2\Z) \ar[d]^(.45){\varepsilon} \\
H^d \ar`d/6pt[r][rrrd]!<0em,2ex>^(.75){\deg} \ar@{=}[r] & H^d(X(R),\Z/2\Z) \ar[r]^(.45)\sim & H^{2d}_{X(R)}(X(C),\Z/2\Z) \ar[r] & H^{2d}(X(C),\Z/2\Z)\rlap{\text{,}} \ar@{=}[d] \\
&&&
*!<0em,-2ex>\entrybox{\Z/2\Z}
}
\end{equation*}
in which the leftmost vertical arrow is the projection map
and the horizontal isomorphisms are the
decomposition~\eqref{eq:canonical decomposition mod 2}
and the equivariant
purity isomorphism~\eqref{eq:equivariant purity},
with coefficients $\Z/2\Z$ or $\Z/2\Z[G]$.
The leftmost square
commutes by~\textsection\ref{subsubsec:effect};
the bottom triangle commutes as a consequence of the fact that~$H^d$ is generated
by classes supported on points.
The hypothesis that $X(R)\neq\emptyset$
implies, by Proposition~\ref{prop:poincare duality equivariant},
that $H^{2d}_G(X(C)\setminus X(R),\Z/2\Z)=0$.
It follows, by Proposition~\ref{prop:short exact sequences},
that~$f_{2d}$ is an isomorphism
and that its inverse is the map~\eqref{eq:canonical retraction mod 2}.
From the definition of~\eqref{eq:canonical retraction mod 2} and from the above diagram,
we deduce the commutativity of the bottom square of the following diagram,
whose vertical arrows are induced by~\eqref{eq:real-complex sequence 01}:
\begin{align*}
\xymatrix@R=1.5ex@C=1em{
H^{2d}_G(X(C),X(R),\Z/2\Z) \ar@{=}[d] && H^{2d-1}_G(X(R),\Z/2\Z) \ar[ll]_(.45){\delta_{2d}} \save+<-4.7em,-1.7ex>\ar@{=}[dl]!<0em,1.6ex>\restore \ar[dd] && H^{2d-1}_G(X(C),\Z/2\Z) \ar[ll]_(.49){g_{2d-1}} \ar[dd] \\
\Z/2\Z
& \hbox to 0pt{\hss\rlap{$H^0\oplus\dots\oplus H^{2d-1}$}\phantom{$H^0\oplus\dots\oplus H^{2d}$}\hss} \ar@<-.17em>@{^{(}->}[dd]^(.45)\iota \\
&& H^{2d}_G(X(R),\Z/2\Z) \save+<-4.3em,-1.7ex>\ar@{=}[dl]!<.7em,2.05ex>\restore && \smash[b]{H^{2d}_G(X(C),\Z/2\Z)}\vphantom{\lower.3ex\hbox{x}} \ar@{->>}[ll]_(.49){g_{2d}} \ar[d]^(.42)\varepsilon \\
& \hbox to 0pt{\hss$H^0\oplus\dots\oplus H^{2d}$\hss} \save+<3.7em,0ex>\ar[rr]!(-8,0)^{z \mapsto \deg(z\gamma^{-1})}\restore &&& \mkern-66mu\Z/2\Z=H^{2d}(X(C),\Z/2\Z)\rlap{\text{.}}
}
\end{align*}
The map~$g_{2d}$ is surjective, as $H^{2d+1}_G(X(C),X(R),\Z/2\Z)=0$
(see~\textsection\ref{subsubsec:cohomological dimension}).
By~\textsection\ref{subsubsec:effect},
the map~$\iota$ is the inclusion.
As the top row and the right column are exact,
we see that an element of $H^0 \oplus\dots\oplus H^{2d-1}$ dies in the top left corner
if and only if it dies in the bottom right corner.  This proves the lemma.
\end{proof}

\subsection{A real Lefschetz hyperplane theorem}
\label{subsec:real lefschetz}

We shall need,
in~\cite[\textsection\ref*{BW2-par:systeme anticanonique}]{bwpartie2},
a Lefschetz hyperplane theorem
for the equivariant Gysin map
\begin{align}
\label{eq:gysin in lefschetz}
H^{i-2}_G(Y(C),\Z(j-1))\to H^i_G(X(C),\Z(j))
\end{align}
of a smooth ample hypersurface $Y \subset X$ in a smooth and projective variety~$X$ over~$R$
(see~\eqref{eq:equivariant purity subvariety}).
The usual formulation is correct when $X(R)=\emptyset$,
as we now check.

\begin{prop}
\label{prop:weak lefschetz no point}
Let~$X$ be a smooth and projective variety over~$R$, of dimension~$d$.
Let $Y\subset X$ be a smooth ample hypersurface.
Let $j \in \Z$.
If $X(R)=\emptyset$,
the map~\eqref{eq:gysin in lefschetz}
is bijective for any $i>d+1$ and is surjective for $i=d+1$.
\end{prop}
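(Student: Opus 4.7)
The plan is to reduce the assertion, through the equivariant purity isomorphism \eqref{eq:equivariant purity subvariety} and the long exact sequence of the pair $(X(C),U(C))$, where $U=X\setminus Y$, to a vanishing statement on the cohomology of the affine complement $U$. Concretely, the Gysin map \eqref{eq:gysin in lefschetz} fits into
\begin{align*}
\cdots \to H^{i-2}_G(Y(C),\Z(j-1)) \to H^i_G(X(C),\Z(j)) \to H^i_G(U(C),\Z(j)) \to \cdots\rlap{\text{,}}
\end{align*}
so the proposition will follow at once if one can show that $H^i_G(U(C),\Z(j))=0$ for every $i>d$ and every $j\in\Z$: the vanishing at degree $d+1$ yields the surjectivity at $i=d+1$, and the vanishing at degrees $i-1$ and $i$ yields the bijectivity for $i>d+1$.

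I would prove this vanishing by combining two ingredients. First, since $Y$ is ample, $U$ is an affine variety of dimension~$d$; and since $X(R)=\emptyset$ implies $U(R)=\emptyset$, the action of $G$ on $U(C)$ is free, so $U(C)/G$ is a locally complete semi-algebraic space of pure dimension~$2d$. Hence, by~\eqref{eq:cohomological dimension iso open} and \cite[Chapter~II, Lemma~9.1]{delfshomology},
\begin{align*}
H^i_G(U(C),\Z(j))=H^i(U(C)/G,\Z(j))=0\qquad\text{for every }i>2d\text{ and every }j\rlap{\text{.}}
\end{align*}
Second, the smooth complex affine variety $U_C$ has complex dimension~$d$ and hence, by the Andreotti--Frankel theorem, its $\C$-points have the homotopy type of a CW complex of real dimension $\leq d$; invoking the Delfs--Knebusch transfer principle to pass from $R=\R$ to an arbitrary real closed field, one obtains
\begin{align*}
H^i(U(C),\Z)=0\qquad\text{for every }i>d\rlap{\text{.}}
\end{align*}

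The rest is a propagation argument using the real-complex long exact sequence \eqref{eq:real-complex long 10}, in which the connecting map is cup product with~$\omega$ (see~\textsection\ref{subsubsec:omega}). For every $i>d$ and every $j$, the fragment
\begin{align*}
H^i(U(C),\Z(j)) \to H^i_G(U(C),\Z(j)) \xrightarrow{\ \smile\omega\ } H^{i+1}_G(U(C),\Z(j+1))
\end{align*}
has vanishing left-hand term by the second ingredient (noting that $\Z(j)\simeq\Z$ as an abelian group), so cup product with~$\omega$ is injective in that range. Iterating, one sees that for every $i>d$ and every $j$, the group $H^i_G(U(C),\Z(j))$ injects into $H^{i+k}_G(U(C),\Z(j+k))$ for all $k\geq 0$, and the target vanishes once $k$ is large enough, by the first ingredient.

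No single step is deep, and I expect the main obstacle to be essentially bookkeeping: ensuring that the statements over an arbitrary real closed field $R$ are correctly extracted via semi-algebraic transfer, and verifying that the periodicity/$\omega$-multiplication argument indeed propagates the Andreotti--Frankel bound into equivariant cohomology of every weight.
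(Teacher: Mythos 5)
Your proposal is correct and follows the same overall strategy as the paper: reduce via the localisation sequence to showing $H^i_G(U(C),\cdot)=0$ for $i>d$ on the affine complement $U=X\setminus Y$, get the vanishing for $i\gg0$ from $U(R)=\emptyset$ and the cohomological dimension of $U(C)/G$, get the non-equivariant vanishing for $i>d$ from affineness, and propagate through the real-complex exact sequence~\eqref{eq:real-complex long 10} (your ``iterate injectivity of $\smile\omega$ upward'' is the same induction as the paper's descending induction in which the twist $j$ varies). The one genuine difference is the source of the non-equivariant vanishing: you invoke Andreotti--Frankel over $\C$ plus a Delfs--Knebusch transfer/spreading-out argument, whereas the paper compares semi-algebraic cohomology of $U(C)$ with \'etale cohomology of $U_C$ and applies Artin's affine vanishing theorem, which works directly over the algebraically closed field $C$ and thus sidesteps exactly the bookkeeping you flag as the main obstacle. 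The paper also proves the vanishing for arbitrary finitely generated $G$-modules $M$ rather than just $\Z(j)$ -- not needed for this proposition (as you correctly observe, the real-complex sequences only ever produce twists of $\Z$), but reused verbatim in Lemma~\ref{lem:restrmapinjective}.
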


\begin{proof}
Let $U = X \setminus Y$.
It suffices to check that $H^i_G(U(C),M)=0$ for any $i>d$ and any
finitely generated $G$\nobreakdash-module~$M$.
This is true for $i\gg 0$, as $U(R)=\emptyset$
(see~\textsection\ref{subsubsec:cohomological dimension}).
On the other hand, if $i>d$,
we have $H^i(U(C),M) \otimes_\Z \Zl=H^i_\et(U_C,M \otimes_\Z \Zl)=0$
for every~$\ell$, as~$U_C$ is affine and~$C$ is algebraically closed
(see \cite[Chapter~VI, Theorem~7.2]{milneet}), hence $H^i(U(C),M)=0$.
The desired vanishing then follows,
thanks to the real-complex exact sequence~\eqref{eq:real-complex long 10},
by a descending induction on~$i$
in which~$M$ is allowed to vary.
\end{proof}

When $X(R)\neq\emptyset$,
some condition on the real Gysin maps
\begin{align}
\label{eq:real gysin in lefschetz}
H^{p-1}(Y(R),\Z/2\Z)\to H^p(X(R),\Z/2\Z)
\end{align}
(see~\eqref{eq:purity mod 2})
must appear in the formulation
of a Lefschetz hyperplane theorem
as
the ampleness of~$Y$
has no effect on
the injectivity or surjectivity of these maps.
For example,
the conclusion of Proposition~\ref{prop:weak lefschetz no point} fails whenever $X(R)\neq\emptyset$
and $Y(R)=\emptyset$, since in this case
$H^{i-2}_G(Y(C),\Z(i-1))=0$ and $H^i_G(X(C),\Z(i))\neq 0$ for $i>2d$,
by \S\ref{subsubsec:cohomological dimension} and~\eqref{eq:canonical decomposition}.
The next proposition generalises the surjectivity half of Proposition~\ref{prop:weak lefschetz no point} when $X(R)\neq\emptyset$.

\begin{prop}
\label{prop:weak lefschetz surjectivity}
Let~$X$ be a smooth and projective variety over~$R$, of dimension~$d$.
Let $Y\subset X$ be a smooth ample hypersurface.  Let $i,j\in \Z$
with $i\geq d+1$. If~\eqref{eq:real gysin in lefschetz}
is surjective for every $p\geq 1$ such that $\congru{p}{i-j}{2}$,
then the image of~\eqref{eq:gysin in lefschetz}
is
\begin{align*}
\left\{\alpha\in H_G^i(X(C),\Z(j));\; \alpha|_x=0 \text{ for all }x\in X(R)\right\}\rlap{\text{,}}
\end{align*}
where $\alpha|_x \in H^i_G(x,\Z(j))=H^i(G,\Z(j))$ denotes the restriction of~$\alpha$ to~$x$.
\end{prop}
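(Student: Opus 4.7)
The plan is to reformulate the assertion through the equivariant Gysin long exact sequence attached to the open immersion $U = X \setminus Y \hookrightarrow X$: the image of~\eqref{eq:gysin in lefschetz} is the kernel of the restriction $\mathrm{res}\colon H^i_G(X(C),\Z(j)) \to H^i_G(U(C),\Z(j))$. I must therefore show that this kernel coincides with the subgroup $\{\alpha : \alpha|_x = 0 \text{ for all } x \in X(R)\}$.

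One inclusion is immediate. If $\alpha = i_{Y,*}\beta$ and $x \in U(R)$, then $\alpha|_x$ factors through $\mathrm{res}(\alpha) = 0$. If instead $x \in Y(R)$, the self-intersection formula $i_Y^* i_{Y,*}\beta = c_1(N_{Y/X}) \smile \beta$ gives $(i_{Y,*}\beta)|_x = c_1(N_{Y/X})|_x \smile \beta|_x$, which vanishes because $c_1(N_{Y/X})|_x \in H^2_G(x,\Z(1)) = H^2(G,\Z(1)) = 0$.

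For the reverse inclusion, assume $\alpha|_x = 0$ for every $x \in X(R)$. Via~\eqref{eq:canonical decomposition}, $\alpha|_{X(R)}$ sits in $\bigoplus_{p+q=i} H^p(X(R), H^q(G,\Z(j)))$. Since $i \geq d+1 > d = \dim X(R)$, the only potentially nonzero summands lie in $H^p(X(R),\Z/2\Z)$ for $0 \leq p \leq d$ with $p \equiv i-j \pmod{2}$; the hypothesis $\alpha|_x=0$ precisely kills the $p = 0$ summand. By the classical Gysin exact sequence for $Y(R) \subset X(R)$, the surjectivity hypothesis on the real Gysin maps is equivalent to the vanishing, for all relevant $p \geq 1$, of the restriction $H^p(X(R),\Z/2\Z) \to H^p(U(R),\Z/2\Z)$; hence each remaining summand of $\alpha|_{X(R)}$ restricts to zero on $U(R)$. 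Thus $\mathrm{res}(\alpha)$ vanishes in $H^i_G(U(R),\Z(j))$.

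The final step---which I expect to carry the main technical weight---is to show that the restriction map $H^i_G(U(C),\Z(j)) \to H^i_G(U(R),\Z(j))$ is injective for every $i \geq d+1$; this is where the ampleness of $Y$ is used, through the affineness of $U_C$. As in the proof of Proposition~\ref{prop:weak lefschetz no point}, étale Artin vanishing implies $H^i(U(C),\Z) = 0$ for $i > d$, and the real-complex long exact sequence~\eqref{eq:real-complex long 10} then makes cup product with $\omega$ an isomorphism $H^i_G(U(C),\Z(j)) \isoto H^{i+1}_G(U(C),\Z(j+1))$ for every $i \geq d+1$. The same reasoning, using $\dim U(R) = d$, yields the analogous isomorphism for $U(R)$. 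Since~\eqref{eq:cohomological dimension iso open} makes the restriction an isomorphism for $i > 2d$, a decreasing induction compatible with $\omega$-cup then propagates this isomorphism down to $i = d+1$, and the hard inclusion follows.
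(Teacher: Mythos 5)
Your argument is correct and follows essentially the same route as the paper's proof: you reduce to the exactness of the equivariant localisation sequence for $U=X\setminus Y$, you reprove Lemma~\ref{lem:restrmapinjective} (the isomorphism $H^i_G(U(C),\Z(j))\isoto H^i_G(U(R),\Z(j))$ for $i>d$, via Artin vanishing for the affine $U_C$ and descending induction along the real-complex sequence), and you convert the hypothesis on the real Gysin maps through the canonical decomposition~\eqref{eq:canonical decomposition}. The only cosmetic differences are that the paper phrases the real step via the forgetful maps $\theta^p_R$ from cohomology with supports rather than via restriction to $U(R)$ (the two being equivalent by the localisation sequence on $X(R)$), and it obtains your "easy inclusion" directly from the diagram instead of through a self-intersection computation at real points of $Y$.
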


We start with a lemma.

\begin{lem}
\label{lem:restrmapinjective}
Let~$U$ be an affine variety over~$R$, of dimension~$d$.
For any $i>d$
and any $G$\nobreakdash-module~$M$,
the restriction map
$H^i_G(U(C),M) \to H^i_G(U(R),M)$ is an isomorphism.
\end{lem}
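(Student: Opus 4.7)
My plan is to reduce the claim, via the long exact sequence of the pair $(U(C),U(R))$ in equivariant cohomology, to the vanishing
$H^i_G(U(C),U(R),M)=0$ for all $i\geq d+2$ and all $G$\nobreakdash-modules~$M$, and then to bridge the borderline degree $i=d+1$ by a $2$\nobreakdash-periodicity argument with~$\omega$.

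The first step is to show that cup product with~$\omega$ yields isomorphisms
$H^i_G(V,M)\xrightarrow{\sim}H^{i+1}_G(V,M(1))$
for $V\in\{U(C),U(R)\}$ and every $i>d$. Applying the real-complex exact sequence~\eqref{eq:real-complex long 10} to~$V$, this reduces to the Artin-type vanishings $H^i(U(C),M)=0$ for $i>d$ (because $U_C$ is affine over the algebraically closed field~$C$, as in the proof of Proposition~\ref{prop:weak lefschetz no point}) and $H^i(U(R),M)=0$ for $i>d$ (from the semi-algebraic dimension bound on~$U(R)$). Since the connecting map of~\eqref{eq:real-complex long 10} is cup product with~$\omega$ (see~\textsection\ref{subsubsec:omega}), these isomorphisms are automatically compatible with the restriction $U(R)\hookrightarrow U(C)$.

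The second step is a descending induction establishing $H^i_G(U(C),U(R),M)=0$ for all $i\geq d+2$ and all~$M$. The base case uses~\eqref{eq:cohomological dimension iso relative}, which identifies $H^i_G(U(C),U(R),M)$ with $H^i(U(C)/G,U(R),M)$; the latter vanishes for $i>2d$ as $U(C)/G$ has semi-algebraic dimension at most~$2d$. For the inductive step, apply~\eqref{eq:real-complex long 10} to the sheaf $j_!M$ on~$U(C)$, where $j\colon U(C)\setminus U(R)\hookrightarrow U(C)$ is the open inclusion, to obtain
\[
H^i(U(C),U(R),M) \to H^i_G(U(C),U(R),M) \to H^{i+1}_G(U(C),U(R),M(1)).
\]
For $i\geq d+2$ the left-hand group vanishes, by the non-equivariant long exact sequence of the pair combined with the Artin-type vanishings above, while the right-hand group vanishes by the inductive hypothesis; hence so does the middle one. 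Feeding this back into the equivariant long exact sequence of the pair $(U(C),U(R))$, the restriction map is an isomorphism for $i\geq d+2$.

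To extend this to $i=d+1$, I will invoke the commutative square
\[
\xymatrix{
H^{d+1}_G(U(C),M) \ar[r]^{\smile\omega} \ar[d] & H^{d+2}_G(U(C),M(1)) \ar[d] \\
H^{d+1}_G(U(R),M) \ar[r]^{\smile\omega} & H^{d+2}_G(U(R),M(1))
}
\]
whose horizontal arrows are isomorphisms by the first step and whose right vertical arrow is an isomorphism by the second step applied to~$M(1)$; hence the left vertical arrow---the restriction in degree $d+1$---is an isomorphism as well. The main subtlety is precisely this borderline degree: the non-equivariant group $H^{d+1}(U(C),U(R),M)$ is generally nonzero, being the cokernel of $H^d(U(C),M)\to H^d(U(R),M)$, so descending induction via~\eqref{eq:real-complex long 10} alone only reaches $i\geq d+2$, and it is the cup-$\omega$ periodicity of the first step, itself a manifestation of affine Artin vanishing, that recovers the missing degree.
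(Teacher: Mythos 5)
Your argument is correct and rests on the same engine as the paper's proof: the cup-$\omega$ periodicity $H^i_G(V,M)\isoto H^{i+1}_G(V,M(1))$ for $V\in\{U(C),U(R)\}$ and $i>d$, extracted from the real-complex sequence~\eqref{eq:real-complex long 10} together with affine Artin vanishing for $U(C)$ and the semi-algebraic dimension bound for $U(R)$, anchored at $i\gg 0$ by~\textsection\ref{subsubsec:cohomological dimension}. Two small remarks. First, your step~2 (the vanishing of $H^i_G(U(C),U(R),M)$ for $i\geq d+2$) is superfluous: the commutative square of your step~3 works verbatim for every $i>d$, so a single descending induction on~$i$, with the coefficient module varying from $M$ to $M(1)$ at each step, already deduces the lemma from the range $i>2d$ --- this is exactly how the paper argues, and it spares you the borderline-degree discussion entirely. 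Second, the Artin-type vanishing $H^i(U(C),M)=0$ for $i>d$ is obtained (in the proof of Proposition~\ref{prop:weak lefschetz no point}) by comparison with $\ell$-adic \'etale cohomology, an argument that needs $M$ to be finitely generated; you should therefore begin, as the paper does, by writing~$M$ as the filtered union of its finitely generated sub-$G$\nobreakdash-modules and reducing to that case.
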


\begin{proof}
Writing~$M$ as the union of its finitely generated sub-$G$\nobreakdash-modules,
we see that we may assume~$M$ to be finitely generated.
The conclusion of the lemma, which holds for $i\gg 0$
(see~\textsection\ref{subsubsec:cohomological dimension}),
then follows for $i>d$ by a descending induction, just as in the proof of Proposition~\ref{prop:weak lefschetz no point}, thanks to the real-complex exact sequence~\eqref{eq:real-complex long 10}
and to the vanishing, for $i>d$, of the groups $H^i(U(C),M)$ (see the proof of Proposition~\ref{prop:weak lefschetz no point})
and $H^i(U(R),M)$ (see \cite[Chapter~II, Lemma~9.1]{delfshomology}).
\end{proof}

\begin{proof}[Proof of Proposition~\ref{prop:weak lefschetz surjectivity}]
Let $U=X \setminus Y$.
The commutative diagram
\begin{align}
\label{eq:proof of weak lefschetz forget support}
\begin{aligned}
\xymatrix@R=3ex{
H^i_{G,Y(C)}(X(C),\Z(j)) \ar[r]^(.54){\theta^i_{G,C}} \ar[d] & H^i_G(X(C),\Z(j)) \ar[d] \ar[r] & H^i_G(U(C),\Z(j)) \ar[d]^(.45)\wr \\
H^i_{G,Y(R)}(X(R),\Z(j)) \ar[r]^(.54){\theta^i_{G,R}} & H^i_G(X(R),\Z(j)) \ar[r] & H^i_G(U(R),\Z(j))
}
\end{aligned}
\end{align}
has exact rows and its rightmost vertical map is an
isomorphism by Lemma~\ref{lem:restrmapinjective}.

Let $\theta^p_R:H^p_{Y(R)}(X(R),\Z/2\Z) \to H^p(X(R),\Z/2\Z)$
denote the forgetful map.
Using the decompositions~\eqref{eq:canonical decomposition}
and~\eqref{eq:canonical decomposition with support} (with $\sF=\Z$), we see that
when $i>d$, the map~$\theta^i_{G,R}$ can be identified with the direct sum,
over
all $p \geq 0$
with $\congru{p}{i-j}{2}$,
of the maps~$\theta^p_R$.
Thus, the maps $\theta^p_R$
for $p \geq 1$
such that $\congru{p}{i-j}{2}$
are all surjective
if and only if
the
image
of~$\theta^i_{G,R}$
coincides with the set of $\alpha \in H^i_G(X(R),\Z(j))$
whose component in $H^0(X(R),H^i(G,\Z(j)))$
vanishes, \emph{i.e.}, with the set of~$\alpha$
such that $\alpha|_x=0$ for all $x\in X(R)$.
The proposition then follows by a chase in the diagram~\eqref{eq:proof of weak lefschetz forget support}, as~$\theta^p_R$
and~$\theta^i_{G,C}$
can be identified, respectively, with~\eqref{eq:real gysin in lefschetz}
and~\eqref{eq:gysin in lefschetz}, via~\eqref{eq:purity mod 2}
and~\eqref{eq:equivariant purity subvariety}.
\end{proof}

\begin{rmks}
\label{rmk:lefschetzfaible}
(i) One can also check
that for $i>d+1$,
if~\eqref{eq:real gysin in lefschetz}
is injective for every $p\geq 1$ such that $\congru{p}{i-j}{2}$ and is surjective for every $p\geq 1$ such that $\pascongru{p}{i-j}{2}$,
then~\eqref{eq:gysin in lefschetz} is injective.
However, the proof is more involved and we shall not use this fact.

(ii)
In the setting of Lemma~\ref{lem:restrmapinjective},
Scheiderer \cite[Corollary~18.11]{scheiderer} has shown,
at least when~$M$ is torsion,
the stronger
(and significantly more delicate)
fact
that $H^i_G(U(C),U(R),M)=0$ for all $i>d$.
\end{rmks}

\subsection{Cycle classes and topological constraints}
\label{subsec:topologicalconstraints}

We fix, in this section,
an integer~$k$ and a smooth variety~$X$ over~$R$.

\subsubsection{The equivariant cycle class map}
\label{subsubsec:eqcl}

For any irreducible closed subvariety
$Y \subset X$ of codimension~$k$,
the \emph{equivariant fundamental class of~$Y(C)$ in~$X(C)$}
is the image of~$1$
by the canonical isomorphism
\begin{align}
\Z=H^{2k}_{G,Y(C)}(X(C),\Z(k))
\end{align}
given by~\eqref{eq:equivariant purity subvariety}
when~$Y$ is smooth,
and which stems from~\eqref{eq:equivariant purity subvariety} by d\'evissage
in general
(see \cite[Chapter~VI, \textsection9]{milneet}).
We let $\cl(Y)$ denote its image in $H^{2k}_G(X(C),\Z(k))$.
For a codimension~$k$ cycle $Y=\sum n_i Y_i$ on~$X$,
we let $\cl(Y)=\sum n_i \cl(Y_i)$.
This assignment induces a homomorphism
\begin{align}
\cl:\CH^k(X) \to H^{2k}_G(X(C),\Z(k))
\end{align}
from the Chow group of codimension~$k$ cycles on~$X$,
as was verified by Krasnov \cite[Proposition~2.1.1]{krasnovcharacteristicclasses}
when $R=\R$.  The proof given in \emph{loc.\ cit.} carries over to an arbitrary real closed field~$R$
in view of the homotopy invariance of semi-algebraic cohomology
(see \cite[\textsection6]{delfsknebuschsurvey}, \cite{delfshomotopyaxiom}).
The map~$\cl$ is compatible with proper push-forwards (by functoriality of push-forward maps),
with products
(see \cite[Proposition~2.1.3]{krasnovcharacteristicclasses})
and with pull-backs
(\emph{loc.\ cit.}, Proposition~2.3.3).

\subsubsection{The Borel--Haefliger cycle class map}
\label{subsubsec:borel-haefliger}

For any irreducible closed subvariety $Y \subset X$ of codimension~$k$,
Borel and Haefliger~\cite{borelhaefliger}
(and Delfs \cite[Chapter~III, Theorem~3.7]{delfshomology}
over an arbitrary real closed field)
have proved the existence
of a unique class in $H^k_{Y(R)}(X(R),\Z/2\Z)$,
called the \emph{fundamental class of $Y(R)$ in $X(R)$},
whose restriction
to~$H^k_{Y^0(R)}(X^0(R),\Z/2\Z)$
is the
fundamental class of~$Y^0(R)$
in~$X^0(R)$
in the sense of~\eqref{eq:purity mod 2},
where~$Y^0$ (resp.~$X^0$) denotes the complement, in~$Y$ (resp.~$X$), of the singular locus of~$Y$.
We denote by $\cl_R(Y)$ its image in $H^k(X(R),\Z/2\Z)$.
For a codimension~$k$ cycle $Y=\sum n_i Y_i$ on~$X$,
we let $\cl_R(Y)=\sum n_i \cl_R(Y_i)$.
This assignment induces a homomorphism
\begin{align}
\label{eq:borel-haefliger}
\cl_R:\CH^k(X) \to H^k(X(R),\Z/2\Z)
\end{align}
(see \cite[Proposition~3.4, Remark~3.5]{scheidererpurity}),
which is compatible with proper push-forwards, products and pull-backs
(\emph{e.g.}, as a consequence of
the same property for the equivariant cycle class and of~\textsection\ref{subsubsec:compatibility with pushforwards} below). We define $H^k_{\alg}(X(R),\Z/2\Z)$ to be the image of the map (\ref{eq:borel-haefliger}).

\subsubsection{Topological constraints}
\label{subsubsec:topological constraints}

Let us consider the composition
\begin{align}
\label{eq:composition restriction decomposition}
H^{2k}_G(X(C),\Z(k)) \to H^{2k}_G(X(R),\Z(k))\to \mkern-15mu\bigoplus_{\substack{0\leq p\leq 2k \\ \vphantom{X^X}p \mkern1mu\equiv\mkern1mu k \text{ mod } 2}}\mkern-15muH^p(X(R),\Z/2\Z)
\end{align}
of the restriction map from $X(C)$ to the $G$\nobreakdash-invariant
semi-algebraic subspace $X(R)$
with the map~\eqref{eq:natural projection}
induced by the decomposition~\eqref{eq:canonical decomposition}.
For $\alpha \in H^{2k}_G(X(C),\Z(k))$, we let $\alpha_p \in H^p(X(R),\Z/2\Z)$ denote the $p$th coordinate
of the image of~$\alpha$ by~\eqref{eq:composition restriction decomposition}.

The next theorem spells out the relationship between the equivariant cycle class
and the Borel--Haefliger cycle class, as well as a topological constraint,
expressed in terms of the Steenrod squares
$\Sq^i : H^k(X(R),\Z/2\Z) \to H^{k+i}(X(R),\Z/2\Z)$
(see \cite{epstein}, \cite[Proposition~4.2]{raynaudmodulesproj}),
which equivariant cycle classes must satisfy.
We take the convention that
$\Sq^i=0$ for $i<0$
and recall that $\Sq^0$ is the identity.

\begin{thm}
\label{th:conditions de krasnov}
Let~$X$ be a smooth variety over~$R$.   Let~$k$ be an integer.  Let~$Y$ be a cycle of codimension~$k$
on~$X$ and let $\alpha = \cl(Y) \in H^{2k}_G(X(C),\Z(k))$.
Then
$$\alpha_{k+i} = \Sq^i(\cl_R(Y))$$ for every $i \in 2\Z$.
\end{thm}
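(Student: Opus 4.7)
The plan is to reduce to the case of a smooth closed subvariety $Y \subseteq X$ of codimension $k$ and then perform an equivariant excess-intersection computation. The main obstacle will be identifying the equivariant Euler class of the excess normal bundle with the total Stiefel--Whitney class of $N_{Y(R)/X(R)}$; once this identification is made, the Steenrod squares arise from the classical Wu formula for the pushforward of Stiefel--Whitney classes under a closed embedding of manifolds.

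By linearity in $Y$, one may assume $Y$ is irreducible. Invoking resolution of singularities in characteristic zero together with the compatibility of both $\cl$ and $\cl_R$ with proper push-forward (\textsection\ref{subsubsec:eqcl}, \textsection\ref{subsubsec:borel-haefliger}) and the covariant functoriality of the canonical decomposition (\textsection\ref{subsubsec:covariant functoriality decomposition mod 2})---the projection formula \eqref{eq:projection formula equivariant} being used to pass from proper morphisms to closed immersions via graph factoring---one reduces to the case where $Y \hookrightarrow X$ is a smooth closed immersion of codimension $k$.

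In that case, the key point is that the commutative square
\[
\xymatrix{
Y(R) \ar[r]^{\iota_R} \ar[d] & X(R) \ar[d]^{j} \\
Y(C) \ar[r]^{\iota} & X(C)
}
\]
is non-transverse: the excess normal bundle on $Y(R)$ is canonically the ``imaginary'' summand $\sqrt{-1}\cdot N_{Y/X}(R)$ of $N_{Y/X}(C)|_{Y(R)}$. As a $G$-equivariant real rank-$k$ bundle on $Y(R)$ (with trivial base action), this excess bundle $E$ is isomorphic to $N_{Y(R)/X(R)} \otimes L$, where $L$ is the trivial line bundle with the sign $G$-action; in particular $w_1^G(L) = \omega$. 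The equivariant excess-intersection formula gives $j^*\cl(Y) = \iota_{R*}(e_G(E))$ in $H^{2k}_G(X(R), \Z(k))$, and the Whitney tensor formula in equivariant mod-$2$ cohomology shows that $e_G(E)$, under the canonical decomposition \eqref{eq:canonical decomposition mod 2}, corresponds to the total non-equivariant Stiefel--Whitney class $(w_0, w_1, \ldots, w_k)$ of $N := N_{Y(R)/X(R)}$ in $\bigoplus_{p=0}^{k} H^p(Y(R), \Z/2\Z)$.

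Combining this with covariant functoriality (\textsection\ref{subsubsec:covariant functoriality decomposition mod 2}) and the classical identity $\iota_{R*}(w_j(N)) = \Sq^j(\cl_R(Y))$ (itself a direct consequence of $\Sq(U) = U \cdot w(N)$ for the Thom class $U$ of $N$) shows that the mod-$2$ reduction of $j^*\cl(Y)$ decomposes as $(\Sq^j(\cl_R(Y)))_{0 \leq j \leq k}$ placed in degrees $k, k+1, \ldots, 2k$. To conclude, one passes from $\Z/2\Z$-coefficients to $\Z(k)$-coefficients via the commutative square \eqref{eq:commutative square reduction modulo 2}: the parity-$k$ components $\alpha_{k+i}$ of $j^*\cl(Y)$ read off as $\Sq^i(\cl_R(Y))$ for $i \in 2\Z$, while the intervening odd-parity components $\beta_\Z(\alpha_{k+i})$ match $\Sq^{i+1}(\cl_R(Y))$ automatically by virtue of $\beta_\Z = \Sq^1$ and the Adem relation $\Sq^1\Sq^{2m} = \Sq^{2m+1}$.
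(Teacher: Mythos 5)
Your computation in the smooth case is correct, and it is a genuinely different route from the paper's. The paper does not argue by excess intersection at all: it deduces Theorem~\ref{th:conditions de krasnov} from Theorem~\ref{th:stability of topological constraints} applied with $\ell=0$ to a resolution of singularities $f\colon\widetilde{Y}\to X$ of an irreducible component of the cycle, i.e.\ from the compatibility of the topological constraint with push-forward along an \emph{arbitrary} proper map between smooth varieties; that compatibility is Proposition~\ref{prop:differentiable rr}, whose proof rests on the relative Wu theorem of Atiyah and Hirzebruch together with Krasnov's identification of $\gamma$ with the total Stiefel--Whitney class of the tangent bundle. For a smooth closed immersion $Y\hookrightarrow X$, your clean-intersection computation of $j^*\cl(Y)$ via the excess bundle $N_{Y(\R)/X(\R)}\otimes L$, the splitting-principle identification of its equivariant Euler class with the total Stiefel--Whitney class of $N_{Y(\R)/X(\R)}$ under~\eqref{eq:canonical decomposition mod 2}, Thom's formula $\iota_{R*}(w_j(N))=\Sq^j(\cl_R(Y))$, and the passage to $\Z(k)$ coefficients via~\eqref{eq:commutative square reduction modulo 2}, $\beta_\Z=\Sq^1$ and the Adem relation are all sound; this is essentially the original Kahn--Krasnov argument, and it is more geometric than the paper's.

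The gap is in the reduction to that case. After resolving singularities you are left with a proper map $f\colon\widetilde{Y}\to X$ that is no longer an immersion, and the graph factorisation $f=p\circ\Gamma_f$ hands you the projection $p\colon\widetilde{Y}\times X\to X$, which neither the projection formula nor the covariant functoriality of~\textsection\ref{subsubsec:covariant functoriality decomposition mod 2} disposes of. Restriction to real points does not commute with $p_*$, because the relevant square is not Cartesian: the fibre product of $X(\R)$ and $\widetilde{Y}(\C)\times X(\C)$ over $X(\C)$ is $\widetilde{Y}(\C)\times X(\R)$, not $\widetilde{Y}(\R)\times X(\R)$. Nor does the non-equivariant push-forward $H^p(\widetilde{Y}(\R)\times X(\R),\Z/2\Z)\to H^{p-\dim\widetilde{Y}}(X(\R),\Z/2\Z)$ commute with Steenrod squares: the discrepancy is the Wu class of $\widetilde{Y}(\R)$, already nontrivial when $\widetilde{Y}(\R)=\P^2(\R)$, so the constraint of Definition~\ref{def:condition topologique} is not preserved componentwise under $p_*$ for formal reasons. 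Repairing this step requires exactly the non-formal input that the paper's proof supplies, namely the identity~\eqref{eq:diffrr 2nd form} (Wu's theorem for $\widetilde{Y}(\R)$ combined with $\gamma_{\widetilde{Y}}=w(T_{\widetilde{Y}(\R)})$). As written, your argument is complete only for cycles that are $\Z$-linear combinations of smooth closed subvarieties.
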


This theorem is due to Kahn~\cite{kahnchern}
for Chern classes and to Krasnov~\cite{krasnovequivariant}
in general (at least when $R=\R$).
A more general result will be proved
in~\textsection\ref{subsubsec:compatibility with pushforwards}.

The statement of Theorem~\ref{th:conditions de krasnov} motivates the following definition.

\begin{defn}
\label{def:condition topologique}
We denote by $H^{2k}_G(X(C),\Z(k))_0 \subseteq H^{2k}_G(X(C),\Z(k))$ the subgroup consisting
of those classes~$\alpha$ which satisfy
$\alpha_{k+i} = \Sq^i(\alpha_k)$ for every $i\in 2\Z$.
\end{defn}

Thanks to Theorem~\ref{th:conditions de krasnov},
we may now view the equivariant cycle class map
as a map $\cl:\CH^k(X) \to H^{2k}_G(X(C),\Z(k))_0$.
As a notable consequence of Theorem~\ref{th:conditions de krasnov},
the Borel--Haefliger cycle class map factors through the equivariant cycle class map;
namely, it coincides with the composition of~$\cl$ with the map
\begin{align}
\label{eq:psi with point}
H^{2k}_G(X(C),\Z(k))_0 \to H^k(X(R),\Z/2\Z)
\end{align}
defined by $\alpha\mapsto \alpha_k$.

\begin{rmks}
\label{rk:topological condition 1-cycles and ak}
(i) Let $d=\dim(X)$.
The condition appearing in Definition~\ref{def:condition topologique}
takes a particularly simple form
for $k=d-1$ since $H^p(X(R),\Z/2\Z)=0$ for $p\geq d+1$:
the group $H^{2d-2}_G(X(C),\Z(d-1))_0$
consists of those $\alpha \in H^{2d-2}_G(X(C),\Z(d-1))$
such that $\alpha_p=0$ for all $p<d-1$.

(ii)
Let us fix $\alpha \in H^{2k}_G(X(C),\Z(k))$ and
denote by $\bar\alpha \in H^{2k}_G(X(R),\Z/2\Z)$
the class obtained by restricting to~$X(R)$ and by
reducing the coefficients modulo~$2$.
For $p\in \Z$, let $\bar\alpha_p \in H^p(X(R),\Z/2\Z)$ be the $p$th coordinate
of~$\bar\alpha$ in the decomposition~\eqref{eq:canonical decomposition mod 2}.
Then $\alpha \in H^{2k}_G(X(C),\Z(k))_0$
if and only if
$\bar\alpha_{k+i} = \Sq^i(\bar\alpha_k)$ for every $i\in \Z$.
This follows from~\textsection\ref{subsubsec:reduction modulo 2}
and from the Adem relation $\Sq^{2m+1}=\beta_{\Z}\circ\Sq^{2m}$
(for which we refer the reader to \cite[Proposition~4.2]{raynaudmodulesproj}).

(iii)
For any even~$k$ and
any $\alpha \in H^{2k}_G(X(C),\Z(k))_0$,
the class $\alpha_k^2$ belongs to the image of the natural map
$H^{2k}(X(R),\Z) \to H^{2k}(X(R),\Z/2\Z)$.
Indeed, so does~$\alpha_{2k}$ by its very definition,
and we have
$\alpha_k^2 = \Sq^k(\alpha_k)=\alpha_{2k}$.
As noted by Krasnov~\cite[Remark~4.8]{krasnovequivariant},
this observation, in the case of equivariant cycle classes,
was known to Akbulut and King
(see \cite[Theorem~A~(b)]{akbulutkingtranscendental}).
\end{rmks}

\subsubsection{Compatibility with cup products, pull-backs, push-forwards}
\label{subsubsec:compatibility with pushforwards}

Using~\textsection\ref{subsubsec:reduction modulo 2}, the formula~\eqref{eq:cup product}
and the Cartan formula (see~\cite[p.~91]{milnorstasheff}),
it is easy to check that
Definition~\ref{def:condition topologique} is compatible with cup products, in the sense that
for any~$k$ and~$\ell$,
cup product induces the horizontal arrows of a commutative square
\begin{align*}
\xymatrix@R=3ex{
\ar@<.055em>[d] \mkern-2muH^{2k}_G(X(C),\Z(k))_0 \times H^{2\ell}_G(X(C),\Z(\ell))_0 \ar[r] & H^{2k+2\ell}_G(X(C),\Z(k+\ell))_0 \ar[d] \\
H^k(X(R),\Z/2\Z) \times H^\ell(X(R),\Z/2\Z) \ar[r] & H^{k+\ell}(X(R),\Z/2\Z)
}
\end{align*}
whose vertical arrows are the maps~\eqref{eq:psi with point}.
In addition, the topological constraints of
Definition~\ref{def:condition topologique}
and the map~\eqref{eq:psi with point}
are obviously compatible with pull-backs.
We verify, in Theorem~\ref{th:stability of topological constraints} below,
that they are also
compatible with push-forwards along proper maps.
The proof of this fact
does not depend on Theorem~\ref{th:conditions de krasnov}.
Applying Theorem~\ref{th:stability of topological constraints}
with $\ell=0$
therefore gives an independent proof of Theorem~\ref{th:conditions de krasnov}.

\begin{thm}
\label{th:stability of topological constraints}
Let~$X,Y$ be smooth, irreducible varieties over~$R$.
Let $f:Y\to X$ be a proper morphism.
Let~$\ell$ be an integer. Let $k=\ell+\dim(X)-\dim(Y)$.
Then
\begin{align*}
f_*\big(H^{2\ell}_G(Y(C),\Z(\ell))_0\big) \subseteq H^{2k}_G(X(C),\Z(k))_0\rlap{,}
\end{align*}
where
$f_*:H^{2\ell}_G(Y(C),\Z(\ell)) \to H^{2k}_G(X(C),\Z(k))$
is the push-forward homomorphism defined in~\eqref{eq:pushforward equivariant complex}.
Moreover, the square
\begin{align*}
\xymatrix@R=3ex{
\ar[d]H^{2\ell}_G(Y(C),\Z(\ell))_0 \ar[r]^{\smash[t]{f_*}} & H^{2k}_G(X(C),\Z(k))_0\ar[d] \\
H^\ell(Y(R),\Z/2\Z) \ar[r]^{f_*} & H^k(X(R),\Z/2\Z)\rlap{,}
}
\end{align*}
whose lower horizontal arrow is the push-forward map defined in~\eqref{eq:pushforward}
and whose vertical arrows are
the maps~\eqref{eq:psi with point} associated with~$Y$ and with~$X$,
is commutative.
\end{thm}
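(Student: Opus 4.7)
By Remark~\ref{rk:topological condition 1-cycles and ak}(ii), the condition $\alpha\in H^{2\ell}_G(Y(C),\Z(\ell))_0$ is detected after reduction mod~$2$: writing $\bar\alpha\in H^{2\ell}_G(Y(C),\Z/2\Z)$ for the reduction of $\alpha$ and $\bar\alpha_p\in H^p(Y(R),\Z/2\Z)$ for the $p$-th component of $\bar\alpha|_{Y(R)}$ under~\eqref{eq:canonical decomposition mod 2}, the condition is equivalent to $\bar\alpha_{\ell+i}=\Sq^i(\bar\alpha_\ell)$ for every $i\ge 0$. Since the push-forward homomorphism~\eqref{eq:pushforward equivariant complex} commutes with coefficient reduction, the theorem reduces to proving, for every $\bar\alpha\in H^{2\ell}_G(Y(C),\Z/2\Z)$ satisfying the latter identity, both the commutativity $(f_*\bar\alpha)_k=f_{R*}(\bar\alpha_\ell)$ and the Steenrod identities $(f_*\bar\alpha)_{k+i}=\Sq^i((f_*\bar\alpha)_k)$ for every $i\ge 0$, where $f_R\colon Y(R)\to X(R)$ denotes the restriction of $f$ to real loci.

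Fixing $n\gg 0$ and using the finite-dimensional Borel construction (\textsection\ref{subsubsec:equivariant purity}), the map $f\colon Y(C)\to X(C)$ induces a proper map $\tilde f_C\colon(Y(C)\times S^n)/G\to (X(C)\times S^n)/G$ of smooth semi-algebraic manifolds, and the equivariant push-forward and restriction maps identify with the corresponding topological operations and with the restriction to the closed submanifolds $\tilde Y(R):=Y(R)\times(S^n/G)$, $\tilde X(R):=X(R)\times(S^n/G)$. The resulting square is \emph{not} Cartesian in general, since $\tilde f_C^{-1}(\tilde X(R))$ may strictly contain $\tilde Y(R)$; however, the canonical splitting of Proposition~\ref{prop:short exact sequences} applied to~$X$ and to~$Y$, together with the invertibility of the class~$\gamma$ of Definition~\ref{def:gamma} (Lemma~\ref{lem:gamma invertible}), yields an explicit decomposition of $(\tilde f_{C*}\bar\alpha)|_{\tilde X(R)}$ in terms of $\tilde f_{R*}$ applied to the components $\bar\alpha_p$, with correction terms governed by the classes $\gamma_X$ and $\gamma_Y$ attached to $X$ and $Y$, which by Remark~\ref{rks:short exact sequences mod 2}(i) coincide with the total Stiefel--Whitney classes of $TX(R)$ and $TY(R)$. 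Extracting the $k$-th component of this formula establishes the commutativity $(f_*\bar\alpha)_k=f_{R*}(\bar\alpha_\ell)$ asserted in the theorem.

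For the preservation of $(-)_0$, one then substitutes $\bar\alpha_{\ell+i}=\Sq^i(\bar\alpha_\ell)$ into the analogous expression for $(f_*\bar\alpha)_{k+i}$ obtained in the previous step, and invokes the Atiyah--Hirzebruch relative Wu formula for Steenrod squares,
\begin{equation*}
\Sq(\tilde f_{R*}\beta)=\tilde f_{R*}\bigl(w(\nu_{\tilde f_R})\smile\Sq(\beta)\bigr),
\end{equation*}
where $\nu_{\tilde f_R}$ is the stable normal bundle of $\tilde f_R$, in order to move $\Sq^i$ past $\tilde f_{R*}$. The identity $w(\nu_{\tilde f_R})=\tilde f_R^*\gamma_X\smile\gamma_Y^{-1}$ then shows that the Stiefel--Whitney correction is absorbed exactly into the $\gamma$-correction terms of the preceding paragraph, yielding the desired equality $(f_*\bar\alpha)_{k+i}=\Sq^i((f_*\bar\alpha)_k)$. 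The main obstacle is the explicit comparison of the second paragraph: producing, in the non-Cartesian setting, a precise formula for $(\tilde f_{C*}\bar\alpha)|_{\tilde X(R)}$ in terms of $\tilde f_{R*}(\bar\alpha_p)$ and the classes $\gamma_X$, $\gamma_Y$, and then orchestrating the cancellation with the Atiyah--Hirzebruch correction so that the final identity is Stiefel--Whitney--free.
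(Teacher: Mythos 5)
Your outline follows the paper's own route: reduce to $\Z/2\Z$ coefficients via Remark~\ref{rk:topological condition 1-cycles and ak}~(ii), compare the restriction of $f_*\bar\alpha$ to the real locus with $f_{R*}$ of the components of $\bar\alpha$ up to $\gamma$\nobreakdash-corrections, and absorb the Steenrod squares with the Atiyah--Hirzebruch relative Wu formula (your version, $\Sq(f_{R*}\beta)=f_{R*}(f_R^*\gamma_X\smile\gamma_Y^{-1}\smile\Sq(\beta))$, is equivalent to the form $f_*(\Sq^{-1}(y)\,\Sq^{-1}(\gamma_Y))=\Sq^{-1}(f_*y)\,\Sq^{-1}(\gamma_X)$ used in the paper). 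However, the step you yourself flag as ``the main obstacle''---a precise formula for $(f_{C*}\bar\alpha)|_{X(R)}$ in terms of $f_{R*}(\bar\alpha_p)$---is the actual content of the proof, and the idea that makes it work is missing from your write-up. The paper's device is a shift to high cohomological degrees: by \textsection\ref{subsubsec:decomposition omega} and the formula~\eqref{eq:cup product}, cup product with $\omega^i_{\Z/2\Z}$ corresponds, under the decompositions~\eqref{eq:canonical decomposition mod 2}, to the inclusion of truncations, and by the projection formula~\eqref{eq:projection formula equivariant} it commutes with $f_*$; one may therefore assume $n>2\dim(Y)+1$. In that range the equivariant Gysin maps $H^{n-\dim Y}_G(Y(R),\Z/2\Z)\to H^n_G(Y(C),\Z/2\Z)$ and their analogues for $X$ are isomorphisms (proof of Lemma~\ref{lem:gamma invertible}), every class is supported on the real locus, the non-Cartesian difficulty you mention evaporates, restriction composed with the Gysin map is literally multiplication by $\gamma$, and compatibility of the Gysin isomorphisms with push-forward (\textsection\ref{subsubsec:covariant functoriality decomposition mod 2}) gives the exact identity $(f_{C*}\bar\alpha)|_{X(R)}=\gamma_X\,f_{R*}(\gamma_Y^{-1}\,\bar\alpha|_{Y(R)})$. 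Without this stabilization your ``explicit decomposition'' is not available: in low degrees the retraction of Proposition~\ref{prop:short exact sequences} only controls the part of a class supported on the real locus, up to a cokernel isomorphic to $H^{i-d+1}\oplus\dots\oplus H^i$ (Remark~\ref{rks:short exact sequences mod 2}~(iii)), so no clean formula drops out directly.

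A second gap: Remark~\ref{rks:short exact sequences mod 2}~(i) (the identification of $\gamma$ with the total Stiefel--Whitney class) and the Atiyah--Hirzebruch theorem are statements over $\R$, the latter moreover for proper maps of manifolds, whereas the theorem is asserted over an arbitrary real closed field $R$ and for possibly non-proper $X$ and $Y$. To finish one needs, as in the paper, either a spreading-out argument reducing the purely cohomological identity $\Sq(f_*(\Sq^{-1}(y)))=\gamma_X\,f_*(\gamma_Y^{-1}y)$ over $R$ to its validity over $\R$ (together with the embedding trick handling non-compact $Y(\R)$), or an appeal to the Panin--Smirnov Riemann--Roch theorem for semi-algebraic cohomology in the projective case. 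Your argument as written establishes nothing beyond $R=\R$ with $X$, $Y$ proper.
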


In view of
Remark~\ref{rk:topological condition 1-cycles and ak}~(ii),
Theorem~\ref{th:stability of topological constraints} follows
from Proposition~\ref{prop:differentiable rr} below.

\begin{prop}
\label{prop:differentiable rr}
Let $f:Y\to X$ be a proper morphism between smooth, irreducible varieties
over~$R$.
Let~$n$ be an integer. Let $m=n+2\dim(X)-2\dim(Y)$.
The square
\begin{align*}
\xymatrix@R=3ex@C=5em{
H^n_G(Y(C),\Z/2\Z) \ar[r]^{f_*} \ar[d] & H^m_G(X(C),\Z/2\Z) \ar[d] \\
H^n_G(Y(R),\Z/2\Z) \ar@{=}[d] & H^m_G(X(R),\Z/2\Z) \ar@{=}[d] \\
*!<-0em,-1.8ex>\entrybox{\displaystyle\bigoplus_{0\leq p \leq n}\mkern-5muH^p(Y(R),\Z/2\Z)}
\ar@<2.72ex>[r]^{\Sq \circ f_* \circ \Sq^{-1}} &
*!<-0em,-1.8ex>\entrybox{\displaystyle\bigoplus_{0\leq p \leq m}\mkern-5muH^p(X(R),\Z/2\Z)\rlap{,}}
}
\end{align*}
whose vertical maps are the restriction maps composed with the
decompositions~\eqref{eq:canonical decomposition mod 2}, commutes.
In the lower row of this diagram,
the symbol~$f_*$
denotes the direct sum of the push-forward maps
\begin{align}
\label{eq:real push-forward maps}
f_*:H^p(Y(R),\Z/2\Z) \to H^{p+\dim(X)-\dim(Y)}(X(R),\Z/2\Z)
\end{align}
over all $p\geq 0$ (see~\eqref{eq:pushforward}),
while $\Sq=\Sq^0 + \Sq^1 + \Sq^2 + \dots$ is viewed as an automorphism of
the graded rings $\bigoplus_{0\leq p \leq n}H^p(Y(R),\Z/2\Z)$
and $\bigoplus_{0\leq p \leq m}H^p(X(R),\Z/2\Z)$,
and~$\Sq^{-1}$ stands for its inverse.
\end{prop}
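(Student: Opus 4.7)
The plan is to factor $f$ as a composition of a closed immersion and a smooth projection, and to verify the assertion of the proposition separately in each case. Specifically, I would write $f = q \circ \iota$, where $\iota : Y \hookrightarrow X \times \P^N$ is a closed immersion obtained from $f$ together with a suitable projective embedding of $Y$, and $q : X \times \P^N \to X$ is the projection. The conclusion of the proposition is stable under composition of morphisms by a straightforward diagram chase, using the functoriality of push-forwards, of the restriction map to the real locus, and of the canonical decomposition, together with the identity $\Sq \circ (g_* \circ h_*) \circ \Sq^{-1} = (\Sq \circ g_* \circ \Sq^{-1}) \circ (\Sq \circ h_* \circ \Sq^{-1})$.

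For the smooth projection $q$, I would invoke the equivariant K\"unneth formula, which is compatible with the canonical decomposition~\eqref{eq:canonical decomposition mod 2} and with the push-forward on both sides, to reduce to the base case $X = \mathrm{Spec}\,R$, $Y = \P^N$. There, all groups involved are small and explicit: both $H^*(\R P^N, \Z/2\Z)$ and $H^*_G(\P^N(C), \Z/2\Z)$ admit standard descriptions, the push-forward is integration along the projective space, and the needed commutation with $\Sq$ can be checked by a direct computation.

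The core of the argument is the closed immersion case. Let $c = \dim(X)+N-\dim(Y)$ denote the codimension of $\iota$. Equivariant purity~\eqref{eq:equivariant purity subvariety} turns the push-forward $\iota_*$ into the forget-support map from $H^{n+2c}_{G,Y(C)}((X \times \P^N)(C), \Z/2\Z)$. After restriction to the real locus and expansion via~\eqref{eq:canonical decomposition mod 2}, the results of~\textsection\ref{subsec:on the normal bundle}---and, crucially, Remark~\ref{rks:short exact sequences mod 2}~(i), which identifies the class~$\gamma$ of Definition~\ref{def:gamma} with the total Stiefel--Whitney class of the tangent bundle---translate this composition into multiplication by the Stiefel--Whitney class $w(N)$ of the normal bundle of $Y(R)$ in $(X \times \P^N)(R)$, entering through the relation $\iota^*\gamma_{X \times \P^N} = \gamma_Y \cdot w(N)$, followed by the ordinary Gysin map~\eqref{eq:pushforward}. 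A relative form of Wu's theorem, due to Atiyah and Hirzebruch, then identifies this composition with the $\Sq$-conjugate $\Sq \circ \iota_*^R \circ \Sq^{-1}$ of the ordinary real push-forward.

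The main obstacle is precisely the bookkeeping in the closed immersion case: aligning, on the one hand, the Stiefel--Whitney twist that arises from the analysis of the normal bundle of $Y(R)$ through the splittings of~\textsection\ref{subsec:on the normal bundle}, with, on the other hand, the $\Sq$-conjugation on the non-equivariant side supplied by the Atiyah--Hirzebruch Riemann--Roch formula for Steenrod squares. This requires carefully combining the multiplicativity of the class $\gamma$ in the canonical decomposition with the precise form of Wu's theorem; once this matching is in place, the verification of the proposition reduces to a sequence of formal identifications.
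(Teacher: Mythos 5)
Your strategy---factor $f$ through a closed immersion into $X\times\P^N$ followed by the projection, and treat the two cases separately---is the classical d\'evissage for Riemann--Roch-type statements, and it is not the route taken in the paper. The paper's proof instead begins with a stabilisation step: since $\omega^i_{\Z/2\Z}$ restricts to the unit of the graded ring in the decomposition~\eqref{eq:canonical decomposition mod 2} (\textsection\ref{subsubsec:decomposition omega}), the projection formula~\eqref{eq:projection formula equivariant} shows that one may replace $n$ by $n+i$, hence assume $n>2\dim(Y)+1$. In that range the equivariant Gysin maps $H^{n-\dim(Y)}_G(Y(R),\Z/2\Z)\to H^n_G(Y(C),\Z/2\Z)$ are isomorphisms (see the proof of Lemma~\ref{lem:gamma invertible}), and the whole square collapses to a statement purely about the induced map of real loci, namely $f_*\big(\Sq^{-1}(y)\,\Sq^{-1}(\gamma_Y)\big)=\Sq^{-1}(f_*y)\,\Sq^{-1}(\gamma_X)$, which is exactly the relative Wu theorem of Atiyah and Hirzebruch applied to $Y(\R)\to X(\R)$, combined with Krasnov's identification of $\gamma$ with the total Stiefel--Whitney class (Remark~\ref{rks:short exact sequences mod 2}~(i)). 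Since you too invoke the Atiyah--Hirzebruch theorem in your immersion case, your factorisation does not avoid that input; it only redistributes the work at the cost of extra machinery. The stabilisation trick is the idea your proposal is missing, and it is what makes the paper's argument short.

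Beyond this, three steps of your plan have genuine gaps. First, the factorisation $f=q\circ\iota$ through $X\times\P^N_R$ requires $f$ to be projective, whereas the proposition only assumes $f$ proper and $Y$ need not be quasi-projective; the paper sidesteps this because, after stabilisation, it only needs a factorisation of the map of \emph{real loci}, supplied topologically by a Whitney embedding. Second, in the immersion case the restriction to $X(R)$ of $\iota_*\alpha$, for $\alpha\in H^n_G(Y(C),\Z/2\Z)$, is governed by a clean-intersection (excess) formula for the non-transverse square formed by $Y(R)\subset Y(C)$ and $X(R)\subset X(C)$, whose excess bundle is the normal bundle of $Y(R)$ in $X(R)$ twisted by the sign representation of~$G$; the results of~\textsection\ref{subsec:on the normal bundle} concern only the inclusion of a real locus in its own complex locus and do not supply this formula, so you would have to prove it in the equivariant semi-algebraic setting before ``multiplication by $w(N)$'' can be justified. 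Third, the equivariant K\"unneth formula for $X(C)\times\P^N(C)$ with the diagonal $G$-action, and its compatibility with~\eqref{eq:canonical decomposition mod 2} and with push-forward, is not in the paper and requires an argument (freeness of $H^*_G(\P^N(C),\Z/2\Z)$ over $H^*(G,\Z/2\Z)$). Finally, all the topological inputs you use are statements over~$\R$; as in the paper, a spreading-out argument (or the Panin--Smirnov Riemann--Roch theorem in the projective case) is still needed to conclude over an arbitrary real closed field, and your proposal does not address this.
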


\begin{proof}
As a consequence of~\textsection\ref{subsubsec:decomposition omega}
and of the formula~\eqref{eq:cup product},
the diagram
\begin{align*}
\xymatrix@R=3ex@C=7em{
H^n_G(Y(C),\Z/2\Z) \ar[d] \ar[r]^{y\mapsto y \smile \omega^i_{Y(C),\Z/2\Z}} & H^{n+i}_G(Y(C),\Z/2\Z) \ar[d] \\
H^n_G(Y(R),\Z/2\Z) \ar@{=}[d]
\ar[r]^{y\mapsto y \smile \omega^i_{Y(R),\Z/2\Z}}
& H^{n+i}_G(Y(R),\Z/2\Z) \ar@{=}[d] \\
*!<-0em,-1.8ex>\entrybox{\displaystyle\bigoplus_{0\leq p \leq n}\mkern-5muH^p(Y(R),\Z/2\Z)}
&
*!<-0em,-1.8ex>\entrybox{\displaystyle\bigoplus_{0\leq p \leq n+i}\mkern-10muH^p(Y(R),\Z/2\Z)}
\ar@<-2.72ex>[l]_{\mathrm{projection}}
}
\end{align*}
commutes
for any $i\geq 0$ (and similarly with~$X$ instead of~$Y$).
In view of the projection formula
$f_*(y\smile \omega^i_{Y(C),\Z/2\Z})=f_*(y\smile f^* \omega^i_{X(C),\Z/2\Z})=f_*y\smile \omega^i_{X(C),\Z/2\Z}$
(see~\eqref{eq:projection formula equivariant}),
we deduce that in order to prove Proposition~\ref{prop:differentiable rr},
we may replace~$n$ with $n+i$.  In particular,
we may, and will, assume that $n > 2\dim(Y)+1$.

Let us then consider the commutative diagram (without the dotted arrows)
\begin{align*}
\xymatrix@R=2.5ex@C=5em{
*!<-0em,.2ex>\entrybox{\vphantom{(}\displaystyle\smash[b]{\bigoplus_{\mathclap{p\geq 0}}\mkern6muH^p(Y(R),\Z/2\Z)}}
\ar@<-.35ex>[r]^{f_*}
\ar@<-4.25em>@{..>}@/_4pc/[]!(4,2);[dddd]!(5,2)
&
*!<-0em,.2ex>\entrybox{\vphantom{(}\displaystyle\smash[b]{\bigoplus_{\mathclap{p\geq 0}}\mkern6muH^p(X(R),\Z/2\Z)}}
\ar@<4.25em>@{..>}@/^4pc/[]!(-4,2);[dddd]!(-5,2)
\\
H^{n-\dim(Y)}_G(Y(R),\Z/2\Z) \ar@{=}[u] \ar[d]^(.45)\wr \ar[r]^{f_*} & H^{m-\dim(X)}_G(X(R),\Z/2\Z) \ar@{=}[u] \ar[d]^(.45)\wr  \\
H^n_G(Y(C),\Z/2\Z) \ar[d] \ar[r]^{f_*} & H^m_G(X(C),\Z/2\Z) \ar[d] \\
H^n_G(Y(R),\Z/2\Z) \ar@{=}[d] & H^m_G(X(R),\Z/2\Z) \ar@{=}[d] \\
*!<-0em,-1.5ex>\entrybox{\displaystyle\bigoplus_{\mathclap{p\geq 0}}\mkern6muH^p(Y(R),\Z/2\Z)}
\ar@<2.5ex>@{..>}[r]^{\Sq \circ f_* \circ \Sq^{-1}} &
*!<-0em,-1.5ex>\entrybox{\displaystyle\bigoplus_{\mathclap{p\geq 0}}\mkern6muH^p(X(R),\Z/2\Z)\rlap{,}}
}
\end{align*}
in which the upper horizontal~$f_*$ denotes the direct sum of the maps~\eqref{eq:real push-forward maps}
over all $p\geq 0$
(see~\textsection\ref{subsubsec:covariant functoriality decomposition mod 2}
for the commutativity of the top square)
and the vertical arrows of the middle square are the Gysin maps
associated with the inclusions $Y(R)\subseteq Y(C)$ and $X(R)\subseteq X(C)$
(see~\eqref{eq:equivariant purity example}).
As we have seen in the proof of Lemma~\ref{lem:gamma invertible},
these two Gysin maps are isomorphisms since $n>2\dim(Y)+1$ and $m>2\dim(X)+1$,
and the bent arrows which make the diagram commute
are given by multiplication by the classes
$\gamma_Y \in \bigoplus_{p\geq 0} H^p(Y(R),\Z/2\Z)$
and
$\gamma_X \in \bigoplus_{p\geq 0} H^p(X(R),\Z/2\Z)$
that one obtains by applying Definition~\ref{def:gamma} to~$Y$ and to~$X$.
To prove that the diagram remains commutative with the horizontal dotted arrow,
it therefore suffices to check that
\begin{align}
\label{eq:diffrr 1st form}
\Sq(f_*(\Sq^{-1}(y))) = \gamma_X  f_*(\gamma_Y^{-1} y)
\end{align}
for any $y \in \bigoplus_{p\geq 0} H^p(Y(R),\Z/2\Z)$,
or, equivalently, that
\begin{align}
\label{eq:diffrr 2nd form}
f_*\big(\Sq^{-1}(y)\mkern2mu\Sq^{-1}(\gamma_Y)\big)=\Sq^{-1}(f_*y)\mkern2mu\Sq^{-1}(\gamma_X)
\end{align}
for any $y \in \bigoplus_{p\geq 0} H^p(Y(R),\Z/2\Z)$.

This last equality is a particular case of the relative variant of Wu's theorem
due to Atiyah and Hirzebruch~\cite{atiyahhirzebruchrrdiff}.
Namely, when $R=\R$ and~$X$ and~$Y$ are proper,
this is \emph{op.\ cit.}, Satz~3.2
applied to $\lambda=\Sq^{-1}$;
indeed,
with the notation of~\emph{loc.\ cit.},
one has $\mathrm{Wu}(\Sq,Y)=\Sq^{-1}(\gamma_Y)$
and $\mathrm{Wu}(\Sq,X)=\Sq^{-1}(\gamma_X)$,
according to Thom's formula \cite[p.~91]{milnorstasheff}
and to
Remark~\ref{rks:short exact sequences mod 2}~(i)
(see \cite[Theorem~2.1]{krasnovequivariant}).
When~$R=\R$ but~$X$ and~$Y$ need not be proper,
the proof given in \cite[\textsection3.4]{atiyahhirzebruchrrdiff}
goes through verbatim
once one remarks
that if~$i$ denotes the composition of a closed embedding $Y(\R) \subset \R^N$
given by Whitney's theorem (see \cite[Chapter~2, Theorem~2.14]{Hirsch})
with an embedding of~$\R^N$ into
the $N$\nobreakdash-dimensional sphere~$\mathbf{S}^N$,
the map $(f,i):Y(\R) \to X(\R) \times \mathbf{S}^N$
is a closed embedding.

By a spreading out argument
entirely similar to the one used in~\cite[\textsection7]{delfsknebuschonthehomology},
one can deduce
the validity of~\eqref{eq:diffrr 1st form}
over an arbitrary real closed field~$R$
from the validity of~\eqref{eq:diffrr 1st form}
for all~$X$, $Y$, $f$, $y$ defined over~$\R$.
(The point is that~\eqref{eq:diffrr 1st form} is of a purely cohomological nature.
Spreading out allows one to deduce cohomological statements over arbitrary real closed fields
from the same statements over~$\R$. For statements on algebraic cycles, the situation
is quite different, as we will see in \cite[\S\ref*{BW2-nonarchimedeansection}]{bwpartie2}.)

To establish~\eqref{eq:diffrr 1st form} without spreading out,
one can also, in the projective case,
apply the Riemann--Roch theorem of
Panin and Smirnov \cite[Theorem~2.5.4]{paninrroriented}
to semi-algebraic cohomology with~$\Z/2\Z$ coefficients.
\end{proof}

\section{The real integral Hodge conjecture}
\label{sec:realIHC}

We formulate, in this section, a real analogue of the integral Hodge conjecture.

\subsection{Reminders on the complex integral Hodge conjecture}
\label{par:complexIHC}

Let $k\geq 0$.  Let~$X$ be a smooth and proper variety of pure dimension~$d$ over~$\C$.

A class $\alpha \in H^{2k}(X(\C),\Z(k))$ is \emph{Hodge} if its image in $H^{2k}(X(\C),\C)$
is of type~$(k,k)$ in the Hodge decomposition.
Let $\Hdg^{2k}(X(\C),\Z(k)) \subseteq H^{2k}(X(\C),\Z(k))$
denote the subgroup of Hodge classes.
Classes of algebraic cycles belong to
$\Hdg^{2k}(X(\C),\Z(k))$.
By definition, the \emph{integral Hodge conjecture for codimension~$k$ cycles on~$X$} holds if and only if
the induced map $\CH^k(X)\to \Hdg^{2k}(X(\C),\Z(k))$ is surjective.

The integral Hodge conjecture holds for $k=0$ or $k\geq d$ (trivial)
and for
$k=1$ (this is the Lefschetz~$(1,1)$ theorem).
For all other values of~$k$ and~$d$, it fails in general.
The first counterexamples were discovered by Atiyah and Hirzebruch~\cite{atiyahhirzebruch}.
These are counterexamples for $k=2$ and $d\geq 7$.
In the case of $1$\nobreakdash-cycles,
Koll\'ar~\cite{trentoexamples} has shown that the integral Hodge conjecture
fails for
very general
hypersurfaces of degree~$\delta$ in~$\P^4_\C$
for some~$\delta$.
The smallest value of~$\delta$ known to yield
a counterexample is~$48$
(see~\cite[\textsection5]{totarocontreexemples}),
though it is expected that any $\delta\geq 6$ should yield one:
indeed, Griffiths and Harris~\cite[p.~32]{griffithsharrisnl}
conjecture that if $\delta\geq 6$,
the degree of any curve on
a very general hypersurface of degree~$\delta$ in~$\P^4_\C$
is a multiple of~$\delta$.

By blowing up~$\P^d_\C$ along a very general hypersurface of degree~$\delta$
in $\P^4_\C \subset \P^d_\C$,
one finds counterexamples to the integral Hodge conjecture
among rational varieties for all $k \in \{3, \dots, d-2\}$
(see~\cite[p.~113]{soulevoisin}).
On the other hand, for $k \in \{2,d-1\}$,
the integral Hodge conjecture for codimension~$k$ cycles on~$X$
is a birational invariant (see~\cite[Lemma~15]{voisinsomeaspects}) and in particular it holds for rational
varieties.
The integral Hodge conjecture can nevertheless fail for $k=2$ 
among rationally connected varieties
\cite[Th\'eor\`eme~1.3]{ctvoisin}, even for rationally connected fourfolds \cite[Corollary~1.6]{Stefan}, although it holds for smooth cubic fourfolds \cite[Theorem 18]{voisinsomeaspects}.
For $k=d-1$, the following question is open:

\newcommand{\citequestionvoisin}{\cite[Question~16]{voisinsomeaspects}}
\begin{question}[Voisin~\citequestionvoisin]
\label{mainquestionC}
Let~$X$ be a smooth, proper, rationally connected variety over~$\C$.
Does~$X$ satisfy the integral Hodge conjecture for $1$\nobreakdash-cycles?
\end{question}

Voisin~\cite[Theorem~2]{voisinthreefolds} proves that a complex projective threefold satisfies the integral Hodge conjecture if it is uniruled or Calabi--Yau\footnote{We mean Calabi--Yau in the sense that $K_X\simeq \sO_X$ and $H^1(X,\sO_X)=H^2(X,\sO_X)=0$. Totaro has announced a proof of the integral Hodge conjecture
for complex projective threefolds~$X$ such that $K_X \simeq \sO_X$.  Abelian threefolds are dealt with in~\cite[Corollary~3.1.9]{Grabowski}. The hypothesis that $K_X \simeq \sO_X$ cannot be weakened to $K_X$ being torsion in view of \cite[Theorem 0.1]{John}.}.
This answers
Question~\ref{mainquestionC}
in the affirmative when $\dim(X)=3$.
In higher dimensions, there are partial results for Fano varieties
(see~\cite{HoeringVoisin, Floris}).
In addition,
using a theorem of Schoen~\cite{schoen},
Voisin~\cite[Theorem~1.6]{voisinremarks} shows
that the Tate conjecture for all surfaces over finite fields would
imply a positive answer to Question~\ref{mainquestionC}.

\subsection{The real formulation}
\label{par:realIHC}

Let $k\geq 0$.
Let~$X$ be a smooth and proper variety over~$\R$.
According to Theorem~\ref{th:conditions de krasnov},
the image of the equivariant cycle class map
$\CH^k(X) \to H^{2k}_G(X(\C),\Z(k))$
is contained in the subgroup
$H^{2k}_G(X(\C),\Z(k))_0$
introduced in
Definition~\ref{def:condition topologique}.
As recalled in~\textsection\ref{par:complexIHC},
it is also contained in the inverse image
$\Hdg^{2k}_G(X(\C),\Z(k)) \subseteq H^{2k}_G(X(\C),\Z(k))$
of $\Hdg^{2k}(X(\C),\Z(k))$
by the natural map
$H^{2k}_G(X(\C),\Z(k))\to H^{2k}(X(\C),\Z(k))$.
Hence we obtain a map
\begin{align}
\label{eq:cycle class map to hdgzero}
\cl:\CH^k(X) \to \Hdg^{2k}_G(X(\C),\Z(k))_0
\end{align}
to the subgroup
$\Hdg^{2k}_G(X(\C),\Z(k))_0=\Hdg^{2k}_G(X(\C),\Z(k)) \cap H^{2k}_G(X(\C),\Z(k))_0$
of those classes in $H^{2k}(X(\C),\Z(k))$ which satisfy both the Hodge condition and the topological condition.

\begin{defn}
Let $k\geq 0$.
Let~$X$ be a smooth and proper variety over~$\R$.
We say that
\emph{the real integral Hodge conjecture for codimension~$k$ cycles on~$X$} holds
if the map~\eqref{eq:cycle class map to hdgzero}
is surjective.
\end{defn}

Let us stress that this property can fail, just as the integral Hodge conjecture can fail for varieties
over~$\C$ (see~\textsection\ref{par:complexIHC}).  We shall provide examples of geometrically
connected varieties over~$\R$ for which it fails
in~\textsection\ref{subsec:cycle theoretic obs},
see also Example~\ref{ex:failure ihc omega} below.

We note that if $H^q(X,\Omega_X^p)=0$ for all $p,q$ such that $p+q=2k$, $(p,q)\neq (k,k)$, then
$\Hdg^{2k}_G(X(\C),\Z(k))_0=H^{2k}_G(X(\C),\Z(k))_0$ and the formulation of the real
integral Hodge conjecture makes sense
over an arbitrary real closed field (where Hodge theory is not readily available).

\begin{defn}
\label{def:ihc real closed field}
Let $k\geq 0$.  Let~$X$ be a smooth and proper variety over a real closed field~$R$.
Assume that $H^q(X,\Omega_X^p)=0$ whenever $p+q=2k$ and $(p,q)\neq (k,k)$.
We say that
\emph{the real integral Hodge conjecture for codimension~$k$ cycles on~$X$} holds
if the equivariant cycle class map $\cl:\CH^k(X) \to H^{2k}_G(X(C),\Z(k))_0$
is surjective.
\end{defn}

\begin{rmks}
(i)
Considering equivariant cohomology \`a la Bredon, rather than \`a la Borel, leads to
a factorisation
$\CH^k(X) \to H^{2k,k}_{\Br}(X(\C),\underline{\Z}) \to H^{2k}_G(X(\C),\Z(k))$
of the equivariant cycle class map~$\cl$
(see~\cite[\textsection\textsection1--2]{Bredonquadrics}).
It would be interesting to
determine what additional constraints on the image
of~$\cl$, if any, result from this.

(ii)
By a norm argument,
the map~\eqref{eq:cycle class map to hdgzero} tensored with~$\Q$
is surjective if~$X_\C$ satisfies the Hodge conjecture.
Thus,  there would be no point in formulating a real variant of the
Hodge conjecture with rational coefficients.
\end{rmks}

An intriguing feature of the real integral Hodge conjecture,
one with no analogue in the complex setting,
is the existence of canonical ``constant'' cohomology classes.
Namely, let~$X$ be a smooth and proper variety over a real closed field~$R$
and let~$k \geq 0$ be even,
so that $\Z(2k)=\Z(k)$.
We can view $\omega^{2k} \in H^{2k}_G(X(C),\Z(2k))$
(see~\textsection\ref{subsubsec:omega}) as an element of $H^{2k}_G(X(C),\Z(k))$.
This class is Hodge if $R=\R$, being torsion,
and it belongs to $H^{2k}_G(X(C),\Z(k))_0$ if (and only if) $X(R)=\emptyset$
(see~\textsection\ref{subsubsec:decomposition omega}).
If $X(R)=\emptyset$, the integral Hodge conjecture on~$X$
therefore implies that~$\omega^{2k}$ is algebraic, \emph{i.e.}, is the class
of an algebraic cycle.
Determining when~$\omega^{2k}$ is algebraic is an interesting problem in its own right.

\begin{example}
\label{ex:failure ihc omega}
Let~$X$ be the smooth projective anisotropic quadric of dimension~$n$ over~$R$.
By \cite[Proposition~3.3, Proposition~3.5]{Hilbert17}
and by comparison with $2$\nobreakdash-adic cohomology,
the class~$\omega^4$
is algebraic if and only if~$-1$ is a sum of~$7$ squares in the function field~$R(X)$.
By Pfister~\cite[Satz~5]{PfisterStufe},
such is the case if and only if $n\leq 6$.
\end{example}

In this example, the real integral Hodge conjecture for codimension~$2$ cycles on~$X$ fails
if $n \geq 7$.  This can be compared with~\cite[Th\'eor\`eme~1.3]{ctvoisin}. In constrast, the following question is open.

\begin{question}
Does there exist a smooth and proper variety
of odd dimension~$d$, over a real closed field~$R$,
such that $X(R)=\emptyset$
and $\omega^{2d-2}$ is not algebraic?
\end{question}

\begin{rmks}
\label{rk:covers hknotalg}
(i)
Let~$X$ be a smooth and proper variety over~$\R$
and let $k\geq 0$.
If the restriction of the
map~\eqref{eq:psi with point}
to the subgroup $\Hdg^{2k}_G(X(\C),\Z(k))_0$ is not surjective,
then $H^k_\alg(X(\R),\Z/2\Z)\neq H^k(X(\R),\Z/2\Z)$.
As far as we are aware,
this simple remark explains
all of the examples that appear in the literature
of real varieties~$X$
and integers~$k$ such that $H^k_\alg(X(\R),\Z/2\Z)\neq H^k(X(\R),\Z/2\Z)$.
We give an example of a different kind in~\textsection\ref{subsec:cycle theoretic obs}
(see Example~\ref{ex:a la totaro avec points}); such an example necessarily underlies
a defect of the real integral Hodge conjecture for codimension~$k$ cycles on~$X$.

(ii)
Let us justify the assertion made in~(i) about the existing literature.
Examples with $k=1$,
such as those of~\cite{rislerhomologie}
and~\cite{silholabound},
fall under the scope of Remark~\ref{rk:covers hknotalg}~(i)
since the map~\eqref{eq:cycle class map to hdgzero} is surjective
when $k=1$ according to Proposition~\ref{prop:real(1,1)} below.
Next, the examples
given in~\cite[Theorem~6.9]{akbulutkingsubmanifolds}
and in \cite[Theorem~3.1]{kucharzonhomology}
are examples in which even the map~\eqref{eq:psi with point} is not surjective.
Indeed, these examples only depend
on two properties of the subgroups
$H^k_{\alg}(X(\R),\Z/2\Z)$:
their stability
under cup products, pull-backs and proper push-forwards,
and the fact that
$H^0_\alg(X(\R),\Z/2\Z)\neq H^0(X(\R),\Z/2\Z)$
when~$X$ is connected while~$X(\R)$ is not;
as it turns out, the images of~\eqref{eq:psi with point} form a system of subgroups
that enjoy these properties as well
(see~\textsection\ref{subsubsec:compatibility with pushforwards}).
The example of~\cite{benedettidedo} is an example of a class in $H^2(X(\R),\Z/2\Z)$
whose square cannot be lifted to $H^4(X(\R),\Z)$
(see \cite{teichner}, especially the end of~\textsection2);
by Remark~\ref{rk:topological condition 1-cycles and ak}~(iii),
the map~\eqref{eq:psi with point} again fails to be surjective in this case.
Finally,
in the examples furnished by~\cite[Theorem~2.1]{kucharzalgeqhom},
the map~\eqref{eq:psi with point} fails once more to be surjective,
as we explain in Remark~\ref{rmk:kucharz alg explained} below.
\end{rmks}

\subsection{First positive results}
\label{subsec:first positive results}

On a smooth and proper variety~$X$ of dimension~$d$ over a real closed field~$R$,
the real integral Hodge conjecture for codimension~$k$ cycles
holds if $k=0$ or $k>d$.
For $k=0$, this is a trivial assertion;
for $k>d$, the restriction map
$H^{2k}_G(X(C),\Z(k)) \to H^{2k}_G(X(R),\Z(k))$
is injective
(see~\textsection\ref{subsubsec:cohomological dimension}) with kernel $H^{2k}_G(X(C),\Z(k))_0$, so that the target of (\ref{eq:cycle class map to hdgzero}) vanishes.
Let us now consider, in~\textsection\ref{subsubsec:divisors} and~\textsection\ref{subsubsec:zerocycles} below,
the more interesting cases where $k=1$ or $k=d$.

\subsubsection{Divisors}
\label{subsubsec:divisors}

It was observed by Krasnov that the Lefschetz~$(1,1)$ theorem holds in the real setting as well.
The topological constraints
provided by Theorem~\ref{th:conditions de krasnov}
do not play any role here,
as $H^2_G(X(C),\Z(1))_0=H^2_G(X(C),\Z(1))$.

\begin{prop}[Krasnov]
\label{prop:real(1,1)}
Any smooth and proper variety over~$\R$ satisfies
the real integral Hodge conjecture for codimension~$1$ cycles.
\end{prop}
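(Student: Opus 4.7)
Since $H^2_G(X(\C),\Z(1))_0=H^2_G(X(\C),\Z(1))$ (the topological constraint of Definition~\ref{def:condition topologique} is vacuous for $k=1$), the task reduces to showing that $\cl:\Pic(X)=\CH^1(X)\to\Hdg^2_G(X(\C),\Z(1))$ is surjective. The plan is to transplant the classical Lefschetz~$(1,1)$ theorem into the equivariant setting, by running the argument with the $G$\nobreakdash-equivariant exponential exact sequence of analytic sheaves on~$X(\C)$,
\begin{equation*}
0\to\Z(1)\to\sO^{an}\xrightarrow{\exp}\sO^{an,*}\to 0\rlap{\text{,}}
\end{equation*}
where~$G$ acts on~$X(\C)$ by complex conjugation and the relation $\sigma(2\pi\sqrt{-1})=-2\pi\sqrt{-1}$ accounts for the twist on~$\Z(1)$.

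The associated long exact sequence furnishes a first Chern class map $c_1:H^1_G(X(\C),\sO^{an,*})\to H^2_G(X(\C),\Z(1))$ whose cokernel injects into $H^2_G(X(\C),\sO^{an})$. The first step is to check that any class $\alpha\in\Hdg^2_G(X(\C),\Z(1))$ dies in this last group, and therefore lifts to some $L\in H^1_G(X(\C),\sO^{an,*})$. For this, the Hochschild--Serre spectral sequence~\eqref{eq:hochschild-serre} for~$\sO^{an}$ has $E_2^{p,q}=H^p(G,H^q(X(\C),\sO^{an}))=0$ for $p\geq 1$, since $H^q(X(\C),\sO^{an))$ is a $\Q$\nobreakdash-vector space; consequently the forgetful map $H^2_G(X(\C),\sO^{an})\to H^2(X(\C),\sO^{an})$ is injective, and the image of~$\alpha$ in $H^2(X(\C),\sO^{an})=H^{0,2}(X_\C)$ vanishes by the defining Hodge type~$(1,1)$ condition.

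The remaining step, which I expect to be the main obstacle, is to identify $H^1_G(X(\C),\sO^{an,*})$ with~$\Pic(X)$ compatibly with~$\cl$. GAGA gives $\Gamma(X(\C),\sO^{an,*})=\Gamma(X_\C,\sO^*)$ and $H^1(X(\C),\sO^{an,*})=\Pic(X_\C)$ as $G$\nobreakdash-modules, so the Hochschild--Serre spectral sequence yields an exact sequence
\begin{equation*}
0\to H^1(G,\Gamma(X_\C,\sO^*))\to H^1_G(X(\C),\sO^{an,*})\to\Pic(X_\C)^G\to H^2(G,\Gamma(X_\C,\sO^*))
\end{equation*}
whose outer terms and transgression agree with those of the classical Galois descent sequence for $\Pic(X)=H^1(X_\et,\Gm)$; comparing the two sequences gives an isomorphism $\Pic(X)\isoto H^1_G(X(\C),\sO^{an,*})$ under which~$c_1$ becomes the cycle class map~$\cl$ (the two constructions of the class of a divisor being known to agree). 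The lift~$L$ then arises from a class in~$\Pic(X)$ whose image under~$\cl$ is~$\alpha$, which completes the proof. The delicacy in this last step lies in tracking the semi-linear $G$\nobreakdash-action on~$\sO^{an}$ and matching the transgressions in the two descent sequences.
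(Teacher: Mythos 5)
Your proposal is correct and follows essentially the same route as the paper, which simply observes that one mimics the classical Lefschetz~$(1,1)$ argument using the $G$\nobreakdash-equivariant exponential sequence and refers to Krasnov, Mangolte--van~Hamel and van~Hamel for the details you have spelled out (vanishing of $H^p(G,-)$ on the uniquely divisible groups $H^q(X(\C),\sO^{an})$, and the GAGA/Hochschild--Serre identification $\Pic(X)\isoto H^1_G(X(\C),\sO^{an,*})$ compatible with $\cl$).
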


This can be checked by mimicking the usual proof of the Lefschetz~$(1,1)$ theorem and noting
that the exponential short exact sequence on~$X(\C)$ is $G$\nobreakdash-equivariant.
We refer the reader to~\cite[Proposition~1.3.1]{krasnovcharacteristicclasses},
\cite[Proposition~3.2]{mangoltevanhamel}, \cite[Chapter~IV,
Theorem~4.1]{vanhamelthese}.
Over a real closed field, one has the following substitute:

\begin{prop}
\label{prop:real(1,1)nonarch}
Let~$X$ be a smooth variety over a real closed field~$R$. The cokernel of
the equivariant cycle class map $\cl:\Pic(X)\to H^2_G(X(C),\Z(1))$ is torsion-free.
Assume, moreover, that~$X$ is proper and that $H^2(X,\sO_X)=0$.
Then~$X$ satisfies the real integral Hodge conjecture for codimension~$1$ cycles in
the sense of Definition~\ref{def:ihc real closed field}.
\end{prop}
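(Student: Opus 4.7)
The proof has two parts: first, that for any smooth $X$ over $R$, the cokernel of $\cl$ is torsion-free; second, that under the additional assumptions ($X$ proper and $H^2(X,\sO_X)=0$), the cokernel is finitely generated with vanishing rationalization, hence zero.

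For the first part, the plan is to use Kummer theory in étale cohomology and compare it to equivariant integral cohomology. The Kummer exact sequence $1\to\mu_n\to\Gm\xrightarrow{n}\Gm\to 1$ on the étale site of $X$, combined with the comparison $H^2_\et(X,\mu_n)=H^2_G(X(C),\Z/n\Z(1))$ of~\textsection\ref{subsubsec:sacohomology}, gives an injective Kummer map $\Pic(X)/n\hookrightarrow H^2_G(X(C),\Z/n\Z(1))$ for each $n\geq 1$. By the standard compatibility of $\cl$ with Kummer theory, this equals the composition of reduction modulo~$n$, the map $\cl$, and the embedding $H^2_G(X(C),\Z(1))/n\hookrightarrow H^2_G(X(C),\Z/n\Z(1))$ induced by $0\to\Z(1)\xrightarrow{n}\Z(1)\to\Z/n\Z(1)\to 0$. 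A snake-lemma argument on $0\to\Im(\cl)\to H^2_G(X(C),\Z(1))\to\Coker(\cl)\to 0$ under multiplication by $n$ then reduces $\Coker(\cl)[n]=0$ to two statements: the injectivity of $\Pic(X)/n\hookrightarrow H^2_G(X(C),\Z(1))/n$ (immediate from Kummer), and the inclusion $H^2_G(X(C),\Z(1))[n]\subseteq\Im(\cl)$. The latter follows by identifying $H^2_G(X(C),\Z(1))[n]$ with the image of the integral Bockstein $H^1_\et(X,\mu_n)\to H^2_G(X(C),\Z(1))$ and decomposing $H^1_\et(X,\mu_n)$ via Kummer as an extension of $\Pic(X)[n]$ by $H^0(X,\Gm)/n$: the $\Pic(X)[n]$-part yields the algebraic classes $\cl(L)$, while the $H^0(X,\Gm)/n$-part is annihilated by naturality with the structure morphism $X\to\mathrm{Spec}\,R$, through the vanishing $H^2(G,\Z(1))=0$.

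For the second part, assume $X$ proper and $H^2(X,\sO_X)=0$. The group $H^2_G(X(C),\Z(1))$ is finitely generated, as $H^i(X(C),\Z)$ is finitely generated (semi-algebraic cohomology of a compact semi-algebraic space) and the Hochschild--Serre spectral sequence~\eqref{eq:hochschild-serre} then exhibits $H^2_G(X(C),\Z(1))$ as a finitely generated extension. So is $\Coker(\cl)$, and by the first part it suffices to prove $\Coker(\cl)\otimes\Q=0$. Since $H^p(G,-)$ is $2$-torsion for $p\geq 1$, rational Hochschild--Serre collapses to $H^2_G(X(C),\Q(1))=H^2(X(C),\Q(1))^G$. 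Similarly, Hochschild--Serre for étale cohomology of~$\Gm$ on~$X$, together with Hilbert~90 (so that $H^1(G,C^\times)=0$) and the finiteness of $H^2(G,C^\times)=R^\times/N(C^\times)=\{\pm 1\}$, gives $\Pic(X)\otimes\Q=\Pic(X_C)^G\otimes\Q$. Thus it suffices to show that $\Pic(X_C)\otimes\Q\to H^2(X(C),\Q(1))$ is surjective.

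This final surjectivity is the Lefschetz~$(1,1)$ theorem for $X_C$: by base change $H^2(X_C,\sO_{X_C})=0$, so $H^{0,2}=H^{2,0}=0$ and every class in $H^2(X_C,\C)$ is of Hodge type $(1,1)$; Lefschetz~$(1,1)$ then yields the surjectivity onto the Hodge classes, which is all of $H^2(X(C),\Q(1))$. The main obstacle is to extend Lefschetz~$(1,1)$ from $\C$ to the arbitrary algebraically closed field $C$ of characteristic zero underlying~$R$, which is handled by a standard Lefschetz-principle argument: spread out $X_C$ to a smooth proper family over a finitely generated $\Q$-subalgebra $B\subset C$ admitting an embedding $B\hookrightarrow\C$, and use smooth proper base change in étale cohomology together with the constancy of Picard schemes in smooth proper families to reduce to $C=\C$.
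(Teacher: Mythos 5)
Your overall strategy is close to the paper's: Kummer theory and the comparison between equivariant semi-algebraic and \'etale cohomology for the torsion-freeness, then finite generation plus rational surjectivity (Lefschetz~$(1,1)$ over~$\C$ transported by the Lefschetz principle) for the vanishing. Your second part is in substance the paper's argument, merely packaged differently: the paper deduces $T(\Br(X_C))=0$ from $H^2(X,\sO_X)=0$ and then $T(\Br(X))=0$ by a norm argument, instead of rationalising the cokernel directly; both routes are fine.

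The first part, however, contains a gap. Your snake-lemma reduction forces you to prove that every $n$\nobreakdash-torsion class of $H^2_G(X(C),\Z(1))$ is algebraic, and for that you must show that the integral Bockstein kills the image of $H^0(X,\Gm)/n$ in $H^1_\et(X,\mu_n)$. You justify this ``by naturality with the structure morphism $X\to\mathrm{Spec}\,R$'', but the torsion-freeness assertion is made for an \emph{arbitrary} smooth variety, where global units need not be constant: already for $X=\Gm$ the Kummer class of the coordinate~$t$ is not pulled back from $\mathrm{Spec}\,R$, so the vanishing of $H^2(G,\Z(1))$ cannot be invoked this way. The claim you need is nevertheless true: the Kummer class of any $f\in H^0(X,\Gm)$ is $f^*$ of the Kummer class of~$t$, and a direct computation (retract $C^*$ equivariantly onto the unit circle and run Hochschild--Serre; all three relevant $E_2$\nobreakdash-terms vanish) gives $H^2_G(\Gm(C),\Z(1))=0$, so the Bockstein of $(f)_n$ vanishes by functoriality. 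It is worth noting that the paper's proof sidesteps this point entirely: it identifies $\Coker(\cl)\otimes_\Z\widehat\Z$ with $T(\Br(X))=\Hom(\Q/\Z,\Br(X))$ via the Kummer sequence, observes that a Tate module is torsion-free for purely formal reasons, and concludes from the finite generation of the cokernel --- so one never has to prove directly that torsion classes of $H^2_G(X(C),\Z(1))$ are algebraic (though that is a consequence). If you keep your route, supply the $\Gm$ argument above; the rest of your proof then goes through.
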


\begin{proof}
Let $E=\Coker\mkern2mu\big(\cl:\Pic(X)\to H^2_G(X(C),\Z(1))\big)$.
Let $\Br(X)=H^2_\et(X,\Gm)$ and $T(\Br(X))=\Hom(\Q/\Z,\Br(X))$.
Recall,
from~\textsection\ref{subsubsec:sacohomology},
the canonical identification
$H^2_G(X(C),\Z(1)) \otimes_\Z \widehat\Z=H^2_\et(X,\widehat\Z(1))$,
and
consider the commutative diagram
\begin{align*}
\xymatrix@R=3ex{
\Pic(X) \otimes_\Z \widehat\Z \ar[d] \ar[r] & H^2_G(X(C),\Z(1)) \otimes_\Z \widehat\Z \ar@{=}[d] \ar[r] & E \otimes_\Z \widehat\Z \ar[r] \ar[d] & 0 \\
\displaystyle\varprojlim_{n\geq 1} \big(\Pic(X)/n\Pic(X)\big) \ar[r] & H^2_\et(X,\widehat\Z(1)) \ar[r] & T(\Br(X)) \ar[r] & 0\rlap{,}
}
\end{align*}
in which the bottom row comes from the Kummer exact sequence in \'etale cohomology
and
the vertical map on the right is determined by the rest of the diagram.
The zigzag map $\varprojlim \Pic(X)/n\Pic(X) \to E \otimes_\Z \widehat \Z$
vanishes since its composition
with the projection $E \otimes_\Z \widehat\Z \to E/nE$ vanishes for every~$n$ while~$E$ is a finitely generated abelian group. The vertical map on the right is therefore an isomorphism.
As the abelian group $T(\Br(X))$ is torsion-free and as~$E$ is finitely generated,
it follows that~$E$ is torsion-free.
If~$X$ is proper and satisfies $H^2(X,\sO_X)=0$,
then $\NS(X_C)\otimes_\Z\Ql=H^2_\et(X_C,\Ql)$
for any~$\ell$
(by Hodge theory if $C=\C$, by the Lefschetz principle in general)
and hence $T(\Br(X_C))=0$
(see~\cite[II, Corollaire~3.4]{grothbrauer}).
As $T(\Br(X))$ is torsion-free, a norm argument
now implies that $T(\Br(X))=0$. It follows that $E\otimes_\Z\widehat\Z=0$, hence that $E=0$.
\end{proof}

\subsubsection{Zero-cycles}
\label{subsubsec:zerocycles}

On a smooth and proper variety of pure dimension $d$ over $R$, one may consider the real integral Hodge conjecture for zero-cycles in the sense of Definition \ref{def:ihc real closed field} since $H^q(X,\Omega^p_X)=0$ for $p>d$ or $q>d$. 
We show that it always holds.

\begin{prop}
\label{prop:hodgereel0cycles}
Any smooth and proper variety of pure dimension~$d$ over a real closed field satisfies
the real integral Hodge conjecture for codimension~$d$ cycles.
\end{prop}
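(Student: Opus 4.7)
My plan is to show that the equivariant cycle class map $\cl : \CH^d(X) \to H^{2d}_G(X(C),\Z(d))_0$ is surjective. The case $d = 0$ is immediate (the target is generated by the fundamental classes of the components of~$X$), so I assume $d \geq 1$.

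First I would unpack the topological condition of Definition~\ref{def:condition topologique} for $k = d$. A class $\alpha \in H^{2d}_G(X(C),\Z(d))$ lies in $H^{2d}_G(X(C),\Z(d))_0$ iff $\alpha_{d+i} = \Sq^i(\alpha_d)$ for every $i \in 2\Z$. For $i > 0$, both sides live in $H^{d+i}(X(R),\Z/2\Z) = 0$ (since $X(R)$ has semi-algebraic dimension~$d$), so the constraint is automatic; for $i < 0$, $\Sq^i = 0$ by convention, so the constraint forces $\alpha_{d+i} = 0$. Thus $H^{2d}_G(X(C),\Z(d))_0$ consists of those classes whose only possibly nonzero coordinate under~\eqref{eq:natural projection} is~$\alpha_d$. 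Since $2d > \dim X$, that projection is an isomorphism and identifies $H^{2d}_G(X(R),\Z(d)) \cong \bigoplus_{p \equiv d\,(2),\, 0 \leq p \leq d} H^p(X(R),\Z/2\Z)$; the restriction map $r : H^{2d}_G(X(C),\Z(d))_0 \to H^{2d}_G(X(R),\Z(d))$ therefore lands in the top summand $H^d(X(R),\Z/2\Z)$, via $\alpha \mapsto \alpha_d$. Since each semi-algebraic connected component of~$X(R)$ contains an $R$-point whose fundamental class generates the corresponding summand of $H^d(X(R),\Z/2\Z) \cong (\Z/2\Z)^{\pi_0(X(R))}$, the Borel--Haefliger map $\cl_R : \CH^d(X) \to H^d(X(R),\Z/2\Z)$ is surjective, and by Theorem~\ref{th:conditions de krasnov} one has $r(\cl(z)) = \cl_R(z) = \alpha_d$ for a suitable 0-cycle~$z$, reducing the problem to the case $\alpha \in \ker(r)$.

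For the kernel, the long exact sequence of the pair $(X(C), X(R))$ identifies $\ker(r)$ with the image of the forget-support map $H^{2d}_G(X(C), X(R), \Z(d)) \to H^{2d}_G(X(C),\Z(d))$. By Remark~\ref{rks:equivariant poincare}~(ii) applied with $i = 2d$ and $M = \Z(d)$, combined with~\eqref{eq:cohomological dimension iso open} and the identification $H^0((X(C) \setminus X(R))/G, \Q/\Z) = (\Q/\Z)^{\pi_0((X(C) \setminus X(R))/G)}$, the profinite completion of $H^{2d}_G(X(C), X(R), \Z(d))$ equals $\widehat\Z^{\pi_0((X(C) \setminus X(R))/G)}$. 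Finite generation then forces this group to be free abelian of rank $|\pi_0((X(C) \setminus X(R))/G)|$, with one $\Z$-generator per connected component. Each $\Z$-generator is realised, via the equivariant purity isomorphism~\eqref{eq:equivariant purity subvariety} and excision, as the equivariant fundamental class of a pair $\{x, \bar x\} \subset X(C) \setminus X(R)$ of conjugate complex points in the corresponding component --- i.e., as $\cl(P)$ for a closed point~$P$ of~$X$ with residue field~$C$ (any component contains such a $G$-orbit). Hence $\ker(r) \subseteq \Im(\cl)$, which concludes the proof.

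The main technical obstacle is the duality identification of $H^{2d}_G(X(C), X(R), \Z(d))$ with $\Z^{\pi_0((X(C) \setminus X(R))/G)}$ together with the geometric realisation of each $\Z$-generator as the equivariant cycle class of a closed $C$-point; both rely on Proposition~\ref{prop:poincare duality equivariant} and the cohomological-dimension identifications of~\textsection\ref{subsubsec:cohomological dimension}.
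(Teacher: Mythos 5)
Your strategy runs parallel to the paper's: split $\alpha\in H^{2d}_G(X(C),\Z(d))_0$ into the component $\alpha_d$, which is killed by classes of $R$\nobreakdash-points, and a remainder in $\ker(r)$, to be killed by classes of closed points with residue field~$C$. The first half is correct: your unpacking of the condition~$(\;)_0$ for $k=d$, the reduction to $\ker(r)$ via Theorem~\ref{th:conditions de krasnov}, the identification of $\ker(r)$ with the image of the forget-support map, and the computation $H^{2d}_G(X(C),X(R),\Z(d))\cong\Z^{\pi_0((X(C)\setminus X(R))/G)}$ from Remark~\ref{rks:equivariant poincare}~(ii) all hold. The gap is in the final step, which is the only non-formal part of the whole argument: the assertion that this free abelian group is \emph{generated} by the relative fundamental classes of conjugate pairs $\{x,\bar x\}$, one per connected component. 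Equivariant purity and excision produce such classes, i.e.\ $|\pi_0|$ elements of $\Z^{|\pi_0|}$, but the duality computation only pins down the abstract isomorphism type of the target; a priori your classes could span a proper finite-index subgroup (compare $X=\P^1_R$, where the class of a degree-$2$ point is \emph{twice} a generator of $H^2_G(X(C),\Z(1))$, and only the duality normalisation tells you it generates the rank-one relative group). In that case you would only prove that a finite-index subgroup of $\ker(r)$ is algebraic.

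To close this you must show that the relative fundamental class of $\{x,\bar x\}$ pairs, under~\eqref{eq:poincare duality equivariant} with $M=\Z/n\Z(d)$ and for every~$n$, to $1/n$ against the indicator function of the component of~$[x]$ in $H^0_G(X(C)\setminus X(R),\Z/n\Z)$. This is a genuine compatibility between the algebraically defined purity class of \textsection\ref{subsubsec:eqcl} (built via the Borel construction) and the trace map of~\eqref{eq:semi-algebraic poincare duality} (built via Verdier duality on the quotient $(X(C)\setminus X(R))/G$); it is the relative, equivariant analogue of Lemma~\ref{lem:H2dreel}~(i), which the paper deliberately proves not from Proposition~\ref{prop:verdier} but by comparison with $\ell$\nobreakdash-adic étale cohomology --- a route unavailable for the relative group once $X(R)\neq\emptyset$. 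The paper sidesteps the issue with a different description of $\ker(r)$: Lemma~\ref{lem:H2dreel}~(ii) shows, via the cup-product-with-$\omega$ diagram and~\eqref{eq:real-complex long 10}, that $\ker(r)$ is the image of the norm map $N_{C/R}\colon H^{2d}(X(C),\Z(d))\to H^{2d}_G(X(C),\Z(d))$, whose source is generated by non-equivariant point classes by Lemma~\ref{lem:H2dreel}~(i), and $N_{C/R}(\cl(y))=\cl(N_{C/R}(y))$ is algebraic by compatibility of~$\cl$ with push-forward. Either prove the pairing computation above, or replace your treatment of $\ker(r)$ by this norm-map argument.
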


\begin{proof}
Pick a point~$x_i$ in each semi-algebraic connected component of~$X(R)$
and a point~$y_j$ in each semi-algebraic connected component of~$X(C)$.
By Theorem~\ref{th:conditions de krasnov}
and the next lemma, the classes $\cl(x_i)$ and $\cl(N_{C/R}(y_j))$
generate~$H^{2d}_G(X(C),\Z(d))_0$.
\end{proof}

\begin{lem}
\label{lem:H2dreel}
Let~$X$ be a smooth and proper variety of pure dimension~$d$ over a real closed field~$R$.
\begin{enumerate}[(i)]
\item
Sending a connected component of~$X_C$ to the cycle class of one of its closed points
defines a canonical isomorphism $\Z^{\pi_0(X_C)}\isoto H^{2d}(X(C),\Z(d))$.
\item
The norm map $H^{2d}(X(C),\Z(d))\to H^{2d}_G(X(C),\Z(d))$ from~\eqref{eq:real-complex long 10}
takes its values in the subgroup $H^{2d}_G(X(C),\Z(d))_0$ and fits into an exact sequence
\begin{align*}
\xymatrix{
H^{2d}(X(C),\Z(d))\ar[r]& H^{2d}_G(X(C),\Z(d))_0\ar[r]& H^d(X(R),\Z/2\Z)\ar[r]&0\rlap{,}
}
\end{align*}
where
the map on the right
is the map~\eqref{eq:psi with point} for $k=d$
and where the map on the left is injective if~$X$ is geometrically connected.
\end{enumerate}
\end{lem}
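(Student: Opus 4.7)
For part (i), by additivity in the connected components of $X_C$, it suffices to treat the case where $X$ is geometrically connected, so that $X_C$ is a smooth compact connected complex manifold. The complex structure supplies a canonical $G$\nobreakdash-equivariant identification $\orient_{X(C)}=\Z(d)$ (see \textsection\ref{subsubsec:equivariant purity}); Poincaré duality~\eqref{eq:semi-algebraic poincare duality} then identifies $H^{2d}(X(C),\Z(d))$ with $\Z$, and for any closed point $y\in X_C$ the class $\cl(y)$ is the image of $1\in H^0(\{y\},\Z)$ under purity~\eqref{eq:purity mod 2}, hence a generator.

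The proof of (ii) will hinge on the following reduction. Since $2d>d=\dim(X)$, the natural projection~\eqref{eq:natural projection} is an isomorphism
\[H^{2d}_G(X(R),\Z(d))\isoto \bigoplus_{\substack{0\leq p\leq d\\ p\,\equiv\, d \text{ mod } 2}}H^p(X(R),\Z/2\Z),\]
under which restriction to $X(R)$ records the components $(\alpha_p)_p$. Together with Remark~\ref{rk:topological condition 1-cycles and ak}~(i), this identifies $\ker(\psi)\cap H^{2d}_G(X(C),\Z(d))_0$ with the kernel of the restriction map $H^{2d}_G(X(C),\Z(d))\to H^{2d}_G(X(R),\Z(d))$.

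Three verifications will then complete (ii). First, the image of the norm map lies in $\ker(\psi)$: restriction to $X(R)$ commutes with norms and factors through $H^{2d}(X(R),\Z(d))=0$. Second, $\psi$ is surjective: if $X(R)\neq\emptyset$, choose a real point $x_V$ in each semi-algebraic connected component $V$ of $X(R)$; then $\cl(x_V)\in H^{2d}_G(X(C),\Z(d))_0$ satisfies $\psi(\cl(x_V))=\cl_R(x_V)$, and these Borel--Haefliger classes are Poincaré dual to the fundamental classes of the components, hence generate $H^d(X(R),\Z/2\Z)$. Third, for exactness at the middle, given $\alpha$ with $\alpha|_{X(R)}=0$, the real--complex exact sequence~\eqref{eq:real-complex long 10} with $\sF=\Z(d)$ reduces membership in the image of the norm to the vanishing of $\alpha\smile\omega$ in $H^{2d+1}_G(X(C),\Z(d+1))$; as $(\alpha\smile\omega)|_{X(R)}=0$, the task further reduces to the injectivity of the restriction map $H^{2d+1}_G(X(C),\Z(d+1))\to H^{2d+1}_G(X(R),\Z(d+1))$. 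By the long exact sequence of the pair $(X(C),X(R))$, its kernel is the image of $H^{2d+1}_G(X(C),X(R),\Z(d+1))$, whose profinite completion is, by Remark~\ref{rks:equivariant poincare}~(ii), Pontrjagin dual to $H^{-1}_G(X(C)\setminus X(R),\Hom(\Z(d+1),\Q/\Z(d)))=0$; as this finitely generated group injects into its profinite completion, it must vanish.

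Finally, when $X$ is geometrically connected, injectivity of the norm map follows by computing that its composition with the forgetful map $H^{2d}_G(X(C),\Z(d))\to H^{2d}(X(C),\Z(d))\cong\Z$ equals the trace $1+\sigma^*$; complex conjugation acts trivially on $H^{2d}(X(C),\Z(d))$, the sign $(-1)^d$ it picks up on the real orientation of $X(\C)$ being cancelled by its action on the twist $\Z(d)$, so this composition is multiplication by $2$, which is injective. The hard part will be the injectivity of the restriction map at degree $2d+1$, which rests on the less elementary duality statement of Remark~\ref{rks:equivariant poincare}~(ii) rather than on a direct geometric argument.
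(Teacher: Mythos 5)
Your proof is correct, and its engine is the same as the paper's: the image of the norm is the kernel of cup product with $\omega$ by~\eqref{eq:real-complex long 10}, and this kernel is computed by restricting to $X(R)$ and using the vanishing of relative equivariant cohomology above degree~$2d$. The paper packages all of this (including surjectivity onto $H^d(X(R),\Z/2\Z)$) into a single commutative diagram: the top row $H^{2d}(X(C),\Z(d))\to H^{2d}_G(X(C),\Z(d))\to H^{2d+1}_G(X(C),\Z(d+1))\to 0$ is exact because $H^{2d+1}(X(C),\Z(d))=0$, the right vertical restriction is an isomorphism by~\textsection\ref{subsubsec:cohomological dimension}, and the bottom map is an injection of summands by~\textsection\ref{subsubsec:effect}; surjectivity of~$\psi$ then drops out of the exactness of the top row, with no appeal to cycle classes. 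Your alternative for surjectivity---point classes $\cl(x_V)$ together with Theorem~\ref{th:conditions de krasnov}---is legitimate and not circular (that theorem is proved independently of this lemma), but it imports a nontrivial result where a diagram chase suffices. Two smaller points. First, your ``hard part,'' the vanishing of $H^{2d+1}_G(X(C),X(R),\Z(d+1))$, does not need Remark~\ref{rks:equivariant poincare}~(ii) and profinite completions: by~\eqref{eq:cohomological dimension iso relative} this group is $H^{2d+1}(X(C)/G,X(R),\Z(d+1))$, which vanishes because the quotient has dimension~$2d$ (\textsection\ref{subsubsec:cohomological dimension}); so your closing sentence overstates what the step rests on. Second, in~(i) your references are slightly off: \eqref{eq:semi-algebraic poincare duality} and~\eqref{eq:purity mod 2} only cover finite-exponent (resp.\ $\Z/2\Z$) coefficients, whereas the integral statement you need is Proposition~\ref{prop:verdier} (or, as the paper does, comparison with $\ell$\nobreakdash-adic cohomology and Poincar\'e duality there); with that correction, the reduction to the geometrically connected case and the identification of the point class as a generator are fine.
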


\begin{proof}
Assertion~(i) is a part of Poincar\'e duality
and follows from \cite[Chapter~VI, Theorem~11.1~(a)]{milneet}
by comparison with $\ell$\nobreakdash-adic cohomology for every~$\ell$.
Alternatively, although we do not do it here,
it could be deduced from the statement of Poincar\'e
duality given in Proposition~\ref{prop:verdier}.

The composition of the norm map which appears in~(ii)
with the natural map $H^{2d}_G(X(C),\Z(d)) \to H^{2d}(X(C),\Z(d))$
coincides, if~$X$ is geometrically irreducible,
with multiplication by~$2$ on the torsion-free group $H^{2d}(X(C),\Z(d))=\Z$.
It is therefore injective in this case.
The rest of the assertion follows from the commutative diagram
\begin{align*}
\xymatrix@R=3ex{
H^{2d}(X(C),\Z(d)) \ar[r] & H^{2d}_G(X(C),\Z(d)) \ar[r] \ar[d] & H^{2d+1}_G(X(C),\Z(d+1)) \ar[d]^(.45)\wr \ar[r] & 0\\
& H^{2d}_G(X(R),\Z(d)) \ar[r] & H^{2d+1}_G(X(R),\Z(d+1))\rlap{,}
}
\end{align*}
whose first row is exact and comes from~\eqref{eq:real-complex long 10}.
Indeed,
the vertical map on the right is an isomorphism (see~\textsection\ref{subsubsec:cohomological dimension})
and the bottom horizontal map is compatible with the decompositions~\eqref{eq:canonical decomposition}
(see~\textsection\ref{subsubsec:reduction modulo 2} and~\textsection\ref{subsubsec:effect}).
\end{proof}

\begin{rmk}
\label{rmk:kucharz alg explained}
Thanks to Proposition~\ref{prop:hodgereel0cycles}, we can now
extend \cite[Theorem~2.1]{kucharzalgeqhom} to the following purely cohomological statement,
as promised in Remark~\ref{rk:covers hknotalg}~(ii):

Let~$X$ be a smooth and proper
variety of pure dimension~$d$ over a real closed field~$R$.
Let~$k$ be an integer. Let $\alpha_k \in H^k(X(R),\Z/2\Z)$
and $\beta_{d-k} \in H^{d-k}(X(R),\Z/2\Z)$.
If~$\alpha_k$ and~$\beta_{d-k}$ can be lifted,
by the corresponding maps~\eqref{eq:psi with point},
to $\alpha \in H^{2k}_G(X(C),\Z(k))_0$
and $\beta \in H^{2d-2k}_G(X(C),\Z(d-k))_0$
such that the image of~$\beta$ in $H^{2d-2k}(X(C),\Z(d-k))$ vanishes
(as is the case if $\beta=\cl(y)$ for a cycle~$y$ algebraically equivalent to~$0$),
then $\deg(\alpha_k \smile \beta_{d-k})=0$, where $\deg:H^d(X(R),\Z/2\Z) \to \Z/2\Z$
denotes the total degree.

Indeed, letting $\gamma = \alpha \smile \beta$,
we have $\gamma \in H^{2d}_G(X(C),\Z(d))_0$
and $\gamma_d = \alpha_k \smile \beta_{d-k}$
by~\textsection\ref{subsubsec:compatibility with pushforwards}.
By Proposition~\ref{prop:hodgereel0cycles},
there exists a zero-cycle~$z$
such that $\gamma=\cl(z)$.
It must have degree~$0$ since the image of~$\gamma$
in $H^{2d}(X(C),\Z(d))$ vanishes.
Hence $\deg(\gamma_d)=0$.
\end{rmk}

\subsubsection{Birational invariance}
\label{subsubsec:birational invariance}

The integral Hodge conjecture for codimension~$k$ cycles on a smooth and proper complex algebraic
variety of pure dimension~$d$
is a birational invariant when $k=2$ or $k=d-1$ (see~\cite[Lemma~15]{voisinsomeaspects}).
The real integral Hodge conjecture enjoys the same birational invariance property.

\begin{prop}
\label{prop:birinvIHC}
Let~$R$ be a real closed field.
Let~$X$ denote a smooth and proper variety of pure dimension~$d$ over~$R$.
Let $k \in \{2,d-1\}$.
If $R=\R$,
the group
\begin{align*}
\Coker\big(\CH^k(X) \to \Hdg^{2k}_G(X(\C),\Z(k))_0\big)
\end{align*}
is a birational invariant of~$X$.
If $k=d-1$,
so is the group
\begin{align*}
\Coker\big(\CH^k(X) \to H^{2k}_G(X(C),\Z(k))_0\big)\rlap{,}
\end{align*}
for any~$R$.
\end{prop}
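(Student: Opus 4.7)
The plan is to use weak factorisation to reduce the statement to the case of a single blowup along a smooth centre, and then to exploit the blowup formula in both Chow groups and in $G$-equivariant integral cohomology, together with the previously established cases of the real integral Hodge conjecture (divisors and zero-cycles), to prove the desired isomorphism of cokernels.

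First I would invoke the weak factorisation theorem of Abramovich--Karu--Matsuki--W\l odarczyk, valid over any field of characteristic zero, to factor any birational map between smooth proper varieties over $R$ as a zigzag of blowups and blowdowns along smooth centres. It thus suffices to show, for $k\in\{2,d-1\}$, that if $Z \subset X$ is a smooth closed subvariety of pure codimension $c \geq 2$ of a smooth proper variety $X$ of pure dimension $d$ over $R$, and $\pi : \widetilde X \to X$ denotes the blowup along $Z$, then $\pi$ induces an isomorphism of the corresponding cokernels. Let $j : E \hookrightarrow \widetilde X$ denote the exceptional divisor, $p : E \to Z$ the projection (a $\P^{c-1}$-bundle), and $h = \cl(\sO_E(1))$. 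The classical blowup formula provides an isomorphism
\[
\CH^k(X) \oplus \bigoplus_{i=0}^{c-2} \CH^{k-1-i}(Z) \xrightarrow{\sim} \CH^k(\widetilde X), \quad (\alpha,\beta_0,\dots,\beta_{c-2}) \mapsto \pi^*\alpha + \sum_i j_*(h^i \smile p^*\beta_i),
\]
with a strictly parallel formula in $G$-equivariant integral cohomology, obtained from the projective bundle formula for $E = \P_Z(N_{Z/X})$ and the long exact sequence of the blowup square. These isomorphisms are compatible via the equivariant cycle class map, since the latter is natural with respect to pull-backs, proper push-forwards, and cup products. Over $\R$, the Hodge filtration on $H^{2k}(\widetilde X(\C),\C)$ is preserved by these same operations, so the cohomological formula restricts to an isomorphism between the subgroups $\Hdg^{2k}_G(\cdot,\Z(k))$. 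Crucially, the subscript-$0$ condition is also preserved by pull-backs, by proper push-forwards thanks to Theorem~\ref{th:stability of topological constraints}, and by cup products (see \S\ref{subsubsec:compatibility with pushforwards}); as the class $h \in H^2_G(E(C),\Z(1))$ is a cycle class, it lies in $H^2_G(E(C),\Z(1))_0$ by Theorem~\ref{th:conditions de krasnov}. Hence the cohomological blowup formula further restricts to an isomorphism on the $\Hdg^{2k}_G(\cdot,\Z(k))_0$ subgroups over~$\R$ and on the $H^{2k}_G(\cdot,\Z(k))_0$ subgroups over an arbitrary real closed field~$R$.

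Combining these two compatible decompositions yields
\[
\Coker(\cl_{\widetilde X}) \cong \Coker(\cl_X) \oplus \bigoplus_{i=0}^{c-2} \Coker(\cl_{Z,k-1-i}),
\]
with or without the Hodge condition included, where $\cl_{Z,k-1-i}$ denotes the equivariant cycle class map on $Z$ in codimension $k-1-i$. It then remains to show that each summand indexed by $Z$ vanishes. When $k = d-1$ the relevant codimensions on $Z$ (which has dimension $d-c$) are $d-2, d-3, \dots, d-c$: all but the last strictly exceed $\dim(Z)$, and in these codimensions both the Chow group and the subscript-$0$ cohomology group vanish, the latter by the first paragraph of \S\ref{subsec:first positive results}; the remaining codimension equals $\dim(Z)$ and corresponds to zero-cycles, for which the real integral Hodge conjecture is exactly Proposition~\ref{prop:hodgereel0cycles}. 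When $k=2$ and $R = \R$, the relevant codimensions are $1$ and (if $c \geq 3$) also~$0$; codimension zero is trivial, and codimension one is Proposition~\ref{prop:real(1,1)}, which already includes the Hodge condition.

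The main obstacle I anticipate is the verification in the second step that the cohomological blowup formula restricts to the subgroups singled out by the subscript-$0$ condition. The compatibility of that condition with cup products, pull-backs and proper push-forwards (Theorem~\ref{th:stability of topological constraints} and \S\ref{subsubsec:compatibility with pushforwards}) settles the point in a structural way, but one must nevertheless check both directions of the decomposition, and in particular that the projection onto each summand (which is expressed through $\pi_*$, $j^*$, $p_*$ and cup products with appropriate powers of~$h$) also preserves the subscript-$0$ condition. Once this is in place, the reasoning above is formal.
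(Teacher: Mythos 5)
Your proposal is correct and follows essentially the same route as the paper: reduction to a single blowup with smooth centre (the paper invokes Hironaka rather than weak factorisation, to the same effect), compatible blowup decompositions in Chow groups and in equivariant cohomology that respect the subscript-$0$ condition, and the known cases of the real integral Hodge conjecture for divisors and for zero-cycles on the centre. The verification you flag as the main obstacle is carried out in the paper by a short induction extracting each $\beta_i$ from $\iota^*(\gamma-\pi^*\alpha)$ via the projection formula and the identities $\tau_*\xi^m=0$ for $m<r-1$ and $\tau_*\xi^{r-1}=1$; this is exactly the formal check, built from push-forwards and cup products preserving the subscript-$0$ condition, that you anticipate.
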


\begin{proof}
By Hironaka's theorem \cite[\textsection5]{hironaka},
it suffices to show that these groups are invariant by any blowing-up with smooth centre.
This, in turn, follows from the next lemma and from the validity of the real integral
Hodge conjecture for divisors and for zero-cycles (Propositions~\ref{prop:real(1,1)},
\ref{prop:real(1,1)nonarch}, and~\ref{prop:hodgereel0cycles}) on the centre of the blowing-up.
\end{proof}

\begin{lem}
Let $\pi:X'\to X$ denote the blowing-up of
a smooth irreducible closed subvariety $Z \subset X$ of codimension~$r$.
Let $\iota:E \hookrightarrow X'$ denote the inclusion of the exceptional divisor
and $\tau:E\to Z$ the projection.
Let~$k$ be an integer.
Let~$\xi$
denote the image, by $\cl:\Pic(E)\to H^2_G(E(C),\Z(1))$,
of the class of~$\sO_E(1)$ (the tautological line bundle of the projective bundle~$\tau$).
 There is a natural isomorphism
\begin{align}
H^{2k}_G(X(C),\Z(k))_0 \oplus \bigoplus_{i=k-r+1}^{k-1} H^{2i}_G(Z(C),\Z(i))_0 \to
H^{2k}_G(X'(C),\Z(k))_0
\end{align}
and, if $R=\R$, a natural isomorphism
\begin{align}
\Hdg^{2k}_G(X(C),\Z(k))_0 \oplus \bigoplus_{i=k-r+1}^{k-1} \Hdg^{2i}_G(Z(C),\Z(i))_0 \to
\Hdg^{2k}_G(X'(C),\Z(k))_0\rlap{,}
\end{align}
both given by $\alpha \oplus \bigoplus (\beta_i)_i \mapsto \pi^*\alpha + \sum_i \iota_*(\xi^{k-i-1} \smile \tau^*\beta_i)$.
\end{lem}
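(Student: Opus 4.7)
My plan is to reduce this to three ingredients: the equivariant blow-up formula at the level of full cohomology groups (without the topological/Hodge constraints), the stability of $(\cdot)_0$ under pull-back, cup product, and proper push-forward, and a self-intersection computation to invert the map.

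\textbf{Step 1: the unconstrained blow-up formula.} First I establish that, with no $(\cdot)_0$ decoration, the map
\[
\Phi\colon H^{2k}_G(X(C),\Z(k)) \oplus \bigoplus_{i=k-r+1}^{k-1} H^{2i}_G(Z(C),\Z(i)) \to H^{2k}_G(X'(C),\Z(k)),
\]
defined by the same formula, is an isomorphism. Setting $U = X\setminus Z = X'\setminus E$, this follows by comparing the equivariant localisation long exact sequences for $Z\subset X$ and $E\subset X'$, combining
the equivariant purity isomorphisms~\eqref{eq:equivariant purity subvariety} (which identify the cohomology supported on~$Z$, resp.\ on~$E$, with appropriately shifted equivariant cohomology of~$Z$, resp.\ of~$E$), and using the projective bundle formula for $\tau\colon E\to Z$. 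The projective bundle formula in the equivariant setting with integral twisted coefficients is proven exactly as in the non-equivariant setting by Leray--Hirsch, using that $\xi$ is a global class restricting to the degree~$1$ generator on each fibre~$\P^{r-1}_C$. I will also need the fact that $\pi\colon X'\setminus E\isoto U$ induces an isomorphism on equivariant cohomology.

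\textbf{Step 2: $\Phi$ preserves the $(\cdot)_0$ subgroups.} Since $\xi = \cl(\sO_E(1))$, Theorem~\ref{th:conditions de krasnov} shows $\xi\in H^2_G(E(C),\Z(1))_0$. For $\beta_i\in H^{2i}_G(Z(C),\Z(i))_0$, compatibility of the topological condition with pull-backs (trivial) gives $\tau^*\beta_i\in H^{2i}_G(E(C),\Z(i))_0$; compatibility with cup products (\textsection\ref{subsubsec:compatibility with pushforwards}) applied $k-i-1$ times to~$\xi$ and once to~$\tau^*\beta_i$ gives $\xi^{k-i-1}\smile\tau^*\beta_i \in H^{2k-2}_G(E(C),\Z(k-1))_0$; then Theorem~\ref{th:stability of topological constraints}, applied to the proper closed immersion $\iota$, gives $\iota_*(\xi^{k-i-1}\smile\tau^*\beta_i)\in H^{2k}_G(X'(C),\Z(k))_0$. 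Likewise $\pi^*\alpha\in(\cdot)_0$, so the image lies in $H^{2k}_G(X'(C),\Z(k))_0$.

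\textbf{Step 3: $\Phi^{-1}$ preserves the $(\cdot)_0$ subgroups.} Given $\gamma\in H^{2k}_G(X'(C),\Z(k))_0$ with $\Phi^{-1}(\gamma) = (\alpha,(\beta_i))$, I will recover $\alpha$ and the $\beta_i$ by applying operations that preserve $(\cdot)_0$. Since $\tau_*(\xi^m)=0$ for $0\leq m\leq r-2$ and $\tau_*(\xi^{r-1})=1$, pushing the defining formula forward by~$\pi$ collapses all the exceptional contributions and yields $\alpha=\pi_*\gamma$; hence $\alpha\in H^{2k}_G(X(C),\Z(k))_0$ by Theorem~\ref{th:stability of topological constraints}. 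For the~$\beta_i$, I decompose $\iota^*\gamma$ in the projective bundle basis as $\iota^*\gamma = \sum_{j=0}^{r-1}\xi^j\smile\tau^*c_j$ with $c_j\in H^{2(k-j)}_G(Z(C),\Z(k-j))$. The self-intersection formula $\iota^*\iota_*=\,\cdot\smile c_1(N_{E/X'})$ together with $N_{E/X'}=\sO_E(-1)$ matches the $c_j$ with the~$\beta_i$ (up to sign) for $j\geq 1$, and $c_0=j^*\alpha$. Starting from $c_{r-1}=\tau_*(\iota^*\gamma)$ and descending (subtracting the already-known higher-$c$ contributions to $\tau_*(\iota^*\gamma\smile\xi^{r-1-j})$), each $c_j$ is expressed using $\iota^*$, $\tau_*$, and cup products with powers of~$\xi$; all of these preserve $(\cdot)_0$ by the same compatibilities as in Step~2, so each $\beta_i\in H^{2i}_G(Z(C),\Z(i))_0$.

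\textbf{Hodge variant and main obstacle.} Over~$\R$, the same argument goes through for $\Hdg$: $\pi^*$, $\iota_*$, $\tau^*$, $\tau_*$, $j^*$, $\pi_*$ are induced by algebraic correspondences and hence preserve Hodge classes in $H^{2k}_G(X(\C),\Z(k))$ (via the natural map to $H^{2k}(X(\C),\Z(k))$), and~$\xi$ is a Hodge class. The main obstacle is Step~1, namely pinning down the equivariant projective bundle formula with twisted integer coefficients $\Z(k)$ and the clean form of the localisation sequences with these twists; once that is in place, Steps~2 and~3 are essentially formal applications of the stability results already developed in~\textsection\ref{subsec:topologicalconstraints}.
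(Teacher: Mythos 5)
Your proposal is correct and follows essentially the same route as the paper: establish the unconstrained blow-up isomorphism (which the paper simply cites from SGA\,7 rather than re-deriving via localisation and the projective bundle formula), deduce one inclusion from the stability of $(\cdot)_0$ under pull-back, cup product and proper push-forward, and invert by recovering $\alpha=\pi_*\gamma$ and then the $\beta_i$ through an induction using $\iota^*$, $\tau_*$ and cup products with powers of~$\xi$. The paper's version of your Step~3 packages the self-intersection bookkeeping slightly differently, by applying the projection formula to $\delta=\iota^*(\gamma-\pi^*\alpha)$, but the content is the same.
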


\begin{proof}
As is well known, the above formula defines
an isomorphism of abelian groups
\begin{align}
\label{eq:cohomology of blowup}
H^{2k}_G(X(C),\Z(k)) \oplus \bigoplus_{i=k-r+1}^{k-1} H^{2i}_G(Z(C),\Z(i)) \to
H^{2k}_G(X'(C),\Z(k))
\end{align}
(see \cite[Th\'eor\`eme~2.2, Th\'eor\`eme~1.2]{katzsga72}).
When $R=\R$, this isomorphism respects the Hodge decompositions.
It only remains to be checked that
for any $\alpha \oplus \bigoplus (\beta_i)_i$
in the left-hand side of~\eqref{eq:cohomology of blowup},
if one sets $\gamma=\pi^*\alpha + \sum_i \iota_*(\xi^{k-i-1} \smile \tau^*\beta_i)$,
then $\gamma \in H^{2k}_G(X'(C),\Z(k))_0$ if and only if
$\alpha \in H^{2k}_G(X(C),\Z(k))_0$ and $\beta_i \in H^{2i}_G(Z(C),\Z(i))_0$ for every~$i$.
As cup products, push-forwards and pull-backs preserve
the condition of Definition~\ref{def:condition topologique}
(see~\textsection\ref{subsubsec:compatibility with pushforwards}
and Theorem~\ref{th:stability of topological constraints}),
the converse implication is clear.
For the direct implication, let
\begin{align*}
\delta=\sum_{i=k-r+1}^{k-1} \xi^{k-i}\smile \tau^*\beta_i \in H^{2k}_G(E(C),\Z(k))
\end{align*}
and let
us assume that
$\gamma \in H^{2k}_G(X'(C),\Z(k))_0$.
As $\alpha=\pi_*\gamma$,
Theorem~\ref{th:stability of topological constraints}
shows that
$\alpha \in H^{2k}_G(X(C),\Z(k))_0$.
As $\delta=\iota^*(\gamma-\pi^*\alpha)$,
it follows that $\delta \in H^{2k}_G(E(C),\Z(k))_0$.
To prove that $\beta_i \in H^{2i}_G(Z(C),\Z(i))_0$
for all~$i$, we now argue by induction on~$i$.
Assuming that $i_0\in \{k-r+1,\dots,k-1\}$
is such that $\beta_i \in H^{2i}_G(Z(C),\Z(i))_0$ for all $i<i_0$,
let us check that $\beta_{i_0} \in H^{2i_0}_G(Z(C),\Z(i_0))_0$.
By the projection formula~\eqref{eq:projection formula equivariant},
we have
\begin{align}
\label{eq:tauxi proof birational invariance}
\tau_*(\xi^{i_0-k+r-1}\smile\delta) = \beta_{i_0} + \sum_{i=k-r+1}^{i_0-1} \tau_*\xi^{i_0-i+r-1}\smile \beta_i
\end{align}
since $\tau_*\xi^m=0$ for $m<r-1$ and $\tau_*\xi^{r-1}=1\in H^0_G(Z(C),\Z)$.
By the compatibility of Definition~\ref{def:condition topologique} with
push-forwards and cup products
(see~\textsection\ref{subsubsec:compatibility with pushforwards}
and Theorem~\ref{th:stability of topological constraints}),
the left-hand side of~\eqref{eq:tauxi proof birational invariance}
belongs to $H^{2i_0}_G(Z(C),\Z(i_0))_0$.
By our assumption on~$i_0$,
so does the second term of the right-hand side;
hence $\beta_{i_0} \in H^{2i_0}_G(Z(C),\Z(i_0))_0$.
\end{proof}

\subsubsection{Projective spaces}
\label{subsubsecps}
We finally consider the case of projective spaces.

\begin{prop}
\label{propIHCprojectivespace}
For all integers $0\leq k\leq d$, the real integral Hodge conjecture holds for codimension $k$ cycles on $\P^d_R$.
\end{prop}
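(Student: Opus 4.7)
Let $h = \cl(H) \in H^2_G(\P^d(C), \Z(1))$ denote the equivariant class of a hyperplane. Since $\CH^k(\P^d_R) = \Z$ is freely generated by the class of any codimension-$k$ linear subspace $L$, with $\cl(L) = h^k$, the proposition reduces to showing that $H^{2k}_G(\P^d(C), \Z(k))_0 = \Z \cdot h^k$. The strategy has three stages: compute $H^{2k}_G(\P^d(C), \Z(k))$ via an equivariant projective bundle formula, restrict to a linear subspace of lower dimension, and conclude via an explicit mod-$2$ computation using the canonical decompositions of \textsection\ref{subsec:canonical decompositions}.

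First, I establish the equivariant projective bundle formula
\begin{equation*}
H^{2k}_G(\P^d(C), \Z(k)) = \Z \cdot h^k \oplus \bigoplus_{\substack{0 \leq a < k \\ k-a \text{ even}}} (\Z/2\Z) \cdot h^a\, \omega^{2(k-a)}
\end{equation*}
by induction on $d$ via the equivariant Gysin long exact sequence for $\P^{d-1} \hookrightarrow \P^d$: the complement $\A^d$ is $G$-equivariantly contractible, so $H^i_G(\A^d(C), \Z(j)) = H^i(G, \Z(j))$, and the pullback from a rational point of $\P^d$ provides a canonical section splitting the sequence at each stage. Using the identification of $H^{2m}(G, \Z(m))$ (which is $\Z$ for $m = 0$, $\Z/2\Z$ for $m > 0$ even, and $0$ for $m$ odd), together with the fact that $\omega^{2m}$ generates $H^{2m}(G, \Z(m))$ when $m$ is even and positive, one obtains the displayed formula.

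Given $\alpha = n h^k + \sum_a \epsilon_a h^a \omega^{2(k-a)}$ in $H^{2k}_G(\P^d(C), \Z(k))_0$, I restrict to a linear subvariety $\P^{k-1} \subset \P^d$: the class $h^k$ dies, while the remaining $h^a \omega^{2(k-a)}$ with $0 \leq a \leq k - 1$ are linearly independent there by the projective bundle formula on $\P^{k-1}$. Since the topological condition of Definition~\ref{def:condition topologique} is compatible with pullback (see \textsection\ref{subsubsec:compatibility with pushforwards}), the restriction lies in $H^{2k}_G(\P^{k-1}(C), \Z(k))_0$. The proposition therefore reduces to the following claim: for every $\ell < k$, the group $H^{2k}_G(\P^\ell(C), \Z(k))_0$ vanishes.

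To prove the claim, let $\beta = \sum_a \epsilon_a h^a \omega^{2(k-a)}$ lie in $H^{2k}_G(\P^\ell(C), \Z(k))_0$. Since $h$ is algebraic, Theorem~\ref{th:conditions de krasnov} combined with the unstable relation $\Sq^1(x) = x^2$ for $x$ the generator of $H^1(\P^\ell(R), \Z/2\Z)$ shows that the mod-$2$ decomposition of the restriction of $\bar h$ to $\P^\ell(R)$ equals $(0, x, x^2)$; by the cup product formula~\eqref{eq:cup product} one deduces $(\bar h^a)_r = \binom{a}{r-a} x^r$. As $\bar\omega^{2(k-a)}$ decomposes as the unit concentrated in degree zero (\textsection\ref{subsubsec:decomposition omega}), one obtains $(\bar\beta)_r = \sum_a \epsilon_a \binom{a}{r-a} x^r$. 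Now $\bar\beta_k = 0$ because $k > \ell$ forces $H^k(\P^\ell(R), \Z/2\Z) = 0$, so Remark~\ref{rk:topological condition 1-cycles and ak}~(ii) yields $\bar\beta_p = \Sq^{p-k}(\bar\beta_k) = 0$ for all $p$. If $a_0$ denotes the smallest index with $\epsilon_{a_0} \neq 0$, then at $r = a_0$ only the $a = a_0$ term contributes (since $\epsilon_a = 0$ for $a < a_0$ while $\binom{a}{a_0 - a} = 0$ for $a > a_0$), yielding $\bar\beta_{a_0} = \epsilon_{a_0} x^{a_0} \neq 0$ as $a_0 \leq \ell$, contradicting $\bar\beta_{a_0} = 0$. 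The main obstacle is the verification of the equivariant projective bundle formula over an arbitrary real closed field, which requires establishing the $G$-equivariant contractibility of $\A^d(C)$ and the splitting of the Gysin sequences at each inductive step.
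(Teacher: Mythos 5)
Your proof is correct, but it takes a genuinely different route from the paper's. You compute the full group $H^{2k}_G(\P^d(C),\Z(k))$ via an equivariant projective bundle formula and then identify the subgroup cut out by the topological condition as $\Z\cdot h^k$ by an explicit computation with the decompositions of \textsection\ref{subsec:canonical decompositions}. The paper avoids computing this group altogether: it uses the decomposition of the diagonal $[\Delta_{\P^d_R}]=\sum_{i}[\P^i_R\times\P^{d-i}_R]$ in $\CH^d(\P^d_R\times\P^d_R)$ as a correspondence acting on a class $\alpha\in H^{2k}_G(\P^d(C),\Z(k))_0$. The terms with $i<k$ die because $H^{2k}_G(\P^i(C),\Z(k))_0=0$ — the restriction to $\P^i(R)$ is injective in degrees above $2\dim\P^i$ by \textsection\ref{subsubsec:cohomological dimension}, and the topological condition forces every component of the restriction to vanish; this is exactly your final ``claim'', obtained without any projective bundle formula. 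The terms with $i>k$ die for degree reasons, and the surviving term $[\P^k_R\times\P^{d-k}_R]_*\alpha$ is algebraic because $H^{2k}_G(\P^k(C),\Z(k))_0$ consists entirely of algebraic classes by the zero-cycle case (Proposition~\ref{prop:hodgereel0cycles}). The paper's argument is shorter and reuses only already-established ingredients (correspondences, cohomological dimension, the case $k=\dim$), whereas yours requires setting up and splitting the equivariant Gysin sequences over an arbitrary real closed field; in exchange, your computation yields the extra information that $H^{2k}_G(\P^d(C),\Z(k))\cong\Z h^k\oplus\bigoplus(\Z/2\Z)\,h^a\omega^{2(k-a)}$ and that the topological condition cuts out exactly $\Z h^k$. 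Note also that your mod-$2$ binomial computation in the last step is more than is needed: once one knows that $H^{2k}_G(\P^\ell(C),\Z(k))\to H^{2k}_G(\P^\ell(R),\Z(k))$ is injective for $\ell<k$ and that $\alpha_k\in H^k(\P^\ell(R),\Z/2\Z)=0$, the defining relations $\alpha_{k+i}=\Sq^i(\alpha_k)$ kill all components at once.
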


\begin{proof}
By \cite[Theorem~3.3~(b)]{Fulton}, the Chow group $\CH^d(\P^d_R\times \P^d_R)$ is generated by the classes of $\P^i_R\times \P^{d-i}_R$ for $0\leq i\leq d$ . The class of the diagonal $\Delta_{\P^d_R}$ may then be determined by computing the intersection numbers $[\Delta_{\P^d_R}]\cdot[\P^i_R\times \P^{d-i}_R]=1$:
\begin{equation}
\label{correspdiag}
[\Delta_{\P^d_R}]=\sum_{i=0}^d[\P^i_R\times \P^{d-i}_R]\in \CH^d(\P^d_R\times \P^d_R).
\end{equation}
Let $\alpha\in H^{2k}_G(\P^d(C),\Z(k))_0$.
We let the correspondence (\ref{correspdiag}) act on $\alpha$.
 For $i<k$, one has $[\P^i_R\times \P^{d-i}_R]_*\alpha=0$. Indeed, $H^{2k}_G(\P^i(C),\Z(k)) \to H^{2k}_G(\P^i(R),\Z(k))$
is injective by~\textsection\ref{subsubsec:cohomological dimension}, so that $H^{2k}_G(\P^i(C),\Z(k))_0=0$. For $i>k$, one also has $[\P^i_R\times \P^{d-i}_R]_*\alpha=0$ in view of the vanishing of $H^{2k-2i}_G(\P^{d-i}(C),\Z(k-i))$ in this range. We deduce:
$$\alpha=[\Delta_{\P^d_R}]_*\alpha=[\P^k_R\times \P^{d-k}_R]_*\alpha.$$
The latter class is algebraic because the whole group $H^{2k}_G(\P^k(R),\Z(k))_0$ consists of algebraic classes by Proposition \ref{prop:hodgereel0cycles}.
\end{proof}

\subsection{Main question}

The results of Voisin~\cite{voisinthreefolds,voisinremarks}
on the integral Hodge conjecture for complex varieties
(discussed in~\textsection\ref{par:complexIHC})
and the good basic properties of the real integral Hodge conjecture
(see~\textsection\ref{subsec:first positive results})
lead us to raise the following question:

\begin{question}
\label{mainquestion}
Let~$X$ be a smooth, proper and geometrically irreducible
variety over~$\R$.
If the underlying complex variety~$X_\C$ is a uniruled threefold, a Calabi--Yau threefold, or a rationally connected variety,
does~$X$ satisfy the real integral Hodge conjecture for $1$\nobreakdash-cycles?
\end{question}

In~\textsection\ref{sec:onecycles} and~\textsection\ref{sec:blochogus} below,
we explore the meaning and implications of a positive answer to this question.
In~\cite{bwpartie2}, we shall provide positive answers for various classes of uniruled threefolds (and, in higher dimensions, for iterated conic bundles over these threefolds, or over arbitrary surfaces).
As far as we know, the answer to Question~\ref{mainquestion} may be
in the affirmative in full generality.

For Calabi--Yau threefolds and for rationally connected varieties,
one might wish to extend
Question~\ref{mainquestion} to arbitrary real closed fields,
using Definition~\ref{def:ihc real closed field}.
However, we shall give in~\cite{bwpartie2} various examples
which show that the resulting question would have a negative
answer,
at least for rationally connected threefolds with a rational point
and for (simply connected) Calabi--Yau threefolds with no rational point.  (It is still
conceivable that
over an arbitrary
real closed field,
rationally
connected varieties with no rational point
may satisfy the real integral Hodge
conjecture for $1$\nobreakdash-cycles.)

\section{Cycles of dimension~\texorpdfstring{$1$}{1}}
\label{sec:onecycles}

The goal of this section is to investigate the relationship between the real integral Hodge conjecture for $1$\nobreakdash-cycles, on the one hand,
and the complex integral Hodge conjecture for $1$\nobreakdash-cycles,
the study of the Borel--Haefliger cycle class map,
and the question of the existence of curves of even geometric genus in a given variety, on the other hand.
For the whole of~\textsection\ref{sec:onecycles},
we fix a real closed field~$R$
and a smooth, proper and geometrically irreducible variety~$X$ over~$R$,
of dimension~$d$.

\subsection{Summary}

Before examining the integral Hodge conjecture for $1$\nobreakdash-cycles,
we must study the target $H^{2d-2}_G(X(C),\Z(d-1))_0$ of the equivariant cycle class map.

A key role will be played by two homomorphisms
\begin{align*}
\psi:H^{2d-2}_G(X(C),\Z(d-1))_0\to \Mpsi
\qquad\text{ and }\qquad
\psi':\Pic(X_C)^G[2^\infty]\to \Mpsi^*\rlap{,}
\end{align*}
where~$\Mpsi$ is a $2$\nobreakdash-torsion abelian group and $\Mpsi^*=\Hom(\Mpsi,\Z/2\Z)$.
Their definitions, given in~\textsection\ref{subsec:psipsiprime}, are of a purely topological nature.
Theorem~\ref{th:image psi},
whose proof relies on the self-dual long exact sequence of Theorem~\ref{th:selfduality},
asserts that the images of~$\psi$ and~$\psi'$ are exact orthogonal complements
under the natural pairing $\Mpsi\times\Mpsi^*\to\Z/2\Z$;
in particular, the map~$\psi$ is surjective if~$\Pic(X_C)[2]=0$.
We show, in~\textsection\ref{subsec:cokernel norm map},
that under the same assumption,
the kernel of~$\psi$ coincides with the image of the norm map
\begin{align*}
N_{C/R}:H^{2d-2}(X(C),\Z(d-1))\to H^{2d-2}_G(X(C),\Z(d-1))\rlap{.}
\end{align*}
Thus, we obtain, in this case, a complete description of $H^{2d-2}_G(X(C),\Z(d-1))_0$.

To define~$\psi$ and~$\psi'$ and to prove these results,
we have to distinguish between the two cases $X(R)=\emptyset$ and $X(R)\neq\emptyset$.
The definitions and arguments, in these two cases, are entirely distinct, but run parallel.

In~\textsection\ref{subsec:effect of psi}, we study the map
$\phi=\psi\circ \cl : \CH_1(X) \to \Mpsi$.
It turns out that~$\phi$ coincides with
the Borel--Haefliger cycle class map when~$X(R)\neq\emptyset$,
and that when~$X(R)=\emptyset$, this map computes the geometric genus modulo~$2$
of the curves lying on~$X$.

We gather several consequences of these results in~\textsection\ref{subsection:obstructions}.
In the case of surfaces~$X$ with $H^2(X,\sO_X)=0$, we obtain a statement
of independent interest on the image of the
Borel--Haefliger cycle class map and on the existence of curves of even geometric
genus
(Corollary~\ref{cor:surfaces}), which generalises, in particular, the work of Mangolte and van~Hamel~\cite{mangoltevanhamel}
on Enriques surfaces.
In the case of simply connected Calabi--Yau or rationally connected threefolds, we obtain, by combining these results with the work of Voisin~\cite{voisinthreefolds},
an equivalence between the real integral Hodge conjecture and the surjectivity of the Borel--Haefliger cycle class map and the existence of a curve of even geometric genus
(Corollary~\ref{cor:IHC pour solides RC ou CY}).

\subsection{The map \texorpdfstring{$\psi$}{ψ} and its image}
\label{subsec:psipsiprime}

We first define the $2$\nobreakdash-torsion abelian group~$\Mpsi$ and the homomorphisms~$\psi$ and~$\psi'$.
Once~$\Mpsi$ is defined, we set $\Mpsi^*=\Hom(\Mpsi,\Z/2\Z)$.

\begin{defn}
\label{def:psi no point}
If $X(R)=\emptyset$,
we let $\Mpsi=\Z/2\Z$, we define
\begin{align*}
\psi:H^{2d-2}_G(X(C),\Z(d-1))_0 \to \Mpsi
\end{align*}
by $\psi(x)=x \smile \omega^2$ for any~$x$
(see~\textsection\ref{subsubsec:omega}
for the definition of $\omega^2 \in H^2_G(X(C),\Z(2))$),
which makes sense as $H^{2d}_G(X(C),\Z(d+1))=\Z/2\Z$ in view of
Proposition~\ref{prop:poincare duality equivariant} and Remark~\ref{rks:equivariant poincare}~(ii),
and we define
\begin{align*}
\psi':\Pic(X_C)^G[2^\infty] \to \Mpsi^*
\end{align*}
as the restriction to $\Pic(X_C)^G[2^\infty]$ of the map $\Pic(X_C)^G \to \Br(R)=\Z/2\Z=\Mpsi^*$
which measures the obstruction to
representing a $G$\nobreakdash-invariant divisor class on~$X_C$ by a $G$\nobreakdash-invariant divisor
(see~\cite[III, \textsection5.4]{grothbrauer}).
\end{defn}

\begin{defn}
\label{def:psi with point}
If $X(R)\neq\emptyset$,
we let $\Mpsi=H^{d-1}(X(R),\Z/2\Z)$, we define
\begin{align*}
\psi:H^{2d-2}_G(X(C),\Z(d-1))_0 \to \Mpsi
\end{align*}
to be the map~\eqref{eq:psi with point},
and, noting that $\Mpsi^*=H^1(X(R),\Z/2\Z)$ (see~\eqref{eq:semi-algebraic poincare duality})
and that $\Pic(X_C)^G=\Pic(X)$ as $X(R)\neq\emptyset$ (see~\cite[8.1/4]{neronmodels}), we define
\begin{align*}
\psi':\Pic(X_C)^G[2^\infty] \to \Mpsi^*
\end{align*}
as the restriction to $\Pic(X_C)^G[2^\infty]=\Pic(X)[2^\infty]$ of the
Borel--Haefliger cycle class map $\Pic(X)\to H^1(X(R),\Z/2\Z)$
(see~\eqref{eq:borel-haefliger}).
\end{defn}

\begin{thm}
\label{th:image psi}
For any smooth, proper and geometrically irreducible variety~$X$,
of dimension~$d$, over a real closed field~$R$,
the images of~$\psi$ and of~$\psi'$ are exact orthogonal complements
under the natural perfect pairing $\Mpsi \times \Mpsi^* \to \Z/2\Z$.
\end{thm}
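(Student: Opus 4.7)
The proof naturally splits into the two cases $X(R)\neq\emptyset$ and $X(R)=\emptyset$ in view of the parallel but different definitions of $\psi$ and $\psi'$. The overall strategy, in each case, is to interpret $\psi$ as the linear form associated to a distinguished cohomology class under a suitable Poincar\'e-type duality, and $\psi'$ as an edge map from a Kummer or Hochschild--Serre sequence, and then to reconcile these two descriptions by a diagram chase.

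In the case $X(R)\neq\emptyset$, I would first reduce the analysis of $\psi$ to mod-$2$ coefficients via Remark \ref{rk:topological condition 1-cycles and ak}(ii): the image $\bar\alpha\in H^{2d-2}_G(X(C),\Z/2\Z)$ of a class $\alpha\in H^{2d-2}_G(X(C),\Z(d-1))_0$ has components in the decomposition \eqref{eq:canonical decomposition mod 2} satisfying $\bar\alpha_p=0$ for $p<d-1$ and $\bar\alpha_{d-1+i}=\Sq^i(\bar\alpha_{d-1})$ for $i>0$, with $\psi(\alpha)=\bar\alpha_{d-1}$. The self-dual long exact sequence of Theorem \ref{th:selfduality} in degree $2d-2$ then computes the image of the composition $H^{2d-2}_G(X(C),\Z/2\Z)\to H^{2d-2}_G(X(R),\Z/2\Z)\twoheadrightarrow H^{d-1}(X(R),\Z/2\Z)$; by self-duality, this image pairs perfectly with the cokernel of the analogous map in degree $2$. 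Using Proposition \ref{prop:real(1,1)}, every class in $H^2_G(X(C),\Z(1))$ is the equivariant cycle class of a divisor, and by Theorem \ref{th:conditions de krasnov} the $H^1(X(R),\Z/2\Z)$-component of its decomposition is its Borel--Haefliger class; combined with the Bockstein comparison of \eqref{eq:commutative square reduction modulo 2}, this identifies the relevant subspace of $H^1(X(R),\Z/2\Z)$ as the image of $\psi'$.

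In the case $X(R)=\emptyset$, I would use the equivariant Poincar\'e duality of Remark \ref{rks:equivariant poincare}(ii) to identify the profinite completion of $H^{2d-2}_G(X(C),\Z(d-1))$ with the Pontryagin dual of $H^2_G(X(C),\Q/\Z(1))$. Under this identification, $\psi$ corresponds to pairing with the class $\tilde\omega^2\in H^2_G(X(C),\Q/\Z(1))$, defined as the image of $\omega^2\in H^2_G(X(C),\Z(2))$ under the unique non-trivial $G$-equivariant map $\Z(2)\to\Q/\Z(1)$, $1\mapsto 1/2$; hence the image of $\psi$ equals $\Z/2\Z$ if and only if $\tilde\omega^2\neq 0$. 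Via the Kummer exact sequence and the Hochschild--Serre spectral sequence for $\Gm$, I would identify $\tilde\omega^2$ with the image in $H^2_G(X(C),\Q/\Z(1))$ of the pullback of the non-trivial element of $\Br(R)$ to $\Br(X)$, and $\psi'$ with the differential $d_2:\Pic(X_C)^G\to\Br(R)$ of this spectral sequence restricted to $2$-primary torsion; the vanishing of $\tilde\omega^2$ then becomes, by a diagram chase, equivalent to surjectivity of $\psi'$.

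The main obstacle, in both cases, is the careful identification of the dual map on the Picard side. In the case $X(R)\neq\emptyset$, the challenge lies in translating the mod-$2$ formalism of Theorem \ref{th:selfduality} into a statement about integral $2$-primary torsion in $\Pic(X)$, using the Bockstein obstructions of \eqref{eq:commutative square reduction modulo 2} to control which mod-$2$ classes lift to integers. In the case $X(R)=\emptyset$, the main technical point is the explicit identification of $\tilde\omega^2$ with the pullback of a Brauer class, which reduces to verifying that the cup product $\omega\smile\omega\in H^2(G,\mu_2)$ corresponds to the non-trivial element of $\Br(R)=H^2(G,\Gm(C))$---a concrete computation involving the quaternion algebra $(-1,-1)$ over $R$.
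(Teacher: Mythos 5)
Your overall architecture---dualise $\psi$ against a degree-$2$ group and identify the dual map with a Picard-theoretic obstruction---is indeed the paper's, and two of your steps are sound: the reduction of $\psi$ to the component $\bar\alpha_{d-1}$, and, when $X(R)=\emptyset$, the identification of $\psi$ with the pairing against $\tilde\omega^2\in H^2_G(X(C),\Q/\Z(1))$, so that $\psi\neq 0$ iff $\tilde\omega^2\neq 0$. But your two key identifications each contain a genuine gap. In the case $X(R)=\emptyset$, routing the argument through $\Br(X)$ and the Hochschild--Serre sequence for $\Gm$ loses exactly the information the theorem is about: the vanishing of the pullback of the quaternion class in $\Br(X)$ is equivalent to the surjectivity of the obstruction map on \emph{all} of $\Pic(X_C)^G$, not on $\Pic(X_C)^G[2^\infty]$, and it only forces $\tilde\omega^2$ to lie in the subgroup $\Pic(X)\otimes\Q/\Z\subseteq H^2_G(X(C),\Q/\Z(1))$, not to vanish (there is no map $\Br(X)\to H^2_G(X(C),\Q/\Z(1))$; the map goes the other way and has that kernel). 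A curve $B$ of even genus $\geq 2$ with $B(R)=\emptyset$ defeats both steps simultaneously: by Witt's theorem $\Br(R)\to\Br(B)$ vanishes, so $\Pic(B_C)^G\to\Br(R)$ is onto, yet $\psi'=0$ by Lemma~\ref{lem:geyer} and $\tilde\omega^2\neq 0$. The repair is to run Hochschild--Serre with $\Q_2/\Z_2(1)$ coefficients, where $E_2^{0,1}=\Pic(X_C)^G[2^\infty]$ by Kummer, so that injectivity of the edge map $H^2(G,\Q_2/\Z_2(1))\to H^2_G(X(C),\Q_2/\Z_2(1))$ is equivalent to the surjectivity of $\Pic(X)[2^\infty]\to\Pic(X_C)^G[2^\infty]$; this is what the paper does.

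In the case $X(R)\neq\emptyset$ the coefficients on the two sides of your duality are mismatched. On the degree-$(2d-2)$ side, the image of the full mod-$2$ composite $H^{2d-2}_G(X(C),\Z/2\Z)\to H^{d-1}(X(R),\Z/2\Z)$ is in general strictly larger than $\mathrm{im}(\psi)$, which only sees classes lifted from $H^{2d-2}_G(X(C),\Z(d-1))_0$; the paper needs Lemma~\ref{lem:topological condition = restriction}, resting on the integral splitting of Proposition~\ref{prop:short exact sequences integral}, precisely to identify that subgroup with $H^{2d-2}_G(X(C)\setminus X(R),\Z(d-1))$ before feeding it into the sequence~\eqref{eq:selfdual sequence}. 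On the degree-$2$ side, Proposition~\ref{prop:real(1,1)} identifies the image of $H^2_G(X(C),\Z(1))\to H^1(X(R),\Z/2\Z)$ with $\cl_R(\Pic(X))=H^1_\alg(X(R),\Z/2\Z)$, which is in general strictly larger than $\mathrm{im}(\psi')=\cl_R(\Pic(X)[2^\infty])$: for the quartic $K3$ of Example~\ref{ex:quartique avec points} one has $\Pic(X_\C)$ torsion-free, hence $\mathrm{im}(\psi')=0$ and the theorem asserts $\mathrm{im}(\psi)=H^1(X(\R),\Z/2\Z)$, whereas your pairing would output the proper subgroup $H^1_\alg(X(\R),\Z/2\Z)^\perp$. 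The Bockstein bookkeeping cannot bridge this, because the correct dual object in degree $2$ is $H^1_G(X(C),\Q/\Z(1))=\Pic(X)_\tors$ (Lemma~\ref{lem:image of psiprime}): the duality must be run between $\Z(d-1)$-coefficients in degree $2d-2$ and $\Q/\Z(1)$-coefficients in degree $2$, as in the paper's maps $\theta$ and $\theta'$. The variant you propose, with $\Z(1)$-coefficients in degree $2$, is the (conditional) duality of Theorem~\ref{th:ch1torsion}~(ii), which is a different statement.
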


\begin{proof}
We first prove the theorem under the assumption that $X(R)=\emptyset$.
In this case,
we have to show that~$\psi$ is surjective if and only if~$\psi'=0$.
Let us consider the commutative square
\begin{align*}
\xymatrix{
\mkern-66mu\Z/2\Z=H^0_G(X(C),\Q_2/\Z_2(-1)) \ar[r] & H^2_G(X(C),\Q_2/\Z_2(1)) \\
H^0(G,\Q_2/\Z_2(-1)) \ar[r]^\sim \ar[u]_(.47)\wr & H^2(G,\Q_2/\Z_2(1))\rlap{\text{,}} \ar[u]
}
\end{align*}
in which the horizontal maps are the cup product with~$\omega^2$ and the vertical maps
come from the Hochschild--Serre spectral sequence~\eqref{eq:hochschild-serre}.
Applying~\eqref{eq:real-complex sequence 01}
and~\eqref{eq:real-complex sequence 10},
we see that the bottom horizontal map is an isomorphism.
On the other hand,
by Proposition~\ref{prop:poincare duality equivariant}
and Remark~\ref{rks:equivariant poincare}~(ii),
the top horizontal map composed with the inclusion $H^2_G(X(C),\Q_2/\Z_2(1)) \subseteq H^2_G(X(C),\Q/\Z(1))$ is dual to~$\psi$.
Hence~$\psi$ is surjective if and only if
the vertical map on the right is injective.
By~\eqref{eq:hochschild-serre}, this amounts
to the surjectivity of the natural map
$H^1_G(X(C),\Q_2/\Z_2(1)) \to H^1(X(C),\Q_2/\Z_2(1))^G$,
or equivalently, of
 $H^1_\et(X,\Q_2/\Z_2(1)) \to H^1_\et(X_C,\Q_2/\Z_2(1))^G$.
The Kummer exact sequence in \'etale cohomology identifies the latter map with $\Pic(X)[2^\infty] \to \Pic(X_C)^G[2^\infty]$,
which is surjective if and only if~$\psi'=0$.

We henceforth assume that $X(R)\neq\emptyset$.  Let us state two lemmas.

As $H^0(G,\Q/\Z(1))=\Z/2\Z$ and $H^1(G,\Q/\Z(1))=0$,
the spectral sequence~\eqref{eq:first spectral sequence}
induces a canonical isomorphism $\Mpsi^*=H^1(X(R),\Z/2\Z) \isoto H^1_G(X(R),\Q/\Z(1))$.

\begin{lem}
\label{lem:image of psiprime}
Through this identification of~$\Mpsi^*$ with $H^1_G(X(R),\Q/\Z(1))$,
 the image of~$\psi'$ is
the image of the restriction
map $H^1_G(X(C),\Q/\Z(1)) \to H^1_G(X(R),\Q/\Z(1))$.
\end{lem}

\begin{proof}
The boundary maps of the exact sequence $0\to \Z(1) \to \Q(1) \to \Q/\Z(1) \to 0$
and the isomorphism $\Pic(X)_{\mathrm{tors}}=H^1_\et(X,\Q/\Z(1))=H^1_G(X(C),\Q/\Z(1))$
stemming from the Kummer sequence
fit into a diagram
\begin{align}
\begin{aligned}
\label{eq:image of psiprime big diagram}
\xymatrix@R=3ex{
\Pic(X)_{\mathrm{tors}} \ar@{=}[r] \ar@{}|{\bigcap}[d] & H^1_G(X(C),\Q/\Z(1)) \ar[r] \ar[d] & \rlap{$H^1_G(X(R),\Q/\Z(1))=\Mpsi^*$}\phantom{H^1_G(X(R),\Q/\Z(1))} \ar[d]^(.45)\wr \\
\Pic(X) \ar[r]^(.39)\cl & H^2_G(X(C),\Z(1)) \ar[r] & \rlap{$H^2_G(X(R),\Z(1))=\Mpsi^*$\text{,}}\phantom{H^2_G(X(R),\Z(1))}
}
\end{aligned}
\end{align}
where we identify $H^2_G(X(R),\Z(1))$ with~$\Mpsi^*$ by means of~\eqref{eq:natural projection}.
In terms of these identifications, the rightmost vertical map of~\eqref{eq:image of psiprime big diagram} is the identity map
of~$\Mpsi^*$, as follows from the commutative square
\begin{align}
\begin{aligned}
\xymatrix@R=3ex{
H^1(X(R),H^0(G,\Q/\Z(1))) \ar[r]^(.56)\sim \ar[d] & H^1_G(X(R),\Q/\Z(1)) \ar[d] \\
H^1(X(R),H^1(G,\Z(1))) \ar[r]^(.56)\sim & H^2_G(X(R),\Z(1))\rlap{\text{,}}
}
\end{aligned}
\end{align}
whose horizontal maps are the edge homomorphisms of the spectral sequences~\eqref{eq:first spectral sequence}.
According to \cite[(3.8) and~\textsection3.3.1]{ctcimetrento},
the left square of~\eqref{eq:image of psiprime big diagram}
commutes after tensoring with~$\Zl$ for every prime~$\ell$;
therefore~\eqref{eq:image of psiprime big diagram} is a commutative diagram.
By Theorem~\ref{th:conditions de krasnov},
the image of $\Pic(X)[2^\infty]$ in~$\Mpsi^*$, via~\eqref{eq:image of psiprime big diagram},
is the image of~$\psi$ (see~\eqref{eq:psi with point}).
It is also the image of~$\Pic(X)_{\mathrm{tors}}$
since $2\Mpsi^*=0$. The lemma follows.
\end{proof}

\begin{lem}
\label{lem:topological condition = restriction}
The restriction map induces an isomorphism
\begin{align*}
H^{2d-2}_G(X(C),\Z(d-1))_0\isoto H^{2d-2}_G(X(C) \setminus X(R),\Z(d-1))\rlap{\text{.}}
\end{align*}
\end{lem}

\begin{proof}
We recall that a canonical retraction
of the forgetful map
\begin{align}
\label{eq:forgetfulmap degree 2d-2}
H^{2d-2}_{G,X(R)}(X(C),\Z(d-1)) \to H^{2d-2}_G(X(C),\Z(d-1))
\end{align}
was constructed in~\textsection\ref{subsubsec:with integral coeff}.
By Proposition~\ref{prop:short exact sequences integral},
the lemma will follow if we check that
the kernel of this retraction
is $H^{2d-2}_G(X(C),\Z(d-1))_0$.
Let us take up, from~\textsection\ref{subsubsec:normal bundle with mod 2 coefficients},
the notation $H^p=H^p(X(R),\Z/2\Z)$.
Let $\alpha \in H^{2d-2}_G(X(C),\Z(d-1))$.
As in~\textsection\ref{subsubsec:topological constraints},
we denote by $\alpha_p \in H^p$ the image of~$\alpha$
by~\eqref{eq:composition restriction decomposition}
for $\congru{p}{d-1}{2}$.
Unravelling the definition of the canonical retraction of~\eqref{eq:forgetfulmap degree 2d-2}
constructed in~\textsection\ref{subsubsec:with integral coeff},
while taking~\textsection\ref{subsubsec:reduction modulo 2} into account,
shows that~$\alpha$ belongs to its kernel if and only if
the component in~$H^p$ of the class
\begin{align}
\label{eq:class verification retraction}
\sum_{\substack{0\leq i\leq d-3 \\ \vphantom{X^X}i \mkern1mu\equiv\mkern1mu d-1 \text{ mod } 2}}\mkern-10mu \left(\alpha_i+\beta_\Z(\alpha_i)\right)\gamma^{-1} \in H^0\oplus\cdots\oplus H^{d-2}
\end{align}
is trivial for each $\congru{p}{d-1}{2}$ such that $p\leq d-3$.  Here,
the product takes place in the ring $H^0\oplus\cdots\oplus H^{d-2}$ and $\beta_\Z:H^i\to H^{i+1}$ denotes
the ordinary Bockstein homomorphism (see~\textsection\ref{subsubsec:reduction modulo 2}).
On the other hand, we have
$\alpha \in H^{2d-2}_G(X(C),\Z(d-1))_0$ if and only if $\alpha_p=0$ for each $p\leq d-3$
such that $\congru{p}{d-1}{2}$.
To see that these two conditions are equivalent, we note that
if $\alpha_p\neq 0$ for some $\congru{p}{d-1}{2}$
and if~$p$ denotes the smallest such integer,
then the component in~$H^p$ of~\eqref{eq:class verification retraction} is~$\alpha_p$.
\end{proof}

Let us resume the proof of Theorem~\ref{th:image psi} under the assumption
that $X(R)\neq\emptyset$.
The map~$u_2$ appearing in~\eqref{eq:selfdual sequence}
can be inserted into a diagram
\begin{align}
\label{diag:image of psiprime}
\owrepositiontag{{\raise 11pt}{%
\xymatrix@C=1em@R=3ex{
&&\mkern-92muH^0\oplus H^1=H^1_G(X(R),\Z/2\Z) \ar[d]^{\delta_2} \ar[r]
& \rlap{$H^1_G(X(R),\Q/\Z(1))=H^1$}\phantom{H^1_G(X(R),\Q/\Z(1))}\mkern13mu \ar@<-.2em>[d] \\
H^1 \ar@<.3em>[ur]!(11.5,0) \ar[r] & H^{2-d} \oplus \cdots \oplus H^1 \ar[r]^(.42){u_2} & H^2_G(X(C),X(R),\Z/2\Z) \ar[r] & H^2_G(X(C),X(R),\Q/\Z(1))\rlap{\text{,}}
}}}
\end{align}
in which the leftmost arrows are the obvious inclusions,
the vertical maps are the connecting
homomorphisms of the localisation exact sequences,
and the canonical isomorphisms of the first row are the decomposition~\eqref{eq:canonical decomposition mod 2} and the identification used in the statement of Lemma~\ref{lem:image of psiprime}.
A glance at the description of~$u_2$ given after~\eqref{eq:selfdual sequence}
shows that this diagram commutes.
In addition, the map from the bottom left~$H^1$ to the top right~$H^1$
is the identity map.
It follows,
in view of Lemma~\ref{lem:image of psiprime},
that the kernel of the composition $\theta':H^1 \to H^2_G(X(C),X(R),\Q/\Z(1))$
of all of the maps of the bottom row of~\eqref{diag:image of psiprime}
coincides with the image of~$\psi'$.

Now, let
$\theta:H^{2d-2}_G(X(C)\setminus X(R),\Z(d-1))\to H^{d-1}$
denote the composition of
the natural map
$H^{2d-2}_G(X(C)\setminus X(R),\Z(d-1))\to H^{2d-2}_G(X(C)\setminus X(R),\Z/2\Z)$,
of the map $w_{2d-2}: H^{2d-2}_G(X(C)\setminus X(R),\Z/2\Z) \to H^{d-1}\oplus\cdots\oplus H^{2d-2}$
appearing in~\eqref{eq:selfdual sequence},
and of the projection map
$H^{d-1}\oplus\cdots\oplus H^{2d-2}\to H^{d-1}$.
The description of~$w_{2d-2}$ given after~\eqref{eq:selfdual sequence}
shows,
in view of~\textsection\ref{subsubsec:reduction modulo 2},
that~$\psi$ coincides with the composition of~$\theta$
with the isomorphism of
Lemma~\ref{lem:topological condition = restriction}.
Hence~$\psi$ and~$\theta$ have the same image.

By Theorem~\ref{th:selfduality}, Proposition~\ref{prop:poincare duality equivariant},
and Remark~\ref{rks:equivariant poincare}~(ii),
the Pontrjagin dual of~$\theta$ is~$\theta'$.
The kernel of~$\theta'$ and the image of~$\theta$ are therefore
exact orthogonal
complements.
\end{proof}

\subsection{The effect of \texorpdfstring{$\psi$}{ψ} on cycle classes}
\label{subsec:effect of psi}

We set $\phi=\psi\circ \cl : \CH_1(X) \to \Mpsi$.

\begin{thm}
\label{th:phi}
Let~$X$ be a smooth, proper and geometrically irreducible variety
over a real closed field~$R$.  If~$X(R)\neq\emptyset$, the map~$\phi$ is the Borel--Haefliger
cycle class map.  If $X(R)=\emptyset$, the map~$\phi$ is characterised by the property that
$$
\phi(B)=\left\{\begin{array}{ll}
1 & \text{if $B$ is geometrically irreducible and has even geometric genus,}\\
0 & \text{otherwise}
\end{array}
\right.
$$
for any integral proper curve $B \subseteq X$.
\end{thm}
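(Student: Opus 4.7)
The plan is to handle the cases $X(R)\neq\emptyset$ and $X(R)=\emptyset$ separately, matching the two definitions of $\psi$ (Definitions~\ref{def:psi no point}--\ref{def:psi with point}).

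When $X(R)\neq\emptyset$, the map~$\psi$ sends $\alpha$ to $\alpha_{d-1}$, so Theorem~\ref{th:conditions de krasnov} applied to $Y=B$ with $i=0$ gives $\phi(B)=\cl(B)_{d-1}=\Sq^0(\cl_R(B))=\cl_R(B)$, as claimed. Assume now that $X(R)=\emptyset$, so that $\psi(x)=x\smile\omega^2$. If $B$ is geometrically reducible, write $B_C=B_1+\overline{B_1}$; the compatibility of cycle classes with the push-forward $X_C\to X$ (an étale double cover of $R$-schemes) shows that $\cl(B)$ lies in the image of the norm map $N\colon H^{2d-2}(X(C),\Z(d-1))\to H^{2d-2}_G(X(C),\Z(d-1))$ of~\eqref{eq:real-complex long 10}. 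The exactness of~\eqref{eq:real-complex long 10} at $H^{2d-2}_G(X(C),\Z(d-1))$ identifies the image of $N$ with the kernel of cup product with~$\omega$, whence
\begin{align*}
\phi(B)=\cl(B)\smile\omega\smile\omega=0\rlap{.}
\end{align*}

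If $B$ is geometrically irreducible, let $\nu\colon B'\to B$ be the normalisation and set $f=i\circ\nu\colon B'\to X$, where $i\colon B\hookrightarrow X$. Then $B'$ is a smooth, proper, geometrically irreducible curve over~$R$, and $B'(R)=\emptyset$ since $B'(R)$ maps to $X(R)=\emptyset$. Functoriality of the equivariant cycle class under proper push-forwards yields $\cl(B)=f_*(1)$, so the projection formula~\eqref{eq:projection formula equivariant} gives
\begin{align*}
\phi(B)=f_*(1)\smile\omega^2=f_*(f^*\omega^2)=f_*(\omega_{B'}^2)\rlap{.}
\end{align*}
By Proposition~\ref{prop:poincare duality equivariant} and Remark~\ref{rks:equivariant poincare}~(ii) applied to~$B'$ and~$X$, both the source and the target of $f_*\colon H^2_G(B'(C),\Z(2))\to H^{2d}_G(X(C),\Z(d+1))$ are canonically isomorphic to $\Z/2\Z$ via the Poincaré duality trace. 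As proper push-forward commutes with this trace, $f_*$ is the identity on $\Z/2\Z$, and the problem reduces to showing that $\omega_{B'}^2\in H^2_G(B'(C),\Z(2))=\Z/2\Z$ is non-zero if and only if $g(B')$ is even.

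This last step is the main obstacle. Since $B'(R)=\emptyset$, the group~$G$ acts freely on~$B'(C)$; as complex conjugation on a complex curve is anti-holomorphic, the action is orientation-reversing. Hence $M:=B'(C)/G$ is a closed non-orientable semi-algebraic surface, and $B'(C)\to M$ is its orientation cover; the class $\omega_{\Z/2\Z}\in H^1(M,\Z/2\Z)$ classifying this cover therefore coincides with $w_1(M)$. By Wu's formula for a closed surface (valid over arbitrary real closed fields by Tarski transfer from $\R$), one has $\omega_{\Z/2\Z}^2=w_1(M)^2=w_2(M)\in H^2(M,\Z/2\Z)$, whose image under the Poincaré trace equals $\chi(M)\bmod 2=(1+g(B'))\bmod 2$; hence $\omega_{\Z/2\Z}^2\neq 0$ if and only if $g(B')$ is even. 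Finally, $H^2_G(B'(C),\Z(2))=\Z/2\Z$ is $2$\nobreakdash-torsion, so multiplication by~$2$ vanishes on it and the reduction-mod-$2$ map $H^2_G(B'(C),\Z(2))\to H^2_G(B'(C),\Z/2\Z)$ is injective; thus $\omega_{B'}^2\neq 0$ if and only if $\omega_{\Z/2\Z}^2\neq 0$, i.e., if and only if $g(B')$ is even, completing the proof.
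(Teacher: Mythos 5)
Your proof is correct, and the first two thirds of it (the case $X(R)\neq\emptyset$ via Theorem~\ref{th:conditions de krasnov} with $i=0$; the norm argument for geometrically reducible $B$ via~\eqref{eq:real-complex long 10}; the reduction to the normalisation $B'$ using the projection formula and the fact that $f_*\colon H^2_G(B'(C),\Z(2))\to H^{2d}_G(X(C),\Z(d+1))$ is an isomorphism because it is Pontrjagin dual to $f^*$ on $H^0_G(-,\Q/\Z(1))$) coincide with the paper's argument. Where you genuinely diverge is in the final step, deciding whether $\omega^2_{B'}\in H^2_G(B'(C),\Z(2))=\Z/2\Z$ vanishes. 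The paper treats this arithmetically: it applies Theorem~\ref{th:image psi} to the curve $B'$, which converts the question into the surjectivity of $\Pic(B')[2^\infty]\to\Pic(B'_C)^G[2^\infty]$, and then invokes Geyer's lemma (Lemma~\ref{lem:geyer}), whose proof goes through Witt's theorem, Riemann--Roch and the Brauer group of~$R$. You instead compute topologically on the quotient surface $M=B'(C)/G$: since the $G$\nobreakdash-action is free and reverses orientation ($\orient_{B'(C)}=\Z(1)$ with $G$ acting by $-1$), $B'(C)\to M$ is the orientation cover, so $\omega_{\Z/2\Z}=w_1(M)$, and Wu's formula together with $\deg w_2(M)=\chi(M)=1-g\bmod 2$ gives the parity criterion; the reduction mod~$2$ is injective because the integral group is already $\Z/2\Z$. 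Your route has the merit of being independent of Theorem~\ref{th:image psi} (and hence of the self-duality Theorem~\ref{th:selfduality}) and of Geyer's lemma, and it makes the appearance of the genus visibly an Euler-characteristic phenomenon; its cost is that you must import classical closed-surface topology (Wu's formula, the identification of $\deg w_2$ with $\chi\bmod 2$, and the existence of Stiefel--Whitney classes for the semi-algebraic quotient $M$) into the semi-algebraic category over an arbitrary real closed field. Your appeal to transfer from~$\R$ is legitimate and is of the same nature as the spreading-out argument the paper itself uses in the proof of Proposition~\ref{prop:differentiable rr}, but it deserves to be spelled out with the same care there (the statement being transferred is purely cohomological, hence invariant under extension of real closed fields by the Delfs--Knebusch comparison results). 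The paper's route, by contrast, needs no such transfer at this point, since Geyer's lemma is algebraic.
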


\begin{proof}
When $X(R)\neq\emptyset$, this is Theorem~\ref{th:conditions de krasnov} applied with $i=0$.
Let us assume that $X(R)=\emptyset$. If~$B$ is not geometrically integral, then, as a $1$\nobreakdash-cycle on~$X$,
it is the norm of a $1$\nobreakdash-cycle on~$X_C$, so that~$\cl(B)$ belongs to the image of the norm map
\begin{align*}
H^{2d-2}(X(C),\Z(d-1))\to H^{2d-2}_G(X(C),\Z(d-1))\rlap{\text{.}}
\end{align*}
In view of the real-complex long exact sequence~\eqref{eq:real-complex long 10}, it follows
that $\cl(B) \smile \omega=0$,
hence $\phi(B)=0$.
We may therefore assume that~$B$ is geometrically irreducible.
Let~$B'$ denote its normalisation
and $\pi:B'\to X$ the natural morphism.
The push-forward map $\pi_*:H^2_G(B'(C),\Z(2)) \to H^{2d}_G(X(C),\Z(d+1))$
is an isomorphism since its Pontrjagin dual
 $\pi^*:H^0_G(X(C),\Q/\Z(1))\to H^0_G(B'(C),\Q/\Z(1))$
is one
(see~\eqref{eq:projection formula equivariant}
and Proposition~\ref{prop:poincare duality equivariant}).
As a consequence, if $\psi_{B'}:H^0_G(B'(C),\Z) \to \Z/2\Z$
denotes the map associated with~$B'$
by Definition~\ref{def:psi no point},
the square
\begin{align*}
\xymatrix@R=3ex{
\Z=H^0_G(B'(C),\Z) \ar@<.7em>[d]^{\pi_*} \ar[r]^(.69){\psi_{B'}} &\Z/2\Z\ar@{=}[d] \\
H^{2d-2}_G(X(C),\Z(d-1)) \ar[r]^(.69){\psi} & \Z/2\Z
}
\end{align*}
is commutative.
By Theorem~\ref{th:image psi}
and Lemma~\ref{lem:geyer} below,
the map~$\psi_{B'}$ is surjective
if and only if~$B'$ has even genus.
As the vertical arrow on the left maps~$1$ to~$\cl(B)$,
this completes the proof
of Theorem~\ref{th:phi}.
\end{proof}

\begin{lem}[Geyer]
\label{lem:geyer}
Let~$B$ be a smooth, proper and geometrically integral curve over a real closed field~$R$.
If $B(R)=\emptyset$,
the natural map $\Pic(B)[2^{\infty}]\to \Pic(B_C)^G[2^\infty]$ is
surjective
if and only if~$B$ has even genus.
\end{lem}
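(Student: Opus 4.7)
I plan to use the Hochschild–Serre exact sequence
\[0\to\Pic(B)\to\Pic(B_C)^G\xrightarrow{\partial}\Br(R)=\Z/2\Z\]
arising from $B\to\mathrm{Spec}\,R$ with coefficients in $\Gm$ (using $H^1(G,C^\ast)=0$ by Hilbert~90). Every torsion class in $\Pic(B_C)^G$ has degree zero, hence lies in $J(B)(R)$, so the surjectivity of the map in the lemma is equivalent to the vanishing of $\partial$ on the subgroup $J(B)(R)[2^\infty]=\Pic(B_C)^G[2^\infty]$.

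Two ingredients then control $\partial$. First, Pfister's theorem on levels gives $s(R(B))=2$ whenever $B(R)=\emptyset$ and $B$ is geometrically integral, so the quaternion algebra~$\mathbb{H}$ splits over $R(B)$; combined with the equality $\Br(B)=\Br(R(B))$ (the residue maps at closed points of~$B$ vanish because all residue fields equal~$C$), this makes $\Br(R)\to\Br(B)$ zero, hence $\partial$ surjective. Second, I would establish a Riemann–Roch parity constraint: for $L\in\Pic(B_C)^G$ with $\partial(L)\neq 0$, any isomorphism $\phi\colon\sigma^\ast L\isoto L$ yields a cocycle $c=\phi\circ\sigma^\ast\phi\in\mathrm{Aut}(L)$ lying in $R_{<0}\subset R^\ast$, and the resulting semi-linear action of~$G$ on $V=H^0(B_C,L)$ realises this $C$-vector space as a module over the quaternion algebra~$\mathbb{H}$; since every $\mathbb{H}$-module has $R$-dimension divisible by~$4$, $\dim_C V$ must be even. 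When $\deg(L)=d>2g-2$, Riemann–Roch gives $\dim_C V=d-g+1$, so $\partial(L)\neq 0$ forces $d\equiv g-1\pmod 2$; translating $L$ by elements of $\Pic(B)$ (whose degrees fill out $2\Z$, since every closed point of~$B$ has degree~$2$) extends this parity constraint to all degrees.

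Combining these ingredients completes the proof. When $g$ is even, $\partial(L)\neq 0$ forces $d$ odd, and surjectivity of $\partial$ then identifies it with $\deg\bmod 2$, which vanishes on $J(B)(R)\supseteq J(B)(R)[2^\infty]$. When $g$ is odd, $\partial(L)\neq 0$ instead forces $d$ even; subtracting a suitable element of $\Pic(B)$ from any $L$ witnessing the surjectivity of $\partial$ yields $\alpha\in J(B)(R)$ with $\partial(\alpha)\neq 0$. As the homomorphism $\partial\colon J(B)(R)\to\Z/2\Z$ must annihilate the divisible identity component $J(B)(R)^\circ$, it factors through the finite $2$-group $\pi_0(J(B)(R))$, which sits inside $J(B)(R)[2^\infty]$; therefore $\partial|_{J(B)(R)[2^\infty]}$ is non-trivial and surjectivity fails. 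The main technical obstacle will be the careful verification of the $\mathbb{H}$-module structure on $H^0(B_C,L)$ underlying the Riemann–Roch parity argument, together with the identification of $\Br(B)$ with $\Br(R(B))$ used for Pfister's input.
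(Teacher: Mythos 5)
Your proof is correct, and its skeleton is the same as the paper's: the Hochschild--Serre sequence $0\to\Pic(B)\to\Pic(B_C)^G\xrightarrow{\partial}\Br(R)\to 0$, made exact on the right via Witt--Pfister, together with a Riemann--Roch parity constraint showing that $\partial(L)\neq 0$ forces $\deg(L)\equiv g-1 \bmod 2$. Two steps differ. First, where the paper simply cites Lichtenbaum/Gross--Harris for the fact that $\dim_C H^0(B_C,L)$ kills $\partial(L)$, you reprove it by exhibiting $H^0(B_C,L)$ as a module over the quaternion algebra when $\partial(L)\neq 0$; this is precisely the content of the cited result, so your version is self-contained at no real cost. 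Second, and more substantively, in the odd-genus direction the paper produces an \emph{explicit} class in $\Pic(B_C)^G[2]\setminus\Pic(B)$, namely $L-N_{C/R}(L_0)$ with $L=2L_0$, using only the divisibility of $\Pic^0(B_C)$ over the algebraically closed field $C$; you instead invoke the structure of $J(B)(R)$ over the real closed field --- divisibility of the identity component and finiteness of $\pi_0$ --- to push the nontriviality of $\partial$ from $J(B)(R)$ down to its $2$-primary torsion. This works, but two points need care: (a) $\pi_0(J(B)(R))$ is a quotient, not a subgroup, so the correct statement is that $J(B)(R)[2^\infty]$ surjects onto $\pi_0(J(B)(R))$ (which does follow from the divisibility of the identity component, after discarding odd torsion, which $\partial$ kills anyway); and (b) over an arbitrary real closed field the divisibility of $J(B)(R)^\circ$ and the finiteness of $\pi_0$ are not free --- they require the semi-algebraic theory (openness of multiplication by $n$, finiteness of the set of semi-algebraic connected components). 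The paper's variant sidesteps all of this, which is why it is preferable in the generality of the statement; your route is the more natural one if one already has the real Lie group picture of $J(B)(\R)$ in mind.
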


\begin{proof}
This follows from the work of Geyer
(see~\cite[p.~91]{geyer}, extended to real closed fields
in~\cite[\textsection10]{knebusch2};
see also~\cite[\textsection2]{grossharris}).
We sketch a complete argument
for the reader's convenience.
First, as $B(R)=\emptyset$ and~$B$ is a curve, by a theorem of Witt~\cite{wittreel}
extended by Pfister to real closed fields (see~\cite[Theorem~4.1]{knebusch1} and the
references therein), one can write~$-1$ as a sum
of two squares in~$R(B)$.
In other words, the natural map $\Br(R)\to \Br(B)$ vanishes.
Hence, the exact sequence of low degree terms of the Hochschild--Serre spectral sequence takes the form
\begin{align}
\label{eq:suite pic pic br}
\xymatrix{
0 \ar[r] & \Pic(B) \ar[r] & \Pic(B_C)^G \ar[r] & \Br(R) \ar[r] & 0\rlap{\text{.}}
}
\end{align}

Let~$g$ denote the genus of~$B$.

\begin{sublem}\label{sublem:geyer}
Let $L \in \Pic(B_C)^G$.
If $\deg(L)\equiv g \!\!\mod 2$,
then $L \in \Pic(B)$.
\end{sublem}

\begin{proof}
After tensoring~$L$ with a large power of an ample line bundle,
we may assume that $\deg(L)>2g-2$.
The Riemann--Roch theorem then implies that $\dim_C H^0(B_C,L)$ is odd.
On the other hand,
this integer kills the image of~$L$ in~$\Br(R)$
(see~\cite[Theorem~6]{lichtenbaumduality} or \cite[p.~160]{grossharris}).
\end{proof}

The sublemma implies the lemma when~$g$ is even.
Suppose now~$g$ is odd. By~\eqref{eq:suite pic pic br} there exists $L \in \Pic(B_C)^G$
with $L \notin \Pic(B)$.  By the sublemma, its degree must be even.
By adding to~$L$ a suitable multiple of the class of a closed point of~$B$,
we may assume that $\deg(L)=0$.
As $\Pic^0(B_C)$ is divisible, there
exists $L_0 \in \Pic^0(B_C)$ such that $L=2L_0$.  Then $L-N_{C/R}(L_0)$ is an element
of $\Pic(B_C)^G[2]$ which does not belong to $\Pic(B)[2]$, thus completing the proof
of the lemma.
\end{proof}

\subsection{Connection with the first intermediate index}
\label{subsec:elw}

According to Theorem~\ref{th:phi},
the map $\phi:\CH_1(X)\to \Mpsi$ is surjective if and only if
\begin{enumerate}[(i)]
\item the map $\cl_R:\CH_1(X)\to H^{d-1}(X(R),\Z/2\Z)$ is surjective, if $X(R)\neq\emptyset$;
\item $X$ contains a geometrically integral curve of even geometric genus, if $X(R)=\emptyset$.
\end{enumerate}

The first condition is a classical one in real algebraic geometry.
Since the second one is not, and will play a prominent role in what follows when $X(R)=\emptyset$, we devote this section to explaining a few general facts about it.

In the sequel, we shall encounter many examples
of varieties, over the reals,
which do not contain a curve of even geometric genus
(see~\textsection\ref{section:examples} and \cite[\textsection\ref*{BW2-par:cexnonarch}]{bwpartie2}).
The following proposition illustrates the sharpness of such examples.

\begin{prop}
\label{prop:parity of genus in irrelevant situations}
Let~$X$ be a smooth, proper and geometrically irreducible variety, of dimension~$\geq 2$, over~$R$
(or, more generally, over an infinite perfect field).
Then~$X$ contains a geometrically irreducible curve of odd geometric genus.
If $X(R)\neq\emptyset$, then~$X$ contains a geometrically irreducible curve of even geometric genus.
\end{prop}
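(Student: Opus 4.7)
The plan is to reduce to the case of a smooth projective surface and to exhibit explicit curves in suitable linear systems whose geometric genera can be read off from the adjunction formula.

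First I would reduce to the case where $X$ is a smooth, projective, geometrically integral surface $S$. Since $X$ is smooth and proper over an infinite perfect field, after replacing $X$ by a projective birational modification if necessary one may iterate general hyperplane sections in a projective embedding: Bertini's theorems for smoothness and geometric integrality, which hold over any infinite field when the linear system is base-point free, yield the desired $S \subseteq X$. When $X(R) \neq \emptyset$, fixing a rational point $x \in X(R)$ and requiring the successive hyperplane sections to contain~$x$ also ensures $S(R) \neq \emptyset$. Any geometrically irreducible curve on $S$ is then a geometrically irreducible curve on $X$ of the same geometric genus.

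For the odd-genus statement, I would choose a very ample divisor $H$ on $S$ and apply Bertini to $|4H|$: a general member $C \in |4H|$ is smooth and geometrically integral, and the adjunction formula yields
\begin{align*}
g(C) \;=\; 1 + \frac{(4H)^{2} + 4H \cdot K_{S}}{2} \;=\; 1 + 8H^{2} + 2\,H \cdot K_{S},
\end{align*}
which is odd regardless of the values of $H^{2}$ and $H \cdot K_{S}$.

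For the even-genus statement I would use the rational point $p \in S(R)$ and perform a node trick on the blow-up $\pi \colon \widetilde S \to S$ at~$p$, with exceptional divisor~$E$. For $n$ a sufficiently large multiple of~$4$, the linear system $|n\pi^{*}H - 2E|$ on $\widetilde S$ is base-point free, and a general member $\widetilde C$ is smooth, geometrically integral, and meets~$E$ transversally in $(n\pi^{*}H - 2E) \cdot E = 2$ distinct points; its image $C = \pi(\widetilde C) \subseteq S$ is then a geometrically irreducible curve in $|nH|$ with a single node at~$p$ and smooth elsewhere, so that its geometric genus equals $g(\widetilde C)$. An adjunction computation on $\widetilde S$ gives $g(\widetilde C) = g_{n} - 1$, where $g_{n} = 1 + (n^{2}H^{2} + n\,H \cdot K_{S})/2$ is odd by the parity argument of the previous paragraph (since $4 \mid n$), so $g(C)$ is even. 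The only delicate point is to check that Bertini (for smoothness, geometric irreducibility, and transversality to~$E$) really does apply over an arbitrary infinite perfect base field and that $|n\pi^{*}H - 2E|$ is base-point free on $\widetilde S$ for $n$ large; both are standard. The substance of the argument is the elementary observation that $1 + 8H^{2} + 2H \cdot K_{S}$ is odd independently of the surface, together with the classical fact that normalising a single node drops the geometric genus by~$1$.
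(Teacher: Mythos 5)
Your proof is correct and follows essentially the same route as the paper's: reduce to a smooth surface, take a general member of $|4H|$ and apply adjunction for the odd-genus case, and blow up a real point and use a suitable linear system on the blow-up for the even-genus case. The only difference is cosmetic — you use $|n\pi^{*}H-2E|$ (nodal curves through $p$) where the paper uses $|4nH'+E|$ for an ample $H'$ on the blow-up — and both yield the parity shift by the same adjunction computation.
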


\begin{proof}
We first assume that~$X$ is a surface.
In this case, if~$H$ denotes a very ample divisor on~$X$,
a general member of the linear system $|4H|$ is a geometrically irreducible curve with odd geometric genus,
as follows from the adjunction formula.
If $X(R)\neq\emptyset$, let $\pi:X' \to X$ denote the blowing-up of a point of~$X(R)$,
with exceptional divisor~$E$, and let~$H'$ denote an ample divisor on~$X'$.
The image, by~$\pi$, of a general member of the linear system $|4nH'+E|$,
for $n\gg 0$,
is a geometrically irreducible curve with even geometric genus,
according to the adjunction formula.
The general case of Proposition~\ref{prop:parity of genus in irrelevant situations}
can be proved by applying these arguments to the desingularisation of an appropriate
surface lying in~$X$.
\end{proof}

The existence of a curve of even geometric genus is conveniently expressed in
terms of the intermediate indices of~$X$, introduced by Koll\'ar~\cite[Definition~1]{kollarelw}
following~\cite{elw}.
We recall that for any~$i$,
the \emph{$i$th intermediate index} $\elw_i(X)$ is, by definition,
the gcd of the integers $\chi(X,E)$ when~$E$ ranges over the coherent sheaves on~$X$ supported
on a closed subset of dimension~$\leq i$.  Clearly
\begin{align*}
\elw_d(X) \mid \elw_{d-1}(X) \mid \cdots \mid \elw_0(X)
\end{align*}
and $\elw_0(X)$ is the gcd of the degrees of the closed points of~$X$.
The next statement, a consequence of
standard results on Grothendieck groups of coherent sheaves
(see \cite[\textsection8]{borelserre}, \cite{berthelotsga6}),
summarises some basic properties of these indices.

\newcommand{\citekollarpropq}{\cite[Proposition~4]{kollarelw}}
\begin{prop}[\citekollarpropq]
\label{prop:basic properties elw}
For any proper variety~$X$ and any~$i$,
we have $\elw_i(X)=\gcd \chi(Z,\sO_Z)=\gcd \chi(Z',\sO_{Z'})$,
where~$Z$ ranges over the integral closed subvarieties of~$X$ of dimension~$\leq i$ and~$Z'$ denotes the normalisation of~$Z$.
\end{prop}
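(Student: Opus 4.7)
The plan is to interpret all three invariants as the positive generator of the image, under the Euler characteristic homomorphism $\chi(X,-) \colon K_0(X) \to \Z$, of a well-chosen subgroup of $K_0(X)$, the Grothendieck group of coherent sheaves on $X$.

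First, I would introduce the filtration $K_0^{(i)}(X) \subseteq K_0(X)$ by the subgroup generated by classes of coherent sheaves whose support has dimension~$\leq i$. The definition of~$\elw_i(X)$ says precisely that $\elw_i(X) \cdot \Z$ is the image of $K_0^{(i)}(X)$ under $\chi(X,-)$. The standard dévissage lemma (see for example \cite{borelserre}, \cite{berthelotsga6}) states that $K_0^{(i)}(X)$ is generated, as an abelian group, by the classes $[\sO_Z]$ for~$Z$ ranging over the integral closed subvarieties of~$X$ of dimension~$\leq i$: any coherent sheaf admits a finite filtration whose successive quotients are of the form $\sO_Z$ up to extension by a sheaf with smaller support, and one iterates. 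Applying $\chi(X,-)$ and using $\chi(X,\sO_Z)=\chi(Z,\sO_Z)$, this immediately yields the first equality $\elw_i(X) = \gcd\chi(Z,\sO_Z)$.

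For the second equality $\gcd\chi(Z,\sO_Z) = \gcd\chi(Z',\sO_{Z'})$, the key input is the short exact sequence
\begin{align*}
0 \to \sO_Z \to \pi_* \sO_{Z'} \to Q \to 0
\end{align*}
associated with the normalisation morphism $\pi \colon Z' \to Z$, where~$Q$ is a coherent sheaf on~$Z$ supported on the (possibly empty) non-normal locus of~$Z$, hence on a closed subset of dimension strictly less than $\dim(Z)$. Since~$\pi$ is finite, $\chi(Z,\pi_*\sO_{Z'})=\chi(Z',\sO_{Z'})$, and therefore
\begin{align*}
\chi(Z',\sO_{Z'}) = \chi(Z,\sO_Z) + \chi(Z,Q)\rlap{\text{.}}
\end{align*}
By the dévissage recalled above, applied inside~$Z$, the class $[Q] \in K_0(Z)$ is an integral linear combination of classes $[\sO_W]$ for integral closed subvarieties $W \subseteq Z$ of dimension $<\dim(Z)$. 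Hence $\chi(Z,Q)$ lies in the subgroup of~$\Z$ generated by the numbers $\chi(W,\sO_W)$ for integral $W \subseteq X$ of dimension $<\dim(Z) \leq i$.

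From here I would conclude by induction on $\dim(Z)$. Writing $A \subseteq \Z$ for the subgroup generated by the $\chi(Z,\sO_Z)$ and $B \subseteq \Z$ for the subgroup generated by the $\chi(Z',\sO_{Z'})$, the identity above combined with the induction hypothesis shows $\chi(Z',\sO_{Z'}) \in A$, giving $B \subseteq A$; symmetrically, it shows $\chi(Z,\sO_Z) \in B$, giving $A \subseteq B$. Thus $A=B$ and the two gcd's coincide. The only subtle point is the dévissage step, but it is entirely standard and requires no real obstacle beyond invoking the cited references.
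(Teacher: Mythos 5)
Your proof is correct and follows exactly the route the paper indicates: the paper gives no argument of its own but attributes the statement to Koll\'ar and to ``standard results on Grothendieck groups of coherent sheaves'' (Borel--Serre, SGA6), which is precisely the d\'evissage of $K_0^{(i)}(X)$ by classes $[\sO_Z]$ together with the normalisation exact sequence and induction on $\dim(Z)$ that you carry out. Nothing is missing.
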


We can now relate~$\elw_1(X)$ to curves of even genus.
Note that if~$X$ is a nonempty proper variety over~$R$, then $\elw_0(X)=1$ if $X(R)\neq\emptyset$
and $\elw_0(X)=2$ otherwise.

\begin{cor}
\label{cor:what is elw1}
Let~$X$ be a nonempty proper variety over~$R$.
If~$X(R)\neq\emptyset$ or if~$X$ contains a geometrically irreducible curve of even geometric genus,
then $\elw_1(X)=1$.  Otherwise $\elw_1(X)=2$.
\end{cor}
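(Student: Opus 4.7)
My plan is to exploit Proposition~\ref{prop:basic properties elw}, which gives $\elw_1(X)=\gcd \chi(Z',\sO_{Z'})$ as $Z$ ranges over integral closed subvarieties of~$X$ of dimension $\leq 1$ and $Z'$ denotes the normalisation of~$Z$, together with the divisibility $\elw_1(X) \mid \elw_0(X)$, where $\elw_0(X)\in\{1,2\}$ according as $X(R)\neq\emptyset$ or $X(R)=\emptyset$.

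First I dispatch the easy cases producing $\elw_1(X)=1$. If $X(R)\neq\emptyset$, pick $x\in X(R)$; then $\chi(\{x\},\sO_{\{x\}})=\dim_R R=1$, so $\elw_1(X)=1$. If instead $X(R)=\emptyset$ but~$X$ contains a geometrically irreducible curve~$B$ of even geometric genus~$g$, let $B'$ denote its normalisation. Then~$B'$ is smooth and geometrically integral, hence $H^0(B',\sO_{B'})=R$ and $\dim_R H^1(B',\sO_{B'})=g$, so $\chi(B',\sO_{B'})=1-g$ is odd. It follows that $\elw_1(X)$ is odd; combined with $\elw_1(X)\mid 2$, this forces $\elw_1(X)=1$.

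The remaining case is $X(R)=\emptyset$ together with the assumption that~$X$ carries no geometrically irreducible curve of even geometric genus. Here the claim is $\elw_1(X)=2$, and since $\elw_1(X)\mid 2$, it suffices to show that $\chi(Z',\sO_{Z'})$ is even for every integral closed subvariety $Z\subseteq X$ of dimension $\leq 1$. I split into three sub-cases:
\begin{enumerate}[(a)]
\item $\dim Z=0$: since $X(R)=\emptyset$, the residue field of~$Z$ equals~$C$, so $\chi(Z',\sO_{Z'})=[C:R]=2$.
\item $\dim Z=1$ and $Z$ geometrically integral: the normalisation~$Z'$ is a smooth geometrically integral curve of odd geometric genus~$g$ by hypothesis, hence $\chi(Z',\sO_{Z'})=1-g$ is even.
\item $\dim Z=1$ and $Z$ not geometrically integral: since~$R$ has only one non-trivial algebraic extension (namely~$C$), the algebraic closure of~$R$ in the function field of~$Z$ is~$C$, and so $Z'_C$ splits as a disjoint union $Z'' \sqcup \overline{Z''}$ of two smooth integral $C$\nobreakdash-curves exchanged by~$G$. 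By flat base change, $\chi(Z',\sO_{Z'})=\chi(Z'_C,\sO_{Z'_C})=2\chi(Z'',\sO_{Z''})$, which is even.
\end{enumerate}

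The hard part is really just case~(c); the key input is that over a real closed field, a non-geometrically-integral curve acquires exactly two components after base change to~$C$, whence the factor of~$2$ in its Euler characteristic. Once this is recognised the rest is bookkeeping with Proposition~\ref{prop:basic properties elw} and the divisibilities among the~$\elw_i$.
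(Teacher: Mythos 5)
Your proof is correct and follows the route the paper intends: Corollary~\ref{cor:what is elw1} is stated as an immediate consequence of Proposition~\ref{prop:basic properties elw}, and your case analysis (closed points of degree~$2$ when $X(R)=\emptyset$, geometrically integral curves via $\chi(Z',\sO_{Z'})=1-g$, and the factor of~$2$ for curves whose function field contains~$C$) is exactly the bookkeeping the paper leaves implicit. No gaps.
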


As another consequence of Proposition~\ref{prop:basic properties elw}, we note that
for any proper and geometrically irreducible curve~$B$ over~$R$ such that $B(R)=\emptyset$,
the geometric genus and the arithmetic genus
of~$B$ have the same parity (both are congruent to $1-\elw_1(B)$ modulo~$2$).
Replacing the word ``geometric'' with ``arithmetic''
in the statements of Theorem~\ref{th:phi}
and Corollary~\ref{cor:what is elw1}
would therefore make no difference.

Finally, we recall how the parity of the genus behaves in a cover of curves.

\begin{prop}
\label{prop:genusundermorphism}
Let $f:E \to F$ be a finite morphism between smooth, proper and geometrically irreducible curves over~$R$.
Assume that $F(R)=\emptyset$ and that~$F$ has even geometric genus.  Then~$E$ has even geometric
genus if and only if~$f$ has odd degree.
\end{prop}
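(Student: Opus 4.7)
The plan is to apply Theorem \ref{th:phi} to the fundamental classes $[E]\in\CH_1(E)$ and $[F]\in\CH_1(F)$, viewed as integral proper curves in themselves, and to exploit the compatibility of $\phi$ with proper pushforward. Since $F(R)=\emptyset$ and $f$ is a morphism, $E(R)=\emptyset$, so Definition \ref{def:psi no point} and Theorem \ref{th:phi} apply to both curves and yield $\phi_E([E])=1\in\Z/2\Z$ if and only if $E$ has even geometric genus, while $\phi_F([F])=1$ by hypothesis.

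The heart of the argument is the identity $\phi_F\circ f_* = \phi_E$ on $\CH_1(E)$. Functoriality of the cycle class map (\S\ref{subsubsec:eqcl}) reduces this to $\psi_F\circ f_* = \psi_E$ on $H^0_G(E(C),\Z)$. Since $\omega^2$ is pulled back from the point we have $f^*\omega^2_F=\omega^2_E$, and the projection formula \eqref{eq:projection formula equivariant} gives $f_*(x\smile\omega^2_E)=f_*(x)\smile\omega^2_F$ in $H^2_G(F(C),\Z(2))$ for any $x\in H^0_G(E(C),\Z)$; it therefore suffices to check that the trace $\mathrm{tr}\colon H^2_G(\cdot,\Z(2))\xrightarrow{\sim}\Z/2\Z$ of Proposition \ref{prop:poincare duality equivariant} and Remark \ref{rks:equivariant poincare}(ii) commutes with $f_*$. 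By Pontrjagin duality, the dual of $f_*$ is $f^*\colon H^0_G(F(C),\Q/\Z(1))\to H^0_G(E(C),\Q/\Z(1))$; both these groups equal $(\Q/\Z(1))^G=\Z/2\Z$, since $E(C)$ and $F(C)$ are connected (as $E$ and $F$ are geometrically connected), and $f^*$ acts as the identity on this common $\Z/2\Z$, whence the result.

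To conclude, we observe that $f_*[E]=\deg(f)\cdot[F]$ in $\CH_1(F)=\Z\cdot[F]$, whence
$$\phi_E([E])=\phi_F(f_*[E])=\deg(f)\cdot\phi_F([F])=\deg(f)\pmod 2,$$
so $E$ has even geometric genus if and only if $\deg(f)$ is odd. The main technical obstacle is the naturality of the Poincaré duality trace under $f_*$; this is handled by the Pontrjagin duality argument above, after which the rest of the proof is a single line.
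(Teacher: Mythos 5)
Your proof is correct, but it takes a genuinely different route from the paper. The paper's proof is a one\nobreakdash-liner: it quotes the degree formula $\congru{\chi(E,\sO_E)}{\deg(f)\chi(F,\sO_F)}{2}$ of Haution and Koll\'ar, and since $\chi=1-g$ this immediately gives the parity statement. You instead derive the result from the internal machinery of \textsection\ref{sec:onecycles}: you apply Theorem~\ref{th:phi} to $[E]\in\CH_1(E)$ and $[F]\in\CH_1(F)$ and prove the compatibility $\phi_F\circ f_*=\phi_E$, which for curves is exactly the mod~$2$ degree formula. Your verification of that compatibility (pull back $\omega^2$, apply the projection formula~\eqref{eq:projection formula equivariant}, and identify $f_*$ on the top equivariant groups with the identity of $\Z/2\Z$ via its Pontrjagin dual $f^*$ on $H^0_G(\cdot,\Q/\Z(1))$) is the same device the paper uses for $\pi_*$ in its own proof of Theorem~\ref{th:phi}, and the adjointness of $f_*$ and $f^*$ it relies on is granted there by the same references. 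There is no circularity: Proposition~\ref{prop:genusundermorphism} is only used later (in Example~\ref{ex:a la totaro sans point}), and nothing in the proofs of Theorems~\ref{th:image psi} and~\ref{th:phi} or Lemma~\ref{lem:geyer} depends on it. What each approach buys: the paper's citation is elementary, independent of the equivariant cohomology of \textsection\ref{sec:cohomology}, and valid over an arbitrary field (it is the source of the general Remark closing \textsection\ref{subsec:elw}); your argument stays entirely within the paper but is much heavier, as Theorem~\ref{th:phi} ultimately rests on Theorem~\ref{th:selfduality} and Geyer's Lemma~\ref{lem:geyer}, and it is specific to real closed fields. In effect you re-prove the special case of the degree formula that the proposition needs, rather than invoking it.
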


\begin{proof}
Apply the degree formula $\congru{\chi(E,\sO_E)}{\deg(f)\chi(F,\sO_F)}{2}$
(see~\cite{hautiondegreeformula} or \cite[Lemma~5~(2)]{kollarelw}).
\end{proof}

\begin{rmk}
Let~$X$ be a smooth and proper variety, defined over an arbitrary field.
Let~$i$ be an integer.
Associating, with an integral dimension~$i$ cycle on~$X$,
the Euler characteristic of its structure sheaf
determines a morphism $\CH_i(X)\to\Z/\elw_{i-1}(X)\Z$
which factors through algebraic equivalence
(see~\cite[Proposition~14]{kollarelw}).
When the ground field is real closed and $i=1$, more is true:
this morphism even factors through homological equivalence,
according to Theorem~\ref{th:phi}.

We do not know whether this assertion remains valid for all values of $i$.
For $i=1$,
it does not extend to arbitrary fields,
as the following example shows.
Let~$X$ denote the smooth projective quadric threefold
over $\C((t))((u))((v))$
defined by the anisotropic quadratic form
$\langle 1,t,u,tu,v \rangle$.
Using the Hochschild--Serre spectral sequence, one
checks that $H^4_\et(X,\Z_2(2))$ is torsion-free.
On the other hand, it follows from \cite[Theorem~5.3 and Theorem~3.8]{Karpenko}
that $\CH_1(X)_\tors$ has order~$2$ and is
generated by a class whose image in $\Z/\elw_0(X)\Z=\Z/2\Z$ does not vanish.
\end{rmk}

\subsection{The cokernel of the norm map}
\label{subsec:cokernel norm map}

\begin{prop}
\label{prop:conoyau norme}
Let~$X$ be a smooth, proper and geometrically irreducible variety,
of dimension~$d$, over a real closed field~$R$.  If $\Pic(X_C)[2]=0$, then~$\psi$ fits into an exact sequence
\begin{align}
\label{eq:conoyau de la norme}
\xymatrix@C=1.3em{
H^{2d-2}(X(C),\Z(d-1))\ar[r] & H^{2d-2}_G(X(C),\Z(d-1))_0 \ar[r]^(.79){\smash[t]{\psi}} & \Mpsi \ar[r] & 0\rlap{\text{,}}
}
\end{align}
where the first map is the norm map from~\eqref{eq:real-complex long 10}.
\end{prop}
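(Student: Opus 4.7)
The plan is to establish exactness by verifying three things in turn: surjectivity of $\psi$, the easy inclusion $\image N\subseteq\ker\psi$, and the reverse inclusion $\ker\psi\subseteq\image N$.

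First, I would deduce surjectivity of $\psi$ from Theorem~\ref{th:image psi}. The hypothesis $\Pic(X_C)[2]=0$ forces $\Pic(X_C)^G[2^\infty]=0$, since for any abelian group~$A$ the vanishing of $A[2]$ forces the vanishing of $A[2^\infty]$. The map $\psi'$ of Definitions~\ref{def:psi no point} or~\ref{def:psi with point} is therefore identically zero, and Theorem~\ref{th:image psi} then forces $\image\psi=\Mpsi$.

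Next, I would verify $\image N\subseteq\ker\psi$ directly. If $X(R)=\emptyset$, then $\psi(\alpha)=\alpha\smile\omega^2$; the exactness of~\eqref{eq:real-complex long 10} at degree $2d-2$ gives $N(y)\smile\omega=0$ in $H^{2d-1}_G(X(C),\Z(d))$, and therefore $\psi(N(y))=0$. If $X(R)\neq\emptyset$, then $\psi(\alpha)=\alpha_{d-1}$ is read off the restriction of $\alpha$ to $X(R)$; since $G$ acts trivially on~$X(R)$, the norm map $H^i(X(R),\Z(d-1))\to H^i_G(X(R),\Z(d-1))$ lands in the $q=0$ summand of~\eqref{eq:canonical decomposition} as multiplication by~$2$ (or vanishes, when $d-1$ is odd), so every component $N(y)_p$ in the projection~\eqref{eq:natural projection} is killed by the reduction modulo~$2$ built into that projection.

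The crux is the reverse inclusion, which I would handle by Pontryagin duality. Combining Proposition~\ref{prop:poincare duality equivariant} and Remark~\ref{rks:equivariant poincare}(ii) with Lemma~\ref{lem:topological condition = restriction} (in the case $X(R)\neq\emptyset$) gives, on profinite completions, a perfect pairing between $H^{2d-2}_G(X(C),\Z(d-1))_0$ and a group $\Xi$ equal to $H^2_G(X(C),\Q/\Z(1))$ if $X(R)=\emptyset$, and to $H^2_G(X(C),X(R),\Q/\Z(1))$ otherwise. Under this pairing, $\psi^\vee$ sends the generator of~$\Mpsi^*$ to the image of~$\omega^2$ in~$\Xi$ in the first case, and to the map $\theta'$ of the proof of Theorem~\ref{th:image psi} in the second. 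By adjunction of the norm and restriction maps, the kernel of $N^\vee:\Xi\to H^2(X(C),\Q/\Z(1))$ agrees with the kernel of the forgetful map $\Xi\to H^2_G(X(C),\Q/\Z(1))\to H^2(X(C),\Q/\Z(1))$, and~\eqref{eq:real-complex long 10} identifies this kernel with the image of $\smile\omega:H^1_G(X(C),\Q/\Z)\to\Xi$. Since $\omega$ is of order~$2$, this image is concentrated on the $2$-primary part. Under $\Pic(X_C)[2]=0$, the Kummer sequence combined with the Hochschild--Serre spectral sequence yields $H^1_\et(X,\Q_2/\Z_2(1))=\Pic(X)[2^\infty]=0$ and $H^1_\et(X,\Q_2/\Z_2)=H^1(G,\Q_2/\Z_2)=\Z/2\Z$, generated by the reduction of~$\omega$. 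Cupping with $\omega$ therefore produces precisely $\langle\omega^2\rangle=\image\psi^\vee$, so $\ker N^\vee=\image\psi^\vee$, and the duality then returns $\ker\psi=\image N$.

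The delicate point will be the last step when $X(R)\neq\emptyset$: one must check carefully that $\psi^\vee$ really is~$\theta'$ in~$\Xi$ and that the image of $\smile\omega$ (in the relative setting) coincides with $\image\theta'$, which requires a fresh appeal to the self-dual sequence~\eqref{eq:selfdual sequence} and Theorem~\ref{th:selfduality} and a careful tracking of the canonical decompositions of~\textsection\ref{subsec:canonical decompositions} through the long exact sequence of the pair.
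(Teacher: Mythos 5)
Your strategy---surjectivity of $\psi$ from Theorem~\ref{th:image psi}, the inclusion $\Im(N)\subseteq\Ker(\psi)$ by hand, and the reverse inclusion by Pontrjagin duality---is genuinely different from the paper's proof, which never dualises. The paper works entirely on the primal side of the sequence~\eqref{eq:real-complex long 10}: the hypothesis $\Pic(X_C)[2]=0$ forces $H^{2d-1}(X(C),\Z)$ to have odd order (Poincar\'e duality on $X_C$), whence $\smile\omega$ maps $H^{2d-2}_G(X(C),\Z(d-1))$ onto $H^{2d-1}_G(X(C),\Z(d))[2]$ with kernel exactly $\Im(N)$, and the further cup products with $\omega$, landing in $H^{2d}_G$ (resp.\ $H^{2d+1}_G$, then restricted to $X(R)$), are isomorphisms; $\psi$ is then identified with $\smile\omega^2$ (resp.\ with $\smile\omega^3$ followed by restriction to $X(R)$), and both the surjectivity and the computation of the kernel drop out at once. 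Your first two steps are correct, and your computation that the $2$\nobreakdash-primary part of $H^1_G(X(C),\Q/\Z)$ is generated by the reduction of~$\omega$ is the dual avatar of the oddness of $H^{2d-1}(X(C),\Z)$; when $X(R)=\emptyset$ your third step goes through (modulo the bookkeeping that the ``transposed'' integral duality---open part with $\Z(d-1)$ coefficients against relative part with $\Q/\Z(1)$ coefficients---is not literally Remark~\ref{rks:equivariant poincare}~(ii) and requires a limit argument over $\Z/n$ coefficients, and that the adjoint of the norm under these pairings really is the forgetful map).

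The gap is in the case $X(R)\neq\emptyset$. There, $N^\vee$ is the composite $\Xi=H^2_G(X(C),X(R),\Q/\Z(1))\xrightarrow{\,h\,}H^2_G(X(C),\Q/\Z(1))\xrightarrow{\,r\,}H^2(X(C),\Q/\Z(1))$, while $\Im(\psi^\vee)=\Im(\theta')$ is the image of the connecting map $\delta:H^1_G(X(R),\Q/\Z(1))\to\Xi$, that is, $\Ker(h)$. Your sentence identifying ``the kernel of the forgetful map'' with ``the image of $\smile\omega$'' conflates two different groups: $\Im(\smile\omega)$ lives in $H^2_G(X(C),\Q/\Z(1))$, not in $\Xi$, and one has
$\Ker(r\circ h)/\Ker(h)\cong\Im(h)\cap\Ker(r)=\Im(h)\cap\Im(\smile\omega)$.
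If this intersection were nonzero, the duality would only give that $\Im(N)$ has index~$2$ in $\Ker(\psi)$, and the proposition would fail. The missing argument is the following: under $\Pic(X_C)[2]=0$ your computation shows that $\Im(\smile\omega)$ is generated by the image~$c$ of $\omega^2$ in $H^2_G(X(C),\Q/\Z(1))$; but $c$ restricts nontrivially to every real point (the image of $\omega^2|_x$ in $H^2(G,\Q/\Z(1))=\Z/2\Z$ is the nonzero element, as one checks from the coefficient sequences $0\to\Z\to\Z\to\Z/2\Z\to 0$ and $0\to\Z/2\Z(1)\to\Q/\Z(1)\to\Q/\Z(1)\to 0$ using $H^3(G,\Z)=0$ and $H^1(G,\Q/\Z(1))=0$), whereas $\Im(h)$ is contained in the kernel of restriction to $X(R)$. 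Hence $c\notin\Im(h)$, the intersection vanishes, and $\Ker(N^\vee)=\Ker(h)=\Im(\psi^\vee)$. You flag this step as delicate, but its resolution is this local computation at a real point rather than a further appeal to Theorem~\ref{th:selfduality}; as written, the step is not merely unverified but stated incorrectly.
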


It is part of the assertion of Proposition~\ref{prop:conoyau norme} that the norm map takes its values
in the subgroup $H^{2d-2}_G(X(C),\Z(d-1))_0 \subseteq H^{2d-2}_G(X(C),\Z(d-1))$.

\begin{proof}
The following lemma immediately
implies Proposition~\ref{prop:conoyau norme} when $X(R)=\emptyset$.

\begin{lem}
\label{lem:proof of conoyau norme}
Under the assumptions of the proposition, the group $H^{2d-1}(X(C),\Z)$ is finite of odd order.  Cup product with $\omega \in H^1_G(X(C),\Z(1))$ induces a surjection
\begin{align}
\label{eq:cupproduct with omega 1}
H^{2d-2}_G(X(C),\Z(d-1)) \twoheadrightarrow H^{2d-1}_G(X(C),\Z(d))[2]
\end{align}
whose kernel is the image of the norm map and it induces an isomorphism
\begin{align}
\label{eq:cupproduct with omega 2}
H^{2d-1}_G(X(C),\Z(d))[2] \isoto H^{2d}_G(X(C),\Z(d+1))\rlap{\text{.}}
\end{align}
If $X(R)\neq\emptyset$, it also induces an isomorphism
\begin{align}
\label{eq:cupproduct with omega 3}
H^{2d}_G(X(C),\Z(d+1))\isoto H^{2d+1}_G(X(C),\Z(d+2))\rlap{\text{.}}
\end{align}
\end{lem}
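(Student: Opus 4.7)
My plan is to derive each of the four assertions from the two real-complex long exact sequences~\eqref{eq:real-complex long 01} and~\eqref{eq:real-complex long 10}, assisted by Poincar\'e duality on the compact oriented real manifold~$X(C)$.

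I would first establish the finiteness and odd order of $H^{2d-1}(X(C),\Z)$. The Kummer sequence on~$X_C$, combined with $C^{\times}/(C^{\times})^{2}=0$, identifies $H^1(X(C),\Z/2\Z)$ with $H^1_{\et}(X_C,\mu_2)$, which is an extension of $\Pic(X_C)[2]=0$ by zero and hence vanishes. As $H^1(X(C),\Z)$ is torsion-free by universal coefficients and is killed by~$2$, it vanishes as well. Poincar\'e duality then identifies $H^{2d-1}(X(C),\Z)$ with $H_1(X(C),\Z)$, which is torsion (from $\Hom(H_1,\Z)=H^1=0$) and has no $2$-torsion (from $\Hom(H_1,\Z/2\Z)\hookrightarrow H^1(X(C),\Z/2\Z)=0$), hence is finite of odd order.

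For the second and third assertions, I would apply~\eqref{eq:real-complex long 10} successively with $\sF=\Z(d-1)$ and $\sF=\Z(d)$. In the first case, the image of $\smile\omega$ is contained in the $2$-torsion of $H^{2d-1}_G(X(C),\Z(d))$ since $2\omega=0$, while conversely every $2$-torsion class of this group restricts trivially to the odd-order group $H^{2d-1}(X(C),\Z(d))$ and hence lies in that image by exactness; exactness also identifies the kernel with the image of~$N$. In the second case, the crucial observation is that $G$ acts trivially on $H^{2d}(X(C),\Z(d))=\Z$, since the sign $(-1)^d$ from orientation-reversal by complex conjugation cancels the sign from the twist; therefore $\text{restr}\circ N=1+\sigma=2$ is injective, which forces the map $H^{2d}_G(X(C),\Z(d+1))\to H^{2d}(X(C),\Z(d))$ to vanish and $\smile\omega\colon H^{2d-1}_G(X(C),\Z(d))\twoheadrightarrow H^{2d}_G(X(C),\Z(d+1))$ to be surjective. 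As the target is $2$-torsion while the kernel of $\smile\omega$ has odd order (being a quotient of $H^{2d-1}(X(C),\Z(d))$), a routine lifting argument (solving $2v=u$ inside the odd-order kernel) upgrades this surjection to the desired isomorphism on $2$-torsion.

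The main subtlety will lie in the fourth assertion, where the hypothesis $X(R)\neq\emptyset$ must come into play. Surjectivity of $\smile\omega$ again follows from~\eqref{eq:real-complex long 10} with $\sF=\Z(d+1)$, using $H^{2d+1}(X(C),\Z(d+1))=0$ for dimension reasons. For injectivity, I would identify $\Z(d+2)=\Z(d)$ as $G$-modules and invoke~\eqref{eq:real-complex long 01} with $\sF=\Z(d)$, which reads
\[H^{2d}_G(X(C),\Z(d))\xrightarrow{\text{restr}}H^{2d}(X(C),\Z(d))\longrightarrow H^{2d}_G(X(C),\Z(d+1))\xrightarrow{\smile\omega}H^{2d+1}_G(X(C),\Z(d)).\]
The hypothesis $X(R)\neq\emptyset$ enters exactly here: for any real point $x_0\in X(R)$, the equivariant cycle class $\cl(x_0)\in H^{2d}_G(X(C),\Z(d))$ restricts to the cycle class of the single complex point $x_0(C)$, which is a generator of $H^{2d}(X(C),\Z(d))=\Z$. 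The restriction is therefore surjective, the middle map in the displayed sequence vanishes by exactness, and $\smile\omega$ is injective.
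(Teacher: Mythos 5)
Your proof is correct and follows essentially the same route as the paper's: both arguments run the real--complex long exact sequences~\eqref{eq:real-complex long 10} (equivalently~\eqref{eq:real-complex long 01}) through degrees $2d-2$ to $2d+1$, using the odd order of $H^{2d-1}(X(C),\Z)$, the injectivity of the norm on $H^{2d}(X(C),\Z(d))=\Z$, the surjectivity of the restriction to $H^{2d}(X(C),\Z(d))$ via the class of a real point, the vanishing of $H^{2d+1}(X(C),\Z)$, and the same odd-order-kernel lifting argument for~\eqref{eq:cupproduct with omega 2}. The only divergence is in the first assertion, where you deduce the odd order of $H^{2d-1}(X(C),\Z)$ from $H^1(X(C),\Z/2\Z)=0$ via universal coefficients and integral Poincar\'e duality, whereas the paper passes through $2$\nobreakdash-adic \'etale cohomology (a route that works verbatim over any real closed field, where integral semi-algebraic Poincar\'e duality is less readily available); note also that $H^1(X(C),\Z)$ is $2$\nobreakdash-divisible rather than ``killed by $2$'', though the conclusion that it vanishes is unaffected.
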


To deduce Proposition~\ref{prop:conoyau norme} from Lemma~\ref{lem:proof of conoyau norme} when $X(R)\neq\emptyset$, we consider the commutative diagram
\begin{align*}
\xymatrix@R=-.5ex@C=3.5em{
&H^{2d-2}_G(X(C),\Z(d-1)) \ar@{->>}[r] \ar[d] & H^{2d+1}_G(X(C),\Z(d+2)) \ar[d]^(.45)\wr &
\vphantom{\displaystyle\bigoplus_{\substack{0\leq p\leq d-1 \\ p\mkern1mu\equiv\mkern1mu d-1 \text{ mod } 2}}} \\
&H^{2d-2}_G(X(R),\Z(d-1)) \ar[d] \ar[r] & H^{2d+1}_G(X(R),\Z(d+2)) \ar[d]^(.45)\wr &
\vphantom{\displaystyle\bigoplus_{\substack{0\leq p\leq d-1 \\ p\mkern1mu\equiv\mkern1mu d-1 \text{ mod } 2}}} \\
&\mkern-60mu\displaystyle\bigoplus_{\substack{0\leq p\leq d-1 \\ p\mkern1mu\equiv\mkern1mu d-1 \text{ mod } 2}}\mkern-25muH^p(X(R),\Z/2\Z) \ar@{=}[r]!(-24.1,0) &
\mkern-60mu\displaystyle\bigoplus_{\substack{0\leq p\leq d-1 \\ p\mkern1mu\equiv\mkern1mu d-1 \text{ mod } 2}}\mkern-25muH^p(X(R),\Z/2\Z)\rlap{\text{,}}
}
\end{align*}
in which the horizontal arrows are the cup product with~$\omega^3$
and the lower vertical maps are the maps~\eqref{eq:natural projection}.
The commutativity of the lower square follows
from~\textsection\ref{subsubsec:reduction modulo 2}
and~\textsection\ref{subsubsec:effect}.
By Lemma~\ref{lem:proof of conoyau norme},
the top horizontal map is surjective and its kernel is the image of the norm map.
The upper vertical map on the right is an isomorphism
by~\textsection\ref{subsubsec:cohomological dimension}.
Hence the composition of the vertical maps on the left is surjective and its kernel is the image of the
norm map.
Proposition~\ref{prop:conoyau norme} is now established.
\end{proof}

\begin{proof}[Proof of Lemma~\ref{lem:proof of conoyau norme}]
As $\Pic(X_C)[2]=0$, the $2$\nobreakdash-torsion subgroup of
$H^1_\et(X_C,\Q_2/\Z_2)$ is trivial.  By Poincar\'e duality,
the group $H^{2d-1}_\et(X_C,\Z_2)=H^{2d-1}(X(C),\Z) \otimes_\Z \Z_2$ is therefore
$2$\nobreakdash-divisible.  The group $H^{2d-1}(X(C),\Z)$ being finitely
generated, it must then be finite of odd order.
On the other hand,
as $2\omega=0$, the cohomology class $x \smile \omega$ is
$2$\nobreakdash-torsion for any~$x$, hence~\eqref{eq:cupproduct with omega 1}
is well-defined.
It follows from these remarks and from the real-complex long exact sequence~\eqref{eq:real-complex long 10}
that~\eqref{eq:cupproduct with omega 1} is surjective, with kernel the image
of the norm map.

The norm map $H^{2d}(X(C),\Z(d)) \to H^{2d}_G(X(C),\Z(d))$ is injective as
its composition with the natural map $H^{2d}_G(X(C),\Z(d)) \to H^{2d}(X(C),\Z(d))$
is multiplication by~$2$ on the group $H^{2d}(X(C),\Z(d))=\Z$
(see Lemma~\ref{lem:H2dreel}~(i)).
Putting together this injectivity,
the remarks that $H^{2d-1}(X(C),\Z)$ has odd order
and that $2\omega=0$,
and the exact sequence~\eqref{eq:real-complex long 10}, we deduce
that~\eqref{eq:cupproduct with omega 2} is an isomorphism.

If $X(R)\neq\emptyset$, the natural map
$H^{2d}_G(X(C),\Z(d)) \to H^{2d}(X(C),\Z(d))$ is surjective
(see Lemma~\ref{lem:H2dreel}~(i)).
On the other hand, we have $H^{2d+1}(X(C),\Z(d))=0$
as~$X(C)$ has cohomological dimension~$2d$
(see \cite[Chapter~II, Lemma~9.1]{delfshomology}).  Hence the exact sequence~\eqref{eq:real-complex long 10}
implies that~\eqref{eq:cupproduct with omega 3} is an isomorphism in this case.
\end{proof}

\subsection{Wrapping up}
\label{subsection:obstructions}

We now combine the contents
of~\textsection\textsection\ref{subsec:psipsiprime}--\ref{subsec:cokernel
norm map} and deduce various results on the Borel--Haefliger cycle class
map for $1$\nobreakdash-cycles, on the existence of curves of even genus,
and on the real integral Hodge conjecture for $1$\nobreakdash-cycles.

In accordance with common usage,
for a smooth and proper variety~$X$,
we set $H_1(X(R),\Z/2\Z)=H^{d-1}(X(R),\Z/2\Z)$
and write $H_1^\alg(X(R),\Z/2\Z)=\cl_R(\CH_1(X))$
and $H^1_\alg(X(R),\Z/2\Z)=\cl_R(\Pic(X))$,
where~$\cl_R$ denotes the Borel--Haefliger cycle class maps
for curves or for divisors.

\subsubsection{Nomenclature}
\label{nomenclature}

We have seen in Theorem~\ref{th:phi} that
$\phi=\psi\circ\cl:\CH_1(X) \to \Mpsi$ detects the genus modulo~$2$
if $X(R)=\emptyset$ and is the Borel--Haefliger
cycle class map if $X(R)\neq\emptyset$.
Let us analyse the various ways in which~$\phi$ can fail to be surjective.
First, if~$\psi$ is not surjective,
we say that there is a \emph{topological obstruction} to the surjectivity of~$\phi$.
When $R=\R$, one can further factor~$\phi$ as
\begin{align*}
\CH_1(X)\to \Hdg^{2d-2}_G(X(C),\Z(d-1))_0\subseteq H^{2d-2}_G(X(C),\Z(d-1))_0 \xrightarrow{\psi} \Mpsi\rlap{\text{.}}
\end{align*}
If $R=\R$,
we say that there is a \emph{Hodge-theoretic obstruction}
to the surjectivity of~$\phi$
if~$\psi$ is surjective but its restriction to~$\Hdg^{2d-2}_G(X(C),\Z(d-1))_0$
is not.
Finally, if $R=\R$ (resp., if $H^2(X,\sO_X)=0$),
we speak of a \emph{cycle-theoretic obstruction} to the surjectivity of~$\phi$
if the Hodge-theoretic (resp., topological) obstruction vanishes but~$\phi$ still fails to be surjective.

There can be a topological obstruction
only if $\Pic(X_C)[2]\neq 0$,
by Theorem~\ref{th:image psi};
there can be a Hodge-theoretic obstruction
only if $H^2(X,\sO_X)\neq 0$;
and there can be a cycle-theoretic obstruction
only if the real integral Hodge conjecture for $1$\nobreakdash-cycles fails for~$X$.
Examples illustrating all of these obstructions will be given in~\textsection\ref{section:examples}.

\subsubsection{Varieties with $H^2(X,\sO_X)=0$}
\label{subsec:varieties with h20=0}

Assume that $H^2(X,\sO_X)=0$ (``no Hodge-theoretic obstruction'')
and that~$X$ satisfies the real integral Hodge conjecture for $1$\nobreakdash-cycles
(``no cycle-theoretic obstruction'').
Then, by Theorem~\ref{th:image psi}, the image of $\phi=\psi\circ\cl:\Pic(X)\to\Mpsi$ is the orthogonal complement
of the image of~$\psi'$ (``the topological obstruction controls the image of~$\phi$'').
Combining this with Theorem~\ref{th:phi}
and Proposition~\ref{prop:parity of genus in irrelevant situations}, we have now established
the following statement, in which~$\cl_R$ denotes the two
Borel--Haefliger
cycle class maps
$\Pic(X) \to H^1(X(R),\Z/2\Z)$
and $\CH_1(X) \to H^{d-1}(X(R),\Z/2\Z)$.

\begin{thm}
\label{th:nohodgetheoreticob}
Let~$X$ be a smooth, proper and geometrically irreducible variety over a
real closed field~$R$.
Assume
that $\dim(X)>0$,
that $H^2(X,\sO_X)=0$
and that~$X$ satisfies the real integral Hodge conjecture for $1$\nobreakdash-cycles.
\begin{enumerate}[(i)]
\item
The subgroups $\cl_R(\CH_1(X))$ and
$\cl_R(\Pic(X)[2^\infty])$ are exact orthogonal
complements under the Poincar\'e duality pairing.
\item
There exists a geometrically irreducible curve of even geometric
genus in~$X$ if and only if the natural map $\Pic(X)[2^{\infty}]\to
\Pic(X_C)^G[2^{\infty}]$ is onto.
\end{enumerate}
In particular, if $\Pic(X_C)[2]=0$, then $H_1^\alg(X(R),\Z/2\Z)=H_1(X(R),\Z/2\Z)$ and~$X$
contains a geometrically irreducible curve of even geometric genus.
\end{thm}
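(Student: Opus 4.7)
The approach is to deduce both statements from the orthogonality of Theorem~\ref{th:image psi} applied to the cycle class map. Specifically, I would first use that $H^2(X,\sO_X)=0$, together with Serre duality and Hodge symmetry (applied after base change to~$C$, combined with the Lefschetz principle), to establish that $H^q(X,\Omega_X^p)=0$ for every $(p,q)$ with $p+q=2d-2$ and $(p,q)\neq(d-1,d-1)$. This means that $\Hdg_G^{2d-2}(X(\C),\Z(d-1))_0 = H_G^{2d-2}(X(C),\Z(d-1))_0$ when $R=\R$, and that the real integral Hodge conjecture (interpreted as in Definition~\ref{def:ihc real closed field} over arbitrary~$R$) asserts the surjectivity of $\cl:\CH_1(X)\to H_G^{2d-2}(X(C),\Z(d-1))_0$. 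In particular, $\phi=\psi\circ\cl$ and $\psi$ have the same image, and by Theorem~\ref{th:image psi} this image is the exact orthogonal complement of $\Im(\psi')$ under the pairing $\Mpsi\times\Mpsi^*\to\Z/2\Z$.

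It then remains to interpret this orthogonality in each of the two cases $X(R)\neq\emptyset$ and $X(R)=\emptyset$. When $X(R)\neq\emptyset$, we have $\Pic(X)=\Pic(X_C)^G$ by \cite[8.1/4]{neronmodels}, so $\Pic(X)[2^\infty]=\Pic(X_C)^G[2^\infty]$; the pairing $\Mpsi\times\Mpsi^*\to\Z/2\Z$ coincides with Poincaré duality on $X(R)$, and Theorem~\ref{th:phi} together with Definition~\ref{def:psi with point} identify $\Im(\phi)$ with $H_1^\alg(X(R),\Z/2\Z)$ and $\Im(\psi')$ with $\cl_R(\Pic(X)[2^\infty])$, giving~(i); part~(ii) is automatic, the Picard map being the identity and, when $\dim X\geq 2$, a curve of even geometric genus existing by Proposition~\ref{prop:parity of genus in irrelevant situations}. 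When $X(R)=\emptyset$, both sides of~(i) vanish trivially; for~(ii), Theorem~\ref{th:phi} identifies $\Im(\phi)=\Z/2\Z$ with the existence of a geometrically irreducible curve of even geometric genus on~$X$, while Definition~\ref{def:psi no point} together with the Hochschild--Serre exact sequence
\begin{equation*}
0\to\Pic(X)\to\Pic(X_C)^G\to\Br(R)=\Z/2\Z
\end{equation*}
(where injectivity is Hilbert 90) identifies $\Im(\psi')=0$ with the surjectivity of $\Pic(X)[2^\infty]\to\Pic(X_C)^G[2^\infty]$, using that any lift to $\Pic(X)$ of a $2^\infty$-torsion class is itself $2^\infty$-torsion by the injectivity of the first arrow.

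The concluding assertion follows immediately: if $\Pic(X_C)[2]=0$, then $\Pic(X_C)^G[2^\infty]=0$, so $\psi'=0$ and $\Im(\phi)=\Mpsi$ in both cases, yielding both $H_1^\alg(X(R),\Z/2\Z)=H_1(X(R),\Z/2\Z)$ and the existence of a curve of even geometric genus. I do not foresee any single step as an essential obstacle: the argument is really the formal combination of the already established Theorems~\ref{th:image psi} and~\ref{th:phi} with the vanishing of the Hodge-theoretic obstruction forced by $H^2(X,\sO_X)=0$; the main care lies in running the two parallel setups (with and without real points) in tandem and correctly identifying $\Mpsi,\Mpsi^*,\phi,\psi'$ in each case.
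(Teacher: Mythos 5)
Your proposal is correct and follows essentially the same route as the paper: since $H^2(X,\sO_X)=0$ kills the Hodge condition in degree $2d-2$, the real integral Hodge conjecture makes $\cl$ surjective onto $H^{2d-2}_G(X(C),\Z(d-1))_0$, so $\Im(\phi)=\Im(\psi)$ is the exact orthogonal complement of $\Im(\psi')$ by Theorem~\ref{th:image psi}, and Theorem~\ref{th:phi} together with Proposition~\ref{prop:parity of genus in irrelevant situations} translates this into (i) and (ii). You merely spell out details the paper leaves implicit (the Serre duality/Hodge symmetry vanishing and the identification of $\Im(\psi')=0$ with the surjectivity of $\Pic(X)[2^\infty]\to\Pic(X_C)^G[2^\infty]$, the latter already contained in the proof of Theorem~\ref{th:image psi} via the Kummer sequence).
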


According to Proposition~\ref{prop:real(1,1)nonarch},
the hypotheses of Theorem~\ref{th:nohodgetheoreticob}
are met for surfaces of geometric genus zero.

\begin{cor}
\label{cor:surfaces}
Let~$X$ be a smooth, proper and geometrically irreducible surface over a
real closed field~$R$, such that $H^2(X,\sO_X)=0$.
\begin{enumerate}[(i)]
\item
The subgroups $\cl_R(\Pic(X))$ and $\cl_R(\Pic(X)[2^\infty])$ of $H^1(X(R),\Z/2\Z)$ are exact orthogonal
complements under the Poincar\'e duality pairing.
\item There exists a geometrically irreducible curve of even geometric
genus in~$X$ if and only if the natural map $\Pic(X)[2^{\infty}]\to
\Pic(X_C)^G[2^{\infty}]$ is onto.
\end{enumerate}
In particular, if $\Pic(X_C)[2]=0$, then $H^1_\alg(X(R),\Z/2\Z)=H^1(X(R),\Z/2\Z)$ and~$X$
contains a geometrically irreducible curve of even geometric genus.
\end{cor}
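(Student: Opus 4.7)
The plan is to deduce Corollary~\ref{cor:surfaces} directly from Theorem~\ref{th:nohodgetheoreticob} by exploiting the coincidence, peculiar to surfaces, between $1$\nobreakdash-cycles and divisors.

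First, I would observe that when $\dim(X)=2$, codimension~$1$ cycles and $1$\nobreakdash-cycles are the same thing, so that $\CH_1(X)=\Pic(X)$ and the two Borel--Haefliger cycle class maps
\begin{align*}
\Pic(X)\to H^1(X(R),\Z/2\Z)\qquad\text{and}\qquad \CH_1(X)\to H^{d-1}(X(R),\Z/2\Z)
\end{align*}
coincide. Next, since $H^2(X,\sO_X)=0$ by hypothesis, Proposition~\ref{prop:real(1,1)nonarch} applies and shows that~$X$ satisfies the real integral Hodge conjecture for codimension~$1$ cycles, which for a surface is exactly the real integral Hodge conjecture for $1$\nobreakdash-cycles. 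Thus every hypothesis of Theorem~\ref{th:nohodgetheoreticob} is met; its conclusions~(i) and~(ii) are precisely the statements~(i) and~(ii) of Corollary~\ref{cor:surfaces}.

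It only remains to verify the ``In particular'' clause. Assume $\Pic(X_C)[2]=0$. A trivial induction on~$n$ (using that if $2^n L=0$ and $n\geq 1$, then $2^{n-1}L$ is $2$\nobreakdash-torsion, hence zero) shows that $\Pic(X_C)[2^\infty]=0$ as well; in particular $\Pic(X)[2^\infty]=0$. Then $\cl_R(\Pic(X)[2^\infty])=0$, and its orthogonal complement under Poincar\'e duality is all of $H^1(X(R),\Z/2\Z)$, so (i) gives $H^1_\alg(X(R),\Z/2\Z)=H^1(X(R),\Z/2\Z)$. Similarly, the natural map $\Pic(X)[2^\infty]\to \Pic(X_C)^G[2^\infty]$ is trivially surjective (both groups vanish), so (ii) produces a geometrically irreducible curve of even geometric genus in~$X$.

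There is no real obstacle here: the whole content of the corollary is already packaged in Theorem~\ref{th:nohodgetheoreticob}, and the only ingredient needed to unlock it in the surface case is the observation that $H^2(X,\sO_X)=0$ forces the real integral Hodge conjecture for divisors via Proposition~\ref{prop:real(1,1)nonarch}. The verification is purely bookkeeping.
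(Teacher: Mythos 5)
Your proof is correct and follows exactly the paper's route: the paper deduces Corollary~\ref{cor:surfaces} from Theorem~\ref{th:nohodgetheoreticob} in one line, noting that Proposition~\ref{prop:real(1,1)nonarch} supplies the real integral Hodge conjecture for (co)dimension~$1$ cycles on a surface with $H^2(X,\sO_X)=0$. Your additional bookkeeping (identifying the two cycle class maps for $d=2$ and unwinding the case $\Pic(X_C)[2]=0$) is accurate and matches what the paper leaves implicit.
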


\begin{rmks}
\label{rmk:surfacespg=0}
(i)
At least when $R=\R$,
the particular case of Corollary~\ref{cor:surfaces}~(i)
when $\Pic(X_C)[2]=0$ was known
to Silhol \cite[Theorem~III.3.4]{silhol} for geometrically rational surfaces and to van~Hamel
\cite[Chapter~IV, Corollary~4.4 and Chapter~III, Lemma~8.9]{vanhamelthese} in general;
see also \cite[Th\'eor\`eme~3.7.18]{mangoltelivre}.
Corollary~\ref{cor:surfaces}~(ii), on the other hand, is new even when $\Pic(X_C)[2]=0$.

(ii)
Assume that $R=\R$. 
According to \cite[Th\'eor\`eme~4]{kahnchern},
the map~$\cl_{\R}$ sends the isomorphism class of a
line bundle on~$X$ to the first Stiefel--Whitney class of the
line bundle it induces on~$X(\R)$.
In particular,
if~$K_X$ denotes the canonical divisor class of~$X$
and
$w_1(X(\R))$
the first Stiefel--Whitney class of the tangent bundle of~$X(\R)$,
then $\cl_{\R}(K_X)=w_1(X(\R))$.
On the other hand, for an Enriques surface~$X$, the group $\Pic(X)[2^\infty]$ has order~$2$ and
is generated by~$K_X$.
Corollary~\ref{cor:surfaces}~(i) therefore
recovers
the theorem of Mangolte and van~Hamel~\cite[Theorem~4.4]{mangoltevanhamel}
according to which if~$X$ is an Enriques surface,
the subgroup $H^1_\alg(X(\R),\Z/2\Z)$ is the orthogonal complement of~$w_1(X(\R))$
(so that the equality $H^1_\alg(X(\R),\Z/2\Z)=H^1(X(\R),\Z/2\Z)$ holds
if and only if every connected component of~$X(\R)$ is orientable).
Corollary~\ref{cor:surfaces}~(i) may be viewed as a generalisation
of this result to all surfaces of geometric genus zero and all real closed fields.

(iii)
Corollary~\ref{cor:surfaces}~(i)
was known to Kucharz when~$R=\R$
and~$X_\C$ is birationally ruled
(see \cite[Proposition~1.6]{kucharzalgeq};
note that $\NS(X)$ is torsion-free in this case,
so that $\cl_\R(\Pic(X)[2^\infty])=\cl_\R(\Pic^0(X)[2^\infty])=\cl_\R(\Pic^0(X))$).

(iv)
Both assertions of Corollary~\ref{cor:surfaces} fail
if we drop the assumption $H^2(X,\sO_X)=0$ (see Example~\ref{ex:quartique avec points}
and Example~\ref{ex:kollar quartique sans point}; these are~$K3$ surfaces).
\end{rmks}

\begin{example}
\label{ex:Enriques sans point}
By Corollary~\ref{cor:surfaces}~(ii),
every Enriques surface over a real closed field contains a geometrically irreducible
curve of even geometric genus.  Indeed, if~$X$ is
such a surface, then $H^2(X,\sO_X)=0$ and $\Pic(X_C)[2^\infty]$ is generated by the
canonical class~$K_{X_C}$, which comes, by pull-back, from the class~$K_X$
in $\Pic(X)[2]$.
\end{example}

As a consequence of Corollary~\ref{cor:surfaces}~(i), we obtain
a lower bound on the size of the subgroup $H^1_\alg(X(R),\Z/2\Z) \subseteq H^1(X(R),\Z/2\Z)$, yielding a positive result concerning the question raised in \cite[Remark 5.2]{BenedettiTognoli}.

\begin{cor}
\label{cor:lower bound h1}
Let~$X$ be a smooth, proper and geometrically irreducible surface over a
real closed field~$R$.  Assume that $H^2(X,\sO_X)=0$. Then
\begin{align*}
\dim_{\Z/2\Z} H^1_\alg(X(R),\Z/2\Z) \geq \frac{1}{2}\dim_{\Z/2\Z} H^1(X(R),\Z/2\Z)\rlap{\text{.}}
\end{align*}
In particular, if $H^1(X(R),\Z/2\Z)\neq 0$, then $H^1_\alg(X(R),\Z/2\Z)\neq 0$.
\end{cor}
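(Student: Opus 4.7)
The plan is to apply Corollary~\ref{cor:surfaces}(i) directly. Set $V = H^1(X(R),\Z/2\Z)$, let $A = \cl_R(\Pic(X)) = H^1_\alg(X(R),\Z/2\Z)$ and let $B = \cl_R(\Pic(X)[2^\infty])$. The key input is that, under the Poincaré duality pairing on~$V$, the subspaces $A$ and $B$ are exact orthogonal complements. In particular, since this pairing is perfect, one has
\begin{align*}
\dim_{\Z/2\Z} A + \dim_{\Z/2\Z} B = \dim_{\Z/2\Z} V.
\end{align*}

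The second ingredient is the trivial containment $B \subseteq A$: since $\Pic(X)[2^\infty] \subseteq \Pic(X)$, its image under $\cl_R$ lies in the image of $\cl_R$ on all of $\Pic(X)$. Hence $\dim_{\Z/2\Z} B \leq \dim_{\Z/2\Z} A$, and substituting into the displayed equality yields
\begin{align*}
\dim_{\Z/2\Z} V \leq 2\,\dim_{\Z/2\Z} A,
\end{align*}
which is the claimed inequality. The final sentence of the corollary (that $H^1(X(R),\Z/2\Z)\neq 0$ forces $H^1_\alg(X(R),\Z/2\Z)\neq 0$) is then immediate from the bound. There is no real obstacle here: the entire content is packaged in Corollary~\ref{cor:surfaces}(i), and the proof is essentially a one-line linear algebra consequence of ``a subspace containing its own orthogonal complement has dimension at least half the ambient dimension.''
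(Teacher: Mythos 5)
Your argument is correct and is essentially the paper's own proof: the paper likewise invokes Corollary~\ref{cor:surfaces}~(i) to conclude $H^1_\alg(X(R),\Z/2\Z)^\perp \subseteq H^1_\alg(X(R),\Z/2\Z)$, which is exactly your observation that $B=A^\perp$ is contained in $A$, followed by the same dimension count.
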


\begin{proof}
By Corollary~\ref{cor:surfaces}~(i), we
have $H^1_\alg(X(R),\Z/2\Z)^\perp \subseteq H^1_\alg(X(R),\Z/2\Z)$.
\end{proof}

\begin{rmks}
(i)
This bound is sharp:
if~$E$ is an elliptic curve over~$\R$,
if $X \to E$ is a conic bundle surface
with
smooth fibres over~$E(\R)$, and if~$X(\R)$ is connected and nonempty while~$E(\R)$
has two connected components,
it is an exercise to check that $\dim_{\Z/2\Z}H^1(X(\R),\Z/2\Z)=2$
while $\dim_{\Z/2\Z}H^1_\alg(X(\R),\Z/2\Z)=1$.

(ii)
There are surfaces~$X$ over~$\R$ with $H^2(X,\sO_X)\neq 0$ (\emph{e.g.}, $K3$
surfaces) such that $H^1(X(\R),\Z/2\Z)\neq 0$
but $H^1_\alg(X(\R),\Z/2\Z)=0$ (see \cite[Exemple~4.5.9]{mangoltelivre}).
\end{rmks}

\subsubsection{A criterion for the real integral Hodge conjecture}

In the next statements,
the real integral Hodge conjecture over an arbitrary real
closed field is meant in the sense of Definition~\ref{def:ihc real closed field}.
Similarly, when $R\neq \R$, ``the complex integral Hodge conjecture for $1$\nobreakdash-cycles on~$X_C$'' when $H^2(X,\sO_X)=0$
simply refers to the surjectivity of the cycle class map $\cl_C:\CH_1(X_C)\to H^{2d-2}(X(C),\Z(d-1))$.

\begin{thm}
\label{thm:relation ihc phi}
Let $X$ be a smooth, proper and geometrically irreducible variety over a real closed field~$R$.
Assume that $\Pic(X_C)[2]=0$, that $H^2(X,\sO_X)=0$,
and that~$X_C$ satisfies the complex integral Hodge conjecture for $1$\nobreakdash-cycles.
Then~$X$ satisfies the real integral Hodge conjecture for $1$\nobreakdash-cycles
if and only if the following hold:
\begin{enumerate}
\item[(i)] if $X(R)\neq\emptyset$, then $H_1(X(R),\Z/2\Z)=H_1^\alg(X(R),\Z/2\Z)$;
\item[(ii)] if \mbox{$X(R)=\emptyset$, then $X$ contains a geometrically irreducible curve of even} geometric genus.
\end{enumerate}
\end{thm}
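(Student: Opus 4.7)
My plan is to combine Proposition~\ref{prop:conoyau norme} with Theorem~\ref{th:phi} to reduce the real integral Hodge conjecture to the surjectivity of $\phi=\psi\circ\cl$, and then to translate that surjectivity into the assertions (i) and (ii).

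First, I would observe that the hypothesis $H^2(X,\sO_X)=0$ implies, by Hodge symmetry and Serre duality (using the Lefschetz principle when $R\neq\R$), that $H^{d-2,d}(X_\C)=H^{d,d-2}(X_\C)=0$. Consequently every class in $H^{2d-2}(X(C),\Z(d-1))$ is Hodge, so $\Hdg^{2d-2}_G(X(C),\Z(d-1))_0=H^{2d-2}_G(X(C),\Z(d-1))_0$. Hence the real integral Hodge conjecture for $1$-cycles on $X$ amounts to the surjectivity of the map $\cl:\CH_1(X)\to H^{2d-2}_G(X(C),\Z(d-1))_0$, in both the archimedean and the general real closed cases of Definition~\ref{def:ihc real closed field}.

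Next, I would invoke Proposition~\ref{prop:conoyau norme}, applicable thanks to $\Pic(X_C)[2]=0$, to obtain the exact sequence
$$H^{2d-2}(X(C),\Z(d-1))\xrightarrow{N}H^{2d-2}_G(X(C),\Z(d-1))_0\xrightarrow{\psi}\Mpsi\to 0.$$
The main step, and the one which consumes the complex integral Hodge conjecture for $X_C$, is to show that the image of $N$ is contained in $\cl(\CH_1(X))$. Given $\alpha\in H^{2d-2}(X(C),\Z(d-1))$, the complex integral Hodge conjecture produces $z\in\CH_1(X_C)$ with $\cl_C(z)=\alpha$; I would then argue that $N(\alpha)=\cl(\pi_{*}z)$, where $\pi:X_C\to X$ denotes the base change, by the compatibility of the equivariant cycle class map with proper push-forwards (\textsection\ref{subsubsec:eqcl}).

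A short diagram chase in the above exact sequence then yields that $\cl$ is surjective if and only if $\phi=\psi\circ\cl:\CH_1(X)\to\Mpsi$ is surjective: given $\beta$ in $H^{2d-2}_G(X(C),\Z(d-1))_0$, any $z\in\CH_1(X)$ with $\phi(z)=\psi(\beta)$ makes $\beta-\cl(z)$ a norm, hence algebraic by the previous paragraph, while the converse is immediate since $\psi$ is surjective. Finally, Theorem~\ref{th:phi} identifies $\phi$ with the Borel–Haefliger cycle class map $\CH_1(X)\to H_1(X(R),\Z/2\Z)$ when $X(R)\neq\emptyset$, yielding the equivalence with~(i); and when $X(R)=\emptyset$, it identifies the surjectivity of $\phi$ (into $\Mpsi=\Z/2\Z$) with the existence of a geometrically irreducible curve of even geometric genus on~$X$, yielding the equivalence with~(ii). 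The genuine difficulty has already been absorbed into Proposition~\ref{prop:conoyau norme} and Theorem~\ref{th:phi}; once those are available, no further obstacle arises.
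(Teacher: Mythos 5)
Your proposal is correct and follows essentially the same route as the paper: Proposition~\ref{prop:conoyau norme} provides the exact sequence, the complex integral Hodge conjecture plus compatibility of $\cl$ with the push-forward along $X_C\to X$ shows the image of the norm map is algebraic, the diagram chase reduces surjectivity of $\cl$ to that of $\phi$, and Theorem~\ref{th:phi} translates the latter into (i)--(ii). The only cosmetic difference is that you spell out explicitly the vacuity of the Hodge condition under $H^2(X,\sO_X)=0$, which the paper absorbs into its definitional conventions.
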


\begin{proof}
The exact sequence of Proposition~\ref{prop:conoyau norme} fits into a commutative diagram
\begin{align*}
\xymatrix@R=3ex{
H^{2d-2}(X(C),\Z(d-1)) \ar[r] & H^{2d-2}_G(X(C),\Z(d-1))_0 \ar[r]^(.78)\psi & \Mpsi \ar[r] & 0\\
\CH_1(X_C)\ar[u]_{\cl_C} \ar[r] & \CH_1(X) \ar[u]_\cl \ar@<-.45em>[ru]_(.55)\phi
}
\end{align*}
whose bottom horizontal map is the proper push-forward.
The map~$\cl_C$ is surjective by assumption.
Hence the surjectivity of~$\cl$ is equivalent to that of~$\phi$.
The latter is, in turn,
equivalent to~(i)--(ii),
by Theorem~\ref{th:phi}.
\end{proof}

For smooth proper threefolds $X$ that are rationally connected, or Calabi-Yau (in the sense that $K_X\simeq \sO_X$ and $H^1(X,\sO_X)=H^2(X,\sO_X)=0$), the complex
integral Hodge conjecture for $1$\nobreakdash-cycles
was proved by Voisin \cite[Theorem~2]{voisinthreefolds} when $R=\R$. Using comparison with \'etale cohomology and the Lefschetz principle, the same holds for an arbitrary real closed field $R$. One deduces:

\begin{cor}
\label{cor:IHC pour solides RC ou CY}
Let $X$ be a smooth and proper threefold
over a real closed field~$R$.
Assume that~$X$ is
 rationally connected or is simply connected Calabi--Yau.
Then the real integral Hodge conjecture for~$X$ is equivalent
to the equality $H_1^\alg(X(R),\Z/2\Z)=H_1(X(R),\Z/2\Z)$, if $X(R)\neq\emptyset$,
or to the existence of a geometrically irreducible curve of even geometric genus, if $X(R)=\emptyset$.
\end{cor}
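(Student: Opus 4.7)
The plan is to deduce this corollary as a direct consequence of Theorem~\ref{thm:relation ihc phi}, by verifying its three hypotheses in each of the two cases under consideration: that $\Pic(X_C)[2]=0$, that $H^2(X,\sO_X)=0$, and that $X_C$ satisfies the complex integral Hodge conjecture for $1$-cycles. The third input is exactly what is supplied in the paragraph immediately preceding the corollary---Voisin's theorem over~$\C$, extended to an arbitrary algebraically closed field~$C$ of characteristic zero via comparison with \'etale cohomology and the Lefschetz principle.

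The vanishing $H^2(X,\sO_X)=0$ is built into the definition of simply connected Calabi--Yau used here; in the rationally connected case, the stronger statement $H^q(X,\sO_X)=0$ for every $q\geq 1$ is classical over~$\C$ and transfers to an arbitrary algebraically closed field of characteristic zero by flat base change combined with the Lefschetz principle.

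For the vanishing $\Pic(X_C)[2]=0$, I would invoke the Kummer sequence in \'etale cohomology: since~$C$ is algebraically closed, it yields the identification $\Pic(X_C)[2]=H^1_\et(X_C,\Z/2\Z)$. In both cases, $X_C$ is simply connected---by assumption in the Calabi--Yau case, and by the Koll\'ar--Miyaoka--Mori theorem (after reducing to $C=\C$ via the Lefschetz principle and using the topological simple connectivity of rationally connected complex manifolds) in the rationally connected case. Hence $H^1_\et(X_C,\Z/2\Z)=0$ in both situations, and so $\Pic(X_C)[2]=0$.

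With these three hypotheses in place, the stated equivalence follows immediately from Theorem~\ref{thm:relation ihc phi}. The real obstacle is thus not of a structural nature but resides in the verifications over an arbitrary real closed field~$R$: each input is classical over~$\C$, and one must check that each is invariant under extension of algebraically closed fields of characteristic zero, which is a routine application of the Lefschetz principle together with the appropriate comparison theorem ($\ell$-adic for the integral Hodge conjecture, GAGA for coherent cohomology, $\pi_1^\et$ versus $\pi_1^{\mathrm{top}}$ for simple connectivity).
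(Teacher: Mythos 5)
Your proposal is correct and follows the paper's own route exactly: the paper's proof consists of the paragraph preceding the corollary (Voisin's theorem over $\C$, transported to arbitrary $C$ by comparison with \'etale cohomology and the Lefschetz principle) followed by an appeal to Theorem~\ref{thm:relation ihc phi}, whose remaining hypotheses $H^2(X,\sO_X)=0$ and $\Pic(X_C)[2]=0$ you verify just as intended, via the definition of Calabi--Yau and the simple connectedness of rationally connected varieties. The only point you leave implicit is that the full real integral Hodge conjecture for a threefold reduces to the case of $1$\nobreakdash-cycles, which is supplied by the results of \textsection\ref{subsec:first positive results} (codimensions $0$, $1$, $3$ and $>3$ always hold here, codimension~$1$ using $H^2(X,\sO_X)=0$ over a non-archimedean $R$).
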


\section{Examples}
\label{section:examples}

We now provide various examples
of smooth, proper and geometrically irreducible varieties~$X$ over~$\R$
such that one of the two equalities
$\elw_1(X)=1$ (when $X(\R)=\emptyset$)
or
$H_1(X(\R),\Z/2\Z)=H_1^\alg(X(\R),\Z/2\Z)$ (when $X(\R)\neq\emptyset$)
fails.
We recall that when $X(\R)=\emptyset$,
the condition $\elw_1(X)=1$ is equivalent to the existence
of a curve of even genus in~$X$
(see Corollary~\ref{cor:what is elw1}).
All of these examples illustrate the obstructions
to the surjectivity of $\phi:\CH_1(X)\to\Mpsi$
described in~\textsection\ref{nomenclature}.

\subsection{Topological obstructions}
\label{subsec:topological examples}

The easiest examples are curves of positive genus.
For a curve,
only a topological obstruction can prevent~$\phi$ from being surjective,
since the map
 $\cl:\CH_1(X) \to H^{2d-2}_G(X(C),\Z(d-1))_0$ is an isomorphism
when $d=1$.

\begin{example}[with real points]
If~$X$ is a curve and $X(\R)$ has at least two connected components,
then $H_1(X(\R),\Z/2\Z)\neq H_1^\alg(X(\R),\Z/2\Z)$.
\end{example}

\begin{example}[with no real point]
If~$X$ is a curve of odd genus and $X(\R)=\emptyset$, then $\elw_1(X)=2$
(see Corollary~\ref{cor:what is elw1}).
\end{example}

By Theorem~\ref{th:image psi},
topological obstructions can only occur if $\Pic(X_{\C})[2]\neq 0$.
Curves of positive genus satisfy the stronger
property that the abelian variety $\Pic^0(X_{\C})$ is non-zero.
Let us now show that surfaces with $\Pic^0(X_\C)=0$
can also carry topological obstructions.
The surfaces used in the next two examples even satisfy $H^i(X,\sO_X)=0$
for all $i>0$:
they illustrate the sharpness of the last statement of Corollary~\ref{cor:surfaces}.

\begin{example}[with real points]
\label{ex:enriques mangolte van hamel}
Let~$X$ be a real Enriques surface such that~$X(\R)$ is non-orientable
(such surfaces exist; see, \emph{e.g.}, \cite[Theorem~2.2]{degtyarevkharlamov}).
By the theorem of Mangolte and van~Hamel \cite[Theorem~1.1]{mangoltevanhamel}
recalled in Remark~\ref{rmk:surfacespg=0}~(ii),
we then have $H_1(X(\R),\Z/2\Z)\neq H_1^\alg(X(\R),\Z/2\Z)$.
This is explained by a topological obstruction
(see
 Remark~\ref{rmk:surfacespg=0}~(ii)
and the discussion in~\textsection\ref{subsec:varieties with h20=0}).
\end{example}

As we have seen
in Example~\ref{ex:Enriques sans point},
the map~$\phi$ is always surjective
for an Enriques surface with no real point.
To provide an analogue of Example~\ref{ex:enriques mangolte van hamel}
in the case with no real point,
we resort to Campedelli surfaces instead.
These are minimal surfaces of general type
with $H^i(X,\sO_X)=0$ for $i>0$ and $K_X^2=2$.
The precise surfaces we use below
 were constructed, over~$\C$, by
Godeaux~\cite[\textsection6]{godeaux}, and are also described
in~\cite[\textsection2.1]{reid}.
We endow them with an appropriate real structure.

\begin{example}[with no real point]
\label{ex:campedelli}
Let~$\zeta$ be a primitive eighth root of unity.
Let $f:\P^6(\C)\to\P^6(\C)$ be the map defined by
\begin{align*}
f([x_0:\dots:x_6])=[\zeta\petitoverline{x_1}:\petitoverline{x_0}:\zeta^2\petitoverline{x_3}:\petitoverline{x_2}:\zeta^3\petitoverline{x_5}:\petitoverline{x_4}:\petitoverline{x_6}]\rlap{.}
\end{align*}
Let~$H$ denote the group of diffeomorphisms of~$\P^6(\C)$ generated by~$f$.
One checks that $H=\Z/16\Z$ and
that the index~$2$ subgroup $K=\Z/8\Z\subset H$ acts holomorphically on $\P^6(\C)$ while the other elements of~$H$ act antiholomorphically.
Let us identify
$(f^2)^* \sO(1)$ with~$\sO(1)$
by mapping~$x_0$ to~$\zeta x_0$
and let us
consider the
endomorphism
of the complex vector space $H^0(\P^6(\C),\sO(2))$
induced by~$f^2$
and by this identification.
The family $(x_i x_j)_{0\leq i\leq j\leq 6}$ forms an eigenbasis.
Let~$\Lambda_i$
denote the eigenspace associated with~$\zeta^{2i}$.
For $(Q_0,\dots,Q_3) \in \prod_{i=0}^3 \Lambda_i$,
let $Y \subset \P^6(\C)$
denote the subvariety
defined by $Q_0=\dots=Q_3=0$.
By an explicit computation based on the Bertini theorem for linear systems,
one checks that if $Q_0,\dots,Q_3$ are general,
then~$Y$
is a smooth surface
that does not meet the fixed locus of~$f^8$, hence does not meet the fixed locus of any nontrivial element of $H$.
Moreover, one checks that there is a Zariski dense set of quadruples $(Q_0,\dots,Q_3)$
for which~$Y$ is stable under~$f$.
As a consequence, we can choose $(Q_0,\dots,Q_3)$ such
that~$Y$ is a smooth surface on which~$H$ acts freely.
The quotient $S=Y/K$ is then a smooth projective complex surface
on which~$f$ induces a fixed-point free antiholomorphic involution~$\sigma$.
There exist a real projective surface~$X$ and an isomorphism $X(\C)\simeq S$ through which complex conjugation
corresponds to~$\sigma$
(see~\cite[Proposition~I.1.4]{silhol}).
We then have $X(\R)=\emptyset$.
As~$Y$ is a simply connected \'etale cover of both~$X_\C$ and~$X$,
we have $\pi_1^{\et}(X_{\C})=K$ and $\pi_1^{\et}(X)=H$,
hence $H^1_\et(X_\C,\Z/2\Z)=\Z/2\Z$ and $H^1_\et(X,\Z/2\Z)=\Z/2\Z$.
By the Kummer exact sequence,
we deduce that $\Pic(X_\C)[2]=\Z/2\Z$ and $\Pic(X)[2]=0$.
Corollary~\ref{cor:surfaces}~(ii) now implies that~$\elw_1(X)=2$.
\end{example}

\subsection{Hodge-theoretic obstructions}
\label{subsec:hodge-theoretic examples}

The next two examples are~$K3$ surfaces.
As such surfaces are simply connected, they cannot carry
a topological obstruction; however,
they may carry a Hodge-theoretic obstruction.  These examples illustrate
the importance of the hypothesis $H^2(X,\sO_X)=0$ in Corollary~\ref{cor:surfaces}.

\begin{example}[with real points]
\label{ex:quartique avec points}
Examples of quartic surfaces $X \subset \P^3_\R$ such that $H^1(X(\R),\Z/2\Z)\neq H^1_\alg(X(\R),\Z/2\Z)$
are given in~\cite[Example~3.4~(c) and~(d)]{bochnakkucharzalgebraicmodels} and~\cite[Exemple~4.5.9]{mangoltelivre}.
As a further example, take~$X$ to be a very general small real deformation of a smooth quartic surface containing a real line. The cohomology class of the line deforms to a class $\alpha\in H^1(X(\R),\Z/2\Z)$ such that $\deg(\alpha\smile\cl_{\R}(\sO_X(1)))\neq 0\in\Z/2\Z$ by Ehresmann's theorem, and $H^1_\alg(X(\R),\Z/2\Z)$ is generated by $\cl_{\R}(\sO_X(1))$ by the Noether--Lefschetz theorem. Since $\deg(\cl_{\R}(\sO_X(1))\smile\cl_{\R}(\sO_X(1)))=0 \in \Z/2\Z$, one has $\alpha\notin H^1_\alg(X(\R),\Z/2\Z)$.
\end{example}

\begin{example}[with no real point]
\label{ex:kollar quartique sans point}
Let $X \subset \P^3_\R$ be a very general quartic surface
such that $X(\R)=\emptyset$.
As~$\CH_1(X)$ is generated by the class of a hyperplane section,
which has genus~$3$,
Theorem~\ref{th:phi}
immediately implies that~$X$
does not contain any geometrically irreducible curve of even geometric genus.
This example was first noted by Koll\'ar, see \cite[p.~15,
Remarque~(3)]{colliotthelenemadore}.
\end{example}

\subsection{\texorpdfstring{Cycle-theoretic obstructions (failures of the real integral Hodge conjecture)}{Cycle-theoretic obstructions}}
\label{subsec:cycle theoretic obs}

We now give examples, with or without real points,
of cycle-theoretic obstructions to the surjectivity of~$\phi$.
In view of Proposition \ref{prop:real(1,1)},
such obstructions
cannot occur on curves or surfaces.
The examples we give
are simply connected threefolds which satisfy $H^2(X,\sO_X)=0$.   As such,
they cannot carry a topological or Hodge-theoretic obstruction.
Their construction relies on Koll\'ar's specialisation method to provide
counterexamples to the integral Hodge conjecture (see~\cite{trentoexamples}),
as implemented by Totaro~\cite{totarocontreexemples}.
To the best of our knowledge, Example~\ref{ex:a la totaro avec points} constitutes
a new kind of example of a real variety~$X$
such that $H_1(X(\R),\Z/2\Z)\neq H_1^\alg(X(\R),\Z/2\Z)$
(see Remark~\ref{rk:covers hknotalg}~(i)).
Here, the underlying phenomenon
is a defect of the complex integral Hodge conjecture.

\begin{example}[with real points]
\label{ex:a la totaro avec points}
The example will be a hypersurface $X \subset \P^1_\Q \times \P^3_\Q$ of bidegree~$(4,4)$.
Let us first fix a prime number~$p$ (for instance $p=2$)
and choose $f_p \in \Z[u,v,x_0,\dots,x_3]$,
bihomogeneous of bidegree~$(4,4)$ in $(u,v)$, $(x_0,\dots,x_3)$,
whose zero locus $X_{\mkern-1muf_p} \subset \P^1_\Q \times \P^3_\Q$ is smooth
and whose reduction modulo~$p$ is equal to
$v(u^3x_0^4+u^2vx_1^4+uv^2x_2^4+v^3x_3^4)$.
Let us choose $f_\R \in \Q[u,v,x_0,\dots,x_3]$,
bihomogeneous of bidegree~$(4,4)$,
such that $X_{\mkern-1muf_\R} \subset \P^1_\Q\times\P^3_\Q$ is a smooth hypersurface
containing
$\P^1_{\Q}\times\{[1:0:0:0]\}$.
Such~$f_\R$ exist
as a consequence of the refined Bertini theorem of~\cite[Theorem~7]{kleimanaltman}.
Finally, let $f \in \Q[u,v,x_0,\dots,x_3]$ be arbitrarily close to~$f_\R$ in the real topology and to~$f_p$ in the $p$\nobreakdash-adic topology,
and let $X=X_{\mkern-1muf}$.
As~$f$ is $p$\nobreakdash-adically close to~$f_p$,
the argument used in the proof of \cite[Theorem~3.1]{totarocontreexemples}
shows that every curve in~$X_\C$ has even degree over~$\P^1_\C$.
A~fortiori, every curve in~$X$ has even degree over~$\P^1_\R$:
the push-forward map $H_1(X(\R),\Z/2\Z) \to H_1(\P^1(\R),\Z/2\Z)$
must vanish on $H_1^\alg(X(\R),\Z/2\Z)$.
On the other hand, as~$f$ is close to~$f_\R$,
this push-forward map is surjective. (Its surjectivity
is indeed invariant under small real deformations, by Ehresmann's theorem.)
Hence $H_1(X(\R),\Z/2\Z)\neq H_1^\alg(X(\R),\Z/2\Z)$.
\end{example}

\begin{example}[with no real point]
\label{ex:a la totaro sans point}

Let~$B$ be a smooth, projective conic over~$\Q$ such that $B(\R)=\emptyset$.
Let~$p$ be a prime of good reduction for~$B$,
\emph{i.e.},
such that there exists a smooth $\Z_{(p)}$\nobreakdash-scheme~$\sB$ with generic
fiber~$B$ and special fiber~$\P^1_{\F_p}$.
Let $\sY = \sB \times \P^3$,
$Y = B \times \P^3$,
and $\sY_0 = \sY \otimes \F_p = \P^1_{\F_p} \times \P^3_{\F_p}$.
Let $\sO_{\sY}(4,4)$ be the dual of $\omega_{\sB}^{\otimes 2} \boxtimes \omega_{\P^3}$.
As $H^1(\sY,\sO_\sY(4,4))=0$, the restriction
map $H^0(\sY,\sO_\sY(4,4))\to H^0(\sY_0,\sO_{\sY_0}(4,4))$ is onto.
In addition, its kernel is Zariski dense in $H^0(Y,\sO_Y(4,4))$ (viewed as an affine
space over~$\Q$).
We can therefore choose a section $f \in H^0(\sY,\sO_\sY(4,4))$
which reduces to
$v(u^3x_0^4+u^2vx_1^4+uv^2x_2^4+v^3x_3^4) \in H^0(\sY_0,\sO_{\sY_0}(4,4))$
modulo~$p$,
where $[u:v]$ and $[x_0:x_1:x_2:x_3]$ respectively denote the homogeneous
coordinates of~$\P^1_{\F_p}$ and of~$\P^3_{\F_p}$,
and
such that the zero locus $X_{\mkern-1muf} \subset B \times \P^3$ is smooth over~$\Q$.
Let $X = X_{\mkern-1muf} \otimes_\Q \R$.
The argument used in the proof of \cite[Theorem~3.1]{totarocontreexemples}
shows that every curve in~$X_\C$ has even degree over~$B_\C$.
A~fortiori, every curve in~$X$ has even degree over~$B_\R$.
By Proposition~\ref{prop:genusundermorphism}, it follows that every curve in~$X$ has odd genus,
so that $\elw_1(X)=2$.
\end{example}

In view of Examples \ref{ex:a la totaro avec points} and \ref{ex:a la totaro sans point}, it is natural to ask:

\begin{question}
Does there exist a smooth, proper and geometrically irreducible variety~$X$ over $\R$ such that $X_\C$ satisfies the complex integral Hodge conjecture for $1$\nobreakdash-cycles, but such that $X$ has a cycle-theoretic obstruction to the surjectivity of~$\phi$?
\end{question}

Although we do not know any variety over $\R$ with these properties, we will exhibit such examples over non-archimedean real closed fields in \cite[\textsection\ref*{BW2-par:cexnonarch}]{bwpartie2}.

\section{Bloch--Ogus theory and torsion \texorpdfstring{$1$}{1}-cycles}
\label{sec:blochogus}

In this section, we apply Bloch--Ogus theory to investigate the
consequences of the real integral Hodge conjecture for the
study of the group $\CH_1(X)_\tors$ and of its image in $H_1(X(R),\Z/2\Z)$ by the
Borel--Haefliger cycle class map.
The main result is Theorem~\ref{th:ch1torsion}.
We illustrate its applicability in~\textsection\ref{subsec:example torsion for real quartic}
by computing $\CH_1(X)_\tors$ for a smooth quartic threefold~$X$ over
a real closed field~$R$, under the assumption that~$X$ satisfies the real integral Hodge conjecture
and that $X(R)=\emptyset$.
As a preliminary step, we prove, in~\textsection\ref{subsec:coniveau and torsion cycles},
that the defect of the real integral Hodge conjecture, for a threefold,
can be interpreted in terms of unramified cohomology; this is
a real analogue of a theorem of Colliot-Th\'el\`ene and Voisin~\cite{ctvoisin}.
Along the way, we also obtain some new technical results of independent interest
in~\textsection\ref{subsec:complements on bo}, such as the vanishing of
the upper differentials in the coniveau spectral sequence for the equivariant cohomology of~$X(C)$
with coefficients in an arbitrary $G$\nobreakdash-module~$M$
(see Proposition~\ref{prop:various bo}~(vi)),
an assertion which was previously known only when~$M$ is one of $\Z/2\Z$, $\Q/\Z$, $\Q/\Z(1)$
(due to the work
of Colliot-Th\'el\`ene, Parimala, Scheiderer, van~Hamel;
see~\cite[\textsection3.1]{ctparimala}, \cite[Proposition~19.8]{scheiderer},
\cite[\textsection3]{ctscheiderer},
\cite[\textsection2]{vanhamelabeljacobi}).

\subsection{Complements on Bloch--Ogus theory}
\label{subsec:complements on bo}

We collect, in~\textsection\ref{subsec:complements on bo}, some results
on Bloch--Ogus theory in the context of equivariant semi-algebraic cohomology.
General references for this theory are~\cite{blochogus}
and~\cite{blochogusgabber}.
It was applied
in real algebraic geometry
in \cite{ctparimala},
\cite[Chapter~19]{scheiderer},
\cite{ctscheiderer},
\cite{vanhamelabeljacobi},
\cite{hellervoineagu}
with $\Z/2\Z$ coefficients
and
in~\cite[\textsection2]{Hilbert17}
with~$\Z_2$ coefficients.

We fix a $G$\nobreakdash-module~$M$.
Applying \cite[Remark~5.1.3~(3)]{blochogusgabber}
to the cohomology theory with supports
$(X,Z)\mapsto H^*_{G,Z(C)}(X(C),M)$,
we obtain
the \emph{coniveau spectral sequence}, which, in view of~\eqref{eq:equivariant purity subvariety},
takes the form
\begin{align}
\label{eq:coniveau spectral sequence}
E_1^{p,q} = \bigoplus_{Z \subseteq X} \varinjlim_{U \subseteq Z} H^{q-p}_G(U(C),M(-p))
\Rightarrow H^{p+q}_G(X(C),M)
\end{align}
for any smooth variety~$X$ over~$R$;
here,
the direct sum ranges over the irreducible Zariski closed subsets $Z \subseteq X$
of codimension~$p$ and the direct limit ranges over the dense Zariski open
subsets $U \subseteq Z$.  Clearly $E_r^{p,q}=0$ for all $r\geq 1$ whenever $p>q$.

This cohomology theory with supports satisfies the \'etale excision
and homotopy invariance axioms
of \cite[\textsection5.1, \textsection5.3]{blochogusgabber},
as a consequence of the semi-algebraic implicit function theorem
(see~\cite[Example~5.1]{delfsknebuschintrolocallysemialg}),
for \'etale excision,
and of~\eqref{eq:finite dimensional borel} and \cite[Corollary~4.5]{delfshomotopyaxiom},
for homotopy invariance.
By \cite[Corollary~5.1.11 and Proposition~5.3.2]{blochogusgabber},
we deduce,
for any $p,q$,
a canonical isomorphism
\begin{align}
\label{eq:bloch-ogus iso}
E_2^{p,q}=H^p(X,\sH^q_X(M))
\end{align}
if $\sH^q_X(M)$ denotes the Zariski sheaf
associated with the presheaf $U \mapsto H^q_G(U(C),M)$.

We denote by $N^pH^i_G(X(C),M) \subseteq H^i_G(X(C),M)$
the subgroup
of those classes~$\alpha$ for which there exists
a Zariski closed
subset $Z \subseteq X$ of codimension~$\geq p$
such that~$\alpha$ is supported on~$Z(C)$.
The filtration thus defined, called the coniveau filtration, is the one determined
by~\eqref{eq:coniveau spectral sequence}.

The preceding discussion also applies with equivariant cohomology replaced by
cohomology (or we may simply apply it to~$X_C$ viewed as a geometrically reducible variety over~$R$ in the naive way).
We let $\pi:X_C\to X$ denote the projection
map and
$\sH^q_{X_C}(M)$ the Zariski sheaf, on~$X_C$,
associated with the presheaf $U \mapsto H^q(U(C),M)$.
The next proposition summarises
the analogues, for semi-algebraic cohomology with coefficients in~$\Z$,
of some of the statements of \cite[\textsection2]{Hilbert17} (assertions~(i), (ii), (iii))
and of \cite[Chapter~19]{scheiderer}
and \cite[\textsection1.7--\textsection2]{vanhamelabeljacobi}
(assertions~(iv), (v), (vi)).

\begin{prop}
\label{prop:various bo}
Let~$X$ be a smooth variety over~$R$, of dimension~$d$.
\begin{itemize}
\item[(i)]
Let $M$ be a $G$\nobreakdash-module.
For every $p,q\geq 0$,
there are canonical isomorphisms $\sH^q_X(M[G])=\pi_*\sH^q_{X_C}(M)$
and  $H^p(X,\pi_*\sH^q_{X_C}(M))=H^p(X_C,\sH^q_{X_C}(M))$.
\item[(ii)] For every $q\geq 0$, the sheaves $\sH^q_X(\Z(q-1))$ and $\sH^q_X(\Z[G])$ are torsion-free.
\item[(iii)]
The real-complex exact sequence~\eqref{eq:real-complex sequence 01}
induces an exact sequence
\begin{align*}
\xymatrix@R=0.5ex@C=1em{
&*!<2.6em,0ex>\entrybox{
0 \to
\sH_X^q(\Z(q-1))
\to
\pi_* \sH_{X_C}^q(\Z)
\to
\sH_X^q(\Z(q))
}\ar`r/6pt[d]`[l]`[dl]`[d][d] \\
&*!<.35em,0ex>\entrybox{
 \sH_X^{q+1}(\Z(q-1))\to \pi_* \sH_{X_C}^{q+1}(\Z)\to \sH_X^{q+1}(\Z(q))\to 0
}
}
\end{align*}
for every $q\geq 0$.
\item[(iv)]
Let $q>d$.
Let~$M$ be a $G$\nobreakdash-module.
Let us denote by $\iota:X(R) \to X$ the natural morphism of sites
from the semi-algebraic site of~$X(R)$ to the small Zariski site of~$X$.
The natural map $\sH^q_X(M) \to \iota_*H^q(G,M)$
is an isomorphism.
In addition, one has $H^p(X,\sH^q_X(M))=H^p(X(R),H^q(G,M))$
for all $p\geq 0$.
\item[(v)]
For $j\in\Z$
and $i \geq 0$,
the restriction map
$H^i_G(X(C),\Z(j))\to H^i_G(X(R),\Z(j))$
and the decomposition~\eqref{eq:canonical decomposition}
induce an isomorphism
\begin{align*}
H^i_G(X(C),\Z(j))/N^{i-d}H^i_G(X(C),\Z(j))
 \mkern3mu\isoto \mkern-20mu\bigoplus_{\substack{0\leq p<i-d \\ p\mkern1mu\equiv\mkern1mu i-j \text{ mod } 2}}\mkern-20muH^p(X(R),\Z/2\Z)\rlap{\text{.}}
\end{align*}
\item[(vi)]
The differential $E_r^{p,q}\to E_r^{p+r,q-r+1}$
of the coniveau spectral sequence~\eqref{eq:coniveau spectral sequence}
vanishes for all $p,q,r$ such that $r\geq 2$ and $q>d$
and for any $G$\nobreakdash-module~$M$.
\end{itemize}
\end{prop}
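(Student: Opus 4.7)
The plan is to handle (i)--(iii) as formal consequences of earlier machinery, establish (iv) by stalk analysis, and derive (v) from (iv) and (vi), the latter being the main obstacle, which extends to arbitrary $G$\nobreakdash-modules a vanishing previously known only for $M\in\{\Z/2\Z,\Q/\Z,\Q/\Z(1)\}$.

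For (i), apply the identification~\eqref{eq:cohoeqnoneq} with $V=U(C)$ to each Zariski open $U\subseteq X$ and sheafify on $X_{\mathrm{Zar}}$; the second isomorphism is Leray for the finite morphism~$\pi$, for which $R^a\pi_*=0$ when $a>0$. For (ii), the torsion-freeness of $\sH^q_X(\Z[G])=\pi_*\sH^q_{X_C}(\Z)$ then reduces via (i) to the torsion-freeness of $\sH^q_{X_C}(\Z)$, a consequence of the Bloch--Kato conjecture (now a theorem of Voevodsky) via the Bloch--Ogus--Gabber Gersten resolution; for $\sH^q_X(\Z(q-1))$ the argument of \cite[\textsection 2]{Hilbert17} adapts, analysing $\ell$\nobreakdash-primary components separately and exploiting that for odd~$\ell$ the twist splits off after tensoring with $\Z[G]$. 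For (iii), tensor~\eqref{eq:real-complex sequence 01} with $\Z(q-1)$ and use the canonical identification $\Z[G]\otimes\Z(q-1)\simeq\Z[G]$ of $G$\nobreakdash-modules to obtain $0\to\Z(q-1)\to\Z[G]\to\Z(q)\to 0$; sheafifying the resulting long exact sequence, the connecting maps $\sH^{q-1}_X(\Z(q))\to\sH^q_X(\Z(q-1))$ and $\sH^{q+1}_X(\Z(q))\to\sH^{q+2}_X(\Z(q-1))$ are cup products with the $2$\nobreakdash-torsion class $\omega\in H^1(G,\Z(1))$, landing in torsion-free groups by (ii), hence vanishing; this splits off the claimed 6-term exact sequence.

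For (iv), I argue stalkwise. By Lemma~\ref{lem:restrmapinjective}, for any affine $U$ of dimension $\leq d$ and any $q>d$, restriction induces an isomorphism $H^q_G(U(C),M)\isoto H^q_G(U(R),M)$. Identifying the target with $H^q(U(R),\RGamma(G,M))$ via~\eqref{eq:general canonical decomposition}, and using that $H^p(U(R),H^r(G,M))=0$ for $p>d$, a local analysis at a Zariski point $x\in X$ shows $(\sH^q_X(M))_x=H^q(G,M)$ when $x$ lies in the Zariski closure of $X(R)$ and vanishes otherwise, which matches $(\iota_*H^q(G,M))_x$. The equality $H^p(X,\sH^q_X(M))=H^p(X(R),H^q(G,M))$ then follows from Leray for~$\iota$, since the sheaf is supported on~$X(R)$, where Zariski and semi-algebraic cohomology agree.

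The hard part is (vi). The key observation is that for $M=\Z[G]$ the vanishing is trivial: by (i), $E_2^{p,q}(\Z[G])=H^p(X_C,\sH^q_{X_C}(\Z))$, and $\sH^q_{X_C}(\Z)=0$ for $q>d$ by Artin vanishing on the affine opens of $X_C$, so $E_r^{p,q}(\Z[G])=0$ for every $r\geq 2$ in this range, and all relevant differentials are trivially zero. I then propagate this to arbitrary~$M$ by dévissage: the short exact sequences~\eqref{eq:real-complex sequence 01} and~\eqref{eq:real-complex sequence 10} induce, at each $E_r$-page, long exact sequences compatible with the differentials $d_r$; combined with the vanishing for~$M[G]$, a diagram chase reduces general $M$ to base cases. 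Specifically, for finite $M$: $\Z/\ell^n\Z(j)$ with $\ell$ odd is handled directly by the $G$\nobreakdash-module decomposition $\Z/\ell^n[G]\simeq\Z/\ell^n\oplus\Z/\ell^n(1)$, while for $\ell=2$ we iterate the SES $0\to\Z/2\Z\to\Z/2^n\Z\to\Z/2^{n-1}\Z\to 0$ starting from the cited $\Z/2\Z$\nobreakdash-case. For torsion~$M$, take direct limits (the coniveau SS commutes with filtered colimits of coefficients); for $\Z(j)$\nobreakdash-coefficients, use $0\to\Z(j)\to\Q(j)\to\Q/\Z(j)\to 0$ together with $E_2$\nobreakdash-degeneration of the coniveau SS with $\Q$\nobreakdash-coefficients; for general~$M$, use the SES $0\to M_{\mathrm{tors}}\to M\to M/M_{\mathrm{tors}}\to 0$ and further dévissage via $(M/M_{\mathrm{tors}})\otimes\Q$. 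Assertion~(v) then follows by inspecting the $E_\infty$\nobreakdash-page of the coniveau SS for $M=\Z(j)$: by (vi), differentials into and out of $E_r^{p,q}$ with $q>d$ vanish (sources of incoming differentials lie at $q'=q+r-1>d$ too, hence also in the vanishing range), so $E_\infty^{p,q}=E_2^{p,q}=H^p(X(R),H^q(G,\Z(j)))$ by (iv), and this is $H^p(X(R),\Z/2\Z)$ exactly when $q>0$ and $q\equiv j\pmod{2}$, and zero otherwise; summing over $p+q=i$ with $p<i-d$ and checking compatibility with~\eqref{eq:natural projection} via restriction to $X(R)$ yields the stated isomorphism.
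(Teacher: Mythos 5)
Your (i)--(iii) are essentially the paper's arguments (the paper delegates (i) to \cite{ctscheiderer} and \cite{Hilbert17} and deduces (iii) from (i) and (ii) exactly as you do, by killing the $2$\nobreakdash-torsion connecting maps against the torsion-free sheaves of (ii)). Your (iv) follows the intended route, but two points are glossed over: the stalk of $\iota_*H^q(G,M)$ at a Zariski point is in general a direct sum of \emph{several} copies of $H^q(G,M)$ (one for each semi-algebraic branch of $X(R)$ Zariski-locally), not a single copy, and the assertions you actually need are Scheiderer's theorems that the presheaves $U\mapsto H^a(U(R),\sF)$ sheafify to zero on $X_{\mathrm{Zar}}$ for all $a>0$ (not just $a>d$) and that $\iota_*$ is exact.

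The proof of (vi) has a genuine gap. A short exact sequence of coefficients does \emph{not} induce long exact sequences of $E_r$\nobreakdash-pages compatible with $d_r$ for $r\geq 2$; it only gives a long exact sequence of $E_1$\nobreakdash-complexes, and taking cohomology of a long exact sequence of complexes yields nothing of the sort. What survives are morphisms of spectral sequences, in particular $\delta_*\colon E_r^{p,q}(M(1))\to E_r^{p,q+1}(M)$ induced by the boundary of~\eqref{eq:real-complex sequence 01}. Your observation that $E_2^{p,q}(M[G])=H^p(X_C,\sH^q_{X_C}(M))=0$ for $q>d$ is correct and shows that $\delta_*$ is an isomorphism on $E_2^{p,q}$ for $q>d$; but this only transports vanishing of outgoing differentials into the range $q\geq d+2$, and says nothing at $q=d+1$, where one would have to control the differentials out of $E_r^{p,d}(M(1))$ --- these lie outside the range of the statement and need not vanish. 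The case $q=d+1$ is precisely the one needed downstream (e.g.\ $E_\infty^{d-2,d}=E_2^{d-2,d}$ in Proposition~\ref{prop:ihc defect and bo} requires the incoming differentials from $q=d+r-1\geq d+1$ to vanish). The same defect affects your iteration of $0\to\Z/2\Z\to\Z/2^n\Z\to\Z/2^{n-1}\Z\to 0$ and your passage from $\Q/\Z(j)$ to $\Z(j)$. Note also that $M[G]$ cannot serve as the source of a useful surjection: its $E_2$\nobreakdash-terms vanish for $q>d$ while those of $M$ do not.

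The paper's dévissage runs in the opposite direction: it constructs a surjection $N=\Z^{(H^0(G,M))}\oplus\Z(1)^{(H^0(G,M(1)))}\to M$ inducing (split) surjections $H^q(G,N(j))\to H^q(G,M(j))$ for all $q,j$, hence by (iv) surjections $E_2^{p,q}(N)\to E_2^{p,q}(M)$ for all $q>d$; a surjection of $E_2$\nobreakdash-terms whose source has vanishing outgoing differentials propagates that vanishing to the target, page by page. The base case $M=\Z(j)$ is not obtained by dévissage at all: the paper proves (v) and (vi) for $\Z(j)$ \emph{simultaneously}, by combining the surjectivity of the explicit map of (v) (which is Proposition~\ref{prop:truncated projection integral coeff}, nowhere used in your proposal) with a counting argument comparing the cardinalities of $\bigoplus_{p<i-d}E_\infty^{p,i-p}$ and $\bigoplus_{p<i-d}E_2^{p,i-p}$. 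This also repairs the secondary weakness of your (v): knowing abstractly that $E_\infty^{p,q}=E_2^{p,q}$ does not by itself show that the \emph{specific} composite of restriction and decomposition induces the isomorphism; surjectivity plus equality of finite cardinalities does.
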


\begin{proof}
The first isomorphism of~(i) follows from~\eqref{eq:cohoeqnoneq}.
The second one
is obtained in~\cite[Lemma 2.2.1~(a)]{ctscheiderer} with~$\Z/2\Z$ coefficients and in \cite[Proposition~2.1]{Hilbert17}
in the setting of $2$\nobreakdash-adic cohomology.
The arguments given there apply verbatim with equivariant semi-algebraic cohomology with coefficients in~$M$.

For any prime number~$\ell$,
the sheaf associated with $U\mapsto H^q_\et(U,\Z_\ell(q-1))$
is torsion-free (see~\cite[Proposition~2.2]{Hilbert17},
where the assumption that $\ell=2$ is not used;
the underlying argument, which rests on the Bloch--Kato conjecture,
goes back to \cite[Proof of Theorem~1]{blochsrinivas}
and to \cite[Th\'eor\`eme~3.1]{ctvoisin}
and does not depend on
the nature of the ground field~$R$).
By the comparison between
equivariant semi-algebraic cohomology and $\ell$\nobreakdash-adic cohomology,
it follows that the sheaf $\sH^q_X(\Z(q-1))$ is torsion-free.
Applying this to~$X_C$
and noting that $\sH^q_X(\Z[G])=\pi_*\sH^q_{X_C}(\Z(q-1))$,
we see that $\sH^q_X(\Z[G])$ is torsion-free as well.
The proof of~(ii) is complete.

Assertion~(iii)
follows from~(i) and~(ii).  This is observed in~\cite[Proposition~2.5]{Hilbert17}
for $2$\nobreakdash-adic cohomology and the same proof applies here.

Assertion~(iv) for torsion $G$\nobreakdash-modules~$M$
is due to Scheiderer \cite[Corollary~6.9.1 and Corollary~19.5]{scheiderer}.
We adapt his arguments to an arbitrary $G$\nobreakdash-module~$M$
as follows.
The restriction map
$H^q_G(U(C),M) \to H^q_G(U(R),M)$
is an isomorphism
for any $q>d$
and any affine open subset $U \subseteq X$,
by Lemma~\ref{lem:restrmapinjective}.
In addition,
the map $H^q_G(U(R),M) \to H^0(U(R),H^q(G,M))$
induced by the spectral sequence
\begin{align*}
E_2^{a,b}(U)=H^{a}(U(R),H^{b}(G,M)) \Rightarrow H^{a+b}_G(U(R),M)
\end{align*}
(see~\eqref{eq:first spectral sequence})
becomes an isomorphism after sheafification with respect to~$U$,
since the sheaf associated with the presheaf $U \mapsto E_2^{a,b}(U)$ vanishes when $a>0$
according to \cite[Proposition~19.2.1]{scheiderer}
(see also the proof of \cite[Lemma~1.2]{vanhamelabeljacobi}).
All in all, we obtain the desired isomorphism $\sH^q_X(M)\isoto \iota_*H^q(G,M)$.
On the other hand, the functor~$\iota_*$ is exact,
by \cite[Theorem~19.2]{scheiderer}.
By the Leray spectral sequence for~$\iota$, assertion~(iv) follows.

Assertion~(vi)
for $M=\Z/2\Z$
is due to van~Hamel
\cite[Theorem~2.1]{vanhamelabeljacobi}.
Based on Proposition~\ref{prop:truncated projection integral coeff},
we extend his arguments to integral coefficients.
In fact, we shall prove~(v) and~(vi) for $M=\Z(j)$ simultaneously.
To this end, we first note that by purity for semi-algebraic cohomology,
the group $H^p_{Z(R)}(X(R),\Z/2\Z)$ vanishes for any closed subset $Z \subseteq X$ of codimension~$>p$.
(See~\eqref{eq:purity mod 2} and the proof of \cite[Chapter~VI, Lemma~9.1]{milneet}.)
Hence the restriction map induces a map
\begin{align}
\label{eq:map assertion v}
H^i_G(X(C),\Z(j))/N^{i-d}H^i_G(X(C),\Z(j))
 \mkern3mu\to \mkern-20mu\bigoplus_{\substack{0\leq p<i-d \\ p\mkern1mu\equiv\mkern1mu i-j \text{ mod } 2}}\mkern-20muH^p(X(R),\Z/2\Z)\rlap{\text{,}}
\end{align}
which is surjective by Proposition~\ref{prop:truncated projection integral coeff}.
Let us now consider the coniveau spectral sequence~\eqref{eq:coniveau spectral sequence}
associated with $M=\Z(j)$.
We have $E_2^{p,q}=H^p(X(R),H^q(G,\Z(j)))$ whenever $q>d$,
according to~(iv)
and to~\eqref{eq:bloch-ogus iso}.
The target of~\eqref{eq:map assertion v}
can therefore be rewritten as $\bigoplus_{0 \leq p<i-d} E_2^{p,i-p}$.
On the other hand, the domain of~\eqref{eq:map assertion v} has the same cardinality
as $\bigoplus_{0 \leq p<i-d} E_\infty^{p,i-p}$, which is finite.
As $E_\infty^{p,i-p}$ is a subquotient of $E_2^{p,i-p}$ for all~$p$ and as~\eqref{eq:map assertion v}
is surjective, it follows that~\eqref{eq:map assertion v} is an isomorphism
and that $E_\infty^{p,i-p}=E_2^{p,i-p}$ for $p<i-d$.
We have thus established~(v), as well as~(vi) for $M=\Z(j)$ (with $q=i-p$).

It remains to check~(vi) for an arbitrary $G$\nobreakdash-module~$M$, which we now fix.
The map $H^q(G,N(j)) \to H^q(G,M(j))$
induced by
the natural morphism of $G$\nobreakdash-modules $N=\Z^{(H^0(G,M))} \oplus \Z(1)^{(H^0(G,M(1)))} \to M$
is surjective for $q=0$ and any~$j$, therefore also for any~$q$ and any~$j$
since the boundary map $H^{q-1}(G,M(j+1))\to H^q(G,M(j))$ of~\eqref{eq:real-complex sequence 01}
is surjective when $q>0$ (see \cite[Chapter~III, Corollary~5.7, Proposition~5.9, Corollary~6.6]{brown}).
For $q>0$, this is a surjection between $\Z/2\Z$\nobreakdash-vector spaces, hence it is a split surjection.
We deduce that the group $H^p(X(R),H^q(G,N))$ surjects onto $H^p(X(R),H^q(G,M))$
for all~$p\geq 0$ and all $q>0$.
This remark, together with assertion~(iv) for the $G$\nobreakdash-modules~$M$ and~$N$
and with assertion~(vi) for the $G$\nobreakdash-module~$N$
(which we have already shown to hold) implies assertion~(vi) for~$M$.
\end{proof}

\subsection{Coniveau and the real integral Hodge conjecture for \texorpdfstring{$1$}{1}-cycles}
\label{subsec:coniveau and torsion cycles}

If~$X$ is a smooth, proper and irreducible complex variety of dimension~$d$
such that $\CH_0(X)$ is supported on a surface (\emph{i.e.}, such that there exists
a closed subvariety $Y\subseteq X$ of dimension~$\leq 2$ such that $\CH_0(Y) \twoheadrightarrow \CH_0(X)$),
Colliot-Th\'el\`ene and Voisin~\cite[Corollaire~3.12]{ctvoisin}
have shown that the defect of the integral Hodge conjecture for $1$\nobreakdash-cycles on~$X$
is measured by the group $H^{d-3}(X,\sH^d_X(\Q/\Z(d-1)))$.
We establish, in Proposition~\ref{prop:ihc defect and bo} below, an analogue of this statement
for real varieties.

For a variety~$X$ defined over a real closed field~$R$, a $G$\nobreakdash-module~$M$
and any $q\geq 0$,
we set $\sH^q_X(M)_0=\Ker\left(\sH^q_X(M) \to \iota_*H^q(G,M)\right)$
(notation as in Proposition~\ref{prop:various bo}~(iv)).

\begin{prop}
\label{prop:ihc defect and bo}
Let~$X$ be a smooth, proper and geometrically irreducible variety,
of dimension~$d$, over a real closed field~$R$.
If $\CH_0(X_{C'})$ is supported on a surface
for every algebraically closed field~$C'$ containing~$R$, there is a canonical isomorphism
\begin{align*}
H^{d-3}(X,\sH^d_X(\Q/\Z(d-1))_0) = \Coker\left(\CH_1(X) \to H^{2d-2}_G(X(C),\Z(d-1))_0\right)_{\tors}\rlap{.}
\end{align*}
If moreover $R=\R$ or $H^2(X,\sO_X)=0$, the real integral Hodge conjecture (see~\textsection\ref{par:realIHC})
holds for $1$\nobreakdash-cycles on~$X$ if and only if $H^{d-3}(X,\sH^d_X(\Q/\Z(d-1))_0)=0$.
\end{prop}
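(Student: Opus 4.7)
The plan is to adapt to the equivariant setting the strategy of Colliot-Th\'el\`ene and Voisin~\cite[\textsection3]{ctvoisin}, using the coniveau spectral sequence~\eqref{eq:coniveau spectral sequence} and the results of~\textsection\ref{subsec:complements on bo}.

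First, I would identify the target of the cycle class map with the top piece of the coniveau filtration. By Proposition~\ref{prop:various bo}(v) applied with $i=2d-2$ and $j=d-1$, the restriction to $X(R)$ induces an isomorphism
\[
H^{2d-2}_G(X(C),\Z(d-1))/N^{d-1}H^{2d-2}_G(X(C),\Z(d-1)) \isoto \mkern-20mu\bigoplus_{\substack{0\leq p<d-1 \\ p\equiv d-1 \!\!\mod 2}}\mkern-20muH^p(X(R),\Z/2\Z),
\]
and in view of Remark~\ref{rk:topological condition 1-cycles and ak}(i) this subgroup is precisely the kernel. Hence $H^{2d-2}_G(X(C),\Z(d-1))_0 = N^{d-1}H^{2d-2}_G(X(C),\Z(d-1))$, and the cycle class map factors as $\CH_1(X)\to E_2^{d-1,d-1}=H^{d-1}(X,\sH^{d-1}_X(\Z(d-1)))\twoheadrightarrow E_\infty^{d-1,d-1}\hookrightarrow N^{d-1}H^{2d-2}_G/N^dH^{2d-2}_G$, where $E_r^{p,q}$ denotes the coniveau spectral sequence with $\Z(d-1)$ coefficients.

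Second, the hypothesis on $\CH_0$ invokes the Bloch--Srinivas decomposition of the diagonal: there exist $N\geq 1$, a surface $S\subseteq X$, a divisor $D\subseteq X$, and a relation $N[\Delta_X]=[Z_1]+[Z_2]$ in $\CH_d(X\times X)\otimes\Q$ with $Z_1\subseteq S\times X$ and $Z_2\subseteq X\times D$. Letting this correspondence act on $H^{2d-2}_G(X(C),\Z(d-1))$---the action of $Z_1$ factors through a surface and lands in classes of algebraic $1$-cycles, while that of $Z_2$ lands in $N^1H^{2d-2}_G$---shows that, modulo torsion, the cycle class map $\CH_1(X)\to N^{d-1}H^{2d-2}_G$ is surjective and that $N^dH^{2d-2}_G\otimes\Q=0$. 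In particular the cokernel of $\CH_1(X)\to H^{2d-2}_G(X(C),\Z(d-1))_0$ is torsion. When $R=\R$, the same argument shows that $N^{d-1}H^{2d-2}(X(\C),\Q(d-1))$ is a sub-Hodge structure of pure type $(d-1,d-1)$, so the Hodge condition on coniveau-$(d-1)$ classes is automatic; together with the case $H^2(X,\sO_X)=0$ (where no Hodge condition appears), this reduces the real integral Hodge conjecture for $1$\nobreakdash-cycles to the surjectivity of $\CH_1(X)\to H^{2d-2}_G(X(C),\Z(d-1))_0$.

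Third, to capture the torsion of the cokernel, I would pass to the coniveau spectral sequence with $\Q/\Z(d-1)$ coefficients. The long exact sequence of Zariski sheaves arising from $0\to\Z(d-1)\to\Q(d-1)\to\Q/\Z(d-1)\to 0$, combined with the torsion-freeness of $\sH^d_X(\Z(d-1))$ supplied by Proposition~\ref{prop:various bo}(ii), relates the torsion arising from the coniveau filtration on $H^{2d-2}_G(X(C),\Z(d-1))$ to Zariski cohomology of $\sH^d_X(\Q/\Z(d-1))$. Invoking Proposition~\ref{prop:various bo}(vi) to cancel all differentials with source in rows $q>d$, a diagram chase entirely parallel to~\cite[Corollaire~3.12]{ctvoisin} identifies the torsion cokernel with $H^{d-3}(X,\sH^d_X(\Q/\Z(d-1)))$. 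Finally, Proposition~\ref{prop:various bo}(iv) supplies a canonical map $\sH^d_X(\Q/\Z(d-1))\to\iota_*H^d(G,\Q/\Z(d-1))$ whose contribution, via $H^{d-3}(X,\iota_*H^d(G,\Q/\Z(d-1)))=H^{d-3}(X(R),H^d(G,\Q/\Z(d-1)))$, corresponds exactly to the part of the cokernel already killed by the topological condition defining $H^{2d-2}_G(X(C),\Z(d-1))_0$; the torsion cokernel therefore localises onto the kernel $\sH^d_X(\Q/\Z(d-1))_0$, yielding the desired isomorphism. The final statement follows at once, since the cokernel is torsion.

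The main obstacle will be the third step: a careful bookkeeping of differentials in two coniveau spectral sequences (for $\Z(d-1)$ and $\Q/\Z(d-1)$ coefficients), tracking the $\sH^d_X(\Q/\Z(d-1))_0$ summand and verifying that the naturally constructed map $H^{d-3}(X,\sH^d_X(\Q/\Z(d-1))_0)\to\Coker(\CH_1(X)\to H^{2d-2}_G(X(C),\Z(d-1))_0)_\tors$ is bijective rather than merely surjective or injective. The vanishing Proposition~\ref{prop:various bo}(vi) and the torsion-freeness Proposition~\ref{prop:various bo}(ii) provide the inputs that make the argument parallel to the complex case of~\cite[Corollaire~3.12]{ctvoisin} go through.
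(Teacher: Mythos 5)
Your overall strategy (coniveau spectral sequence plus the results of \textsection\ref{subsec:complements on bo}) is the right one, but your first step contains an indexing error that, taken literally, would make the statement trivially false and is incompatible with the counterexamples of \textsection\ref{subsec:cycle theoretic obs}. Proposition~\ref{prop:various bo}~(v) with $i=2d-2$, $j=d-1$ identifies the quotient of $H^{2d-2}_G(X(C),\Z(d-1))$ by $N^{i-d}=N^{d-2}$, \emph{not} by $N^{d-1}$ (the index sets in the target coincide by parity, which may be what misled you, but the denominators do not). Hence the correct identification is $H^{2d-2}_G(X(C),\Z(d-1))_0=N^{d-2}H^{2d-2}_G(X(C),\Z(d-1))$, whereas by equivariant purity $N^{d-1}H^{2d-2}_G(X(C),\Z(d-1))$ is exactly the image of the cycle class map $\CH_1(X)\to H^{2d-2}_G(X(C),\Z(d-1))$. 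Your claim $H^{2d-2}_G(X(C),\Z(d-1))_0=N^{d-1}H^{2d-2}_G$ would therefore force the cokernel to vanish identically. The cokernel is in fact the graded piece $E_\infty^{d-2,d}=N^{d-2}/N^{d-1}$, which equals $E_2^{d-2,d}=H^{d-2}(X,\sH^d_X(\Z(d-1)))$ by Proposition~\ref{prop:various bo}~(vi) together with $E_1^{p,q}=0$ for $p>q$; the factorisation through $E_\infty^{d-1,d-1}$ that you write down is not the relevant one.

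Your third step also leaves the crux unproved, and the mechanism you describe for why $\sH^d_X(\Q/\Z(d-1))_0$ rather than $\sH^d_X(\Q/\Z(d-1))$ appears is not the right one: it is not a matching of a summand of $H^{d-3}$ against ``the part of the cokernel killed by the topological condition'', but a statement at the level of Zariski sheaves. Proposition~\ref{prop:various bo}~(iv) identifies $\sH^{d+1}_X(\Z(d-1))$ with $\iota_*H^{d+1}(G,\Z(d-1))$, so the kernel of the boundary $\sH^d_X(\Q/\Z(d-1))\to\sH^{d+1}_X(\Z(d-1))$ is $\sH^d_X(\Q/\Z(d-1))_0$, and Proposition~\ref{prop:various bo}~(ii) gives exactness on the left of
\begin{align*}
0 \to \sH^d_X(\Z(d-1)) \to \sH^d_X(\Q(d-1)) \to \sH^d_X(\Q/\Z(d-1))_0 \to 0\rlap{.}
\end{align*}
The $\CH_0$ hypothesis is then used to prove $H^{d-3}(X,\sH^d_X(\Q(d-1)))=0$ (this group injects into $H^{d-3}(X_C,\sH^d_{X_C}(\Q))$ by Proposition~\ref{prop:various bo}~(i), and the latter vanishes by the Colliot-Th\'el\`ene--Voisin argument), so that $H^{d-3}(X,\sH^d_X(\Q/\Z(d-1))_0)=H^{d-2}(X,\sH^d_X(\Z(d-1)))_\tors$; combined with the corrected identification of the cokernel above, this yields the first assertion. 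Your direct Bloch--Srinivas manipulation in step two is then unnecessary in that form, and as written it is delicate: upgrading ``lands in $N^1$'' to the coniveau $d-1$ statement needed in degree $2d-2$ requires the decomposition of the diagonal over function fields, which is precisely why the hypothesis is imposed for every $C'$. For the final assertion, what is needed is the surjectivity of $\CH_1(X_C)\otimes_\Z\Q\to\Hdg^{2d-2}(X(C),\Q(d-1))$ followed by a trace argument, which you essentially have.
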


\begin{proof}
Let us consider
the commutative square
\begin{align}
\begin{aligned}
\label{diag:comm square hd0}
\xymatrix@R=3ex{
\sH^d_X(\Q/\Z(d-1)) \ar[r] \ar[d] & \sH^{d+1}_X(\Z(d-1)) \ar[d]^(.45)\wr \\
\iota_*H^d(G,\Q/\Z(d-1)) \ar[r]^\sim & \iota_*H^{d+1}(G,\Z(d-1))\rlap{\text{,}}
}
\end{aligned}
\end{align}
whose horizontal maps come from the short exact sequence $0\to\Z\to\Q\to\Q/\Z\to 0$.
The vertical map on the right is an isomorphism,
by Proposition~\ref{prop:various bo}~(iv).
Hence the kernel of the top horizontal map is $\sH^d_X(\Q/\Z(d-1))_0$ and we therefore obtain,
in view of Proposition~\ref{prop:various bo}~(ii),
a short exact sequence
\begin{align}
\label{eq:sHd z q qz}
\xymatrix@C=1.5em{
0 \ar[r] & \sH^d_X(\Z(d-1)) \ar[r] & \sH^d_X(\Q(d-1)) \ar[r] & \sH^d_X(\Q/\Z(d-1))_0 \ar[r] & 0\rlap{\text{.}}
}
\end{align}
The sheaf $\sH^d_X(\Q(d-1))$ is a direct summand of $\pi_* \sH^d_{X_C}(\Q)$.
By Proposition~\ref{prop:various bo}~(i), it follows
that $H^{d-3}(X,\sH^d_X(\Q(d-1)))$ injects into $H^{d-3}(X_C,\sH^d_{X_C}(\Q))$.
The latter group vanishes since~$\CH_0(X_{C'})$
is supported on a surface for every algebraically closed fields~$C'$ containing~$C$
(see \cite[Proposition~3.3~(ii)]{ctvoisin},
whose proof goes through over an arbitrary algebraically closed field of characteristic~$0$
provided one replaces the hypothesis on~$\CH_0(X_C)$ with the same hypothesis on $\CH_0(X_{C'})$
for all~$C'$).
In view of~\eqref{eq:sHd z q qz},
we deduce that
\begin{align}
\label{eq:hd-3 hd-2shd}
H^{d-3}(X,\sH^d_X(\Q/\Z(d-1))_0)=H^{d-2}(X,\sH^d_X(\Z(d-1)))_\tors\rlap{\text{.}}
\end{align}
Let us now consider the coniveau spectral sequence~\eqref{eq:coniveau spectral sequence}
for $M=\Z(d-1)$.
We have $H^{d-2}(X,\sH^d_X(\Z(d-1)))=E_2^{d-2,d}$
(see~\eqref{eq:bloch-ogus iso}).
By Proposition~\ref{prop:various bo}~(vi) and in view of the fact that $E_1^{p,q}=0$ for $p>q$,
we also have $E_\infty^{d-2,d}=E_2^{d-2,d}$.
On the other hand,
we have
\begin{align}
\label{eq:nd-1}
N^{d-1} H^{2d-2}_G(X(C),\Z(d-1)) = \Im\left(\CH_1(X) \to H^{2d-2}_G(X(C),\Z(d-1))\right)
\end{align}
as a consequence of equivariant purity (see~\eqref{eq:equivariant purity subvariety})
and
\begin{align}
\label{eq:nd-2}
N^{d-2} H^{2d-2}_G(X(C),\Z(d-1))=H^{2d-2}_G(X(C),\Z(d-1))_0
\end{align}
according to Proposition~\ref{prop:various bo}~(v) applied with $i=2d-2$, $j=d-1$.
As the quotient of~\eqref{eq:nd-2} by~\eqref{eq:nd-1} is~$E_\infty^{d-2,d}$,
we conclude that
\begin{align}
\label{eq:coker cycle map tors}
H^{d-2}(X,\sH^d_X(\Z(d-1)))=
\Coker\left(\CH_1(X) \to H^{2d-2}_G(X(C),\Z(d-1))_0\right)\mkern-3mu\rlap{\text{.}}
\end{align}
Together with (\ref{eq:hd-3 hd-2shd}), this proves the first statement. If $R=\R$ or $H^2(X,\sO_X)=0$, the cycle class map
$\CH_1(X_C) \otimes_\Z\Q \to \Hdg^{2d-2}(X(C),\Q(d-1))$
is onto (we use the convention that all classes are Hodge if $H^2(X,\sO_X)=0$): by the Lefschetz principle one reduces to the case $C=\C$, for which see~\cite[p.~91]{lewissurvey}.
It follows, by a trace argument, that the torsion subgroup of the
right-hand side of~\eqref{eq:coker cycle map tors}
is canonically isomorphic to
$\Coker\left(\CH_1(X) \to \Hdg^{2d-2}_G(X(C),\Z(d-1))_0\right)$, as desired.
\end{proof}

\begin{rmks}
\label{rks:ch0 supported on surface}
(i) According to the generalised Bloch conjecture,
the hypothesis that~$\CH_0(X_{C'})$ is supported on a surface
for every algebraically closed field~$C'$ containing~$R$
should be equivalent to
the vanishing of $H^i(X,\sO_X)$ for all~$i\geq 3$
(see \cite[Conjecture~1.11]{voisinweyl},
\cite[\textsection3]{jannsenmotivicsheaves},
\cite[Th\'eor\`eme~22.17]{voisinbook}).

(ii) If~$R$ has infinite transcendence degree over~$\Q$
(for instance, if $R=\R$),
the group $\CH_0(X_{C'})$ is supported on a surface
for every algebraically closed field~$C'$ containing~$R$
if and only if it is so for $C'=C$
(by an argument known as decomposition of the diagonal, see \cite[Appendix to Lecture~1]{blochlectures}).

(iii)
As a consequence of Proposition~\ref{prop:ihc defect and bo},
if~$X$ is a real threefold such that $\CH_0(X_\C)$ is supported on a surface (for instance, a
uniruled threefold),
the defect of the real integral Hodge conjecture for~$X$ is measured by
the subgroup $H^3_\nr(X,\Q/\Z(2))_0$
of the unramified cohomology group
$H^3_\nr(X,\Q/\Z(2))=H^0(X,\sH^3_X(\Q/\Z(2)))$
consisting of those classes whose restriction to any real point of~$X$ vanishes.
\end{rmks}

\subsection{Torsion \texorpdfstring{$1$}{1}-cycles}

Thanks to Proposition~\ref{prop:ihc defect and bo},
we are now in a position to derive consequences of the real integral Hodge conjecture on the
study of $\CH_1(X)_\tors$.

If~$X$ is a smooth, proper and irreducible variety of dimension~$d$ over~$R$,
we denote by
$\lambda:\CH_1(X)_\tors \to H^{2d-3}_\et(X,\Q/\Z(d-1))$
Bloch's Abel--Jacobi map.
(See~\cite{blochtorsion} for its construction
over an algebraically closed field;
the construction goes through over a real closed field,
as explained in \cite[Theorem~3.1]{vanhamelabeljacobi}.)
The map~$\lambda$ takes its values in the inverse image
$H^{2d-3}_G(X(C),\Q/\Z(d-1))_0$
of $H^{2d-2}_G(X(C),\Z(d-1))_0$
by the boundary map
\begin{align}
\label{eq:boundary map h2d-3}
H^{2d-3}_G(X(C),\Q/\Z(d-1))\to H^{2d-2}_G(X(C),\Z(d-1))
\end{align}
since the composition of~$\lambda$ with this boundary map coincides with the equivariant cycle
class map (see \cite[Corollaire~1]{ctsso} and Theorem~\ref{th:conditions de krasnov}).

In the next statements, we denote by~$\cl_R$ the two
Borel--Haefliger
cycle class maps
$\Pic(X) \to H^1(X(R),\Z/2\Z)$
and $\CH_1(X) \to H^{d-1}(X(R),\Z/2\Z)$.

\begin{thm}
\label{th:ch1torsion}
Let~$X$ be a smooth, proper and geometrically irreducible variety,
of dimension~$d$, over a real closed field~$R$.
Assume that $R=\R$ or $H^2(X,\sO_X)=0$,
and that~$X$ satisfies the real integral Hodge conjecture
for $1$\nobreakdash-cycles.
Finally, assume that for every algebraically closed field~$C'$ containing~$R$,
the group $\CH_0(X_{C'})$ is supported on a surface.
\begin{itemize}
\item[(i)] Bloch's Abel--Jacobi map induces a surjection
\begin{align*}
\lambda: \CH_1(X)_\tors \twoheadrightarrow H^{2d-3}_G(X(C),\Q/\Z(d-1))_0
\end{align*}
(even an isomorphism if $d\leq 3$).
\item[(ii)]
The subgroup $\cl_R(\CH_1(X)[2^\infty])$
and the image of the map
\begin{align*}
H^2_G(X(C),\Z(1))\to H^2_G(X(R),\Z(1))=H^1(X(R),\Z/2\Z)
\end{align*}
obtained by composing the restriction map
and the decomposition~\eqref{eq:canonical decomposition}
are exact orthogonal complements under the Poincar\'e duality pairing.
\end{itemize}
\end{thm}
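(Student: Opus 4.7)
The plan is to prove~(i) by developing Bloch--Ogus theory with coefficients in~$\Q/\Z(d-1)$, in the spirit of Colliot-Th\'el\`ene--Voisin~\cite{ctvoisin}, and then to deduce~(ii) from~(i) by invoking the self-duality result of Theorem~\ref{th:selfduality}.

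For~(i), I would analyse the coniveau spectral sequence
\begin{align*}
E_2^{p,q}=H^p(X,\sH^q_X(\Q/\Z(d-1)))\Rightarrow H^{p+q}_G(X(C),\Q/\Z(d-1))
\end{align*}
in total degree $2d-3$. Bloch's construction~\cite{blochtorsion}, transferred to the equivariant semi-algebraic setting as in~\cite[\textsection3]{vanhamelabeljacobi}, realises~$\lambda$ as a surjection of $\CH_1(X)_\tors$ onto $E_\infty^{d-2,d-1}$, followed by the natural inclusion $E_\infty^{d-2,d-1}\hookrightarrow N^{d-2}H^{2d-3}_G(X(C),\Q/\Z(d-1))/N^{d-1}$. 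The differentials which might obstruct the identification of $E_\infty^{d-2,d-1}$ with $E_2^{d-2,d-1}$ all originate on the row $q\geq d$; those with $q>d$ vanish by Proposition~\ref{prop:various bo}~(vi), while the $d_2$ originating on the row $q=d$ is killed by the vanishing of $H^{d-4}(X,\sH^d_X(\Q(d-1)))$, an instance of the Bloch--Srinivas-type vanishing employed in the proof of Proposition~\ref{prop:ihc defect and bo} under the $\CH_0$-supported-on-a-surface hypothesis. To conclude, it will suffice to show that $H^{2d-3}_G(X(C),\Q/\Z(d-1))_0$ is concentrated in coniveau~$\geq d-2$, for which the crucial input is the vanishing of $H^{d-3}(X,\sH^d_X(\Q/\Z(d-1))_0)$, precisely the content of Proposition~\ref{prop:ihc defect and bo} under the real integral Hodge conjecture (together with the Hodge-theoretic hypothesis). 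When~$d\leq 3$, the isomorphism statement then follows from the injectivity of Bloch's Abel--Jacobi map on torsion codimension-$2$ cycles, as established equivariantly in~\cite[Theorem~3.1]{vanhamelabeljacobi}.

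For~(ii), let $\mu:H^{2d-3}_G(X(C),\Q/\Z(d-1))_0\to H^{d-1}(X(R),\Z/2\Z)$ denote the composition of restriction to~$X(R)$ with projection onto the top summand of the canonical decomposition~\eqref{eq:canonical decomposition}, and let $\nu:H^2_G(X(C),\Z(1))\to H^1(X(R),\Z/2\Z)$ be the map appearing in~(ii). By Theorem~\ref{th:conditions de krasnov}, $\mu\circ\lambda$ coincides with the Borel--Haefliger cycle class map on $2$-primary torsion, so~(i) yields $\cl_R(\CH_1(X)[2^\infty])=\Im(\mu)$. The plan is to reformulate~$\mu$ and~$\nu$ in terms of the maps~$w_{2d-2}$ and~$u_2$ of the self-dual long exact sequence~\eqref{eq:selfdual sequence}, using the real-complex exact sequences~\eqref{eq:real-complex long 01}--\eqref{eq:real-complex long 10} and the Bocksteins linking $\Z/2\Z$ to the coefficient modules~$\Z(1)$ and~$\Q/\Z(d-1)$. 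Theorem~\ref{th:selfduality}, which exhibits~$w_{2d-2}$ and~$u_2$ as Pontryagin duals under the perfect pairings~\eqref{eq:poincare duality equivariant} and~\eqref{eq:modified pairing}, combined with Poincar\'e duality between $H^{d-1}(X(R),\Z/2\Z)$ and $H^1(X(R),\Z/2\Z)$, will then yield the asserted orthogonality $\Im(\mu)^\perp=\Im(\nu)$.

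The main obstacle, I expect, will lie in~(ii): translating~$\mu$ (defined with~$\Q/\Z$ coefficients) and~$\nu$ (defined with integer coefficients) into the purely $\Z/2\Z$-coefficient framework of Theorem~\ref{th:selfduality} requires carefully tracking the Bocksteins, the canonical decompositions, and the $2$-primary content of the various long exact sequences in play. In~(i), by contrast, the only subtle point is the handling of the~$d_2$ originating on the row $q=d$, which Proposition~\ref{prop:various bo}~(vi) does not cover directly and which must instead be controlled via the hypothesis on~$\CH_0$.
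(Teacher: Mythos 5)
Your proposal follows essentially the same route as the paper: part~(i) reduces, via the known equality $\Im(\lambda)=N^{d-2}H^{2d-3}_G(X(C),\Q/\Z(d-1))$ and~\eqref{eq:nd-2}, to showing that $H^{2d-3}_G(X(C),\Q/\Z(d-1))_0$ is contained in $N^{d-2}$, with the vanishing of $H^{d-3}(X,\sH^d_X(\Q/\Z(d-1))_0)$ supplied by Proposition~\ref{prop:ihc defect and bo} as the decisive input; part~(ii) is then deduced by expressing the two maps through $w_{2d-2}$ and $u_2$ and invoking the duality of Theorem~\ref{th:selfduality}, exactly as you describe and exactly as the paper does (your anticipated difficulty---tracking Bocksteins and decompositions between $\Q/\Z(d-1)$, $\Z(1)$ and $\Z/2\Z$ coefficients---is indeed where the paper spends its effort).

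Two caveats on part~(i). First, the digression on identifying $E_\infty^{d-2,d-1}$ with $E_2^{d-2,d-1}$ is superfluous: since $N^{d-1}H^{2d-3}=0$, the equality $\Im(\lambda)=N^{d-2}H^{2d-3}=E_\infty^{d-2,d-1}$ is already what van Hamel's theorem gives, and no computation of that group is required. Moreover the vanishing $H^{d-4}(X,\sH^d_X(\Q(d-1)))=0$ you invoke there is not an instance used in the paper (which only needs $p=d-3$), and a vanishing with $\Q$-coefficients would in any case not by itself kill a differential of the $\Q/\Z(d-1)$-coefficient spectral sequence. Second, the passage from $H^{d-3}(X,\sH^d_X(\Q/\Z(d-1))_0)=0$ to the inclusion $H^{2d-3}_G(X(C),\Q/\Z(d-1))_0\subseteq N^{d-2}$ is the genuine technical content, and your sketch leaves its mechanism implicit: since the subgroup $(\cdot)_0$ is defined as a preimage under the Bockstein into integral cohomology, one must compare the coniveau spectral sequences for $\Q/\Z(d-1)$ and for $\Z(d-1)$ (shifted by one) via the morphism induced by $0\to\Z\to\Q\to\Q/\Z\to 0$, using Proposition~\ref{prop:various bo}~(iv) and~(vi) to see that this morphism is an isomorphism on $E_2^{p,q}$ for $q>d$ and the exact sequence~\eqref{eq:shdqzd-1} together with your key vanishing to get injectivity on $E_2^{d-3,d}$. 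This is a natural elaboration of what you wrote, so I regard it as a gap in exposition rather than in the argument.
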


In view of Proposition~\ref{prop:real(1,1)nonarch}
and Theorem~\ref{th:conditions de krasnov}, Theorem \ref{th:ch1torsion} (ii) has the following corollary. It is the twin of Theorem~\ref{th:nohodgetheoreticob}~(i); when $d=2$,
the two are equivalent.

\begin{cor}
\label{cor:ch1torsduality}
Let $X$ be as in Theorem \ref{th:ch1torsion}. If $H^2(X,\sO_X)=0$,
the subgroups $\cl_R(\CH_1(X)[2^\infty])$ and
$\cl_R(\Pic(X))$ are exact orthogonal
complements under the Poincar\'e duality pairing.
\end{cor}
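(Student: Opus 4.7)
The plan is to deduce Corollary~\ref{cor:ch1torsduality} from the three results cited by the sentence preceding its statement: Theorem~\ref{th:ch1torsion}(ii), Proposition~\ref{prop:real(1,1)nonarch}, and Theorem~\ref{th:conditions de krasnov}. The central task is to identify the image of the composition
\[
H^2_G(X(C),\Z(1))\to H^2_G(X(R),\Z(1))=H^1(X(R),\Z/2\Z)
\]
that appears on the right-hand side of Theorem~\ref{th:ch1torsion}(ii) with the subgroup $\cl_R(\Pic(X))\subseteq H^1(X(R),\Z/2\Z)$; once this identification is in hand, plugging it into Theorem~\ref{th:ch1torsion}(ii) and invoking the perfectness of the Poincar\'e duality pairing~\eqref{eq:semi-algebraic poincare duality} on the finite $\Z/2\Z$-vector spaces $H^1(X(R),\Z/2\Z)$ and $H^{d-1}(X(R),\Z/2\Z)$ immediately yields the corollary.

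I would establish the identification by a double inclusion. The inclusion of $\cl_R(\Pic(X))$ into this image is immediate from Theorem~\ref{th:conditions de krasnov} applied with $k=1$ and $i=0$: for any divisor~$D$, the class $\cl(D)\in H^2_G(X(C),\Z(1))$ maps to $\Sq^0(\cl_R(D))=\cl_R(D)$ under the displayed composition. For the reverse inclusion, Proposition~\ref{prop:real(1,1)nonarch} asserts, under the standing hypothesis $H^2(X,\sO_X)=0$, that~$X$ satisfies the real integral Hodge conjecture for divisors in the sense of Definition~\ref{def:ihc real closed field}: the equivariant cycle class map $\cl:\Pic(X)\to H^2_G(X(C),\Z(1))_0$ is surjective. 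The main point to verify is that this surjectivity extends to all of $H^2_G(X(C),\Z(1))$. This follows because the topological condition of Definition~\ref{def:condition topologique} is vacuous when $k=1$: the target of~\eqref{eq:composition restriction decomposition} reduces to the single summand $H^1(X(R),\Z/2\Z)$ (since~$p$ must satisfy $0\leq p\leq 2$ and $p\equiv 1\pmod 2$), and the defining constraint $\alpha_{1+i}=\Sq^i(\alpha_1)$ for $i\in 2\Z$ is tautological when $i=0$ and holds for trivial degree reasons otherwise. Hence $H^2_G(X(C),\Z(1))_0=H^2_G(X(C),\Z(1))$ and $\cl:\Pic(X)\to H^2_G(X(C),\Z(1))$ itself is surjective.

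Combining the two inclusions produces the identification, and Theorem~\ref{th:ch1torsion}(ii) then reads $\cl_R(\CH_1(X)[2^\infty])^{\perp}=\cl_R(\Pic(X))$; taking perpendiculars a second time via the perfect Poincar\'e duality pairing gives the converse equality $\cl_R(\Pic(X))^{\perp}=\cl_R(\CH_1(X)[2^\infty])$, so the two subgroups are exact orthogonal complements, as claimed. The only step requiring genuine verification is the triviality of the topological constraint at $k=1$; once that is settled, the corollary follows by routine bookkeeping across the three cited results.
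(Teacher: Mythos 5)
Your proof is correct and follows exactly the route the paper intends: the paper gives no separate proof of Corollary~\ref{cor:ch1torsduality} beyond the sentence citing Proposition~\ref{prop:real(1,1)nonarch} and Theorem~\ref{th:conditions de krasnov}, and your argument is the natural expansion of that citation. In particular, your identification of the image of $H^2_G(X(C),\Z(1))\to H^1(X(R),\Z/2\Z)$ with $\cl_R(\Pic(X))$ via the surjectivity of $\cl:\Pic(X)\to H^2_G(X(C),\Z(1))=H^2_G(X(C),\Z(1))_0$ (the equality being the vacuousness of the topological constraint for $k=1$, which the paper itself records in \textsection\ref{subsubsec:divisors}) and the compatibility $\alpha_1=\cl_R(Y)$ from Theorem~\ref{th:conditions de krasnov} is precisely what is needed.
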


\begin{proof}[Proof of Theorem \ref{th:ch1torsion}]
We start with~(i).
It is well known that~$\lambda$ is injective if~$d\leq 3$
(see \cite[Corollaire~1]{ctsso}).
It is also a general fact that $\Im(\lambda)=N^{d-2}H^{2d-3}_G(X(C),\Q/\Z(d-1))$
(see \cite[Theorem~3.1]{vanhamelabeljacobi}).
These remarks, together with~\eqref{eq:nd-2}
and with the next lemma, imply~(i).

\begin{lem}
The inverse image of $N^{d-2}H^{2d-2}_G(X(C),\Z(d-1))$
by the boundary map~\eqref{eq:boundary map h2d-3}
is equal to
$N^{d-2}H^{2d-3}_G(X(C),\Q/\Z(d-1))$.
\end{lem}

\begin{proof}
Let us denote
by~$A_r^{p,q}$ (resp., $B_r^{p,q}$)
the term~$E_r^{p,q}$
(resp., $E_r^{p,q+1}$)
of the coniveau spectral sequence~\eqref{eq:coniveau spectral sequence}
associated with $M=\Q/\Z(d-1)$
(resp., $M=\Z(d-1)$).
The short exact sequence $0\to\Z\to\Q\to\Q/\Z\to 0$
induces a morphism
of cohomology theories with supports
$H^*_{G,Z(C)}(X(C),\Q/\Z(d-1)) \to H^{*+1}_{G,Z(C)}(X(C),\Z(d-1))$
in the sense of \cite[\textsection5.1]{blochogusgabber}
and, hence,
a morphism of spectral sequences
\begin{align}
\begin{aligned}
\xymatrix@R=2ex{
\vphantom{A_G^{p+1}}A_1^{p,q} \ar@{=>}[r] \ar[d] & \rlap{$H^{p+q}_G(X(C),\Q/\Z(d-1))$}\phantom{H^{p+q+1}_G(X(C),\Z(d-1))} \ar[d] \\
\vphantom{B_G^{p+1}}B_1^{p,q} \ar@{=>}[r] & H^{p+q+1}_G(X(C),\Z(d-1))\rlap{\text{.}}
}
\end{aligned}
\end{align}
The sheaf $\sH^q_X(\Q(d-1))$ vanishes for $q>d$,
being a direct summand of $\pi_*\sH^q_{X_C}(\Q)$;
hence,
the boundary map $\sH^q_X(\Q/\Z(d-1)) \to \sH^{q+1}_X(\Z(d-1))$
is an isomorphism for $q>d$
and is surjective for $q=d$.
We deduce, on the one hand, that $A_2^{p,q} \isoto B_2^{p,q}$ for $q>d$
(thanks to~\eqref{eq:bloch-ogus iso}), and, on the other hand,
that the sequence
\begin{align}
\label{eq:shdqzd-1}
\xymatrix@C=1.5em{
0 \ar[r] & \sH^d_X(\Q/\Z(d-1))_0 \ar[r] & \sH^d_X(\Q/\Z(d-1)) \ar[r] & \sH^{d+1}_X(\Z(d-1)) \ar[r] & 0
}
\end{align}
is exact (as we have already observed its exactness on the left, in~\eqref{diag:comm square hd0}).
Now, our hypothesis that~$X$ satisfies the real integral Hodge conjecture
for $1$\nobreakdash-cycles
implies,
by Proposition~\ref{prop:ihc defect and bo},
that $H^{d-3}(X,\sH^d_X(\Q/\Z(d-1))_0)=0$.
It follows, in view of~\eqref{eq:shdqzd-1}
and~\eqref{eq:bloch-ogus iso}, that $A_2^{d-3,d} \hookrightarrow B_2^{d-3,d}$.
Thus, the map $A_2^{p,q}\to B_2^{p,q}$ is injective for all $p,q$ such that $p+q=2d-3$
and $q\geq d$.
As the differentials $B_r^{p-r,q+r-1}\to B_r^{p,q}$
vanish for $r\geq 2$ and $q\geq d$ (indeed, even for $q\geq d+1-r$, see Proposition~\ref{prop:various bo}~(vi)), we conclude that $A_\infty^{p,q} \hookrightarrow B_\infty^{p,q}$
for all $p,q$ such that $p+q=2d-3$ and $q\geq d$.
\end{proof}

We now deduce~(ii) from~(i).  By~(i),
by Theorem~\ref{th:conditions de krasnov},
and  by the compatibility between~$\lambda$
and the equivariant cycle class map (see \cite[Corollaire~1]{ctsso}),
we have
\begin{align*}
\cl_R(\CH_1(X)[2^\infty])=
\cl_R(\CH_1(X)_\tors)=
\psi\left(\left(H^{2d-2}_G(X(C),\Z(d-1))_0\right){}_{\mkern-4mu\tors}\right)\mkern-3mu\rlap{\text{,}}
\end{align*}
where~$\psi$ is as in Definition~\ref{def:psi with point}.

From this point on, we proceed as in the proof of Theorem~\ref{th:image psi}.
Let us take up, from~\textsection\ref{subsubsec:normal bundle with mod 2 coefficients},
the notation $H^p=H^p(X(R),\Z/2\Z)$.
The map~$u_2$ appearing in~\eqref{eq:selfdual sequence}
can be inserted into a diagram
\begin{align}
\label{diag:image of psiprimetors}
\owrepositiontag{{\raise 11pt}{%
\xymatrix@C=1em@R=3ex{
&&\mkern-92muH^0\oplus H^1=H^1_G(X(R),\Z/2\Z) \ar[d]^{\delta_2} \ar[r]
& \rlap{$H^2_G(X(R),\Z(1))=H^1$}\phantom{H^2_G(X(R),\Z(1))}\mkern13mu \ar@<-.2em>[d] \\
H^1 \ar@<.3em>[ur]!(11.5,0) \ar[r] & H^{2-d} \oplus \cdots \oplus H^1 \ar[r]^(.42){u_2} & H^2_G(X(C),X(R),\Z/2\Z) \ar[r] & H^3_G(X(C),X(R),\Z(1))\rlap{\text{,}}
}}}
\end{align}
in which the leftmost arrows are the obvious inclusions,
the vertical maps are the connecting
homomorphisms of the localisation exact sequences,
and the canonical isomorphisms of the first row are the decompositions~\eqref{eq:canonical decomposition mod 2}
and~\eqref{eq:canonical decomposition}.
By the description of~$u_2$ given after~\eqref{eq:selfdual sequence},
this diagram commutes.
In addition, the map from the bottom left~$H^1$ to the top right~$H^1$
is the identity map.
It follows
that the kernel of the composition $\theta'_1:H^1 \to H^3_G(X(C),X(R),\Z(1))$
of all of the maps of the bottom row of~\eqref{diag:image of psiprimetors}
coincides with the image of the map appearing in the statement of Theorem~\ref{th:ch1torsion}~(ii).

Let $\theta:H^{2d-2}_G(X(C)\setminus X(R),\Z(d-1))\to H^{d-1}$
denote the map defined at the end of  the proof of Theorem~\ref{th:image psi}
and let $\theta_1 : H^{2d-3}_G(X(C)\setminus X(R),\Q/\Z(d-1)) \to H^{d-1}$
denote its composition with the boundary map arising from the short
exact sequence $0\to\Z\to\Q\to\Q/\Z\to 0$.
As remarked during the proof of Theorem~\ref{th:image psi},
the map~$\psi$ coincides with the composition of~$\theta$
with the isomorphism of
Lemma~\ref{lem:topological condition = restriction}.
Hence $\psi\left(\left(H^{2d-2}_G(X(C),\Z(d-1))_0\right){}_{\mkern-4mu\tors}\right)$
is equal to the image of~$\theta_1$.

Unravelling the definitions of~$\theta_1$ and of~$\theta'_1$ and applying Theorem~\ref{th:selfduality}, Proposition~\ref{prop:poincare duality equivariant},
and Remark~\ref{rks:equivariant poincare}~(ii),
we see that the Pontrjagin dual of~$\theta_1'$ is~$\theta_1$.
The kernel of~$\theta_1'$ and the image of~$\theta_1$ are therefore
exact orthogonal complements, which completes the proof of~(ii).
\end{proof}

\subsection{An example: torsion \texorpdfstring{$1$-cycles}{1-cycles} on real quartic threefolds}
\label{subsec:example torsion for real quartic}

To illustrate the contents of~\textsection\ref{sec:blochogus}, we now
determine the torsion subgroup of the Chow group of $1$\nobreakdash-cycles
of a real quartic threefold with no real point.

\begin{prop}
\label{prop:example quartic threefold torsion}
Let~$R$ be a real closed field and $X \subset \P^4_R$ a smooth quartic threefold
such that $X(R)=\emptyset$.
If~$X$ satisfies the
real integral Hodge conjecture for $1$\nobreakdash-cycles,
then there exists an isomorphism of
abelian groups $\CH_1(X)_\tors \simeq \Z/2\Z \oplus (\Q/\Z)^{30}$.
\end{prop}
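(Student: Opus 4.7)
My strategy is to identify $\CH_1(X)_{\tors}$ with an equivariant cohomology group via Abel--Jacobi, then decompose that group into its divisible and finite parts.

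\emph{Reduction via Abel--Jacobi.} First I verify that Theorem~\ref{th:ch1torsion}(i) applies with $d=3$. Adjunction gives $K_X \simeq \sO_X(-1)$, so $X$ is Fano; hence $H^i(X,\sO_X)=0$ for all $i>0$ (in particular $H^2(X,\sO_X)=0$), and $X$ is rationally connected, so $\CH_0(X_{C'})$ is supported on a point for every algebraically closed $C' \supseteq R$. By Lefschetz hyperplane, $\Pic(X_C) = \Z\cdot\sO(1)$, hence $\Pic(X_C)[2]=0$. The theorem then yields an isomorphism $\lambda:\CH_1(X)_{\tors} \xrightarrow{\sim} H^3_G(X(C),\Q/\Z(2))_0$. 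Since $X(R)=\emptyset$ kills every $H^p(X(R),\Z/2\Z)$, the topological condition defining $H^4_G(X(C),\Z(2))_0$ is vacuous, so $H^3_G(X(C),\Q/\Z(2))_0 = H^3_G(X(C),\Q/\Z(2))$; it remains to compute this group.

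\emph{Divisible part.} From the coefficient sequence $0 \to \Z(2) \to \Q(2) \to \Q/\Z(2) \to 0$ I obtain
\begin{align*}
0 \to H^3_G(X(C),\Q(2))/\Im\left(H^3_G(X(C),\Z(2))\right) \to H^3_G(X(C),\Q/\Z(2)) \to H^4_G(X(C),\Z(2))_{\tors} \to 0,
\end{align*}
whose leftmost term is divisible. As $G$ is finite, $H^3_G(X(C),\Q(2)) = H^3(X(C),\Q)^{F_\infty}$, and this has $\Q$-dimension $30$: the anti-holomorphic involution $F_\infty$ swaps the Hodge summands $H^{2,1}$ and $H^{1,2}$ of $H^3(X(C),\C)$, each of $\C$-dimension $h^{2,1}(X_\C) = 30$, so it has trace zero on $H^3(X(C),\R)$, yielding $\pm 1$-eigenspaces each of $\R$-dimension~$30$. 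Via the Hochschild--Serre spectral sequence, $H^3_G(X(C),\Z(2))$ also has rank $30$ (differentials from $E_2^{0,3}$ land in torsion groups $E_r^{r,4-r}$), so the image in $H^3_G(X(C),\Q(2)) \cong \Q^{30}$ is a full-rank sublattice, and the divisible quotient is isomorphic to $(\Q/\Z)^{30}$.

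\emph{Finite part.} The main task is to show $H^4_G(X(C),\Z(2))_{\tors} \simeq \Z/2\Z$. Proposition~\ref{prop:conoyau norme} (valid since $\Pic(X_C)[2]=0$), combined with $\Mpsi = \Z/2\Z$ (because $X(R)=\emptyset$) and the vanishing of the topological condition, yields
\begin{align*}
\Z = H^4(X(C),\Z(2)) \xrightarrow{N} H^4_G(X(C),\Z(2)) \to \Z/2\Z \to 0.
\end{align*}
The middle group has rank $\dim_\Q H^4_G(X(C),\Q(2)) = 1$, so $H^4_G(X(C),\Z(2)) = \Z\alpha \oplus T$ with $T$ finite. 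Since $X$ satisfies the real integral Hodge conjecture for $1$-cycles, the cycle class map $\cl:\CH_1(X) \twoheadrightarrow H^4_G(X(C),\Z(2))$ is surjective, so the image of the forget map to $H^4(X(C),\Z(2)) = \Z$ equals the image of the degree map on $\CH_1(X)$. Norms of complex lines (which exist on $X_C$ by classical Fano theory) give degree $2$, and plane sections give degree $4$, but no $R$-curve on $X$ has odd degree: any geometrically irreducible $R$-curve $C \subset X$ satisfies $C(R) \subseteq X(R) = \emptyset$, forcing $\elw_0(C) = 2$ and hence $\deg C$ even, while non-geometrically-irreducible $R$-cycles are automatically of even degree. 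Thus the image is $2\Z$, and I may choose $\alpha$ with $\mathrm{forget}(\alpha) = 2$; since $\mathrm{forget}\circ N$ is multiplication by~$2$, this gives $N(1) = \alpha + t$ for some $t \in T$. A direct computation ($(a,s) \mapsto s - at$) identifies $H^4_G(X(C),\Z(2))/\langle N(1)\rangle$ with $T$, and comparison with the exact sequence forces $T \simeq \Z/2\Z$.

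\emph{Conclusion.} The extension $0 \to (\Q/\Z)^{30} \to \CH_1(X)_{\tors} \to \Z/2\Z \to 0$ splits because $(\Q/\Z)^{30}$ is divisible, hence injective in abelian groups. This gives $\CH_1(X)_{\tors} \simeq \Z/2\Z \oplus (\Q/\Z)^{30}$. The hardest step will be establishing $H^4_G(X(C),\Z(2))_{\tors} \simeq \Z/2\Z$, which couples Proposition~\ref{prop:conoyau norme} with the parity constraint on cycle degrees forced by $X(R) = \emptyset$.
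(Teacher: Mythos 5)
Your proof is correct in its overall architecture and matches the paper's: reduce to computing $H^3_G(X(C),\Q/\Z(2))$ via Theorem~\ref{th:ch1torsion}~(i), realise it as an extension of $H^4_G(X(C),\Z(2))_\tors$ by a divisible group of corank~$30$, pin down the torsion as $\Z/2\Z$, and split the extension by injectivity of divisible groups. Two steps differ from the paper. First, for the rank-$30$ computation you obtain the vanishing of the trace of conjugation on $H^3$ from the Hodge decomposition ($F_\infty$ swaps $H^{2,1}$ and $H^{1,2}$, each of dimension $30$), whereas the paper applies the Lefschetz fixed-point theorem to the fixed-point free involution on $X(C)$. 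As written your argument only makes sense for $R=\R$, while the proposition is stated over an arbitrary real closed field; this is a genuine (though easily patched) gap. Either invoke the fixed-point theorem, which is available over any real closed field, or transfer the trace computation from $\R$ by a spreading-out argument of the kind used in the proof of Proposition~\ref{prop:differentiable rr}.

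Second, to show $H^4_G(X(C),\Z(2))_\tors\simeq\Z/2\Z$ you use the hypothesis that $X$ satisfies the real integral Hodge conjecture for $1$\nobreakdash-cycles to identify the image of the forgetful map $H^4_G(X(C),\Z(2))\to H^4(X(C),\Z(2))=\Z$ with the image of the degree map on $\CH_1(X)$, and then conclude by the parity constraint forced by $X(R)=\emptyset$. The paper's Lemma~\ref{lem:computation cohomology real quartic} instead proves unconditionally that the norm class $\cl(L+\bar L)$ is not divisible by~$2$, by cupping with $\cl(\sO_X(1))$ and invoking the unconditional real integral Hodge conjecture for zero-cycles (Proposition~\ref{prop:hodgereel0cycles}) together with $X(R)=\emptyset$. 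Both arguments rest on the same parity phenomenon; yours makes the determination of $H^4_G(X(C),\Z(2))$ conditional on the hypothesis of the proposition (harmless here), while the paper's keeps it unconditional. Your bookkeeping ($N(1)=a\alpha+t$, cokernel of order $|a|\cdot|T|=2$, and $\mathrm{Im}(\mathrm{forget})=2\Z$ forcing $a=\pm1$) correctly yields $T\simeq\Z/2\Z$.
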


We shall prove, in \cite[\textsection\ref*{BW2-sec:Fano}]{bwpartie2},
that if~$R=\R$, then~$X$ does
satisfy the real integral Hodge conjecture for
$1$\nobreakdash-cycles (an assertion which is equivalent to the existence of a geometrically
irreducible curve
of even geometric genus in~$X$, by Theorem~\ref{thm:relation ihc phi}),
so that the conclusion of Proposition~\ref{prop:example quartic threefold torsion}
holds unconditionally in this case.
We do not know whether the conclusion of Proposition~\ref{prop:example quartic threefold torsion} holds
with no assumption on~$R$.

\begin{proof}[Proof of Proposition~\ref{prop:example quartic threefold torsion}]
Applying \cite[Theorem~6]{roitmanrateqv} or the rational connectedness of Fano varieties shows that
the group $\CH_0(X_{C'})$ is supported on a point
for every algebraically closed field~$C'$ containing~$R$.
We can therefore apply Theorem~\ref{th:ch1torsion}~(i)
and conclude that Bloch's Abel--Jacobi map induces an isomorphism
$\CH_1(X)_\tors=H^3_G(X(C),\Q/\Z(2))$.
The latter group is an extension of $H^4_G(X(C),\Z(2))_\tors$ by
$H^3_G(X(C),\Z(2)) \otimes_\Z \Q/\Z$.
The next two lemmas now imply the proposition.
\end{proof}

\begin{lem}
\label{lem:computation cohomology real quartic}
There is a canonical isomorphism $H^4_G(X(C),\Z(2))=\Z\oplus\Z/2\Z$,
the first summand being generated by $\cl(L+\bar L)$,
where $L\subset X_C$ denotes a line and~$\bar L$ its conjugate.
\end{lem}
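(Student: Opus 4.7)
The plan is to compute $H^4_G(X(C),\Z(2))$ by first determining its profinite completion via equivariant Poincaré duality, and then identifying the canonical generator of the torsion-free part by means of Proposition~\ref{prop:conoyau norme}.

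First I would compute $H^2_G(X(C),\Q/\Z(1))=H^2_\et(X,\Q/\Z(1))$. Since $X\subset\P^4_R$ is a smooth quartic threefold, the Lefschetz hyperplane theorem gives $\Pic(X_C)=\Z\cdot[\sO_X(1)]$ with trivial $G$-action, and $\sO_X(1)$ descends from $\P^4_R$, so the natural inclusion $\Pic(X)\hookrightarrow\Pic(X_C)^G$ is the identity on $\Z$. The complex variety $X_C$ is a simply connected Fano threefold with $H^2(X_C,\sO_{X_C})=0$ and $H^3(X(C),\Z)$ torsion-free (the latter by Lefschetz and the fact that the cohomology of a smooth hypersurface in $\P^4_C$ is torsion-free), whence $\Br(X_C)=0$. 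The Hochschild--Serre spectral sequence for the étale cohomology of $X$ with $\Gm$-coefficients then gives
\begin{equation*}
0\to\Pic(X)\to\Pic(X_C)^G\to\Br(\R)\to\Br(X)\to 0,
\end{equation*}
forcing $\Br(X)=\Br(\R)=\Z/2\Z$. The Kummer exact sequence $0\to\Pic(X)\otimes\Q/\Z\to H^2_G(X(C),\Q/\Z(1))\to\Br(X)\to 0$ then splits by divisibility of $\Q/\Z$, yielding $H^2_G(X(C),\Q/\Z(1))=\Q/\Z\oplus\Z/2\Z$.

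Next I would invoke Proposition~\ref{prop:poincare duality equivariant} and Remark~\ref{rks:equivariant poincare}~(ii). Since $X(R)=\emptyset$, the pairing between $H^4_G(X(C),\Z(2))$ and $H^2_G(X(C),\Q/\Z(1))$ identifies the profinite completion of the finitely generated abelian group $H^4_G(X(C),\Z(2))$ with the Pontrjagin dual of $\Q/\Z\oplus\Z/2\Z$, which is $\widehat\Z\oplus\Z/2\Z$. A finitely generated abelian group with profinite completion $\widehat\Z\oplus\Z/2\Z$ must itself be $\Z\oplus\Z/2\Z$, so $H^4_G(X(C),\Z(2))\simeq\Z\oplus\Z/2\Z$.

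Finally, to pin down the canonical generator of the torsion-free summand, I would apply Proposition~\ref{prop:conoyau norme}. Its hypothesis $\Pic(X_C)[2]=0$ is satisfied, and $X(R)=\emptyset$ forces $H^4_G(X(C),\Z(2))_0=H^4_G(X(C),\Z(2))$; it therefore produces an exact sequence
\begin{equation*}
H^4(X(C),\Z(2))\xrightarrow{N}H^4_G(X(C),\Z(2))\to\Z/2\Z\to 0.
\end{equation*}
The composition of $N$ with the forgetful map $H^4_G(X(C),\Z(2))\to H^4(X(C),\Z(2))$ is multiplication by $2$ on the torsion-free group $H^4(X(C),\Z(2))\simeq\Z$, which is generated by the class of any line $L\subset X_C$ (such a line exists, since the Fano scheme of lines on a smooth quartic threefold is a non-empty surface, and it has degree $1$ in $\P^4_C$). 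Hence $N$ is injective, and its image, generated by $N(\cl_C(L))=\cl(L+\bar L)$, must coincide with the torsion-free summand of $H^4_G(X(C),\Z(2))$. The main delicate step is the identification $\Br(X)=\Z/2\Z$, which ultimately rests on the vanishing of $\Br(X_C)$ for this simply connected Fano threefold.
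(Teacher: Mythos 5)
Your proof is correct, but the mechanism by which you detect the $\Z/2\Z$ summand is genuinely different from the paper's. Both arguments begin with Proposition~\ref{prop:conoyau norme}, which shows that $\cl(L+\bar L)$ generates the image of the norm map, a subgroup of index~$2$ (your observation that the norm is injective because its composition with restriction is multiplication by~$2$ on $H^4(X(C),\Z(2))=\Z$ is exactly how the paper argues in Lemma~\ref{lem:proof of conoyau norme}). At that point the only question is whether the extension of $\Z/2\Z$ by $\Z$ splits, i.e.\ whether $\cl(L+\bar L)$ is divisible by~$2$. The paper rules out divisibility directly: if $\cl(L+\bar L)=2b$, then $b\smile\cl(\sO_X(1))$ would be a class of $H^6_G(X(C),\Z(3))_0$ mapping to a generator of $H^6(X(C),\Z(3))=\Z$, contradicting Proposition~\ref{prop:hodgereel0cycles} since every zero-cycle on~$X$ has even degree when $X(R)=\emptyset$. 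You instead compute the full group by determining its profinite completion through Proposition~\ref{prop:poincare duality equivariant} and Remark~\ref{rks:equivariant poincare}~(ii), which requires the auxiliary computation $H^2_G(X(C),\Q/\Z(1))=\Q/\Z\oplus\Z/2\Z$, i.e.\ in substance $\Br(X)=\Z/2\Z$; your Hochschild--Serre argument for this is sound (it rests on $\Pic(X)\twoheadrightarrow\Pic(X_C)^G$, $H^1(G,\Pic(X_C))=0$ and $\Br(X_C)=0$). Your route is longer but yields as a by-product that $\Br(R)\to\Br(X)$ is an isomorphism (so $-1$ is not a sum of two squares in $R(X)$), whereas the paper's divisibility argument is shorter and uses only the already-established real integral Hodge conjecture for zero-cycles. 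Two small points: the Fano scheme of lines on a smooth quartic threefold is a curve, not a surface (only its non-emptiness matters, and that is classical); and the image of the norm map is not \emph{the} torsion-free summand, since such a summand is not unique---the correct conclusion, which your index computation does give, is that $\langle\cl(L+\bar L)\rangle\cong\Z$ is a free direct summand whose complement is the torsion subgroup $\Z/2\Z$.
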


\begin{proof}
The class $\cl(L+\bar L) \in H^4_G(X(C),\Z(2))$
generates
a subgroup of index~$2$,
by Proposition~\ref{prop:conoyau norme}.
In addition, this class is not divisible by~$2$.
Indeed, if it were,
one would deduce, by taking the cup product with the class of~$\sO_X(1)$,
the surjectivity of the natural map $H^6_G(X(C),\Z(3)) \to H^6(X(C),\Z(3))=\Z$,
which would contradict Proposition~\ref{prop:hodgereel0cycles}
since $X(R)=\emptyset$.  The lemma follows.
\end{proof}

\begin{lem}
\label{lem:rkH3}
The finitely generated abelian group $H^3_G(X(C),\Z(2))$ has rank~$30$.
\end{lem}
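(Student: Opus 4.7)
Since the statement concerns a rank, I work after tensoring with~$\Q$. The Hochschild--Serre spectral sequence~\eqref{eq:hochschild-serre} degenerates rationally (as $H^p(G,V)=0$ for $p>0$ and any $\Q$-vector space~$V$), yielding
\begin{align*}
H^3_G(X(C),\Z(2))\otimes_\Z\Q = H^3(X(C),\Q(2))^G = H^3(X(C),\Q)^G,
\end{align*}
the second equality holding because $\Z(2)$ carries the trivial $G$-action (since $(\sqrt{-1}\mkern2mu)^2=-1$ is $G$-fixed). Consequently the rank to be computed equals $\tfrac{1}{2}\bigl(b_3(X)+\mathrm{tr}(\sigma^*\mid H^3(X(C),\Q))\bigr)$, where $\sigma$ denotes complex conjugation on~$X(C)$.

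For the Betti number, the Chern class computation from $c(X)=(1+h)^5/(1+4h)$ on the smooth quartic threefold yields $c_3(X)=-14h^3$, so $\chi(X)=\int_X c_3=-56$; combined with the Lefschetz hyperplane theorem ($b_{2k}=1$ for $0\leq k\leq 3$, $b_1=b_5=0$), this gives $b_3=60$. For the trace, I will apply the Lefschetz fixed-point theorem to~$\sigma$: since $X(R)=\emptyset$,
\begin{align*}
0 = \chi(X(R)) = \sum_{i=0}^{6}(-1)^i\,\mathrm{tr}(\sigma^*\mid H^i(X(C),\Q)).
\end{align*}
The crucial input is the identity $\sigma^*h=-h$ for $h=c_1(\sO_X(1))\in H^2(X(C),\Z)$: the natural lift of~$h$ to $H^2(X(C),\Z(1))$ is $G$-invariant (being the Chern class of a real line bundle), while $\Z(1)$ has $G$-action by~$-1$. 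As $\sigma^*$ is a ring homomorphism, the trace on $H^{2k}=\Q\cdot h^k$ equals $(-1)^k$ for $k\in\{0,1,2,3\}$, so the Lefschetz sum collapses to $-\mathrm{tr}(\sigma^*\mid H^3(X(C),\Q))$, forcing this trace to vanish. The rank therefore equals $\tfrac{1}{2}(60+0)=30$.

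The subtle step is the sign $\sigma^*h=-h$, which is what distinguishes untwisted from Tate-twisted cohomology in the computation. Over a real closed field~$R$ other than~$\R$, one applies the analogous Lefschetz argument semi-algebraically, or, equivalently, reads the rank off $H^3_\et(X_C,\Q_\ell)$ with its $\Gal(C/R)$-action via the comparison $H^3_G(X(C),\Z(2))\otimes_\Z\Z_\ell=H^3_\et(X,\Z_\ell(2))$: both the Betti number~$b_3$ and the character of the $G$-action on $H^3(X(C),\Q)$ are invariant under base change between real closed fields.
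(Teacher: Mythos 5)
Your proof is correct and follows essentially the same route as the paper: apply the Lefschetz fixed-point theorem to complex conjugation (which has empty fixed locus since $X(R)=\emptyset$), use that $\sigma^*$ acts by $(-1)^i$ on $H^{2i}(X(C),\Q)$ to see that the even-degree contributions cancel and hence that the trace on $H^3$ vanishes, and combine with $b_3=60$ to get $\dim H^3(X(C),\Q)^G=30$. The only differences are expository — you spell out the Chern-class computation of $b_3$ and the sign $\sigma^*h=-h$, which the paper handles by citation and by the statement $H^{2i}(X(C),\Q)=\Q(-i)$.
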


\begin{proof}
Let us apply the Lefschetz fixed-point theorem
to the complex conjugation involution of~$X(C)$
(over an arbitrary real closed field, see~\cite{brumfielfixedpoint}).
As this involution has no fixed point,
we deduce from the canonical $G$\nobreakdash-equivariant isomorphisms
$H^{2i}(X(C),\Q)=\Q(-i)$
for $i\in \{0,1,2,3\}$
and from the vanishing of $H^1(X(C),\Q)$ and of $H^5(X(C),\Q)$
that the generator of~$G$ acts on $H^3(X(C),\Q)$
with trace~$0$.
On the other hand, the vector space $H^3(X(C),\Q)$ has dimension~$60$
(see \cite[Example~5.24]{eisenbudharris3264}).
This implies the lemma, since $H^3_G(X(C),\Q)=H^3(X(C),\Q)^G$.
\end{proof}

\begin{rmks}
\label{rmks:chow group of quartic threefold}
(i)
When $R=\R$,
one can exploit the structure of the group of real points of the intermediate Jacobian
of~$X$ to verify that
in the situation of
Proposition~\ref{prop:example quartic threefold torsion},
Theorem~\ref{th:ch1torsion}~(i)
allows one to produce,
at the price of a significantly more involved computation,
an isomorphism $\CH_1(X) \simeq \Z \oplus \Z/2\Z \oplus (\R/\Z)^{30}$.

(ii) 
It is possible to perform the computations of Proposition~\ref{prop:example quartic threefold torsion} for some smooth quartic threefolds with real points.
In this setting, Lemma~\ref{lem:rkH3} still holds,
by \cite[Chapter~I, p.~12, (2.5)]{silhol}, and \cite[Proposition~\ref*{BW2-prop:preuve Fano 1}]{bwpartie2}
allows us to prove, when $R=\R$, that at least some of these varieties still satisfy the real integral Hodge conjecture.
As an example, we have verified
that $\CH_1(X)_{\tors}\simeq\Z/2\Z\oplus(\Q/\Z)^{30}$ if~$X$ has homogeneous equation $x_0^4+x_1^4=x_2^4+x_3^4+x_4^4$.
\end{rmks}

\bibliographystyle{myamsalpha}
\bibliography{hodgereel}
\end{document}